\newcommand{\cmark}{\ding{51}}%
\newcommand{\xmark}{\ding{55}}%
\newcommand\bbR{\mathbb{R}}
\newcommand\bF{\bm{F}}
\newcommand\bv{\bm{v}}
\newcommand\bU{\bm{U}}
\newcommand\bR{\bm{R}}
\newcommand\bJ{\bm{J}}
\newcommand\bD{\bm{D}}
\newcommand\dd{\mathrm{d}}
\newcommand\pd[2]{\frac{\partial {#1}}{\partial {#2}}}
\newcommand\abs[1]{\lvert #1 \rvert}
\newcommand\norm[1]{\left\lVert #1 \right\rVert}
\DeclareMathOperator{\diag}{diag}
\newcommand\xr{i+\frac12}
\newcommand\xl{i-\frac12}
\newcommand\yr{j+\frac12}
\newcommand\yl{j-\frac12}
\theoremstyle{plain}
\newtheorem{lemma}{Lemma}[section]
\theoremstyle{definition}
\newtheorem{definition}{Definition}[section]
\newtheorem{theorem}{Theorem}[section]
\newtheorem{example}{Example}[section]
\newtheorem{proposition}{Proposition}[section]
\theoremstyle{remark}
\newtheorem{remark}{Remark}[section]
\crefname{equation}{}{}
\crefname{figure}{Figure}{Figures}
\crefname{table}{Table}{Tables}
\crefname{example}{Example}{Examples}
\crefname{section}{Section}{Sections}
\renewcommand{\title}{Active flux methods for hyperbolic conservation laws -- flux vector splitting and bound-preservation}
\newcommand{\authorOne}{Junming Duan\footnote{Corresponding author. Institute of Mathematics, University of W\"urzburg, Emil-Fischer-Stra\ss e 40, 97074 W\"urzburg, Germany, junming.duan@uni-wuerzburg.de}}
\newcommand{\authorTwo}{Wasilij Barsukow\footnote{Institut de Math\'ematiques de Bordeaux (IMB), CNRS UMR 5251, University of Bordeaux, 33405 Talence, France, wasilij.barsukow@math.u-bordeaux.fr}}
\newcommand{\authorThree}{Christian Klingenberg\footnote{Institute of Mathematics, University of W\"urzburg, Emil-Fischer-Stra\ss e 40, 97074 W\"urzburg, Germany, christian.klingenberg@uni-wuerzburg.de}}
\begin{document}

\begin{center} \Large
\title

\vspace{1cm}

\date{}
\normalsize

\authorOne, \authorTwo, \authorThree
\end{center}

\begin{abstract}

The active flux (AF) method is a compact high-order finite volume method that simultaneously evolves cell averages and point values at cell interfaces.
Within the method of lines framework, the existing Jacobian splitting-based point value update incorporates the upwind idea but suffers from a stagnation issue
	for nonlinear problems due to inaccurate estimation of the upwind direction, and also from a mesh alignment issue partially resulting from decoupled point value updates.
This paper proposes to use flux vector splitting for the point value update,
offering a natural and uniform remedy to those two issues.
To improve robustness, this paper also develops bound-preserving (BP) AF methods for hyperbolic conservation laws.
Two cases are considered: preservation of the maximum principle for the scalar case, and preservation of positive density and pressure for the compressible Euler equations.
The update of the cell average is rewritten as a convex combination of the original high-order fluxes and robust low-order (local Lax-Friedrichs or Rusanov) fluxes, and the desired bounds are enforced by choosing the right amount of low-order fluxes.
A similar blending strategy is used for the point value update.
	In addition, a shock sensor-based limiting is proposed to enhance the convex limiting for the cell average, which can suppress oscillations well.
	Several challenging tests are conducted to verify the robustness and effectiveness of the BP AF methods,
	including flow past a forward-facing step and high Mach number jets.

Keywords: hyperbolic conservation laws, active flux, flux vector splitting, bound-preserving, convex limiting, shock sensor

Mathematics Subject Classification (2020): 65M08, 65M12, 65M20, 35L65

\end{abstract}

\section{Introduction}\label{sec:introduction}
This paper focuses on the development of robust active flux (AF) methods for hyperbolic conservation laws.
The AF method is a new finite volume method \cite{Eymann_2011_Active_InCollection, Eymann_2011_Active_InProceedings,Eymann_2013_Multidimensional_InCollection,Roe_2017_Is_JoSC},
that was inspired by \cite{VanLeer_1977_Towards_JoCP}.
Apart from cell averages, it incorporates additional degrees of freedom (DoFs) as point values located at the cell interfaces, evolved simultaneously with the cell average.
The original AF method employs a globally continuous representation of the numerical solution using a piecewise quadratic reconstruction,
leading naturally to a third-order accurate method with a compact stencil.
The introduction of point values at the cell interfaces avoids the usage of Riemann solvers as in usual Godunov methods, because the numerical solution is continuous across the cell interface and the numerical flux is available directly.

The independence of the point value update adds flexibility to the AF methods.
Based on the evolution of the point value, there are generally two kinds of AF methods.
The original one uses exact or approximate evolution operators and Simpson's rule for flux quadrature in time, i.e., it does not require time integration methods like Runge-Kutta methods.
Exact evolution operators have been studied for linear equations in \cite{Barsukow_2019_Active_JoSC,Fan_2015_Investigations_InCollection,Eymann_2013_Multidimensional_InCollection, VanLeer_1977_Towards_JoCP}.
Approximate evolution operators have been explored for Burgers' equation \cite{Eymann_2011_Active_InCollection,Eymann_2011_Active_InProceedings,Roe_2017_Is_JoSC,Barsukow_2021_active_JoSC},
the compressible Euler equations in one spatial dimension \cite{Eymann_2011_Active_InCollection,Helzel_2019_New_JoSC,Barsukow_2021_active_JoSC},
and hyperbolic balance laws \cite{Barsukow_2021_Active_SJoSC, Barsukow_2023_Well_CoAMaC}, etc.
One of the advantages of the AF method over standard finite volume methods is its structure-preserving property.
For instance, it preserves the vorticity and stationary states for multi-dimensional acoustic equations \cite{Barsukow_2019_Active_JoSC}, and it is naturally well-balanced for acoustics with gravity \cite{Barsukow_2021_Active_SJoSC}.

Since it may not be convenient to derive exact or approximate evolution operators for nonlinear systems, especially in multi-dimensions,
another kind of generalized AF method was presented in \cite{Abgrall_2023_Combination_CoAMaC, Abgrall_2023_Extensions_EMMaNA, Abgrall_2023_active}.
A method of lines was used, where the cell average and point value updates are written in semi-discrete form and advanced in time with time integration methods. 
In the point values update, the Jacobian matrix is split based on the sign of the eigenvalues (Jacobian splitting (JS)), and upwind-biased stencils are used to compute the approximation of derivatives.
There are some deficiencies of the JS-based AF methods, e.g., the stagnation issue \cite{Helzel_2019_New_JoSC} for nonlinear problems,
and mesh alignment issue in 2D to be introduced in \Cref{sec:mesh_alignment}.
Some remedies are suggested for the stagnation issue,
e.g., using discontinuous reconstruction \cite{Helzel_2019_New_JoSC} or
evaluating the upwind direction using more neighboring information \cite{Barsukow_2021_active_JoSC}. 

Solutions to hyperbolic conservation laws often stay in an \emph{admissible state set} $\mathcal{G}$,
also called the invariant domain.
	For instance, the solutions to initial value problems of scalar conservation laws satisfy a strict maximum principle (MP) \cite{Dafermos_2000_Hyperbolic_book}.
	Physically, both the density and pressure in the solutions to the compressible Euler equations should stay positive.
It is desired to conceive so-called bound-preserving (BP) methods,
i.e., those guaranteeing that the numerical solutions at a later time will stay in $\mathcal{G}$, if the initial numerical solutions belong to $\mathcal{G}$.
The BP property of numerical methods is very important for both theoretical analysis and numerical stability.
Many BP methods have been developed in the past few decades,
e.g., a series of works by Shu and collaborators \cite{Zhang_2011_Maximum_PotRSAMPaES, Hu_2013_Positivity_JoCP, Xu_2014_Parametrized_MoC},
a recent general framework on BP methods \cite{Wu_2023_Geometric_SR},
and the convex limiting approach \cite{Guermond_2018_Second_SJoSC, Hajduk_2021_Monolithic_C&MwA, Kuzmin_2020_Monolithic_CMiAMaE},
which can be traced back to the flux-corrected transport (FCT) schemes for scalar conservation laws \cite{Cotter_2016_Embedded_JoCP, Guermond_2017_Invariant_SJoNA, Lohmann_2017_Flux_JoCP, Kuzmin_2012_Flux_book}.
The previous studies on the AF methods pay limited attention to high-speed flows, or problems involving strong discontinuities,
with some efforts on the limiting for the point value update, see e.g. \cite{Barsukow_2021_active_JoSC, Helzel_2019_New_JoSC, Chudzik_2021_Cartesian_AMaC}.
	Although those limitings can reduce oscillations, the new
	cell average may violate the bound even for linear advection \cite{Barsukow_2021_active_JoSC, Helzel_2019_New_JoSC},
	and it is not straightforward to extend them to the multi-dimensional case.
	In \cite{Chudzik_2021_Cartesian_AMaC, Chudzik_2023_Review_InProceedings}, the authors proposed to adopt a discontinuous reconstruction based on the scaling limiter \cite{Zhang_2011_Maximum_PotRSAMPaES}.
    The flux is computed based on the limited point values, resulting in BP AF methods for scalar conservation laws.
In a very recent paper, the MOOD \cite{Clain_2011_high_JoCP} based stabilization was adopted to achieve the BP property \cite{Abgrall_2023_Activea} in an a posteriori fashion.

This paper presents a new way for the point value update to cure the stagnation and mesh alignment issues,
develops suitable BP limitings for the AF methods,
and also proposes a shock sensor-based limiting to further suppress oscillations.
The main contributions and findings in this work can be summarized as follows.
\begin{enumerate}[label=\textbf{\roman{enumi}).}, wide=0pt, nosep]
	\item We propose to employ the flux vector splitting (FVS) for the point value update,
	which can cure both the stagnation and the mesh alignment issues effectively,
	because the FVS couples the neighboring information in a uniform and natural way.
	The AF method based on the FVS is also shown to give better results than the JS, especially the local Lax-Friedrichs (LLF) FVS, in terms of the CFL number and shock-capturing ability.
	\item We develop BP limitings for both the cell average and point value by blending the high-order AF methods with the first-order LLF method in a convex combination.
	The main idea is to retain as much as possible of the high-order method while guaranteeing the numerical solutions to be BP,
	and the blending coefficients are computed by enforcing the bounds.
	We show that using a suitable time step size and BP limitings,
	the BP AF methods satisfy the MP for scalar conservation laws, and preserve positive density and pressure for the compressible Euler equations.
	\item We design a shock sensor-based limiting, which helps to reduce oscillations by detecting shock strength.
	It is shown to strongly improve the shock-capturing ability in the numerical tests.
	\item Several challenging numerical tests are used to demonstrate the robustness and effectiveness of our BP AF methods.
	Moreover, for the forward-facing step problem, our BP AF method captures small-scale features better compared to the third-order DG method with the TVB limiter on the same mesh resolution, while using fewer DoFs, demonstrating its efficiency and potential for high Mach number flows.
\end{enumerate}

The remainder of this paper is organized as follows.
\Cref{sec:1d_af_schemes} introduces the 1D AF methods based on the FVS for the point value update.
\Cref{sec:2d_af_schemes} extends our FVS-based AF methods to the 2D case.
To design BP methods, \Cref{sec:2d_limitings} describes our convex limiting approach for the cell average, and the limiting for the point value.
The shock sensor-based limiting is also proposed in \Cref{sec:2d_limitings} to suppress oscillations.
The 1D limitings can be reduced from the 2D case, and more details are given in Section \ref{sec:1d_limitings} in Appendix.
Some numerical tests are conducted in  \Cref{sec:results} to experimentally demonstrate the accuracy, BP properties, and shock-capturing ability of the methods.
\Cref{sec:conclusion} concludes the paper with final remarks.

\section{1D active flux methods}\label{sec:1d_af_schemes}
This section presents the generalized AF methods using the method of lines for the 1D hyperbolic conservation laws
\begin{equation}\label{eq:1d_hcl}
	\bU_t + \bF(\bU)_x = 0,\quad
	\bU(x,0) = \bU_0(x),
\end{equation}
where $\bU\in \bbR^{m}$ is the vector of $m$ conservative variables, $\bF\in \bbR^{m}$ is the flux function,
and $\bU_0(x)$ is assumed to be initial data of bounded variation.
Two cases are of particular interest.
The first is a scalar conservation law ($m=1$)
\begin{equation}\label{eq:1d_scalar}
	u_t + f(u)_x = 0,\quad u(x,0) = u_0(x).
\end{equation}
The second case is that of compressible Euler equations of gas dynamics
with $\bU=(\rho, \rho v, E)^\top$ and $\bF=(\rho v, \rho v^2 + p, (E+p)v)^\top$,
where $\rho$ denotes the density, $v$ the velocity,
$p$ the pressure, and $E=\frac12\rho v^2 + \rho e$ the total energy with $e$ the specific internal energy.
The perfect gas equation of state (EOS) $p = (\gamma-1)\rho e$ is used to close the system with the adiabatic index $\gamma > 1$.
Note that this paper uses bold symbols to refer to
vectors and matrices, such that they are easier to distinguish from scalars.
%

Assume that a 1D computational domain is divided into $N$ cells
$I_i = [x_{\xl}, x_{\xr}]$ with cell centers $x_i = (x_{\xl} + x_{\xr})/2$ and cell sizes $\Delta x_i = x_{\xr}-x_{\xl}$, $i=1,\cdots,N$.
The DoFs of the AF methods are the approximations to cell averages of the conservative variable as well as point values at the cell interfaces,
allowing some freedom in the choice of the point values,
e.g. conservative variables, primitive variables, entropy variables, etc.
This paper only considers using the conservative variables, and the DoFs are denoted by
\begin{equation}
	\overline{\bU}_i(t) = \dfrac{1}{\Delta x_i}\int_{I_i} \bU_h(x,t) ~\dd x,\quad
	\bU_{\xr}(t) = \bU_h(x_{\xr}, t),
\end{equation}
where $\bU_h(x,t)$ is the numerical solution.
The cell average is updated by integrating \cref{eq:1d_hcl} over $I_i$ in the following semi-discrete finite volume manner
\begin{equation}\label{eq:semi_av_1d}
	\dfrac{\dd \overline{\bU}_i}{\dd t} = -\dfrac{1}{\Delta x_i}\left[\bF(\bU_{\xr}) - \bF(\bU_{\xl})\right].
\end{equation}
Thus, the accuracy of \cref{eq:semi_av_1d} is determined by the approximation accuracy of the point values.
It was so far (e.g. in \cite{Abgrall_2023_Extensions_EMMaNA}) considered sufficient to update the point values with any finite-difference-like formula
\begin{equation}\label{eq:semi_pnt_1d}
	\dfrac{\dd \bU_{\xr}}{\dd t} = - \bm{\mathcal{R}}\left(\bU_{i+\frac12-l_1}(t), \overline{\bU}_{i+1-l_1}(t), \cdots, \overline{\bU}_{i+l_2}(t), \bU_{i+\frac12+l_2}(t)\right), ~l_1,l_2\geqslant 0,
\end{equation}
with $\bm{\mathcal{R}}$ a consistent approximation of $\partial\bF/\partial x$ at $x_{\xr}$,
as long as it gave rise to a stable method.
This paper explores further conditions on $\bm{\mathcal{R}}$ for nonlinear problems.

\subsection{Stagnation issue when using Jacobian splitting}\label{sec:1d_js}
Let us first briefly describe the point value update based on the JS \cite{Abgrall_2023_Extensions_EMMaNA}, which reads
\begin{equation}\label{eq:1d_semi_js}
	\dfrac{\dd \bU_{\xr}}{\dd t} = - \left[\bJ^+(\bU_{\xr})\bD^+_{\xr}(\bU) + \bJ^-(\bU_{\xr})\bD^-_{\xr}(\bU) \right],
\end{equation}
where the splitting of the Jacobian matrix $\bJ = \bJ^{+} + \bJ^{-}$ is defined as
\begin{equation*}
	\bJ^\pm = \bR\bm{\Lambda}^\pm\bR^{-1},~
	\bm{\Lambda}^\pm = \diag\{\lambda_1^\pm, \dots, \lambda_m^\pm\},
\end{equation*}
based on the eigendecomposition
${\partial \bF}/{\partial\bU} = \bR\bm{\Lambda}\bR^{-1}$,
$\bm{\Lambda} = \diag\{\lambda_1, \dots, \lambda_m\}$,
where $\lambda_1,\cdots,\lambda_m$ are the eigenvalues, with the columns of $\bR$ the corresponding eigenvectors,
and $a^+ = \max\{a, 0\}, ~a^- = \min\{a, 0\}$.
To derive the approximation of the derivatives in \cref{eq:1d_semi_js}, one can first obtain a high-order reconstruction for $\bU$ in the upwind cell, and then differentiate the reconstructed polynomial.
As an example, a parabolic reconstruction in cell $I_i$ is
\begin{align}
	\bU_{\texttt{para}, 1}(x) =& -3(2\overline{\bU}_i - \bU_{\xl} - \bU_{\xr}) \frac{x^2}{\Delta x_i^2}
	+ (\bU_{\xr} - \bU_{\xl}) \frac{x}{\Delta x_i} \nonumber\\
	&+ \frac14(6\overline{\bU}_i - \bU_{\xl} - \bU_{\xr})\label{eq:parabolic_reconstruction}
\end{align}
satisfying
$\bU_{\texttt{para}, 1}(\pm\Delta x_i/2) = \bU_{i\pm\frac12}$,
$\frac1{\Delta x_i}\int_{-\Delta x_i/2}^{\Delta x_i/2} \bU_{\texttt{para}, 1}(x) ~\dd x = \overline{\bU}_{i}$.
Then the derivatives are
\begin{subequations}\label{eq:parabolic_av}
	\begin{align}
		\bD^{+}_{\xr}(\bU) =\bU_{\texttt{para}, 1}'(\Delta x_i/2) &= \dfrac{1}{\Delta x_i}\left(2 \bU_{\xl}- 6 \overline{\bU}_i  + 4 \bU_{\xr} \right), \\
		\bD^{-}_{\xr}(\bU) &= \dfrac{1}{\Delta x_{i+1}}\left(- 4 \bU_{\xr} + 6 \overline{\bU}_{i+1} - 2 \bU_{i+\frac32} \right).
	\end{align}
\end{subequations}

One of the deficiencies of using the JS is the stagnation issue that appears in certain setups for nonlinear problems, as observed in \cite{Helzel_2019_New_JoSC, Barsukow_2021_active_JoSC}.
	As shown in \cref{ex:1d_burgers} for Burgers' equation,
the numerical solution based on the JS without limiting gives a spike in the cell average at the initial discontinuity $x=0.2$, which grows linearly in time.
The reason for this behavior is the inaccurate estimation of the upwind direction at the cell interface, required to split the Jacobian in \cref{eq:1d_semi_js}.
In this example, there are two successive point values with different signs near the initial discontinuity: $u_{\xl}=2$, $u_{\xr}=-1$.
At the cell interface $x_{\xl}$ or $x_{\xr}$, depending on the details of initialization, the upwind discretization in \cref{eq:parabolic_av} only uses the data from the left or right,
leading to zero derivatives, thus the point values $u_{\xl}$ and $u_{\xr}$ stay unchanged.
However, according to the update of the cell average \cref{eq:semi_av_1d},
$\bar{u}_i$ increases gradually (which is the observed spike).
Proposed solutions to handle the stagnation issue involve estimating the Jacobian not only at the relevant cell interface, but also at the neighboring interfaces,
and to select a better upwind direction (e.g. \cite{Barsukow_2021_active_JoSC}),
or achieve the same by blending (e.g. \cite{Chudzik_2023_Review_InProceedings}).
As will be shown below, using FVS instead of the JS naturally has a similar eﬀect.

\subsection{Point value update using flux vector splitting}\label{sec:1d_fvs}
In this paper, we propose to use the FVS for the point value update, which was originally used to identify the upwind directions, and is simpler and somewhat more efficient than Godunov-type methods for solving hyperbolic systems \cite{Toro_2009_Riemann}.
The FVS for the point value update reads
\begin{equation}\label{eq:1d_semi_fvs}
	\dfrac{\dd \bU_{\xr}}{\dd t} 
	= - \left[\widetilde{\bD}^+\bF^{+}(\bU)+ \widetilde{\bD}^-\bF^{-}(\bU) \right]_{\xr},
\end{equation}
where the flux $\bF$ is split into the positive and negative parts $\bF= \bF^{+} + \bF^{-}$
satisfying
\begin{equation}\label{eq:FVS_condition}
	\lambda\left(\pd{\bF^{+}}{\bU}\right) \geqslant 0, \quad \lambda\left(\pd{\bF^{-}}{\bU}\right) \leqslant 0,
\end{equation}
i.e., all the eigenvalues of $\pd{\bF^{+}}{\bU}$ and $\pd{\bF^{-}}{\bU}$ are non-negative and non-positive, respectively.
Different FVS can be adopted as long as they satisfy the constraint \cref{eq:FVS_condition}, to be discussed later.
Finite difference formulae to approximate the flux derivatives are obtained as follows.
From the reconstruction of $\bU$ \cref{eq:parabolic_reconstruction},
one can evaluate the flux $\bF$, and also the split fluxes $\bF^\pm$ pointwise. 
We compute them at the endpoints of the cell and in the middle.
Then a parabolic reconstruction for, say, $\bF^{+}$ in the cell $I_i$ is obtained as follows%
\begin{equation*}
	\bF^{+}_{\texttt{para}, 2}(x) = 
 2(\bF^{+}_{\xl} - 2\bF^{+}_i + \bF^{+}_{\xr}) \frac{x^2}{\Delta x_i^2}
	+ (\bF^{+}_{\xr} - \bF^{+}_{\xl}) \frac{x}{\Delta x_i} + \bF^{+}_{i},
\end{equation*}
satisfying
$\bF^{+}_{\texttt{para}, 2}(\pm\Delta x_i/2) = \bF^{+}_{i\pm\frac12}= \bF^{+}(\bU_{i\pm\frac12})$,
and
$\bF^{+}_{\texttt{para}, 2}(0) = \bF^{+}_{i}= \bF^{+}(\bU_i)$.
The cell-centered point value is
$\bU_i = (- \bU_{\xl} + 6\overline{\bU}_i - \bU_{\xr})/4$.
Then the discrete derivatives are
\begin{subequations}\label{eq:parabolic_pnt}
	\begin{align}
		\left(\widetilde{\bD}^{+}\bF^+\right)_{\xr} = \left(\bF^{+}_{\texttt{para}, 2}\right)'(\Delta x_i/2) &= \dfrac{1}{\Delta x_i}\left(\bF^{+}_{\xl} - 4 \bF^{+}_i  + 3 \bF^{+}_{\xr} \right), \\
		\left(\widetilde{\bD}^{-}\bF^-\right)_{\xr} &= \dfrac{1}{\Delta x_{i+1}}\left(- 3 \bF^{-}_{\xr} + 4 \bF^{-}_{i+1} - \bF^{-}_{i+\frac32} \right).
	\end{align}
\end{subequations}
These finite differences are third-order accurate.
While the reconstructions of both $\bU$ and $\bF$ are parabolic, the coefficients in the formula \cref{eq:parabolic_pnt} differ from that in \cite{Abgrall_2023_Extensions_EMMaNA} because \cref{eq:parabolic_pnt} uses the cell-centered value rather than the cell average.

The FVS-based point value update borrows the information from the neighbors naturally, and can eliminate the generation of the spike effectively, as shown in \cref{fig:1d_burgers_shock},
similar to the idea of the remedy in \cite{Barsukow_2021_active_JoSC}.
Note that we still use the original continuous reconstruction in the AF methods.
We remark that, in AF methods, it is not clear how to define the point values at discontinuities, thus there may be other methods to fix the stagnation issue.

\subsubsection{Local Lax-Friedrichs flux vector splitting}
The first FVS we consider is the LLF FVS, defined as
\begin{equation*}
	\bF^\pm = \frac12(\bF(\bU) \pm \alpha \bU),
\end{equation*}
where the choice of $\alpha$ should fulfill \cref{eq:FVS_condition} across the spatial stencil.
In our implementation, it is determined by
\begin{equation}\label{eq:1d_Rusanov_alpha}
	\alpha_{\xr} = \max_{r} \left\{\varrho(\bU_r)\right\},
	~r \in \left\{ i-\frac12, i, i+\frac12, i+1, i+\frac32\right\},
\end{equation}
where $\varrho$ is the spectral radius of $\partial\bF/\partial\bU$.
One can also choose $\alpha$ to be the maximal spectral radius in the whole domain, corresponding to the (global) LF splitting.
Note, however, that a larger $\alpha$ generally leads to a smaller time step size and more dissipation.

\subsubsection{Upwind flux vector splitting}
One can also split the Jacobian matrix based on each characteristic field,
\begin{equation}\label{eq:upwind_fvs}
	\bF^\pm = \frac12(\bF(\bU)\pm \abs{\bJ} \bU),\quad
	\abs{\bJ} = \bR(\bm{\Lambda}^{+} - \bm{\Lambda}^{-})\bR^{-1}.
\end{equation}
 Note that we evaluate the Jacobian at three locations in the cell $I_i$ to get corresponding $\bF^+$.
For linear systems, one has $\bF = \bJ \bU$, so \cref{eq:upwind_fvs} reduces to the JS, because in this case
\begin{equation*}
	\bF^\pm = \frac12(\bJ\pm \abs{\bJ})\bU
	= \bR\bm{\Lambda}^{\pm}\bR^{-1} \bU= \bJ^{\pm}\bU,
\end{equation*}
with $\bm{J}^{\pm}$ a constant matrix so that
$\widetilde{\bD}^{\pm}\bF^\pm(\bU) = \bJ^{\pm}\widetilde{\bD}^{\pm}\bU$,
which is the same as $\bJ^{\pm}\bD^{\pm}\bU$ if $\bD^{+}$ and $\widetilde\bD^{+}$ are derived from the same reconstructed polynomial.
In other words, for linear systems, the AF methods using this FVS are the same as the original JS-based AF methods.

Such an FVS is also known as the Steger-Warming (SW) FVS \cite{Steger_1981_Flux_JoCP} for the Euler equations,
since the ``homogeneity property'' $\bF = \bJ \bU$ holds \cite{Toro_2009_Riemann}.
One can write down the SW FVS explicitly
\begin{align*}
	\bF^{\pm} &= \begin{bmatrix}
		\frac{\rho}{2\gamma}\alpha^\pm \\
		\frac{\rho}{2\gamma}\left(\alpha^\pm v + a (\lambda_2^\pm - \lambda_3^\pm)\right) \\
		\frac{\rho}{2\gamma}\left(\frac12\alpha^\pm v^2 + a v (\lambda_2^\pm - \lambda_3^\pm) + \frac{a^2}{\gamma-1}(\lambda_2^\pm + \lambda_3^\pm)\right) \\
	\end{bmatrix},
\end{align*}
where
$\lambda_1 = v, ~\lambda_2 = v + a, ~\lambda_3 = v - a,
~
	\alpha^\pm = 2(\gamma-1)\lambda_1^\pm + \lambda_2^\pm + \lambda_3^\pm$,
and $a = \sqrt{\gamma p / \rho}$ is the sound speed.

\begin{remark}\label{rmk:sw_fvs}
It should be noted that $\bF^\pm$ in this FVS may not be differentiable with respect to $\bU$ for nonlinear systems (e.g. Euler), as the splitting is based on the absolute value.
In \cite{Leer_1982_Flux_InProceedings}, the mass flux of $\bF^\pm$ is shown to be not differentiable, which might explain the accuracy degradation in \cref{ex:2d_vortex}.
\end{remark}

\subsubsection{Van Leer-H\"anel flux vector splitting for the Euler equations}
Another popular FVS for the Euler equations was proposed by van Leer \cite{Leer_1982_Flux_InProceedings},
and improved by \cite{Haenel_1987_accuracy_InCollection}.
The flux can be split based on the Mach number $M=v/a$ as
\begin{equation*}
	\bF = \begin{bmatrix}
		\rho a M \\
		\rho a^2(M^2 + \frac{1}{\gamma}) \\
		\rho a^3M(\frac{1}{2}M^2 + \frac{1}{\gamma-1}) \\
	\end{bmatrix} = \bF^+ + \bF^-,\quad
	\bF^{\pm} = \begin{bmatrix}
			\pm\frac{1}{4}\rho a (M\pm1)^2 \\
			\pm\frac{1}{4}\rho a (M\pm1)^2 v + p^{\pm} \\
			\pm\frac{1}{4}\rho a (M\pm1)^2 H \\
		\end{bmatrix},
\end{equation*}
with the enthalpy $H=(E+p)/\rho$,
and the pressure splitting
$p^{\pm} = \frac{1}{2}(1\pm\gamma M)p$.
This FVS gives a quadratic differentiable splitting with respect to the Mach number.

\begin{remark}
	Different FVS may lead to different stability conditions but it is difficult to perform the analysis theoretically.
	We provide experimental CFL numbers for different FVS in some 1D tests.
	Our numerical tests in  \Cref{sec:results} show that the AF methods based on the FVS generally give better results than the JS,
	and the LLF FVS is the best among all the three FVS in terms of the CFL number and non-oscillatory property for high-speed flows involving strong discontinuities.
\end{remark}

\subsection{Time discretization}
The fully-discrete scheme is obtained by using the SSP-RK3 method \cite{Gottlieb_2001_Strong_SRa}
\begin{equation}\label{eq:ssp_rk3}
	\begin{aligned}
		\bU^{*} &= \bU^{n}+\Delta t^n \bm{L}\left(\bU^{n}\right),\\
		\bU^{**} &= \frac{3}{4}\bU^{n}+\frac{1}{4}\left(\bU^{*}+\Delta t^n \bm{L}\left(\bU^{*}\right)\right),\\
		\bU^{n+1} &= \frac{1}{3}\bU^{n}+\frac{2}{3}\left(\bU^{**}+\Delta t^n \bm{L}\left(\bU^{**}\right)\right),
	\end{aligned}
\end{equation}
where $\bm{L}$ is the right-hand side of the semi-discrete schemes \cref{eq:semi_av_1d} or  \cref{eq:semi_pnt_1d}.
The time step size is determined by the usual CFL condition
\begin{equation}\label{eq:cfl_condition}
	\Delta t^n = \dfrac{C_{\texttt{CFL}}}{\max\limits_{i}\{\varrho(\overline{\bU}_i)/\Delta x_i\}}.
\end{equation}

\section{2D active flux methods on Cartesian meshes}\label{sec:2d_af_schemes}
This section presents the generalized AF methods using the method of lines for the 2D hyperbolic conservation laws
\begin{equation}\label{eq:2d_hcl}
	\bU_t + \bF_1(\bU)_x + \bF_2(\bU)_y = 0,\quad
	\bU(x,y,0) = \bU_0(x,y).
\end{equation}
We will consider the scalar conservation law
\begin{equation}\label{eq:2d_scalar}
	u_t + f_1(u)_x + f_2(u)_y = 0,\quad u(x,y,0)=u_0(x,y),
\end{equation}
and the Euler equations with $\bU=(\rho, \rho\bv, E)^\top$, 
$\bF_1=(\rho v_1, \rho v_1^2 + p, \rho v_1v_2, (E+p)v_1)^\top$, 
$\bF_2=(\rho v_2, \rho v_1v_2, \rho v_2^2 + p, (E+p)v_2)^\top$,
where $\bv = (v_1, v_2)$ is the velocity vector, and the other notations are the same as for 1D in \Cref{sec:1d_af_schemes}.
The SSP-RK3 method is used to obtain the fully-discrete method.

Assume that a 2D computational domain is divided into $N_1\times N_2$ cells,
$I_{i,j} = [x_{\xl}, x_{\xr}]\times[y_{\yl}, y_{\yr}]$ with the cell sizes $\Delta x_i = x_{\xr}-x_{\xl}, \Delta y_j = y_{\yr}-y_{\yl}$, and cell centers $(x_i, y_j) = \left(\frac12(x_{\xl} + x_{\xr}), \frac12(y_{\yl} + y_{\yr})\right)$, $i=1,\cdots,N_1,~ j=1,\cdots,N_2$.
The DoFs consist of the cell average $\overline{\bU}_{i,j}(t) = \frac{1}{\Delta x_i\Delta y_j}\int_{I_{i,j}} \bU_h(x,y,t) ~\dd x\dd y$,
the face-centered values $\bU_{i+\frac12,j}(t) = \bU_h(x_{i+\frac12}, y_j, t),
\bU_{i,j+\frac12}(t) = \bU_h(x_{i}, y_{j+\frac12}, t)$,
and the value at the corner $\bU_{i+\frac12,j+\frac12}(t) = \bU_h(x_{i+\frac12}, y_{j+\frac12}, t)$,
where $\bU_h(x,y,t)$ is the numerical solution.
A sketch of the DoFs for the third-order AF method (for the scalar case) is given in \Cref{fig:2d_af_dofs}.

\begin{figure}[hptb!]
	\centering
	\includegraphics[width=0.35\linewidth]{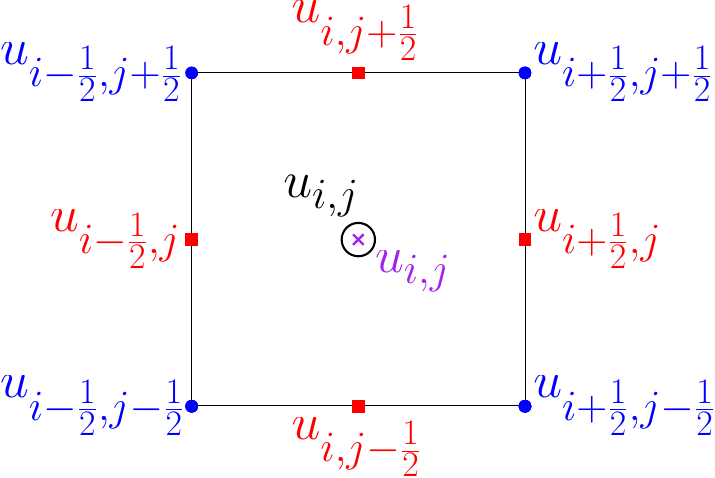}
	\caption{The DoFs for the third-order AF method: cell average (circle), face-centered values (squares), values at corners (dots). Note that the cell-centered point value $u_{i,j}$ (cross) is used in constructing the scheme, but does not belong to the DoFs.}
	\label{fig:2d_af_dofs}
\end{figure}

The cell average is evolved as follows
\begin{equation}\label{eq:semi_av_2d}
	\frac{\dd \overline{\bU}_{i,j}}{\dd t} =
	-\frac{1}{\Delta x_i}\left(\widehat{\bF}_{\xr,j} - \widehat{\bF}_{\xl,j}\right)
	-\frac{1}{\Delta y_j}\left(\widehat{\bF}_{i,\yr} - \widehat{\bF}_{i,\yl}\right),
\end{equation}
where $\widehat{\bF}_{\xr,j}$ and $\widehat{\bF}_{i,\yr}$ are the numerical fluxes
\begin{equation}\label{eq:2d_num_fluxes}
	\widehat{\bF}_{\xr,j} = \frac{1}{\Delta y_j}\int_{y_{\yl}}^{y_{\yr}} \bF_1(\bU_h(x_{\xr},y))~\dd y,~
	\widehat{\bF}_{i,\yr} = \frac{1}{\Delta x_i}\int_{x_{\xl}}^{x_{\xr}} \bF_2(\bU_h(x,y_{\yr}))~\dd x.
\end{equation}
To achieve third-order accuracy, one can use Simpson's rule
\begin{equation}\label{eq:flux_simpson}
	\widehat{\bF}_{\xr,j} = \frac16\left(\bF_1(\bU_{\xr,\yl}) + 4\bF_1(\bU_{\xr,j}) + \bF_1(\bU_{\xr,\yr})\right).
\end{equation}

\subsection{Point value update using flux vector splitting}
For the evolution of the point values, consider the following general form
\begin{equation}\label{eq:semi_pnt_2d}
	\frac{\dd \bU_{\sigma}}{\dd t} = - \bm{\mathcal{R}}\left(\overline{\bU}_c(t), \bU_{\sigma'}(t)\right),~c\in\mathcal{C}(\sigma), \sigma'\in\Sigma(\sigma),
\end{equation}
where $\bm{\mathcal{R}}$ is a consistent approximation of $\partial\bF_1/\partial x + \partial\bF_2/\partial y$ at the point $\sigma$, $\mathcal{C}(\sigma)$ and $\Sigma(\sigma)$ are the spatial stencils containing the cell averages and point values, respectively.
One can use the JS in \cite{Abgrall_2023_active}, or employ the FVS for the point value update.
E.g. for the point value at the corner $(x_{\xr}, y_{\yr})$ the FVS-based update reads
\begin{equation}\label{eq:2d_semi_fvs_node}
	\dfrac{\dd \bU_{\xr,\yr}}{\dd t} 
	= - \sum_{\ell=1}^2 \left[\widetilde{\bD}_\ell^+\bF_\ell^{+}(\bU)+ \widetilde{\bD}_\ell^-\bF_\ell^{-}(\bU) \right]_{\xr,\yr},
\end{equation}
where the fluxes are split as $ \bF_\ell= \bF_\ell^{+} + \bF_\ell^{-}$,
$\lambda\left(\frac{\partial\bF_\ell^{+}}{\partial\bU}\right) \geqslant 0$, $\lambda\left(\frac{\partial\bF_\ell^{-}}{\partial\bU}\right) \leqslant 0$.
The explicit expressions of the 2D FVS used in this paper can be found in Appendix Section \ref{sec:2d_fvs}.
The finite difference operators $\widetilde{\bD}_1^{\pm}$ and $\widetilde{\bD}_2^{\pm}$ can be obtained similarly to \Cref{sec:1d_fvs}.
For third-order accuracy, starting with a bi-parabolic reconstruction
of $\bU$ and computing a bi-parabolic interpolation of $\bF_\ell^\pm$, one thus obtains $\widetilde{\bD}_1^{\pm}$ in the $x$-direction as
	\begin{align*}
		\left(\widetilde{\bD}_1^{+}\bF_1^+\right)_{\xr,\yr} &= \dfrac{1}{\Delta x_i}\left((\bF_1)_{\xl,\yr}^{+} - 4 (\bF_1)_{i,\yr}^{+} + 3 (\bF_1)_{\xr,\yr}^{+} \right), \\
		\left(\widetilde{\bD}_1^{-}\bF_1^-\right)_{\xr,\yr} &= \dfrac{1}{\Delta x_{i+1}}\left(- 3 (\bF_1)_{\xr,\yr}^{-} + 4 (\bF_1)_{i+1,\yr}^{-} - (\bF_1)_{i+\frac32,\yr}^{-} \right).
	\end{align*}

For the face-centered point value at $(x_{\xr}, y_{j})$, the FVS-based update reads
\begin{equation}\label{eq:2d_semi_fvs_facex}
	\dfrac{\dd \bU_{\xr,j}}{\dd t} 
	= - \left[\widetilde{\bD}_1^+\bF_1^{+}(\bU)+ \widetilde{\bD}_1^-\bF_1^{-}(\bU) \right]_{\xr,j}
	- \left(\widetilde{\bD}_2\bF_2(\bU)\right)_{\xr,j},
\end{equation}
where
	\begin{align*}
		\left(\widetilde{\bD}_1^{+}\bF_1^+\right)_{\xr,j} &= \dfrac{1}{\Delta x_i}\left((\bF_1)_{\xl,j}^{+} - 4 (\bF_1)_{i,j}^{+} + 3 (\bF_1)_{\xr,j}^{+} \right), \\
		\left(\widetilde{\bD}_1^{-}\bF_1^-\right)_{\xr,j} &= \dfrac{1}{\Delta x_{i+1}}\left(- 3 (\bF_1)_{\xr,j}^{-} + 4 (\bF_1)_{i+1,j}^{-} - (\bF_1)_{i+\frac32,j}^{-} \right), \\
		\left(\widetilde{\bD}_2\bF_2\right)_{\xr,j} &= \dfrac{1}{\Delta y_j}\left((\bF_2)_{\xr,\yr} - (\bF_2)_{\xr,\yl} \right),
	\end{align*}
and the cell-centered point value is computed from the bi-parabolic reconstruction \cite{Abgrall_2023_active} as
\begin{align}
	\bU_{i,j} = \frac{1}{16}\Big[
		36\overline{\bU}_{i,j} &- 4\left(\bU_{\xl,j}+\bU_{\xr,j}+\bU_{i,\yl}+\bU_{i,\yr}\right) \nonumber\\
		&- \left(\bU_{\xl,\yl} + \bU_{\xr,\yl} + \bU_{\xl,\yr} + \bU_{\xr,\yr}\right)
	\Big]. \label{eq:2d_cell_center_parabolic}
\end{align}
The update for the point value at $(x_{i}, y_{\yr})$ is omitted here,
which is similar to \eqref{eq:2d_semi_fvs_facex}.

\subsection{Mesh alignment issue when using Jacobian splitting}\label{sec:mesh_alignment}
The mesh alignment issue was observed for the fully-discrete AF methods in \cite{Maeng_2017_Advective},
where the convergence rate reduces to 2 for the linear advection problem,
when the advection velocity is aligned with the grid.
For the generalized AF methods based on the JS, such an issue is also observed.
Consider \Cref{ex:2d_sod}, where we solve a quasi-2D Sod shock tube along the $x$-direction on a $100\times 2$ uniform mesh.
As shown in \Cref{fig:2d_sod_density}, the density based on the JS shows large deviations between the contact discontinuity and shock wave.
From \Cref{fig:2d_sod_density_decoupled}, it can be seen that the solutions of the DoFs at the corner $(x_{\xr},y_{\yr})$ and horizontal face $(x_i, y_{\yr})$ are decoupled from that at the vertical face $(x_{\xr}, y_j)$ and cell averages.
The reason is complicated because the mesh alignment issue seems to be caused by the decoupled point value update and its interaction with the JS.


\subsection{Boundary treatment}
	The  numerical boundary conditions can be implemented using ghost cells as usual finite volume methods.
	Take the reflective boundary for the Euler equations as an example.
	Let $x=x_{N_1-\frac12}$ be the boundary,
	then the cell averages and point values in the ghost cell $I_{N_1,j}$ are given by
	\begin{align*}
		&\overline{\bU}_{N_1,j} = \mathcal{M}(\overline{\bU}_{N_1-1,j}),~
		\bU_{N_1+\frac12,j} = \mathcal{M}(\bU_{N_1-\frac32,j}), \\
		&\bU_{N_1,j-\frac12} = \mathcal{M}(\bU_{N_1-1,j-\frac12}),~
		\bU_{N_1+\frac12,j-\frac12} = \mathcal{M}(\bU_{N_1-\frac32,j-\frac12}),
	\end{align*}
	where $\mathcal{M}$ reverses the sign of the $\rho v_1$ component while keeping others unchanged.
	Then the point value update at the boundary can be computed in the same way as the interior points, but the numerical flux on the boundary for the cell average is computed through the LLF flux as suggested in \cite{Abgrall_2023_Activea}.
	For instance, the flux $\bF_1(\bU_{N_1-\frac12,j-\frac12})$ in the right-hand side of \cref{eq:flux_simpson} is replaced by
	$\widehat{\bF}_1^{\texttt{LLF}}(\bU_{N_1-1,j-\frac12}, \mathcal{M}(\bU_{N_1-1,j-\frac12}))$.

\section{2D bound-preserving active flux methods}\label{sec:2d_limitings}
	In this paper, the admissible state set $\mathcal{G}$ is assumed to be convex.
	Two cases are considered.
	For the scalar conservation law \cref{eq:2d_scalar}, its solutions
	satisfy a strict maximum principle (MP) \cite{Dafermos_2000_Hyperbolic_book}, i.e.,
	\begin{equation}\label{eq:2d_scalar_g}
		\mathcal{G} = \left\{ u ~|~ m_0 \leqslant u \leqslant M_0 \right\},
		\quad m_0 = \min_{x,y} u_0(x,y), ~M_0 = \max_{x,y} u_0(x,y).
	\end{equation}
	For the compressible Euler equations, the admissible state set is
	\begin{equation}\label{eq:2d_euler_g}
		\mathcal{G} = \left\{\bU = \left(\rho, \rho\bv, E\right) ~\Big|~ \rho > 0,~ p = (\gamma-1)\left(E - \norm{\rho \bv}^2/(2\rho)\right) > 0 \right\},
	\end{equation}
	which is convex, see e.g. \cite{Zhang_2011_Positivity_JoCP}.
\begin{definition}
	An AF method is called \emph{bound-preserving} (BP) if starting from cell averages and point values in the admissible state set $\mathcal G$, the cell averages and point values remain in $\mathcal G$ at the next time step.
\end{definition}
Note that to avoid the effect of the round-off error, we need to choose the desired lower bounds for the density and pressure.
In the numerical tests, we will enforce $\rho\geqslant\varepsilon^{\rho}$, $p\geqslant\varepsilon^{p}$ with $\varepsilon^{\rho}, \varepsilon^{p}$ to be defined later.
Since the DoFs in the AF methods include both cell averages and point values,
it is necessary to design suitable BP limitings for both of them to achieve the BP property.
The limiting for the cell average has not been addressed much in the literature, except for a very recent work \cite{Abgrall_2023_Activea}.
The 1D limitings can be reduced from this Section, given in Section \ref{sec:1d_limitings} in Appendix.

\subsection{Convex limiting for the cell average}\label{sec:2d_limiting_average}
This section presents a convex limiting approach to achieve the BP property of the cell average update.
The basic idea of the convex limiting approaches \cite{Guermond_2018_Second_SJoSC, Hajduk_2021_Monolithic_C&MwA, Kuzmin_2020_Monolithic_CMiAMaE} is to enforce the preservation of local or global bounds by constraining individual numerical fluxes.
The BP or invariant domain-preserving (IDP) properties of flux-limited approximations are shown using representations in terms of intermediate states that stay in convex admissible state sets \cite{Guermond_2018_Second_SJoSC, Guermond_2019_Invariant_CMiAMaE}.
The low-order scheme is chosen as the first-order LLF scheme
\begin{equation*}
	\overline{\bU}^{\texttt{L}}_{i,j} = \overline{\bU}^{n}_{i,j}
	- \mu_{1,i}\left(\widehat{\bF}^{\texttt{L}}_{\xr,j} - \widehat{\bF}^{\texttt{L}}_{\xl,j}\right)
	- \mu_{2,j}\left(\widehat{\bF}^{\texttt{L}}_{i,\yr} - \widehat{\bF}^{\texttt{L}}_{i,\yl}\right),
\end{equation*}
where $\widehat{\bF}^{\texttt{L}}_{\xr,j}$ and $\widehat{\bF}^{\texttt{L}}_{i,\yr}$ are the LLF fluxes.
Take the $x$-direction as an example,
\begin{align}\label{eq:2d_x_llf_flux}
	\widehat{\bF}^{\texttt{L}}_{\xr,j} :=&\ 
	\widehat{\bF}_1^{\texttt{LLF}}(\overline{\bU}^{n}_{i,j}, \overline{\bU}^{n}_{i+1,j}) \\ =&\ \frac12\left(\bF_1(\overline{\bU}^{n}_{i,j}) + \bF_1(\overline{\bU}^{n}_{i+1,j})\right)
	- \frac{(\alpha_1)_{\xr,j}}{2}\left(\overline{\bU}^{n}_{i+1,j} - \overline{\bU}^{n}_{i,j}\right), \nonumber\\
	(\alpha_1)_{\xr,j} =&\ \max\{\varrho_1(\overline{\bU}^{n}_{i,j}), ~\varrho_1(\overline{\bU}^{n}_{i+1,j})\}, \nonumber\\
	\mu_{1,i} =&\  \Delta t^n / \Delta x_i, \nonumber
\end{align}
where $\varrho_1$ is the spectral radius of $\partial\bF_1/\partial\bU$.
Note that here $\alpha_{\xr,j}$ is not the same as the one in the LLF FVS.
Following \cite{Guermond_2016_Invariant_SJoNA}, the first-order LLF scheme can be rewritten as
\begin{align}\label{eq:2d_lo_decomp}
	\overline{\bU}^{\texttt{L}}_{i,j} =& \left[1-\mu_{1,i}\left((\alpha_1)_{\xl,j}+(\alpha_1)_{\xr,j}\right)
	-\mu_{2,j}\left((\alpha_2)_{i,\yl}+(\alpha_2)_{i,\yr}\right)\right]
	\overline{\bU}^{n}_{i,j} \nonumber\\
	&+ \mu_{1,i}(\alpha_1)_{\xl,j}\widetilde{\bU}_{\xl,j} + \mu_{1,i}(\alpha_1)_{\xr,j}\widetilde{\bU}_{\xr,j} \nonumber\\
	&+ \mu_{2,j}(\alpha_2)_{i,\yl}\widetilde{\bU}_{i,\yl} + \mu_{2,j}(\alpha_2)_{i,\yr}\widetilde{\bU}_{i,\yr},
\end{align}
with four intermediate states,
and the explicit expressions in the $x$-direction are
\begin{equation}\label{eq:2d_llf_inter_states}
			\widetilde{\bU}_{i\pm\frac12,j} = \frac12\left(\overline{\bU}^{n}_{i,j} + \overline{\bU}^{n}_{i\pm1,j}\right)
			\pm \frac{1}{2(\alpha_1)_{i\pm\frac12,j}}\left[\bF_1(\overline{\bU}^{n}_{i,j}) - \bF_1(\overline{\bU}^{n}_{i\pm1,j})\right].
\end{equation}
	The proofs of $\widetilde{\bU}_{i\pm\frac12,j}, \widetilde{\bU}_{i,j\pm\frac12}
  \in \mathcal{G}$ are given in Appendix Section \ref{sec:2d_llf_bp}, for the scalar case and Euler equations.
\begin{lemma}\label{lem:2d_llf_g}
	If the time step size $\Delta t^n$ satisfies
	\begin{equation}\label{eq:2d_convex_combination_dt}
		\Delta t^n \leqslant \frac12\min\left\{\dfrac{\Delta x_i}{(\alpha_1)_{\xl,j}+(\alpha_1)_{\xr,j}}, \dfrac{\Delta y_j}{(\alpha_2)_{i,\yl}+(\alpha_2)_{i,\yr}}\right\},
	\end{equation}
	then \cref{eq:2d_lo_decomp} is a convex combination,
	and the first-order LLF scheme is BP.
\end{lemma}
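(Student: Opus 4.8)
The plan is to verify the two assertions of Lemma~\ref{lem:2d_llf_g} in turn: first that \cref{eq:2d_lo_decomp} is a genuine convex combination under the stated CFL restriction, and then that the low-order scheme inherits the BP property from this representation. For the first part, I would simply inspect the five coefficients appearing in \cref{eq:2d_lo_decomp}. The four ``off-diagonal'' coefficients $\mu_{1,i}(\alpha_1)_{\xl,j}$, $\mu_{1,i}(\alpha_1)_{\xr,j}$, $\mu_{2,j}(\alpha_2)_{i,\yl}$, $\mu_{2,j}(\alpha_2)_{i,\yr}$ are manifestly nonnegative, since each $\mu$ is $\Delta t^n/\Delta x$ (or $\Delta y$) with $\Delta t^n>0$, and each $\alpha$ is a spectral radius, hence $\geqslant 0$. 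Moreover the five coefficients sum to $1$ by construction (the ``diagonal'' coefficient is defined precisely as one minus the sum of the other four). Therefore the only thing to check is that the diagonal coefficient $1-\mu_{1,i}((\alpha_1)_{\xl,j}+(\alpha_1)_{\xr,j})-\mu_{2,j}((\alpha_2)_{i,\yl}+(\alpha_2)_{i,\yr})$ is itself nonnegative. Substituting $\mu_{1,i}=\Delta t^n/\Delta x_i$ and $\mu_{2,j}=\Delta t^n/\Delta y_j$, nonnegativity of this coefficient is equivalent to
\[
	\Delta t^n\left(\dfrac{(\alpha_1)_{\xl,j}+(\alpha_1)_{\xr,j}}{\Delta x_i}+\dfrac{(\alpha_2)_{i,\yl}+(\alpha_2)_{i,\yr}}{\Delta y_j}\right)\leqslant 1,
\]
and since each of the two fractions on the left is bounded by its own reciprocal-times-two under the hypothesis \cref{eq:2d_convex_combination_dt}, the sum is bounded by $\tfrac12+\tfrac12=1$. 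This is the routine estimate behind the factor $\tfrac12$ in \cref{eq:2d_convex_combination_dt}.

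For the second part, I would invoke convexity of $\mathcal G$ together with the intermediate-state result. Since $\mathcal G$ is convex and $\overline{\bU}^{n}_{i,j}\in\mathcal G$ by assumption, and since (by the proofs cited in Appendix Section~\ref{sec:2d_llf_bp}) each of the four intermediate states $\widetilde{\bU}_{\xl,j}$, $\widetilde{\bU}_{\xr,j}$, $\widetilde{\bU}_{i,\yl}$, $\widetilde{\bU}_{i,\yr}$ lies in $\mathcal G$ whenever the neighboring cell averages do, the convex combination \cref{eq:2d_lo_decomp} exhibits $\overline{\bU}^{\texttt{L}}_{i,j}$ as a point of $\mathcal G$. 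Hence the low-order update preserves $\mathcal G$, which is exactly the BP property claimed.

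I should note the one subtlety in applying the intermediate-state result: the intermediate states $\widetilde{\bU}_{i\pm\frac12,j}$ in \cref{eq:2d_llf_inter_states} are defined using $(\alpha_1)_{i\pm\frac12,j}$ in the denominator, so the BP property of these states requires those $\alpha$-values to be large enough — precisely the local Lax-Friedrichs choice $(\alpha_1)_{\xr,j}=\max\{\varrho_1(\overline{\bU}^n_{i,j}),\varrho_1(\overline{\bU}^n_{i+1,j})\}$ in \cref{eq:2d_x_llf_flux}. This is presumably where the appendix proof does its work (it is the standard argument that the LLF intermediate state is a subsonic average of the two neighbors, hence in the invariant domain), and here I would simply cite it. Given that, the present lemma is essentially a bookkeeping statement: the CFL bound \cref{eq:2d_convex_combination_dt} is exactly what makes the diagonal weight nonnegative, and convexity does the rest.

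The main obstacle, such as it is, is not in this lemma itself but is deferred to the appendix — namely establishing $\widetilde{\bU}_{i\pm\frac12,j},\widetilde{\bU}_{i,j\pm\frac12}\in\mathcal G$ for the Euler system, where one must check positivity of both density and pressure of the intermediate state; the scalar case is an easy min/max argument. Within the proof of Lemma~\ref{lem:2d_llf_g} proper, the only thing to be careful about is writing the multi-dimensional dimensional-splitting cleanly so that the two directional CFL contributions add up to at most $1$, which is why the $\tfrac12\min\{\cdots\}$ form appears rather than a single bound.
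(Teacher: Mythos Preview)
Your proposal is correct and follows essentially the same approach as the paper: the paper's proof simply cites references and states that it relies on $\overline{\bU}_{i,j}^n, \widetilde{\bU}_{i\pm\frac12,j}, \widetilde{\bU}_{i,j\pm\frac12} \in \mathcal{G}$ together with the convexity of $\mathcal{G}$, which is exactly the argument you spell out in more detail.
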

The proof (see e.g. \cite{Guermond_2016_Invariant_SJoNA,Perthame_1996_positivity_NM}) relies on $\overline{\bU}_{i,j}^n, \widetilde{\bU}_{i\pm\frac12,j}, \widetilde{\bU}_{i,j\pm\frac12} \in \mathcal{G}$ and the convexity of $\mathcal{G}$.

Upon defining the anti-diffusive flux $\Delta \widehat{\bF}_{i\pm\frac12,j} = \widehat{\bF}^{\texttt{H}}_{i\pm\frac12,j} - \widehat{\bF}^{\texttt{L}}_{i\pm\frac12,j}$,
and $\widehat{\bF}^{\texttt{H}}_{i\pm\frac12,j}$ is given in \cref{eq:2d_num_fluxes}, a forward-Euler step applied to the semi-discrete high-order scheme for the cell average \cref{eq:semi_av_2d} can be written as
\begin{align}
	\overline{\bU}^{\texttt{H}}_{i,j} 
	=&\ \overline{\bU}^{n}_{i,j}
	- \mu_{1,i}\left(\widehat{\bF}^{\texttt{L}}_{\xr,j} - \widehat{\bF}^{\texttt{L}}_{\xl,j}\right)
	- \mu_{2,j}\left(\widehat{\bF}^{\texttt{L}}_{i,\yr} - \widehat{\bF}^{\texttt{L}}_{i,\yl}\right) \nonumber\\
	&- \mu_{1,i}\left(\Delta\widehat{\bF}_{\xr,j} - \Delta\widehat{\bF}_{\xl,j}\right)
	- \mu_{2,j}\left(\Delta\widehat{\bF}_{i,\yr} - \Delta\widehat{\bF}_{i,\yl}\right) \nonumber\\
	=&\ \left[1-\mu_{1,i}\left((\alpha_1)_{\xl,j}+(\alpha_1)_{\xr,j}\right)
	-\mu_{2,j}\left((\alpha_2)_{i,\yl}+(\alpha_2)_{i,\yr}\right)\right]
	\overline{\bU}^{n}_{i,j} \nonumber\\
	&+ \mu_{1,i}(\alpha_1)_{\xl,j}\widetilde{\bU}_{\xl,j}^{\texttt{H},+}
	+ \mu_{1,i}(\alpha_1)_{\xr,j}\widetilde{\bU}_{\xr,j}^{\texttt{H},-} \nonumber\\
	&+ \mu_{2,j}(\alpha_2)_{i,\yl}\widetilde{\bU}_{i,\yl}^{\texttt{H},+}
	+ \mu_{2,j}(\alpha_2)_{i,\yr}\widetilde{\bU}_{i,\yr}^{\texttt{H},-} \label{eq:2d_ho_decomp},
\end{align}
with the high-order intermediate states
\begin{equation*}
	\widetilde{\bU}_{i\pm\frac12,j}^{\texttt{H},\mp} := \widetilde{\bU}_{i\pm\frac12,j} \mp \frac{\Delta\widehat{\bF}_{i\pm\frac12,j}}{(\alpha_1)_{i\pm\frac12,j}}, \quad
	\widetilde{\bU}_{i,j\pm\frac12}^{\texttt{H},\mp} := \widetilde{\bU}_{i,j\pm\frac12} \mp \frac{\Delta\widehat{\bF}_{i,j\pm\frac12}}{(\alpha_2)_{i,j\pm\frac12}}.
\end{equation*}
With the low-order scheme \cref{eq:2d_lo_decomp} and high-order scheme \cref{eq:2d_ho_decomp} having the same abstract form, one can blend them to define the limited scheme for the cell average as
\begin{align}
	\overline{\bU}^{\texttt{Lim}}_{i,j} =&\  \left[1-\mu_{1,i}\left((\alpha_1)_{\xl,j}+(\alpha_1)_{\xr,j}\right)
	-\mu_{2,j}\left((\alpha_2)_{i,\yl}+(\alpha_2)_{i,\yr}\right)\right]
	\overline{\bU}^{n}_{i,j} \nonumber\\
	&+ \mu_{1,i}(\alpha_1)_{\xl,j}\widetilde{\bU}_{\xl,j}^{\texttt{Lim},+}
	+ \mu_{1,i}(\alpha_1)_{\xr,j}\widetilde{\bU}_{\xr,j}^{\texttt{Lim},-} \nonumber\\
	&+ \mu_{2,j}(\alpha_2)_{i,\yl}\widetilde{\bU}_{i,\yl}^{\texttt{Lim},+}
	+ \mu_{2,j}(\alpha_2)_{i,\yr}\widetilde{\bU}_{i,\yr}^{\texttt{Lim},-} \label{eq:2d_limited_decomp},
\end{align}
where the limited intermediate states are
\begin{equation}\label{eq:2d_limited_inter_states}
	\begin{aligned}
	&\widetilde{\bU}_{i\pm\frac12,j}^{\texttt{Lim},\mp} = \widetilde{\bU}_{i\pm\frac12,j} \mp \frac{\Delta\widehat{\bF}^{\texttt{Lim}}_{i\pm\frac12,j}}{(\alpha_1)_{i\pm\frac12,j}}
	:= \widetilde{\bU}_{i\pm\frac12,j} \mp \frac{\theta_{i\pm\frac12,j}\Delta\widehat{\bF}_{i\pm\frac12,j}}{(\alpha_1)_{i\pm\frac12,j}},\\
	&\widetilde{\bU}_{i,j\pm\frac12}^{\texttt{Lim},\mp} = \widetilde{\bU}_{i,j\pm\frac12} \mp \frac{\Delta\widehat{\bF}^{\texttt{Lim}}_{i,j\pm\frac12}}{(\alpha_2)_{i,j\pm\frac12}}
	:= \widetilde{\bU}_{i,j\pm\frac12} \mp \frac{\theta_{i,j\pm\frac12}\Delta\widehat{\bF}_{i,j\pm\frac12}}{(\alpha_2)_{i,j\pm\frac12}},
	\end{aligned}
\end{equation}
	and $\theta_{i\pm \frac12,j}, \theta_{i,j\pm \frac12}\in[0,1]$ are the blending coefficients.
	The limited scheme \cref{eq:2d_limited_decomp} reduces to the first-order LLF scheme if $\theta_{i\pm\frac12,j}=\theta_{i,j\pm\frac12}=0$,
	and recovers the high-order AF scheme \cref{eq:semi_av_2d} when $\theta_{i\pm\frac12,j}=\theta_{i,j\pm\frac12}=1$.

\begin{proposition}
	If the cell average at the last time step $\overline{\bU}_{i,j}^n$ and the limited intermediate states $\widetilde{\bU}_{i\pm\frac12,j}^{\texttt{Lim},\mp}$, $\widetilde{\bU}_{i,j\pm\frac12}^{\texttt{Lim},\mp}$ belong to the admissible state set $\mathcal{G}$,
	then the limited average update \eqref{eq:2d_limited_decomp} is BP, i.e., $\overline{\bU}^{\texttt{Lim}}_{i,j} \in \mathcal{G}$, under the CFL condition \eqref{eq:2d_convex_combination_dt}.
	If the SSP-RK3 \eqref{eq:ssp_rk3} is used for the time integration,
	the high-order scheme is also BP.	
\end{proposition}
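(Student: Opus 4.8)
The plan is to exploit the fact that the limited update \eqref{eq:2d_limited_decomp} was constructed, by design, in exactly the same abstract decomposition form as the low-order scheme \eqref{eq:2d_lo_decomp}, with the genuine intermediate states $\widetilde{\bU}$ replaced by the limited ones $\widetilde{\bU}^{\texttt{Lim},\mp}$ but with \emph{identical coefficients}. So the first step is purely algebraic: observe that the coefficient of $\overline{\bU}^n_{i,j}$ together with the four ``off-diagonal'' coefficients $\mu_{1,i}(\alpha_1)_{\xl,j}$, $\mu_{1,i}(\alpha_1)_{\xr,j}$, $\mu_{2,j}(\alpha_2)_{i,\yl}$, $\mu_{2,j}(\alpha_2)_{i,\yr}$ in \eqref{eq:2d_limited_decomp} sum to one, since the four $\mu(\alpha)$ terms cancel exactly against the leading bracket. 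The four off-diagonal coefficients are manifestly non-negative because the $\alpha$'s are spectral radii and $\mu_{1,i},\mu_{2,j}>0$. It then remains only to check that the coefficient $1-\mu_{1,i}((\alpha_1)_{\xl,j}+(\alpha_1)_{\xr,j})-\mu_{2,j}((\alpha_2)_{i,\yl}+(\alpha_2)_{i,\yr})$ of $\overline{\bU}^n_{i,j}$ is non-negative; this is precisely the content of the CFL restriction \eqref{eq:2d_convex_combination_dt} (it is the same bound that makes \eqref{eq:2d_lo_decomp} a convex combination in \Cref{lem:2d_llf_g}).

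Given this, the forward-Euler statement is immediate: under \eqref{eq:2d_convex_combination_dt} the right-hand side of \eqref{eq:2d_limited_decomp} is a convex combination of $\overline{\bU}^n_{i,j}$ and the four limited intermediate states. By hypothesis all of these lie in $\mathcal G$, and $\mathcal G$ is convex (the maximum-principle interval \eqref{eq:2d_scalar_g} in the scalar case, and the set \eqref{eq:2d_euler_g} for Euler, whose convexity is recalled above), so $\overline{\bU}^{\texttt{Lim}}_{i,j}\in\mathcal G$.

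For the SSP-RK3 part, the plan is to use the defining strong-stability-preserving structure of \eqref{eq:ssp_rk3}: each of $\bU^n+\Delta t^n\bm L(\bU^n)$, $\bU^{*}+\Delta t^n\bm L(\bU^{*})$, $\bU^{**}+\Delta t^n\bm L(\bU^{**})$ is a forward-Euler step of the limited scheme, and $\bU^{**}$ and $\bU^{n+1}$ are convex combinations of such steps with weights $\tfrac34,\tfrac14$ and $\tfrac13,\tfrac23$. Applying the forward-Euler result to each stage shows that every Euler substep stays in $\mathcal G$, and convexity of $\mathcal G$ then propagates admissibility through the two convex combinations, giving $\bU^{n+1}\in\mathcal G$. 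For this to close, the limiter (with its own blending coefficients $\theta$) must be applied anew at every stage, so that the hypothesis ``the limited intermediate states belong to $\mathcal G$'' is understood at each of the three RK stages, and the time-step size \eqref{eq:cfl_condition} must be such that \eqref{eq:2d_convex_combination_dt} holds at each stage with the $\alpha$'s re-evaluated on the current stage data.

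The argument involves no single hard estimate; the only points requiring care are bookkeeping ones: confirming that the coefficients in \eqref{eq:2d_limited_decomp} coincide \emph{verbatim} with those of \eqref{eq:2d_lo_decomp}, so that the partition-of-unity property and the CFL bound carry over unchanged, and, for SSP-RK3, being explicit that the convex-combination/convexity chain needs the intermediate states to remain admissible at all three stages rather than only at the first.
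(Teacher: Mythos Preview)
Your proposal is correct and follows essentially the same approach as the paper: verify that \eqref{eq:2d_limited_decomp} is a convex combination under the CFL condition \eqref{eq:2d_convex_combination_dt}, invoke convexity of $\mathcal G$, and then use the Shu--Osher convex-combination structure of SSP-RK3 to propagate admissibility through the stages. Your write-up is in fact more explicit than the paper's own proof, spelling out why the coefficients sum to one and are non-negative, and your closing caveats about re-applying the limiter and re-checking \eqref{eq:2d_convex_combination_dt} at each stage anticipate exactly the content of the remark that follows the proposition in the paper.
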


\begin{proof}
    Under the constraint \cref{eq:2d_convex_combination_dt}, the limited cell average update $\overline{\bU}^{\texttt{Lim}}_{i,j}$ is a convex combination of $\overline{\bU}_{i,j}^n$ $\widetilde{\bU}_{i\pm\frac12,j}^{\texttt{Lim},\mp}$, and $\widetilde{\bU}_{i,j\pm\frac12}^{\texttt{Lim},\mp}$,
	thus it belongs to $\mathcal{G}$ due to the convexity of $\mathcal{G}$.
	Because the SSP-RK3 is a convex combination of forward-Euler stages,
	the high-order scheme equipped with the SSP-RK3 is also BP according to the convexity.
\end{proof}

\begin{remark}
    The scheme \cref{eq:2d_limited_decomp} is conservative as it amounts to using the $x$-directional numerical flux
    $    \widehat{\bF}^{\texttt{L}}_{\xr,j} + \theta_{\xr,j}\Delta \widehat{\bF}_{\xr,j} = \theta_{\xr,j} \widehat{\bF}^{\texttt{H}}_{\xr,j} + (1 - \theta_{\xr,j}) \widehat{\bF}^{\texttt{L}}_{\xr,j}$,
    which is a convex combination of the high-order and low-order fluxes.
\end{remark}

\begin{remark}
	It should be noted that the time step size \cref{eq:2d_convex_combination_dt} is determined based on the solutions at $t^n$.
	If the constraint is not satisfied at the later stage of the SSP-RK3,
	the BP property may not be achieved because \cref{eq:2d_limited_decomp} is no longer a convex combination.
	In our implementation, we start from the usual CFL condition \cref{eq:cfl_condition}.
    Then, if the high-order AF solutions need BP limitings and \cref{eq:2d_llf_inter_states} is not BP or \cref{eq:2d_convex_combination_dt} is not satisfied,
	the numerical solutions are set back to the last time step,
	and we rerun with a halved time step size until \cref{eq:2d_llf_inter_states} is BP and the constraint \cref{eq:2d_convex_combination_dt} is satisfied.
	This is a typical implementation in other BP methods, e.g. \cite{Zhang_2010_positivity_JoCP}.
\end{remark}

The remaining task is to determine the coefficients at each interface $\theta_{i\pm\frac12,j}, \theta_{i,j\pm\frac12}$ such that $\widetilde{\bU}_{i\pm\frac12,j}^{\texttt{Lim},\mp}, \widetilde{\bU}_{i,j\pm\frac12}^{\texttt{Lim},\mp}\in\mathcal{G}$ and stay as close as possible to the high-order solutions $\widetilde{\bU}_{i\pm\frac12,j}^{\texttt{H}}, \widetilde{\bU}_{i,j\pm\frac12}^{\texttt{H}}$,
i.e., the goal is to find the largest $\theta_{i\pm\frac12,j}, \theta_{i,j\pm\frac12}\in[0,1]$ such that $\widetilde{\bU}_{i\pm\frac12,j}^{\texttt{Lim},\mp}, \widetilde{\bU}_{i,j\pm\frac12}^{\texttt{Lim},\mp}\in\mathcal{G}$.
The explanations will be given for the $x$-direction.

\subsubsection{Application to scalar conservation laws}\label{sec:2d_limiting_average_scalar}
This section is devoted to applying the convex limiting approach to scalar conservation laws \cref{eq:2d_scalar},
such that the limited cell averages \cref{eq:2d_limited_decomp} satisfy the MP $u^{\min}_{i,j} \leqslant \bar{u}_{i,j}^{\texttt{Lim}} \leqslant u^{\max}_{i,j}$,
where $u^{\min}_{i,j} = \min\mathcal{N}$, $u^{\max}_{i,j} = \max\mathcal{N}$,
and $\mathcal{N}$ will be defined later.
According to the convex decomposition, the blending coefficient $\theta_{i+\frac12,j}\in[0,1]$ or $\Delta \hat{f}_{\xr,j}^{\texttt{Lim}}$ should be chosen such that $u^{\min}_{i,j} \leqslant \tilde{u}_{i+\frac12,j}^{\texttt{Lim},-} \leqslant u^{\max}_{i,j}$,
$u^{\min}_{i+1,j} \leqslant \tilde{u}_{i+\frac12,j}^{\texttt{Lim},+} \leqslant u^{\max}_{i+1,j}$.
Solving the first condition, i.e. $u^{\min}_{i,j} \leqslant \tilde{u}_{i+\frac12,j} - \Delta\hat{f}_{i+\frac12,j}^{\texttt{Lim}}/\alpha_{i+\frac12,j}\leqslant u^{\max}_{i,j}$,
one has $\Delta \hat{f}_{\xr,j}^{\texttt{Lim}}\leqslant \alpha_{\xr,j}(\tilde{u}_{\xr,j}-u^{\min}_{i,j})$ if $\Delta \hat{f}_{\xr,j} \geqslant 0$,
or $\Delta \hat{f}_{\xr,j}^{\texttt{Lim}}\geqslant \alpha_{\xr,j}(\tilde{u}_{\xr,j}-u^{\max}_{i,j})$ if $\Delta \hat{f}_{\xr,j} < 0$.
Solving the second condition $u^{\min}_{i+1,j} \leqslant \tilde{u}_{i+\frac12,j}^{\texttt{Lim},+} \leqslant u^{\max}_{i+1,j}$ in the same way and combining the two sets of results yields
\begin{align*}
	&\Delta\hat{f}^{\texttt{Lim}}_{\xr,j} =
	\begin{cases}
		\min\big\{\Delta\hat{f}_{\xr,j}, \Delta\hat{f}^{+}_{\xr,j} \big\},
		&\text{if}~ \Delta \hat{f}_{\xr,j} \geqslant 0, \\
		\max\big\{\Delta\hat{f}_{\xr,j}, \Delta\hat{f}^{-}_{\xr,j} \big\}, &\text{otherwise}, \\
	\end{cases}\\
	&\Delta\hat{f}^{+}_{\xr,j} = (\alpha_1)_{\xr,j}\min\big\{ \tilde{u}_{\xr,j}-u^{\min}_{i,j},
	u^{\max}_{i+1,j}-\tilde{u}_{\xr,j} \big\}, \\
	&\Delta\hat{f}^{-}_{\xr,j} =
	(\alpha_1)_{\xr,j}\max\big\{ u^{\min}_{i+1,j}-\tilde{u}_{\xr,j},
	\tilde{u}_{\xr,j}-u^{\max}_{i,j} \big\}.
\end{align*}
Finally, the limited numerical flux is
\begin{equation}\label{eq:2d_flux_limited_scalar}
	\hat{f}^{\texttt{Lim}}_{\xr,j} = \hat{f}^{\texttt{L}}_{\xr,j} + \Delta\hat{f}^{\texttt{Lim}}_{\xr,j}.
\end{equation}
If considering the global MP, $\mathcal{N} = \bigcup_{i,j,\sigma}\{\bar{u}_{i,j}^n, u_{\sigma}^n\}$.
One can also enforce the local MP, which helps to suppress spurious oscillations \cite{Guermond_2018_Second_SJoSC, Kuzmin_2012_Flux_book, Guermond_2016_Invariant_SJoNA},
by choosing
\begin{align*}
	&\mathcal{N} = \left\{\bar{u}_{i,j}^n, ~\tilde{u}_{\xl,j}, ~\tilde{u}_{\xr,j}, ~\tilde{u}_{i,\yl}, ~\tilde{u}_{i,\yr}, \bar{u}_{i-1,j}^n, \bar{u}_{i+1,j}^n, \bar{u}_{i,j-1}^n, \bar{u}_{i,j+1}^n\right\},
\end{align*}
which includes the intermediate states and neighboring cell averages.

\subsubsection{Application to the compressible Euler equations}\label{sec:2d_limiting_average_euler}

This section aims at enforcing the positivity of density and pressure.
To avoid the effect of the round-off error, we need to choose the desired lower bounds.
Denote the lowest density and pressure in the domain by
\begin{equation}\label{eq:2d_lowest_rho_prs}
	\varepsilon^{\rho}:=\min_{i,j,\sigma}\{\overline{\bU}_{i,j}^{n,\rho}, \bU_{\sigma}^{n,\rho}\},~ \varepsilon^{p}:=\min_{i,j,\sigma}\{p(\overline{\bU}_{i,j}^{n}), p(\bU_{\sigma}^{n})\},
\end{equation}
where $\bU^{*,\rho}$ and $p(\bU^{*})$ denote the density component and pressure recovered from $\bU^{*}$, respectively,
and $\sigma$ denotes the locations of point values in the DoFs.
The limiting \cref{eq:2d_limited_inter_states} is feasible if the constraints are satisfied by the first-order LLF intermediate states \cref{eq:2d_llf_inter_states}, thus the lower bounds can be defined as
\begin{align*}
	&\varepsilon_{i,j}^\rho := \min\{10^{-13}, \varepsilon^{\rho}, \widetilde{\bU}_{\xl,j}^{\rho}, \widetilde{\bU}_{\xr,j}^{\rho}, \widetilde{\bU}_{i,\yl}^{\rho}, \widetilde{\bU}_{i,\yr}^{\rho}\},\\
	&\varepsilon_{i,j}^p := \min\{10^{-13}, \varepsilon^{p}, p(\widetilde{\bU}_{\xl,j}), p(\widetilde{\bU}_{\xr,j}), p(\widetilde{\bU}_{i,\yl}), p(\widetilde{\bU}_{i,\yr})\}.
\end{align*}

i) {\bfseries Positivity of density.}
The first step is to impose the density positivity
$\widetilde{\bU}_{\xr,j}^{\texttt{Lim},\pm,\rho} \geqslant \bar{\varepsilon}_{\xr,j}^{\rho}:=\min\{\varepsilon_{i,j}^\rho, \varepsilon_{i+1,j}^\rho\}$.
Similarly to the derivation of the scalar case,
the corresponding density component of the limited anti-diffusive flux is
\begin{equation*}
	\Delta\widehat{\bF}^{\texttt{Lim},\rho}_{\xr,j} = \begin{cases}
		\min\left\{\Delta\widehat{\bF}^{\rho}_{\xr,j}, ~(\alpha_1)_{\xr,j}\left(\widetilde{\bU}^{\rho}_{\xr,j}-\bar{\varepsilon}_{\xr,j}^{\rho}\right) \right\}, ~&\text{if}~ \Delta \widehat{\bF}^{\rho}_{\xr,j} \geqslant 0, \\
		\max\left\{\Delta\widehat{\bF}^{\rho}_{\xr,j}, ~(\alpha_1)_{\xr,j}\left(\bar{\varepsilon}_{\xr,j}^{\rho}-\widetilde{\bU}^{\rho}_{\xr,j}\right) \right\}, ~&\text{otherwise}. \\
	\end{cases}
\end{equation*}
Then the density component of the limited numerical flux is
$\widehat{\bF}_{\xr,j}^{\texttt{Lim}, *, \rho} = \widehat{\bF}_{\xr,j}^{\texttt{L}, \rho} + \Delta\widehat{\bF}_{\xr,j}^{\texttt{Lim}, \rho}$,
with the other components remaining the same as  $\widehat{\bF}_{\xr,j}^{\texttt{H}}$.

ii) {\bfseries Positivity of pressure.}
The second step is to enforce pressure positivity
	$p(\widetilde{\bU}_{\xr,j}^{\texttt{Lim},\pm}) \geqslant \bar{\varepsilon}_{\xr,j}^{p}:=\min\{\varepsilon_{i,j}^p, \varepsilon_{i+1,j}^p\}$.
Since
\begin{equation*}
	\widetilde{\bU}_{\xr,j}^{\texttt{Lim},\pm} = \widetilde{\bU}_{\xr,j} \pm \frac{\theta_{\xr,j}\Delta \widehat{\bF}_{\xr,j}^{\texttt{Lim}, *}}{\alpha_{\xr,j}},\quad
	\Delta \widehat{\bF}_{\xr,j}^{\texttt{Lim}, *} = \widehat{\bF}_{\xr,j}^{\texttt{Lim}, *} - \widehat{\bF}_{\xr,j}^{\texttt{L}},
\end{equation*}
the constraints lead to two inequalities after some algebraic operations
\begin{equation}\label{eq:pressure_inequality}
	A_{\xr,j}\theta^2_{\xr,j} \pm B_{\xr,j}\theta_{\xr,j} \leqslant C_{\xr,j},
\end{equation}
with the coefficients (the subscript $(\cdot)_{\xr,j}$ is omitted in the right-hand side)
\begin{align*}
	A_{\xr,j} &= \dfrac{1}{2} \norm{ \Delta \widehat\bF^{\texttt{Lim}, *, \rho \bv} }_2^2
	- \Delta \widehat\bF^{\texttt{Lim}, *, \rho} \Delta \widehat\bF^{\texttt{Lim}, *, E}, \\
	B_{\xr,j} &= \alpha_1\left(\Delta \widehat\bF^{\texttt{Lim}, *, \rho} \widetilde\bU^{E}
	+ \widetilde\bU^{\rho} \Delta \widehat\bF^{\texttt{Lim}, *, E}
	- \Delta \widehat\bF^{\texttt{Lim}, *, \rho \bv}\cdot \widetilde\bU^{\rho \bv}
	- \tilde{\varepsilon} \Delta \widehat\bF^{\texttt{Lim}, *, \rho}\right), \\
	C_{\xr,j} &= \alpha_1^2\left(\widetilde\bU^{\rho} \widetilde\bU^{E}
	- \dfrac{1}{2} \norm{ \widetilde\bU^{\rho \bv} }_2^2
	- \tilde{\varepsilon} \widetilde\bU^{\rho}\right),
	\quad
	\tilde{\varepsilon} = \bar{\varepsilon}^p_{\xr,j}/(\gamma-1).
\end{align*}
Following \cite{Kuzmin_2020_Monolithic_CMiAMaE}, the inequalities \cref{eq:pressure_inequality} hold under the linear sufficient condition
\begin{equation*}
	\left(\max\{0, A_{\xr,j}\} + \abs{B_{\xr,j}}\right)\theta_{\xr,j} \leqslant C_{\xr,j},
\end{equation*}
if making use of $\theta_{\xr,j}^2 \leqslant \theta_{\xr,j},~ \theta_{\xr,j}\in[0,1]$.
Thus the coefficient can be chosen as
\begin{equation*}
	\theta_{\xr,j} = \min\left\{1,~ \dfrac{C_{\xr,j}}{\max\{0, A_{\xr,j}\} + \abs{B_{\xr,j}}}\right\},
\end{equation*}
and the final limited numerical flux is
\begin{equation}\label{eq:2d_flux_limited_Euler}
	\widehat{\bF}_{\xr,j}^{\texttt{Lim},**} = \widehat{\bF}_{\xr,j}^{\texttt{L}} + \theta_{\xr,j}\Delta\widehat{\bF}_{\xr,j}^{\texttt{Lim}, *}.
\end{equation}

\subsubsection{Shock sensor-based limiting}\label{sec:2d_limiting_shock_sensor}
Spurious oscillations are observed, especially near strong shock waves, if only the BP limitings are employed, see \Cref{ex:2d_ffs}.
We propose to further limit the numerical fluxes using another parameter $\theta_{\xr,j}^{s}$ based on shock sensors.
Consider the Jameson's shock sensor in \cite{Jameson_1981_Solutions_AJ},
\begin{equation*}
	(\varphi_1)_{i,j} = \dfrac{\abs{\bar{p}_{i+1,j} - 2\bar{p}_{i,j} + \bar{p}_{i-1,j}}}{\abs{\bar{p}_{i+1,j} + 2\bar{p}_{i,j} + \bar{p}_{i-1,j}}},
\end{equation*}
and a modified Ducros' shock sensor \cite{Ducros_1999_Large_JoCP}
\begin{equation*}
	(\varphi_2)_{i,j} = \max\left\{\dfrac{-(\nabla\cdot\bar{\bv})_{i,j}}{\sqrt{(\nabla\cdot\bar{\bv})_{i,j}^2 + (\nabla\times\bar{\bv})_{i,j}^2 + 10^{-40}}}, ~0\right\},
\end{equation*}
where
\begin{align*}
	&(\nabla\cdot\bar{\bv})_{i,j}\approx \dfrac{2\left((\bar{v}_1)_{i+1,j} - (\bar{v}_1)_{i-1,j}\right)}{\Delta x_i + \Delta x_{i+1}}
	+ \dfrac{2\left((\bar{v}_2)_{i,j+1} - (\bar{v}_2)_{i,j-1}\right)}{\Delta y_j + \Delta y_{j+1}},\\
	&(\nabla\times\bar{\bv})_{i,j}\approx \dfrac{2\left((\bar{v}_2)_{i+1,j} - (\bar{v}_2)_{i-1,j}\right)}{\Delta x_i + \Delta x_{i+1}}
	- \dfrac{2\left((\bar{v}_1)_{i,j+1} - (\bar{v}_1)_{i,j-1}\right)}{\Delta y_j + \Delta y_{j+1}},
\end{align*}
with $\bar{v}_{i,j}$ and $\bar{p}_{i,j}$ the velocity and pressure recovered from the cell average $\overline{\bU}_{i,j}$.
We consider the sign of the velocity divergence, such that the shock waves can be located better.
The blending coefficient is designed as
\begin{align*}
	&\theta_{\xr,j}^{s} = \exp(-\kappa (\varphi_1)_{\xr,j} (\varphi_2)_{\xr,j})\in (0, 1],\\
	&(\varphi_s)_{\xr,j} = \max\left\{(\varphi_s)_{i,j}, (\varphi_s)_{i+1,j}\right\}, ~s=1,2,
\end{align*}
where the problem-dependent parameter $\kappa$ adjusts the strength of the limiting, and its optimal choice needs further investigation.
The final limited numerical flux is
\begin{equation}\label{eq:2d_flux_limited_Euler_ss}
	\widehat{\bF}_{\xr,j}^{\texttt{Lim}} = \widehat{\bF}_{\xr,j}^{\texttt{L}} + \theta_{\xr,j}^{s}\Delta\widehat{\bF}_{\xr,j}^{\texttt{Lim}, **},
\end{equation}
with $\Delta\widehat{\bF}_{\xr,j}^{\texttt{Lim},**} = \widehat{\bF}_{\xr,j}^{\texttt{Lim},**} - \widehat{\bF}_{\xr,j}^{\texttt{L}}$,
and $\widehat{\bF}_{\xr,j}^{\texttt{Lim},**}$ given in \cref{eq:2d_flux_limited_Euler}.

\subsection{Scaling limiter for point value}\label{sec:2d_limiting_point}
To achieve the BP property, it is also necessary to introduce BP limiting for the point value, because using the BP limiting for cell average alone cannot guarantee the bounds, see \Cref{ex:2d_advection_discontinuity}.
As there is no conservation requirement on the point value update,
a simple scaling limiter \cite{Liu_1996_Nonoscillatory_SJoNA} is directly performed on the high-order solution rather than on the flux for the cell average.

The first step is to define suitable first-order LLF schemes.
The stencils are shown in \cref{fig:2d_af_llf}.

\begin{figure}[hptb!]
	\centering
	\includegraphics[width=0.8\linewidth]{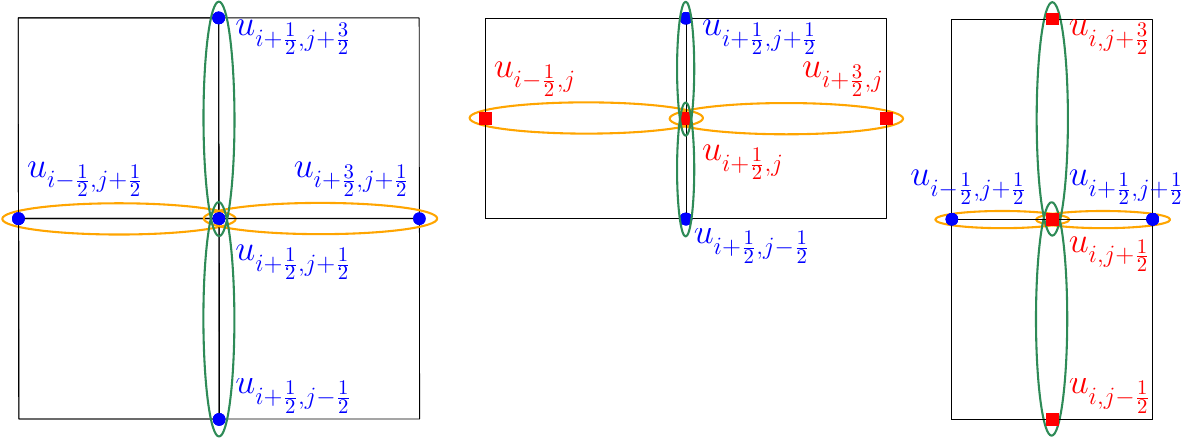}
	\caption{The stencils for the first-order LLF schemes.}
	\label{fig:2d_af_llf}
\end{figure}

For the point value at the corner, one can choose
\begin{align}\label{eq:2d_llf_node}
	\bU_{\xr,\yr}^{\texttt{L}} = \bU_{\xr,\yr}^{n}
	&- \dfrac{2\Delta t^n}{\Delta x_i+\Delta x_{i+1}}
	\left(\widehat{\bF}^{\texttt{L}}_{i+1,\yr}
	- \widehat{\bF}^{\texttt{L}}_{i,\yr}\right) \nonumber\\
	&- \dfrac{2\Delta t^n}{\Delta y_j+\Delta y_{j+1}}
	\left(\widehat{\bF}^{\texttt{L}}_{\xr,j+1}
	- \widehat{\bF}^{\texttt{L}}_{\xr,j}\right),
\end{align}
with the LLF numerical fluxes
\begin{equation*}
	\widehat{\bF}^{\texttt{L}}_{i+1,\yr}:=\widehat{\bF}_1^{\texttt{LLF}}(\bU_{\xr,\yr}^n, \bU_{i+\frac32,\yr}^n),~
	\widehat{\bF}^{\texttt{L}}_{\xr,j+1}:=\widehat{\bF}_2^{\texttt{LLF}}(\bU_{\xr,\yr}^n, \bU_{\xr,j+\frac32}^n).
\end{equation*}
Note that the $x$-directional LLF flux has been used in \cref{eq:2d_x_llf_flux}.
For the vertical face-centered point value, we choose the first-order LLF scheme as
\begin{equation}\label{eq:2d_llf_facex}
	\bU_{\xr,j}^{\texttt{L}} =\  \bU_{\xr,j}^{n}
	- \dfrac{2\Delta t^n}{\Delta x_i+\Delta x_{i+1}}
	\left(\widehat{\bF}^{\texttt{L}}_{i+1,j}
	- \widehat{\bF}^{\texttt{L}}_{i,j}\right) 
	- \dfrac{\Delta t^n}{\Delta y_j}
	\left(\widehat{\bF}^{\texttt{L}}_{\xr,j+\frac12}
	- \widehat{\bF}^{\texttt{L}}_{\xr,j-\frac12}\right),
\end{equation}
with the LLF numerical fluxes
\begin{equation*}
	\widehat{\bF}^{\texttt{L}}_{i+1,j}:=\widehat{\bF}_1^{\texttt{LLF}}(\bU_{\xr,j}^n, \bU_{i+\frac32,j}^n),~
	\widehat{\bF}^{\texttt{L}}_{\xr,j+\frac12}:=\widehat{\bF}_2^{\texttt{LLF}}(\bU_{\xr,j}^n, \bU_{\xr,\yr}^n).
\end{equation*}
The LLF scheme for the face-centered value on the horizontal face can be chosen as
\begin{equation}\label{eq:2d_llf_facey}
	\bU_{i,\yr}^{\texttt{L}} = \bU_{i,\yr}^{n}
	- \dfrac{\Delta t^n}{\Delta x_i}
	\left(\widehat{\bF}^{\texttt{L}}_{i+\frac12,\yr}
	- \widehat{\bF}^{\texttt{L}}_{i-\frac12,\yr}\right)
	- \dfrac{2\Delta t^n}{\Delta y_j+\Delta y_{j+1}}
	\left(\widehat{\bF}^{\texttt{L}}_{i,j+1}
	- \widehat{\bF}^{\texttt{L}}_{i,j}\right),
\end{equation}
with similarly defined LLF numerical fluxes as for the vertical face.

Similarly to \Cref{lem:2d_llf_g}, it is straightforward to obtain the following Lemma.
\begin{lemma}
	The LLF schemes \eqref{eq:2d_llf_node}-\eqref{eq:2d_llf_facey} for the point value update are BP under the following time step size constraint
	\begin{align}
		\Delta t^n \leqslant \frac12\min\Bigg\{
		&\dfrac{\Delta x_i + \Delta x_{i+1}}{2\left((\alpha_1)_{i,j+\frac12}+ (\alpha_1)_{i+1,j+\frac12}\right)},
		\dfrac{\Delta y_j + \Delta y_{j+1}}{2\left((\alpha_2)_{i+\frac12,j}+ (\alpha_2)_{i+\frac12,j+1}\right)}, \nonumber\\
		&\dfrac{\Delta x_i + \Delta x_{i+1}}{2\left((\alpha_1)_{i,j}+ (\alpha_1)_{i+1,j}\right)},
		\dfrac{\Delta y_j}{(\alpha_2)_{i+\frac12,j+\frac12}+ (\alpha_2)_{i+\frac12,j-\frac12}}, \nonumber\\
		&\dfrac{\Delta x_i}{(\alpha_1)_{i+\frac12,j+\frac12}+ (\alpha_1)_{i-\frac12,j+\frac12}},
		\dfrac{\Delta y_j + \Delta y_{j+1}}{2\left((\alpha_2)_{i,j} + (\alpha_2)_{i,j+1}\right)} 
		\Bigg\}, \label{eq:2d_pnt_llf_dt}
	\end{align}
	where $(\alpha_1)_*$ and $(\alpha_2)_*$ are the viscosity coefficients in the LLF schemes.
\end{lemma}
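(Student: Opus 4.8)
The plan is to mimic the proof of \Cref{lem:2d_llf_g}, handling each of the three point-value updates \eqref{eq:2d_llf_node}, \eqref{eq:2d_llf_facex}, \eqref{eq:2d_llf_facey} in turn and rewriting each one as a convex combination of the point value $\bU^n$ at the node being updated and a handful of intermediate states lying in $\mathcal G$. Since these schemes are forward-Euler steps and the SSP-RK3 integrator \eqref{eq:ssp_rk3} is a convex combination of forward-Euler stages, it suffices to treat the forward-Euler case.

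First I would substitute the definition of the LLF flux, $\widehat{\bF}_\ell^{\texttt{LLF}}(\bU_L,\bU_R)=\tfrac12\bigl(\bF_\ell(\bU_L)+\bF_\ell(\bU_R)\bigr)-\tfrac{\alpha}{2}(\bU_R-\bU_L)$ (cf. \eqref{eq:2d_x_llf_flux}), into each update and regroup the terms. For the vertical-face update \eqref{eq:2d_llf_facex} this gives
\begin{align*}
	\bU_{\xr,j}^{\texttt{L}} =&\ \Bigl[1 - \tfrac{2\Delta t^n}{\Delta x_i+\Delta x_{i+1}}\bigl((\alpha_1)_{i,j}+(\alpha_1)_{i+1,j}\bigr) - \tfrac{\Delta t^n}{\Delta y_j}\bigl((\alpha_2)_{\xr,\yl}+(\alpha_2)_{\xr,\yr}\bigr)\Bigr]\bU_{\xr,j}^n \\
	&+ \tfrac{2\Delta t^n}{\Delta x_i+\Delta x_{i+1}}\Bigl((\alpha_1)_{i,j}\,\widetilde{\bU}^{(1)} + (\alpha_1)_{i+1,j}\,\widetilde{\bU}^{(2)}\Bigr) + \tfrac{\Delta t^n}{\Delta y_j}\Bigl((\alpha_2)_{\xr,\yl}\,\widetilde{\bU}^{(3)} + (\alpha_2)_{\xr,\yr}\,\widetilde{\bU}^{(4)}\Bigr),
\end{align*}
where each $\widetilde{\bU}^{(k)}$ has the same shape as the intermediate states in \eqref{eq:2d_llf_inter_states}, namely $\tfrac12(\bU_{\xr,j}^n+\bU_{\mathrm{nb}}^n)\pm\tfrac{1}{2\alpha}\bigl(\bF_\ell(\bU_{\xr,j}^n)-\bF_\ell(\bU_{\mathrm{nb}}^n)\bigr)$ for the relevant neighboring point value $\bU_{\mathrm{nb}}^n$ and its viscosity coefficient $\alpha$. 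The corner update \eqref{eq:2d_llf_node} and the horizontal-face update \eqref{eq:2d_llf_facey} produce analogous decompositions into $\bU^n$ plus four intermediate states; only the spacings multiplying the flux differences change — half-sums $\tfrac{\Delta x_i+\Delta x_{i+1}}{2}$, $\tfrac{\Delta y_j+\Delta y_{j+1}}{2}$ or single widths $\Delta x_i$, $\Delta y_j$ — following the stencils in \Cref{fig:2d_af_llf}.

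Next I would invoke the fact that each $\widetilde{\bU}^{(k)}$ is built from two states in $\mathcal G$ (the node value and a neighboring point value, both in $\mathcal G$ by hypothesis) together with an admissible viscosity coefficient, so it belongs to $\mathcal G$ by exactly the algebraic computations done for the cell-average intermediate states in Appendix Section \ref{sec:2d_llf_bp}, which never use that the two inputs are cell averages. Then I would check that the coefficient of $\bU^n$ in each decomposition is nonnegative precisely when every per-direction dissipation term is at most $\tfrac12$; for \eqref{eq:2d_llf_facex} this reads $\Delta t^n\leqslant\tfrac12\min\bigl\{\tfrac{\Delta x_i+\Delta x_{i+1}}{2((\alpha_1)_{i,j}+(\alpha_1)_{i+1,j})},\ \tfrac{\Delta y_j}{(\alpha_2)_{\xr,\yl}+(\alpha_2)_{\xr,\yr}}\bigr\}$, the third and fourth entries of \eqref{eq:2d_pnt_llf_dt}; the corner update contributes the first two entries and the horizontal-face update the last two. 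Under the full minimum \eqref{eq:2d_pnt_llf_dt}, all three updates are simultaneously convex combinations of elements of $\mathcal G$, hence remain in $\mathcal G$ by convexity, which is the claim.

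I expect the main obstacle to be purely organizational: keeping straight which viscosity coefficients and which half-sum versus single-width spacings belong to each of the three anisotropic stencils, and verifying that they reproduce exactly the six ratios in \eqref{eq:2d_pnt_llf_dt}. The step showing the intermediate states stay in $\mathcal G$ — including positive density and pressure for the Euler equations — carries over verbatim from the cell-average case and can simply be cited. In implementation, this bound is imposed jointly with \eqref{eq:2d_convex_combination_dt}.
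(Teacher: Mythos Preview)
Your proposal is correct and follows exactly the approach the paper intends: the paper does not spell out a proof but simply states that the result follows ``similarly to \Cref{lem:2d_llf_g},'' and your plan of decomposing each of the three LLF point-value updates into a convex combination of the current node value and four LLF-type intermediate states in $\mathcal{G}$ is precisely that analogy. Your bookkeeping of which spacings and viscosity coefficients go with each stencil correctly reproduces the six entries of \eqref{eq:2d_pnt_llf_dt}.
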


The limited solution is obtained by blending the high-order AF scheme \cref{eq:semi_pnt_2d} with the forward-Euler scheme and the LLF schemes \cref{eq:2d_llf_node}-\cref{eq:2d_llf_facey} as $\bU_{\sigma}^{\texttt{Lim}} = \theta_{\sigma} \bU_{\sigma}^{\texttt{H}} + (1-\theta_{\sigma}) \bU_{\sigma}^{\texttt{L}}$,
such that $\bU_{\sigma}^{\texttt{Lim}}\in\mathcal{G}$.

\begin{remark}
	In the FVS, the cell-centered value obtained based on Simpson's rule $\bU_i = (- \bU_{\xl} + 6\overline{\bU}_i - \bU_{\xr})/4$ in 1D or \cref{eq:2d_cell_center_parabolic} in 2D is not a convex combination, thus it is possible that $\bU_i, \bU_{i,j}\notin\mathcal{G}$.
	For the scalar case, it does not affect the BP property.
	However, for the Euler equations, the computation of $\bF_i$ (resp. $(\bF_\ell)_{i,j}$) requires that $\bU_i\in\mathcal{G}$ (resp. $\bU_{i,j}\in\mathcal{G}$), thus the scaling limiter \cite{Zhang_2010_positivity_JoCP} is applied in the cell $I_i$ (resp. $I_{i,j}$), a procedure also mentioned in \cite{Chudzik_2021_Cartesian_AMaC}.
	See more details in \Cref{rmk:cell_center}.
\end{remark}

\subsubsection{Application to scalar conservation laws}\label{sec:2d_limiting_point_scalar}
This section enforces the MP $u_{\sigma}^{\min} \leqslant u_{\sigma}^{\texttt{Lim}} \leqslant u_{\sigma}^{\max}$ using the scaling limiter \cite{Zhang_2010_maximum_JoCP}.
The limited solution is
\begin{equation}\label{eq:2d_pnt_limited_state_scalar}
	u_{\sigma}^{\texttt{Lim}} = \theta_{\sigma} u_{\sigma}^{\texttt{H}} + \left(1-\theta_{\sigma}\right) u_{\sigma}^{\texttt{L}},
\end{equation}
with the coefficient
\begin{equation*}
	\theta_{\sigma} = \min\left\{ 1,~ \left|\dfrac{u_{\sigma}^{\texttt{L}}-m_0}{u_{\sigma}^{\texttt{L}}-u_{\sigma}^{\texttt{H}}} \right|, ~\left| \dfrac{M_0-u_{\sigma}^{\texttt{L}}}{u_{\sigma}^{\texttt{H}}-u_{\sigma}^{\texttt{L}}} \right|
	\right\}.
\end{equation*}
%

	The bounds are determined by $u^{\min}_{\sigma} = \min \mathcal{N}$, $u^{\max}_{\sigma} = \max \mathcal{N}$,
	where the set $\mathcal{N}$ consists of all the DoFs in the domain,
	i.e., $\mathcal{N} = \bigcup_{i,j,\sigma}\{\bar{u}_{i,j}^n, u_{\sigma}^n\}$ for the global MP.
	One can also consider the neighboring DoFs for the local MP.
	For the point value at the corner $(x_{\xr}, y_{\yr})$, we choose
	\begin{align*}
		\mathcal{N} = \left\{u_{\xr,\yr}^n, u_{\xl,\yr}^n, u_{i+\frac32,\yr}^n, u_{\xr, \yl}^n, u_{\xr,j+\frac32}^n\right\},
	\end{align*}
	which should at least include all the DoFs that appeared in the first-order LLF scheme \cref{eq:2d_llf_node}.
	For the point value at the vertical face center $(x_{\xr}, y_{j})$, similarly we choose
	\begin{equation*}
		\mathcal{N} = \left\{u_{\xr,j}^n, u_{\xl, j}^n, u_{i+\frac32,j}^n,
		u_{\xr,\yl}^n, u_{\xr,\yr}^n\right\}.
	\end{equation*}
	For the point value at the horizontal face center $(x_{i}, y_{\yr})$, we choose
	\begin{equation*}
		\mathcal{N} = \left\{u_{i,\yr}^n, u_{i, \yl}^n, u_{i,j+\frac32}^n,
		u_{\xl,\yr}^n, u_{\xr,\yr}^n\right\}.
	\end{equation*}

\subsubsection{Application to the compressible Euler equations}\label{sec:2d_limiting_point_euler}
The limiting consists of two steps.

{\bfseries i) Positivity of density.}
First, the high-order solution $\bU_{\sigma}^{\texttt{H}}$ is modified as $\bU_{\sigma}^{\texttt{Lim}, *}$,
such that its density component satisfies $\bU_{\sigma}^{\texttt{Lim}, *, \rho} \geqslant \varepsilon^\rho_{\sigma}:=\min\{10^{-13}, \varepsilon^{\rho}, \bU_{\sigma}^{\texttt{L},\rho}\}$ with $\varepsilon^{\rho}$ given in \cref{eq:2d_lowest_rho_prs}.
Solving the inequality yields
\begin{equation*}
	\theta^{*}_{\sigma} = \begin{cases}
		\dfrac{\bU_{\sigma}^{\texttt{L}, \rho}-\varepsilon^\rho_{\sigma}}{\bU_{\sigma}^{\texttt{L}, \rho}-\bU_{\sigma}^{\texttt{H}, \rho}},&\text{if}~~ \bU_{\sigma}^{\texttt{H}, \rho} < \varepsilon^\rho_{\sigma}, \\
		1, &\text{otherwise}. \\
	\end{cases}
\end{equation*}
Then the density component of the limited solution is $\bU_{\sigma}^{\texttt{Lim}, *, \rho} = \theta^{*}_{\sigma} \bU_{\sigma}^{\texttt{H}, \rho} + (1-\theta^{*}_{\sigma}) \bU_{\xr}^{\texttt{L}, \rho}$,
with the other components remaining the same as $\bU_{\sigma}^{\texttt{H}}$.

{\bfseries ii) Positivity of pressure.}
Then the limited solution $\bU_{\sigma}^{\texttt{Lim}, *}$ is modified as $\bU_{\sigma}^{\texttt{Lim}}$,
such that it gives positive pressure, i.e., $p(\bU_{\sigma}^{\texttt{Lim}}) \geqslant \varepsilon^p_{\sigma}:=\min\{10^{-13}, \varepsilon^{p}, p(\bU_{\sigma}^{\texttt{L}})\}$, with $\varepsilon^{p}$ given in \cref{eq:2d_lowest_rho_prs}.
Let the final limited solution be
\begin{equation}\label{eq:2d_pnt_limited_state_Euler}
	\bU_{\sigma}^{\texttt{Lim}} = \theta^{**}_{\sigma} \bU_{\sigma}^{\texttt{Lim}, *} + \left(1-\theta^{**}_{\sigma}\right) \bU_{\sigma}^{\texttt{L}}.
\end{equation}
The pressure is a concave function of the conservative variables (see e.g. \cite{Zhang_2011_Maximum_PotRSAMPaES}), so that 
$p(\bU_{\sigma}^{\texttt{Lim}}) \geqslant \theta^{**}_{\sigma}p(\bU_{\sigma}^{\texttt{Lim}, *}) + \left(1-\theta^{**}_{\sigma}\right)p(\bU_{\sigma}^{\texttt{L}})$
based on Jensen's inequality and $\bU_{\sigma}^{\texttt{Lim}, *, \rho} > 0$, $\bU_{\sigma}^{\texttt{L}, \rho} > 0$, $\theta_{\sigma}^{**} \in [0,1]$.
Thus the coefficient can be chosen as
\begin{equation*}
	\theta^{**}_{\sigma} = \begin{cases}
		\dfrac{p(\bU_{\sigma}^{\texttt{L}}) - \varepsilon^p_{\sigma}}{p(\bU_{\sigma}^{\texttt{L}}) - p(\bU_{\sigma}^{\texttt{Lim}, *})},&\text{if}~~ p(\bU_{\sigma}^{\texttt{Lim}, *}) < \varepsilon^p_{\sigma}, \\
		1, &\text{otherwise}. \\
	\end{cases}
\end{equation*}

\begin{remark}\label{rmk:cell_center}
To compute the high-order FVS-based point value update, we should limit the cell-centered value $\bU_i$ in 1D (resp. $\bU_{i,j}$ in 2D) at the beginning of each Runge-Kutta stage.
For example, in 2D, we modify $\bU_{i,j}$ as $\bU_{i,j}^{\texttt{Lim}} = \theta_{i,j}\bU_{i,j} + (1-\theta_{i,j})\overline{\bU}_{i,j}$ such that
\begin{equation*}
	\bU_{i,j}^{\texttt{Lim},\rho} \geqslant \min\{10^{-13}, \overline{\bU}_{i,j}^{\rho}\},~
	p(\bU_{i,j}^{\texttt{Lim}}) \geqslant \min\{10^{-13}, p(\overline{\bU}_{i,j})\}.
\end{equation*}
The computation of $\theta_{i,j}$ is similar to the procedure in this section.
\end{remark}

Let us summarize the main results of the BP AF methods in this paper.
\begin{proposition}
	If the initial numerical solution $\overline{\bU}_{i,j}^0, \bU_{\sigma}^0\in\mathcal{G}$ for all $i,j,\sigma$,
	and the time step size satisfies \eqref{eq:2d_convex_combination_dt} and \eqref{eq:2d_pnt_llf_dt},
	then the AF methods \eqref{eq:semi_av_2d}-\eqref{eq:semi_pnt_2d} equipped with the SSP-RK3 \eqref{eq:ssp_rk3} and the BP limitings
	\begin{itemize}[leftmargin=*]
		\item \eqref{eq:2d_flux_limited_scalar} and \eqref{eq:2d_pnt_limited_state_scalar} preserve the maximum principle for scalar case;
		\item \eqref{eq:2d_flux_limited_Euler} and \eqref{eq:2d_pnt_limited_state_Euler} preserve positive density and pressure for the Euler equations.
	\end{itemize}
\end{proposition}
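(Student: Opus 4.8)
The plan is to reduce the statement to the preservation of $\mathcal{G}$ under a single forward-Euler stage and then use that the SSP-RK3 method \eqref{eq:ssp_rk3} is a convex combination of forward-Euler updates. Concretely, I would first show that one forward-Euler step of \eqref{eq:semi_av_2d}--\eqref{eq:semi_pnt_2d}, with the respective BP limitings applied, sends cell averages and point values in $\mathcal{G}$ to cell averages and point values in $\mathcal{G}$, assuming the two constraints \eqref{eq:2d_convex_combination_dt} and \eqref{eq:2d_pnt_llf_dt} hold at that stage. Since $\mathcal{G}$ is convex, applying this three times (with the intermediate states $\bU^{*},\bU^{**}$) and using the convex-combination structure of \eqref{eq:ssp_rk3} then yields $\overline{\bU}^{n+1}_{i,j},\bU^{n+1}_{\sigma}\in\mathcal{G}$. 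As stressed in the preceding remarks, the time-step constraints are imposed on the data at $t^{n}$ and the implementation restarts with a halved step if they fail at an intermediate stage, so this stage-wise hypothesis is exactly what the statement assumes.

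For the cell average I would invoke \Cref{lem:2d_llf_g} and the Proposition in \Cref{sec:2d_limiting_average}: under \eqref{eq:2d_convex_combination_dt} the limited update \eqref{eq:2d_limited_decomp} is a convex combination of $\overline{\bU}^{n}_{i,j}$ and the limited intermediate states, so the only thing left is to verify that those intermediate states lie in $\mathcal{G}$. In the scalar case this is immediate from the construction of $\Delta\hat f^{\texttt{Lim}}_{\xr,j}$ in \Cref{sec:2d_limiting_average_scalar}, which is designed precisely so that $u^{\min}_{i,j}\le\tilde u^{\texttt{Lim},-}_{\xr,j}$ and $\tilde u^{\texttt{Lim},+}_{\xr,j}\le u^{\max}_{i+1,j}$ (and the mirrored inequalities in the $y$-direction), with the local bounds bracketed by $m_{0},M_{0}$. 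In the Euler case I would follow the two-step argument of \Cref{sec:2d_limiting_average_euler}: first pick the density component of the anti-diffusive flux so that $\widetilde{\bU}^{\texttt{Lim},\pm,\rho}_{\xr,j}\ge\bar{\varepsilon}^{\rho}_{\xr,j}>0$, then pick $\theta_{\xr,j}$ so that the quadratic inequalities \eqref{eq:pressure_inequality} hold, using $\theta_{\xr,j}^{2}\le\theta_{\xr,j}$ to reduce to the linear sufficient condition of \cite{Kuzmin_2020_Monolithic_CMiAMaE} and concluding $p(\widetilde{\bU}^{\texttt{Lim},\pm}_{\xr,j})\ge\bar{\varepsilon}^{p}_{\xr,j}>0$. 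Both steps are well posed because the first-order LLF intermediate states \eqref{eq:2d_llf_inter_states} already belong to $\mathcal{G}$ (Appendix \Cref{sec:2d_llf_bp}), so $\theta=0$ is always admissible and the largest admissible $\theta\in[0,1]$ is well defined; the shock-sensor factor $\theta^{s}_{\xr,j}\in(0,1]$ only shrinks the blending further and cannot spoil membership in $\mathcal{G}$.

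For the point values, the limited value is the convex combination $\bU^{\texttt{Lim}}_{\sigma}=\theta_{\sigma}\bU^{\texttt{H}}_{\sigma}+(1-\theta_{\sigma})\bU^{\texttt{L}}_{\sigma}$, and by the Lemma establishing BP of the point-value LLF schemes \eqref{eq:2d_llf_node}--\eqref{eq:2d_llf_facey} the low-order states $\bU^{\texttt{L}}_{\sigma}$ lie in $\mathcal{G}$ under \eqref{eq:2d_pnt_llf_dt}. For the scalar case the scaling limiter \eqref{eq:2d_pnt_limited_state_scalar} directly enforces $u^{\min}_{\sigma}\le u^{\texttt{Lim}}_{\sigma}\le u^{\max}_{\sigma}$. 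For the Euler case I would again argue in two steps: the density-limiting step gives $\bU^{\texttt{Lim},*,\rho}_{\sigma}\ge\varepsilon^{\rho}_{\sigma}>0$, and then \eqref{eq:2d_pnt_limited_state_Euler} together with the concavity of the pressure in the conservative variables and Jensen's inequality gives $p(\bU^{\texttt{Lim}}_{\sigma})\ge\theta^{**}_{\sigma}p(\bU^{\texttt{Lim},*}_{\sigma})+(1-\theta^{**}_{\sigma})p(\bU^{\texttt{L}}_{\sigma})\ge\varepsilon^{p}_{\sigma}>0$. One also has to dispose of the auxiliary limiting of the cell-centered value $\bU_{i,j}$ in \Cref{rmk:cell_center}: this is needed only so that the split fluxes $(\bF_{\ell})^{\pm}_{i,j}$ entering \eqref{eq:2d_semi_fvs_node}--\eqref{eq:2d_semi_fvs_facex} are well defined (in the Euler case), and it does not enter the convexity bookkeeping for the evolved DoFs.

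I expect the main obstacle to be the Euler pressure step rather than the scalar case: one must check carefully that replacing the quadratic constraint \eqref{eq:pressure_inequality} by its linear counterpart is genuinely sufficient, and that the sequential limiting is consistent, i.e. that first fixing the density component of the anti-diffusive flux (or of the high-order point value) does not render the subsequent pressure constraint infeasible --- this is where the choice of the lower bounds $\bar{\varepsilon}^{\rho},\bar{\varepsilon}^{p}$ and $\varepsilon^{\rho}_{\sigma},\varepsilon^{p}_{\sigma}$ in terms of the low-order intermediate/point states does the real work. A secondary point to be careful about is that the two CFL constraints are guaranteed only at $t^{n}$, so the argument technically relies on the restart mechanism to make them hold at every intermediate RK stage.
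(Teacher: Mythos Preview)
Your proposal is correct and follows essentially the same approach as the paper: the paper treats this proposition as a summary statement whose proof is the collection of the preceding results (Lemma~\ref{lem:2d_llf_g}, the first Proposition in \Cref{sec:2d_limiting_average}, the point-value Lemma for \eqref{eq:2d_llf_node}--\eqref{eq:2d_llf_facey}, and the explicit coefficient constructions in \Cref{sec:2d_limiting_average_scalar}--\Cref{sec:2d_limiting_point_euler}), glued together by the convexity of $\mathcal{G}$ and the convex-combination structure of SSP-RK3. Your write-up reconstructs exactly this chain, including the same caveats about the restart mechanism for intermediate RK stages and the auxiliary cell-center limiting of \Cref{rmk:cell_center}.
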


\begin{remark}
		For uniform meshes, and if taking the maximal spectral radius of $\partial\bF_1/\partial\bU$ and $\partial\bF_2/\partial\bU$ in the domain as $\norm{\varrho_1}_\infty$ and $\norm{\varrho_2}_\infty$,
		the following CFL condition
		\begin{equation*}
			\Delta t^n \leqslant \frac14\min\left\{\dfrac{\Delta x}{\norm{\varrho_1}_\infty},
			\dfrac{\Delta y}{\norm{\varrho_2}_\infty}
			\right\}
		\end{equation*}
		fulfills the time step size constraints \eqref{eq:2d_convex_combination_dt} and \eqref{eq:2d_pnt_llf_dt}.
\end{remark}

\section{Numerical results}\label{sec:results}
This section presents some numerical tests to verify the accuracy, BP property, and shock-capturing ability of the proposed BP AF methods.
The adiabatic index is $\gamma=1.4$ for the Euler equations except for \cref{ex:2d_jet}, where it is $5/3$.
In the 2D plots, the numerical solutions are visualized on a refined mesh with half the mesh size, where the values at the grid points are the cell averages or point values on the original mesh.
Note that the BP limitings naturally reduce some oscillations.
Some additional tests are provided in Section \ref{sec:results_supp} in Appendix,
including a 1D accuracy test for the Euler equations, double rarefaction problem, blast wave interaction problem using the power law reconstruction \cite{Barsukow_2021_active_JoSC}, and double Mach reflection problem.


\begin{example}[Self-steepening shock]\label{ex:1d_burgers}
	Consider the 1D Burgers' equation
$u_t + \left(\frac12 u^2\right)_x = 0$
	on the domain $[-1,1]$ with periodic boundary conditions.
	The initial condition is a square wave, $u_0(x)=2$ if $\abs{x}<0.2$, otherwise $u_0(x)=-1$.

\Cref{fig:1d_burgers_shock} shows the cell averages and point values at $T=0.5$ based on different point value updates with $200$ cells.
The CFL number is $0.2$.
The spike generation when using the JS has been observed in \cite{Helzel_2019_New_JoSC},
and the reason is also discussed in \Cref{sec:1d_fvs}.
Such an issue cannot be eliminated by our BP limitings alone,
but can be cured by additionally using the FVS for the point value update.
The numerical solutions based on the FVS agree well with the reference solution when the limitings are activated.

\begin{figure}[htb]
	\centering
	\begin{subfigure}[b]{0.31\textwidth}
		\centering
		\includegraphics[width=1.0\linewidth]{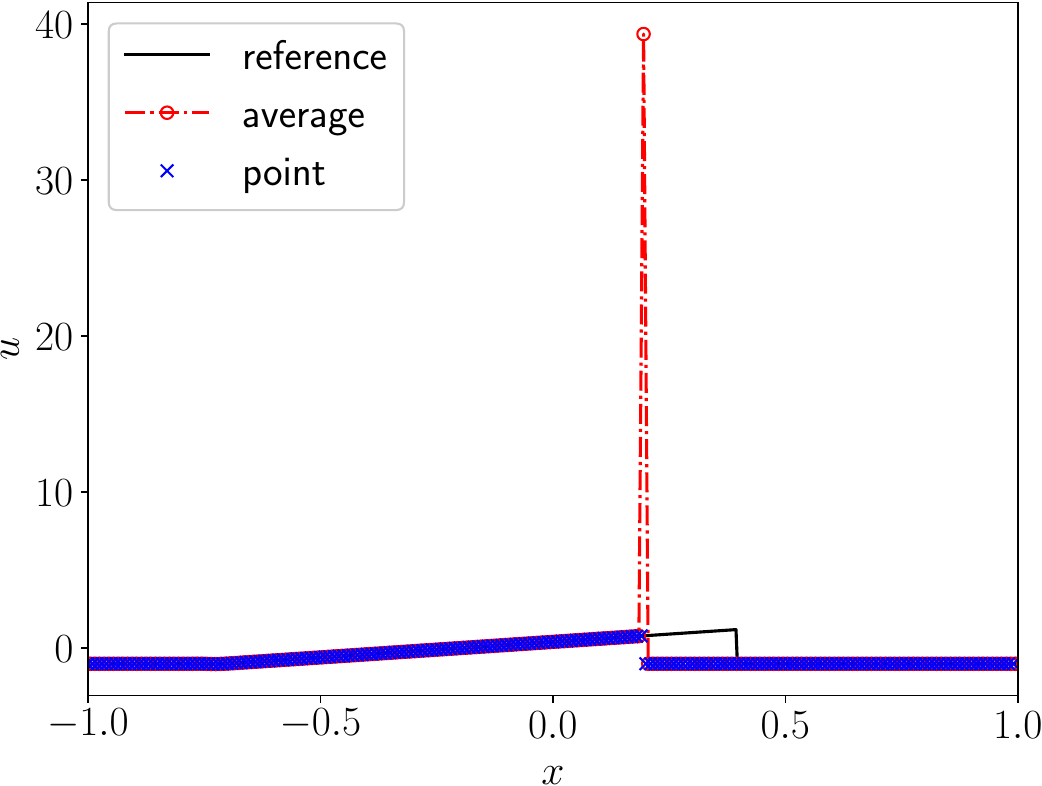}
	\end{subfigure}
	\begin{subfigure}[b]{0.32\textwidth}
		\centering
		\includegraphics[width=0.98\linewidth]{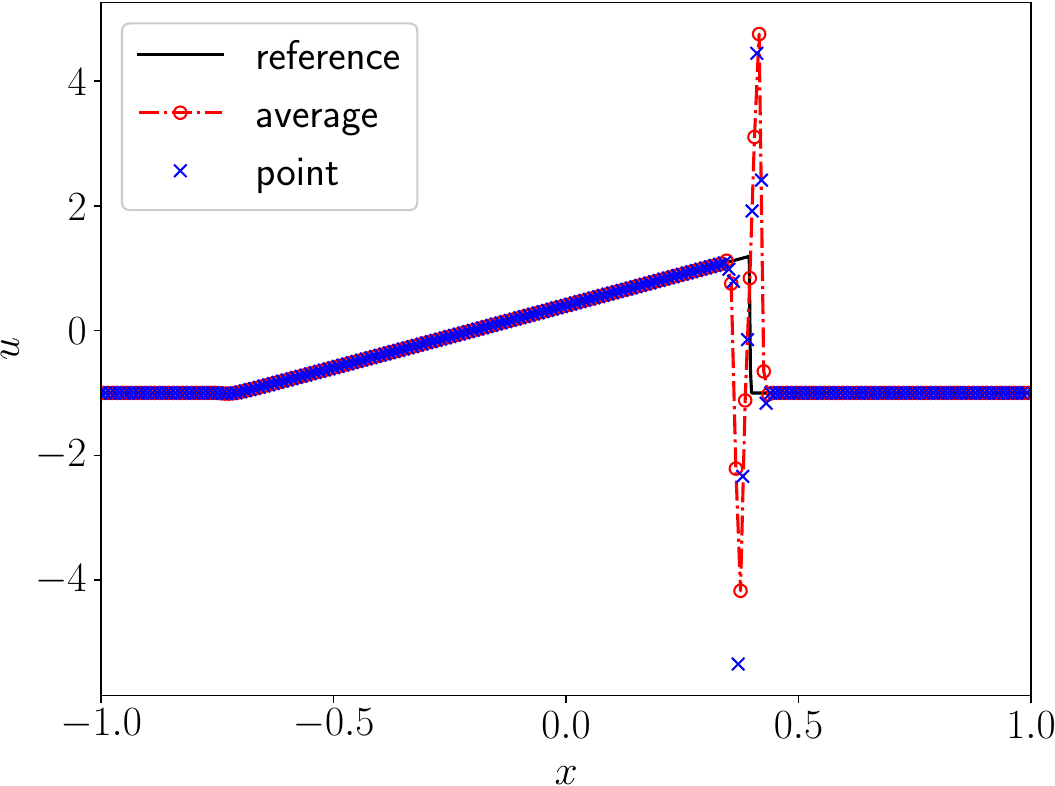}
	\end{subfigure}
	\begin{subfigure}[b]{0.32\textwidth}
		\centering
		\includegraphics[width=0.98\linewidth]{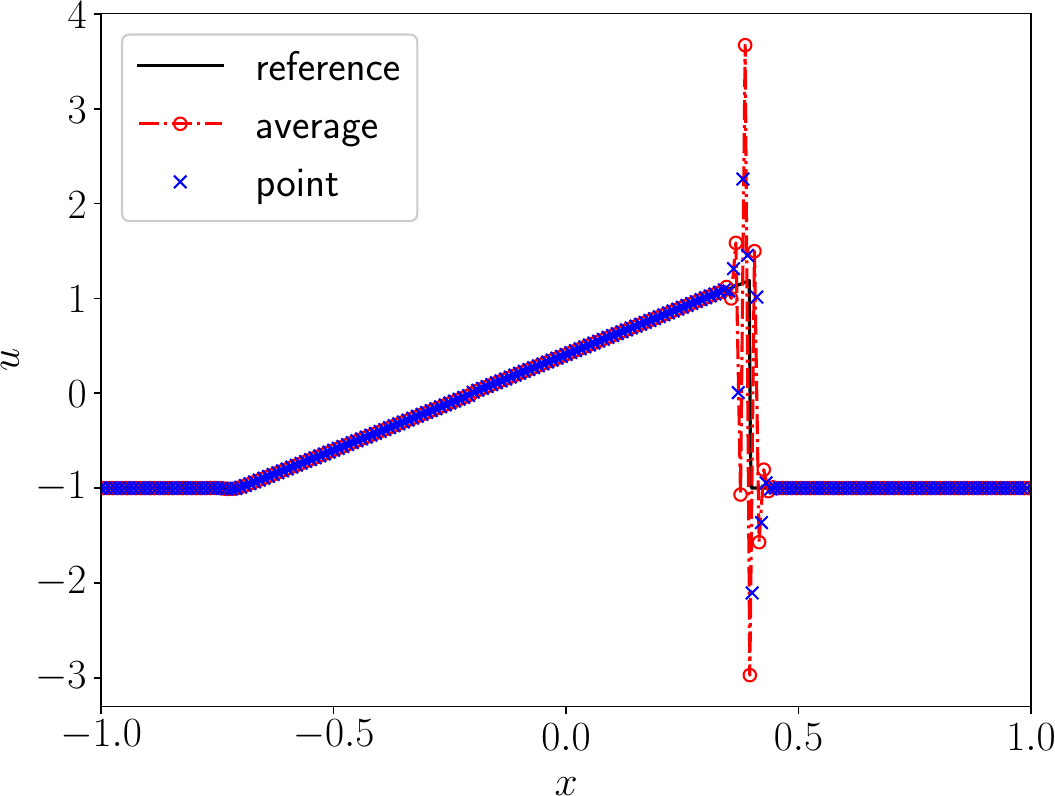}
	\end{subfigure}
	
	\begin{subfigure}[b]{0.32\textwidth}
		\centering
		\includegraphics[width=1.0\linewidth]{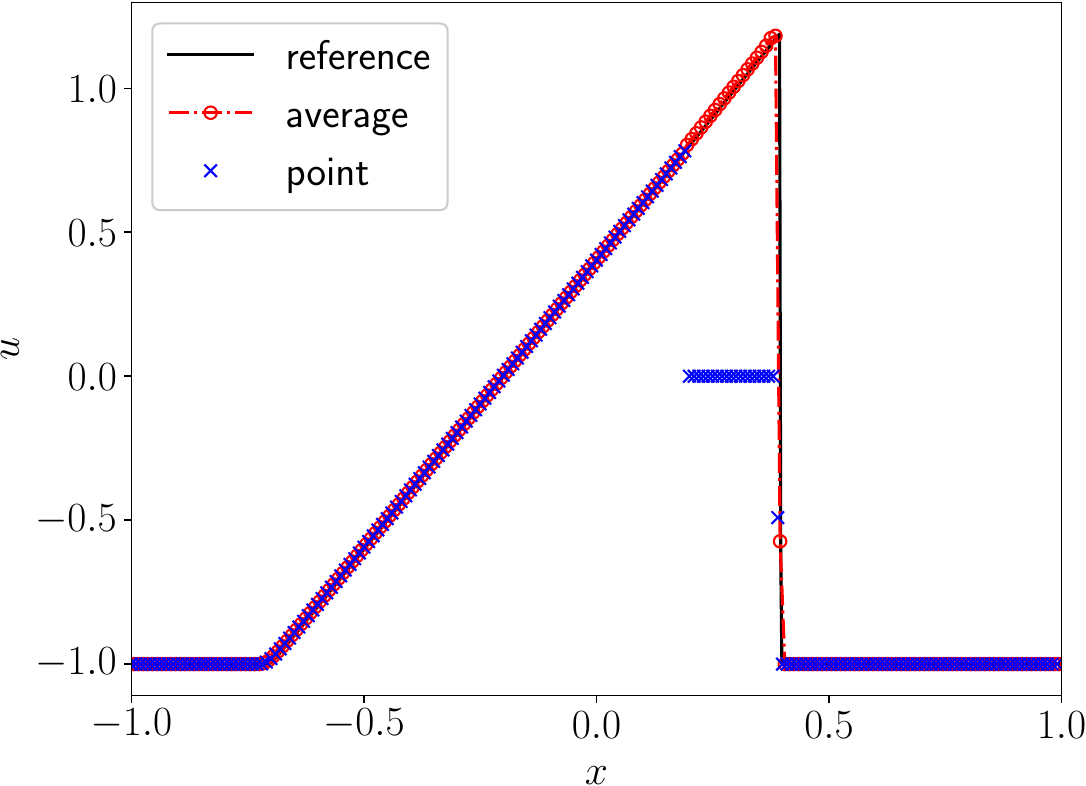}
	\end{subfigure}
	\begin{subfigure}[b]{0.32\textwidth}
		\centering
		\includegraphics[width=1.0\linewidth]{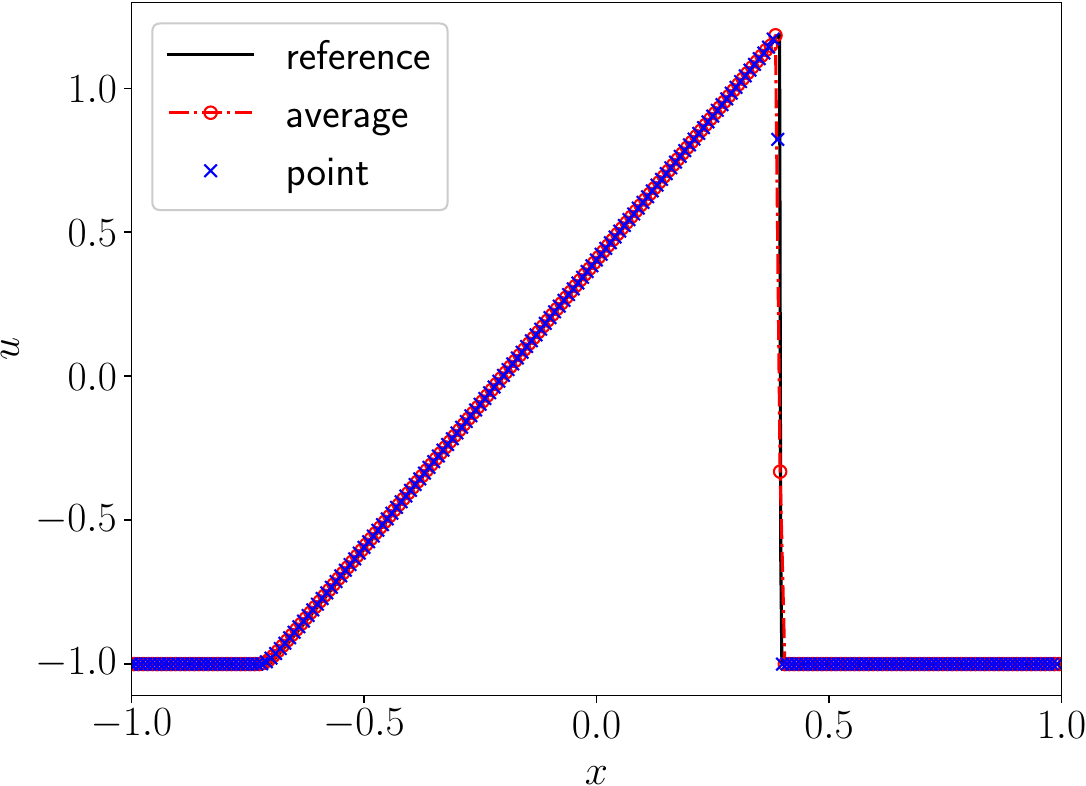}
	\end{subfigure}
	\begin{subfigure}[b]{0.32\textwidth}
		\centering
		\includegraphics[width=1.0\linewidth]{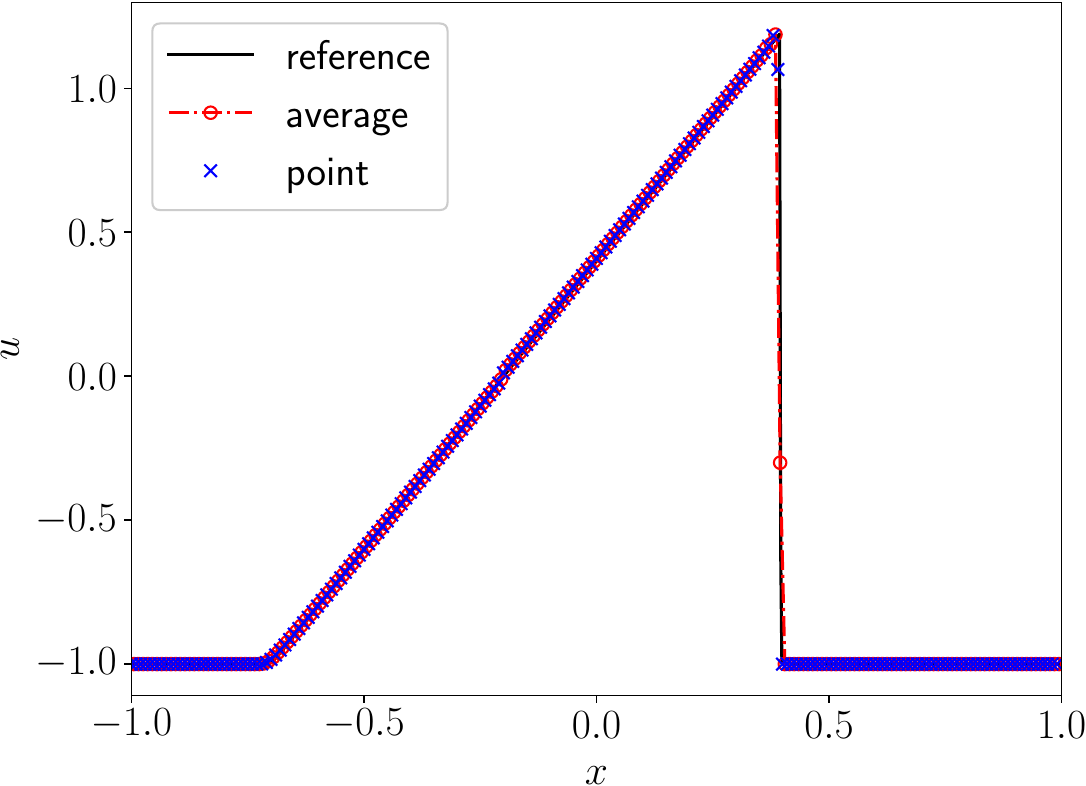}
	\end{subfigure}
	\caption{\Cref{ex:1d_burgers}, self-steepening shock for the Burgers' equation.
	The numerical solutions computed without limiting (top row) and with the BP limitings imposing the local MP (bottom row). From left to right: JS, LLF, and upwind FVS.
	}
	\label{fig:1d_burgers_shock}
\end{figure}
\end{example}

\begin{example}[LeBlanc shock tube]\label{ex:1d_leblanc}
	This is a Riemann problem with an extremely large initial pressure ratio.
	This test is solved until $T=5\times 10^{-6}$ on the domain $[0,1]$ with the initial data $(\rho, v, p)=(2, 0, 10^9)$ if $x<0.5$, otherwise $(\rho, v, p)=(10^{-3}, 0, 1)$.

Without the BP limitings, the simulation will stop due to negative density or pressure.
\Cref{fig:1d_leblanc_fine_mesh_shock_sensor} shows the density computed on a uniform mesh of $6000$ cells with the BP limitings and shock sensor-based limiting.
Note that, the numerical methods typically need a small mesh size to accurately obtain the right location of the shock wave.
The CFL number is $0.4$ for the JS, LLF, and SW FVS,
and $0.1$ for the VH FVS for stability.
The numerical solutions agree well with the exact solution with only a few undershoots at the discontinuities.

\begin{figure}[htb]
	\centering
	\begin{subfigure}[b]{0.24\textwidth}
		\centering
		\includegraphics[width=\linewidth]{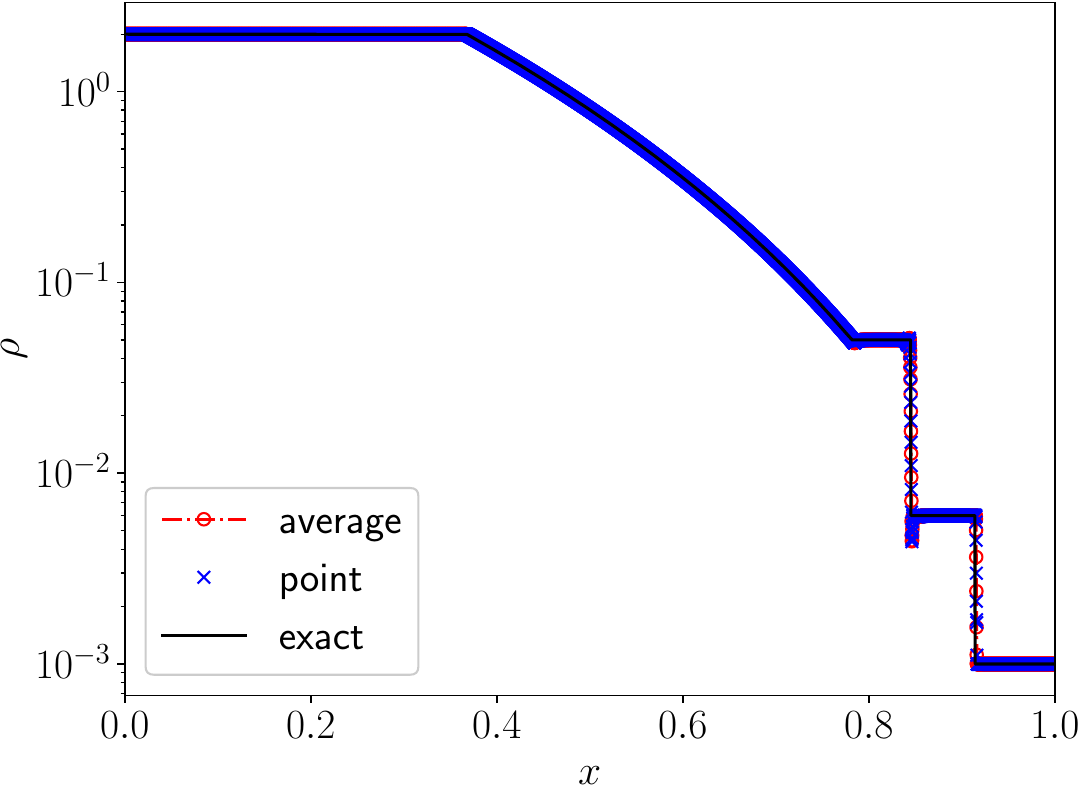}
	\end{subfigure}
	\begin{subfigure}[b]{0.24\textwidth}
		\centering
		\includegraphics[width=\linewidth]{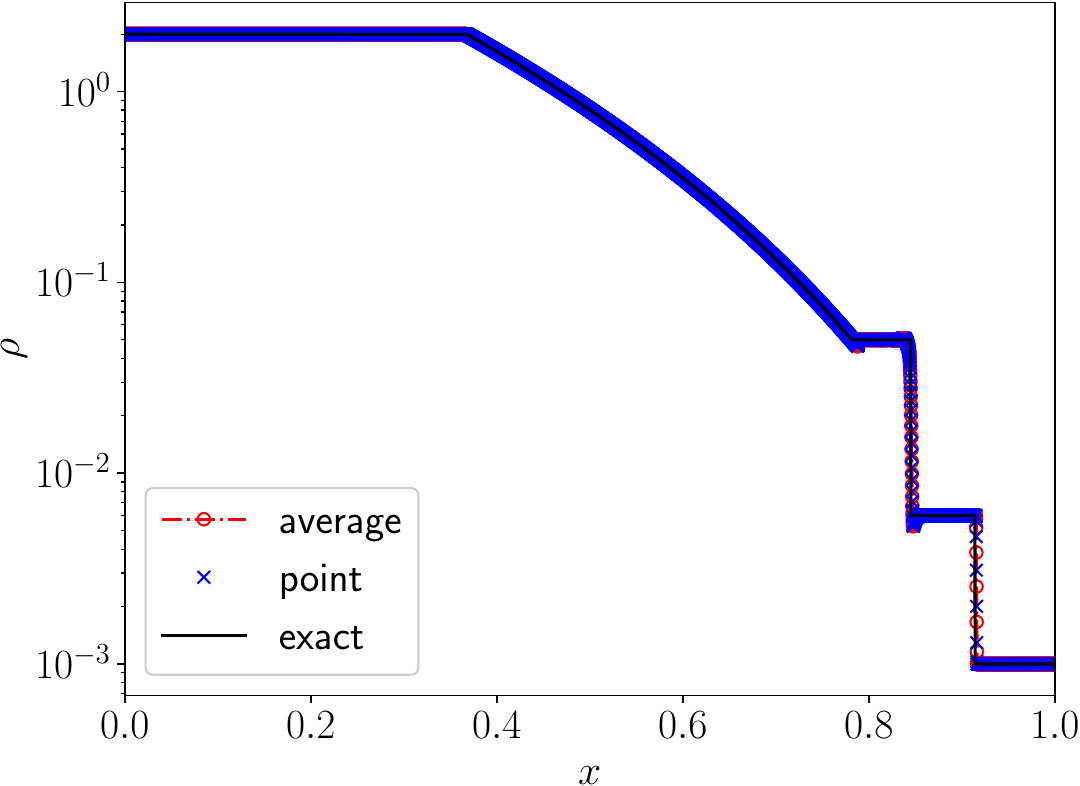}
	\end{subfigure}
	\begin{subfigure}[b]{0.24\textwidth}
		\centering
		\includegraphics[width=\linewidth]{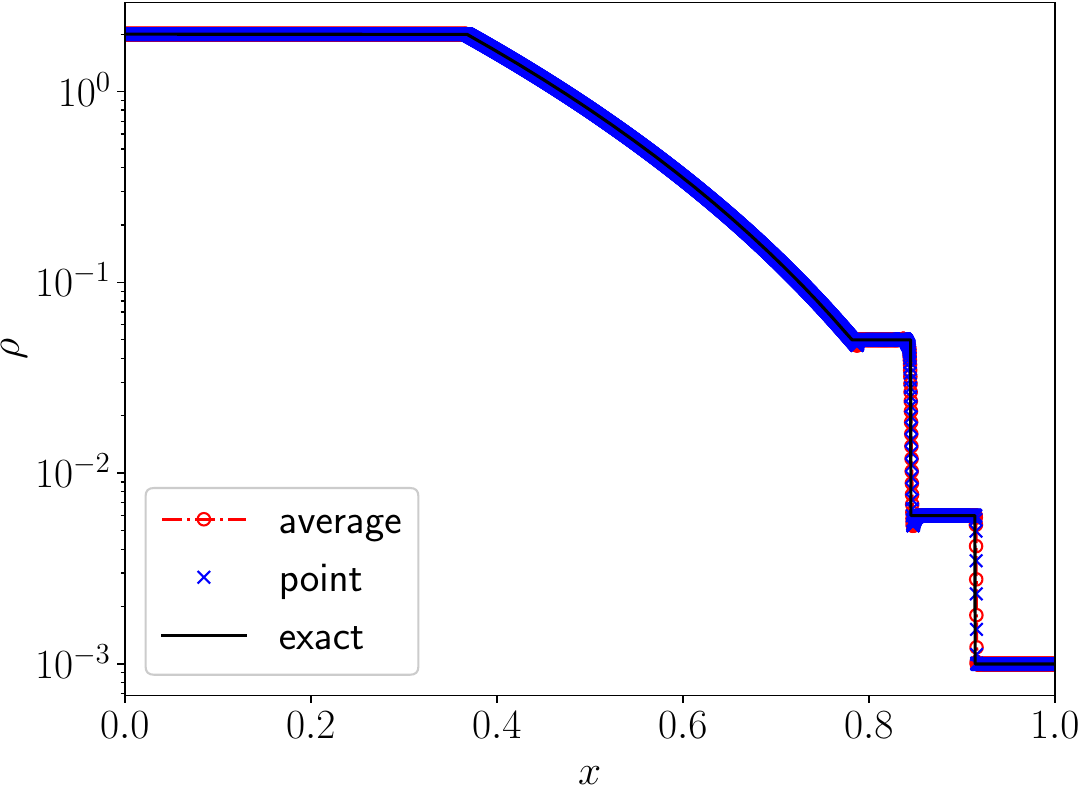}
	\end{subfigure}
	\begin{subfigure}[b]{0.24\textwidth}
		\centering
		\includegraphics[width=\linewidth]{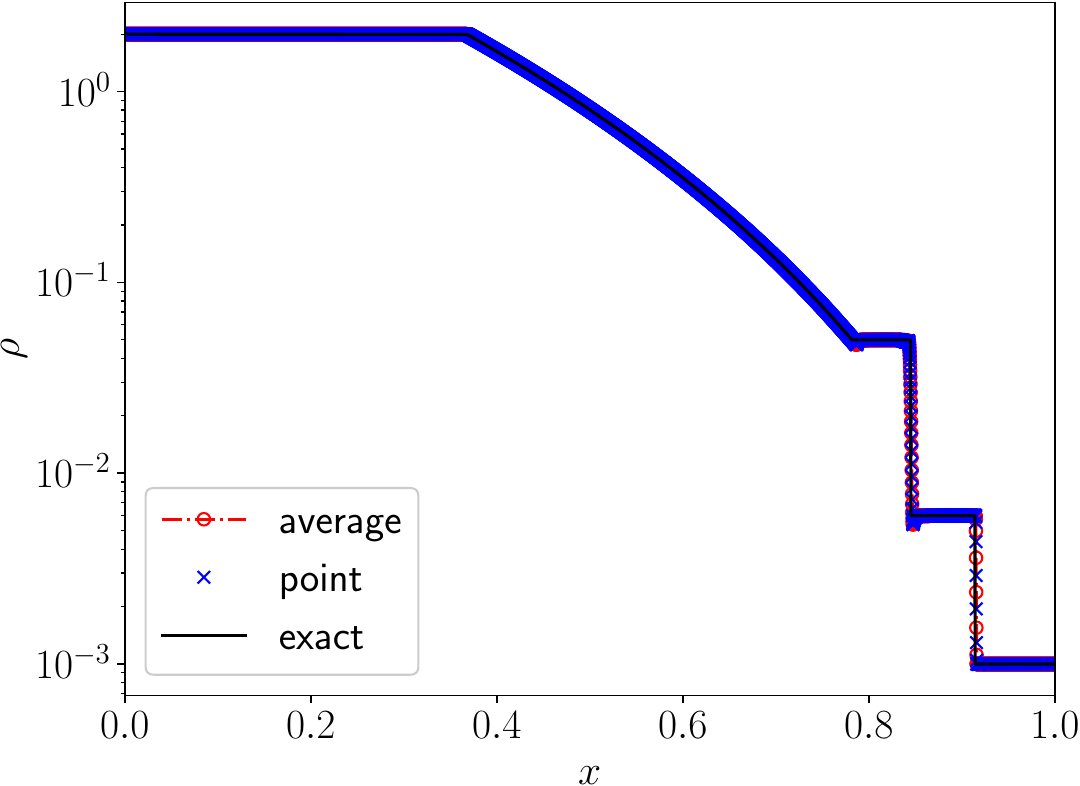}
	\end{subfigure}
	\caption{\Cref{ex:1d_leblanc}, LeBlanc Riemann problem.
		The density computed with the BP limitings and the shock sensor-based limiting ($\kappa=10$) on a uniform mesh of $6000$ cells.
		From left to right: JS, LLF, SW, and VH FVS.
	}
	\label{fig:1d_leblanc_fine_mesh_shock_sensor}
\end{figure}
\end{example}

\begin{example}[Blast wave interaction \cite{Woodward_1984_numerical_JoCP}]\label{ex:1d_blast_wave}
	This test describes the interaction of two strong shocks in the domain $[0,1]$ with reflective boundary conditions.
	The test is solved until $T=0.038$.

Due to the low-pressure region, the schemes blow up without the BP limitings.
\Cref{fig:1d_blast_wave_coarse_mesh_kappa=1} shows the density plots obtained by using the BP limitings and shock sensor-based limiting on a uniform mesh of $800$ cells.
The CFL number is $0.4$ for the JS, LLF, and SW FVS, and $0.36$ for the VH FVS.
The numerical solutions agree well the reference solution with a few overshoots/undershoots.

\begin{figure}[htb]
	\centering
	\begin{subfigure}[b]{0.24\textwidth}
		\centering
		\includegraphics[width=1.0\linewidth]{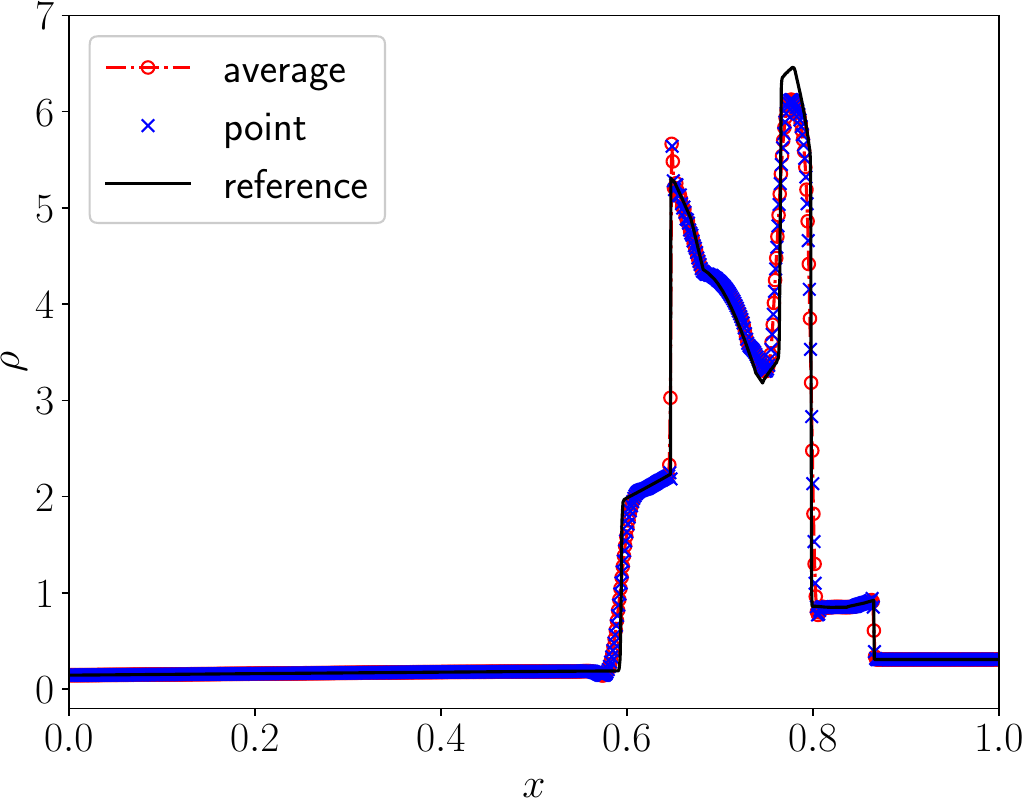}
	\end{subfigure}
	\begin{subfigure}[b]{0.24\textwidth}
		\centering
		\includegraphics[width=1.0\linewidth]{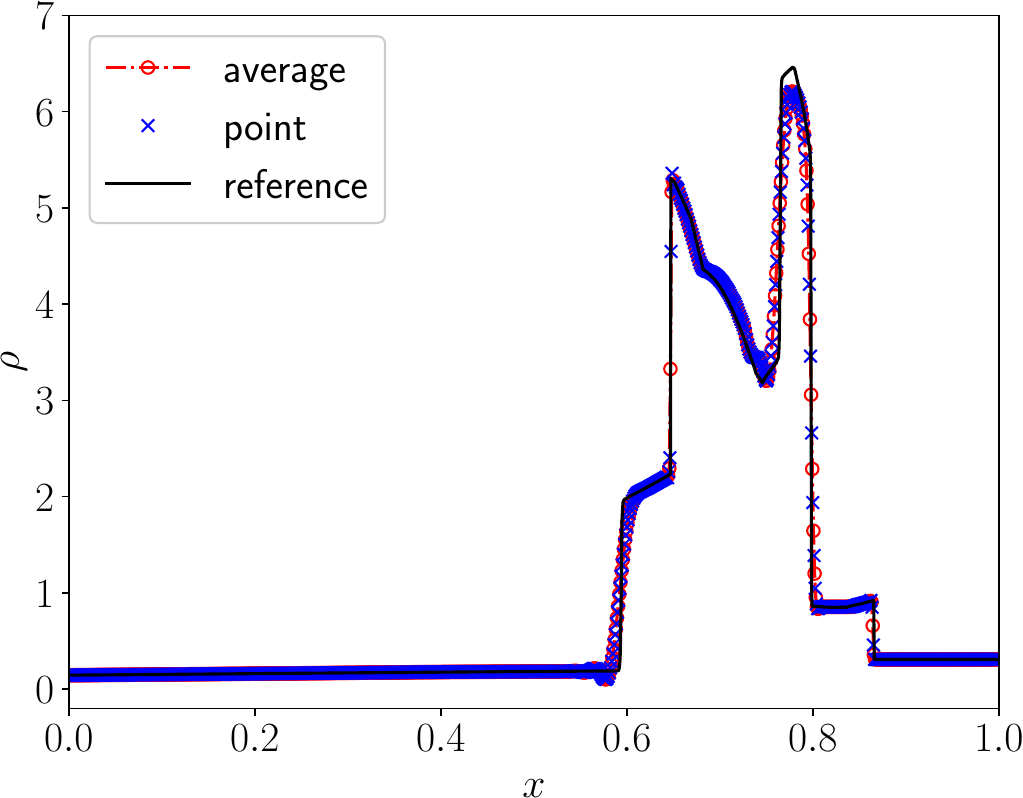}
	\end{subfigure}
	\begin{subfigure}[b]{0.24\textwidth}
		\centering
		\includegraphics[width=1.0\linewidth]{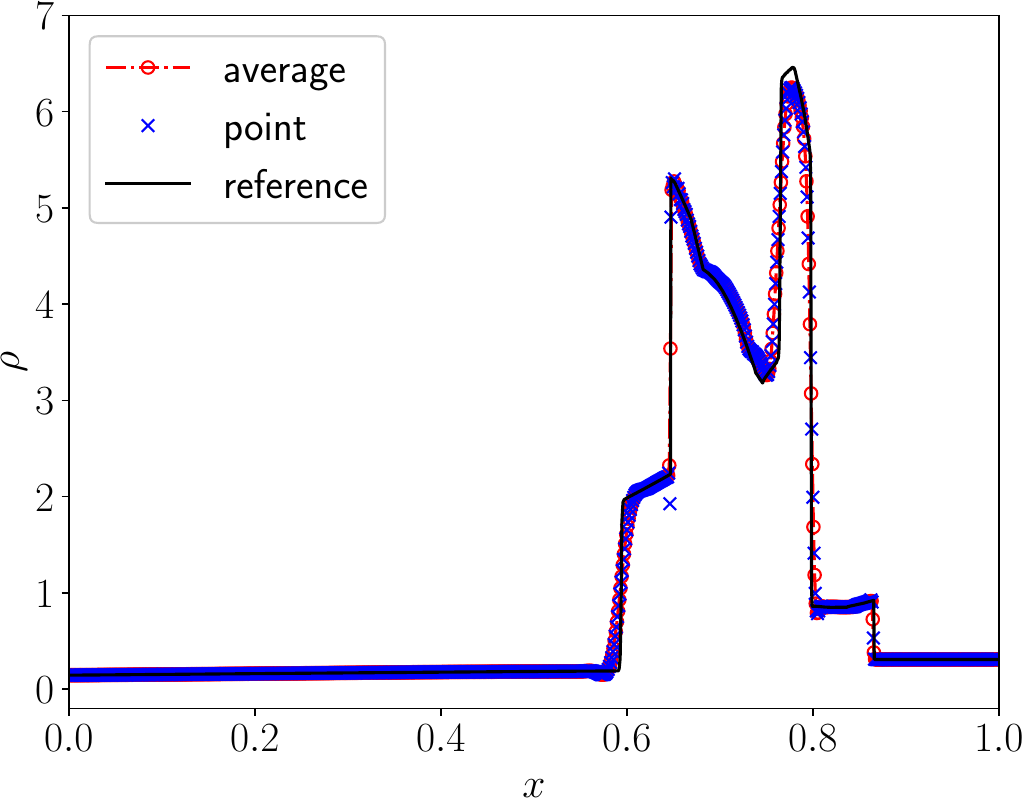}
	\end{subfigure}
	\begin{subfigure}[b]{0.24\textwidth}
		\centering
		\includegraphics[width=1.0\linewidth]{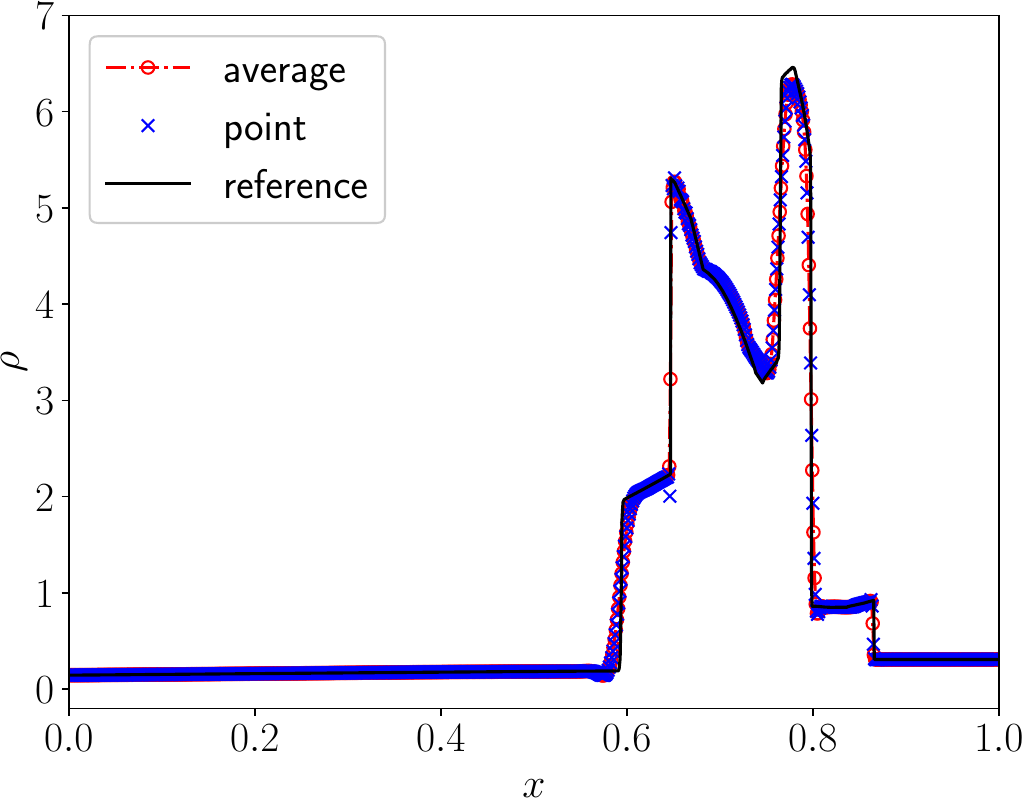}
	\end{subfigure}
	
	\begin{subfigure}[b]{0.24\textwidth}
		\centering
		\includegraphics[width=1.0\linewidth]{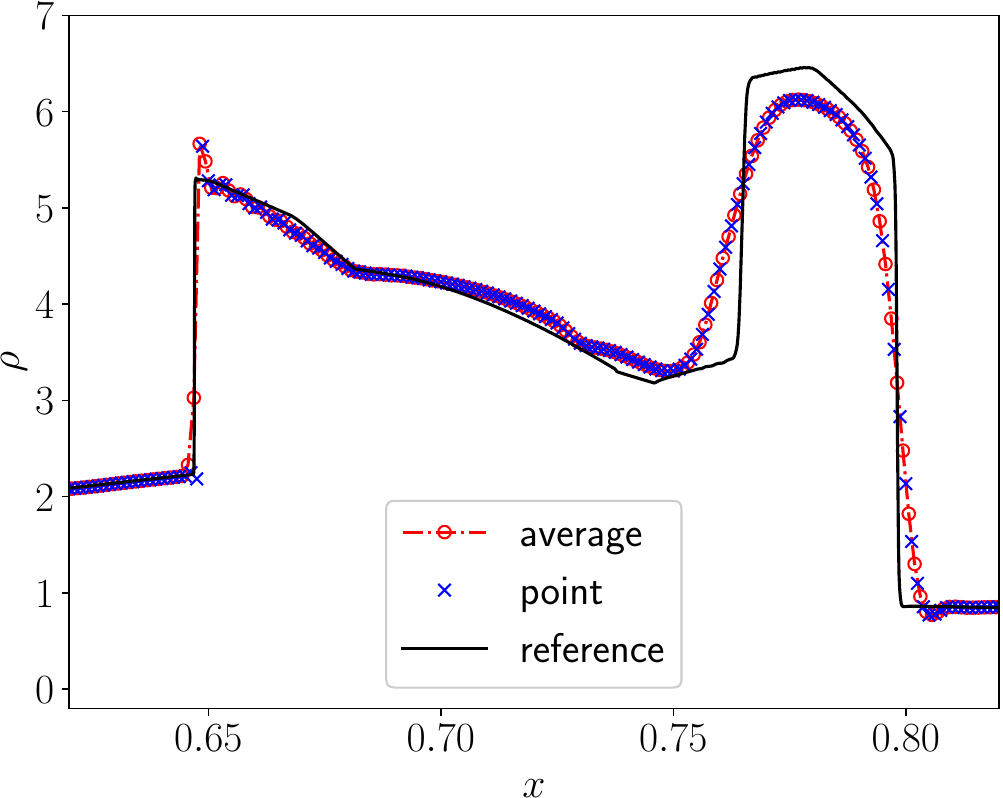}
	\end{subfigure}
	\begin{subfigure}[b]{0.24\textwidth}
		\centering
		\includegraphics[width=1.0\linewidth]{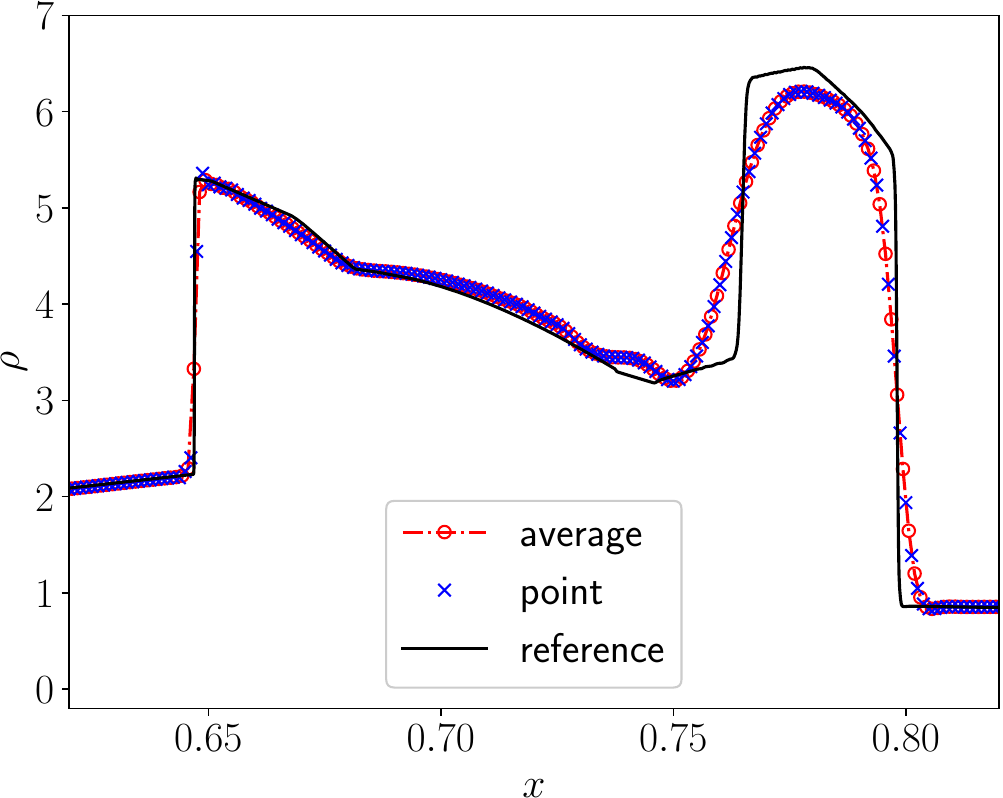}
	\end{subfigure}
	\begin{subfigure}[b]{0.24\textwidth}
		\centering
		\includegraphics[width=1.0\linewidth]{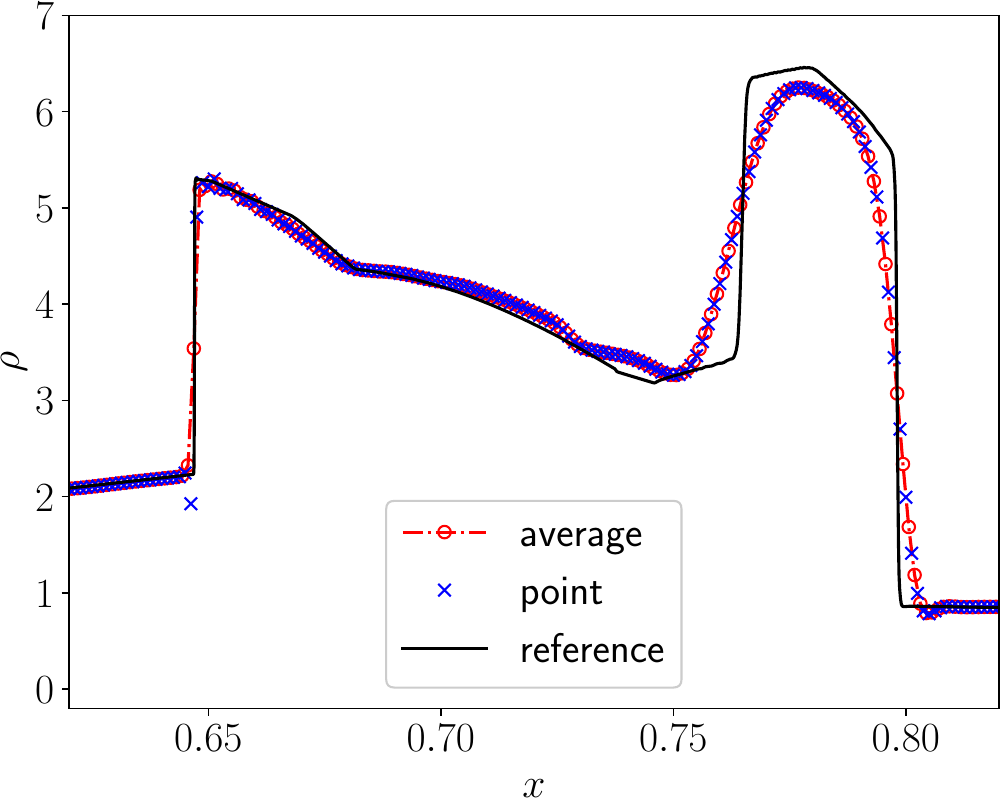}
	\end{subfigure}
	\begin{subfigure}[b]{0.24\textwidth}
		\centering
		\includegraphics[width=1.0\linewidth]{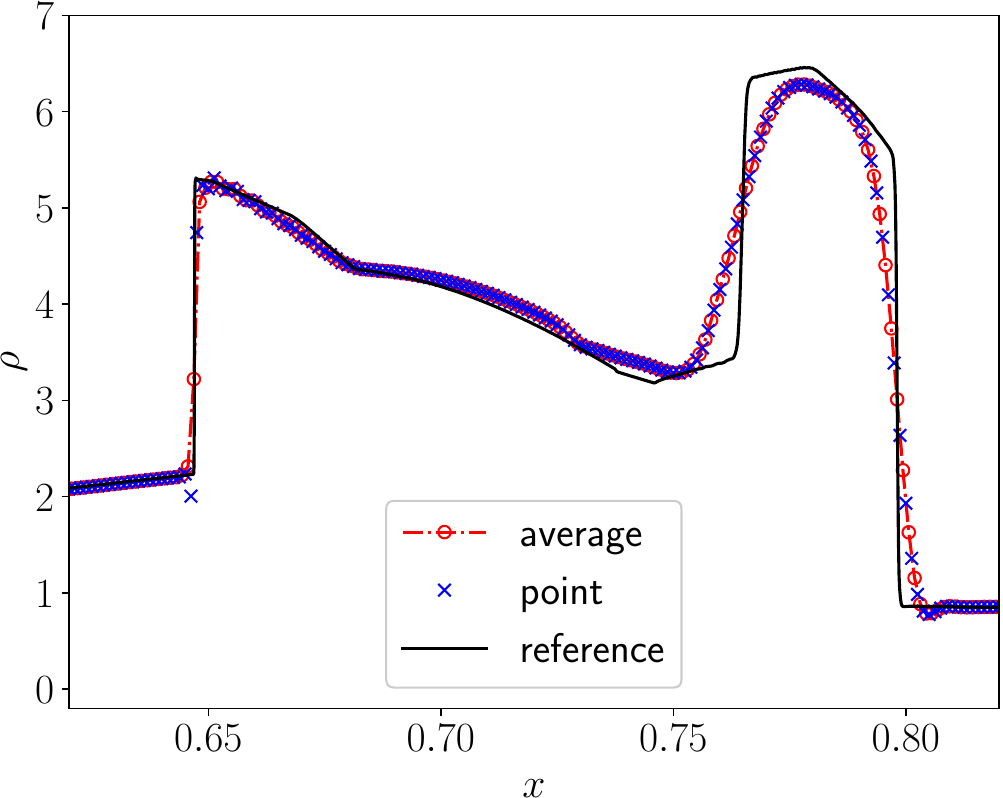}
	\end{subfigure}
	\caption{\Cref{ex:1d_blast_wave}, blast wave interaction.
		The density computed with the BP limitings and the shock sensor-based limiting ($\kappa=1$).
		The corresponding enlarged views in $x\in[0.62, 0.82]$ are shown in the bottom row.}
	\label{fig:1d_blast_wave_coarse_mesh_kappa=1}
\end{figure}
\end{example}

\begin{remark}
   In the numerical tests, the maximal CFL numbers for stability are obtained experimentally.
   Note that the reduction of the CFL numbers is due to different stability bounds for different point value updates, and is not related to the BP property.
   The study of such an issue is beyond the scope of this paper, which will be explored in the future.
\end{remark}


\begin{example}[2D advection equation]\label{ex:2d_advection_discontinuity}
	This test solves $u_t + u_x + u_y = 0$,
	on the periodic domain $[0,1]\times[0,1]$ with the following initial data
	\begin{equation*}
		u_0(x,y) = \begin{cases}
			1-\abs{5r}, &\text{if}\quad r=\sqrt{(x-0.3)^2+(y-0.3)^2}<0.2, \\
			1, &\text{if}\quad \max\{\abs{x-0.7}, ~\abs{y-0.7}\} < 0.2, \\
			0, &\text{otherwise}. \\
		\end{cases}
	\end{equation*}

For the advection equation, the JS and LLF FVS are equivalent.
The results on the uniform $100\times 100$ mesh obtained without and with BP limitings at $T=2$ are presented in \Cref{fig:2d_advection_discontinuity_surface}.
The BP limitings suppress the overshoots and undershoots well near the discontinuities.
\Cref{tab:2d_advection_discontinuity_bounds} lists whether the numerical solutions stay in the bound $[0,1]$.
The bound is only preserved when both the BP limitings for the cell average and point value are activated,
demonstrating that it is necessary to use the two kinds of BP limitings simultaneously.

\begin{figure}[htb]
	\centering
	\begin{subfigure}[b]{0.31\textwidth}
		\centering
		\includegraphics[width=1.0\linewidth]{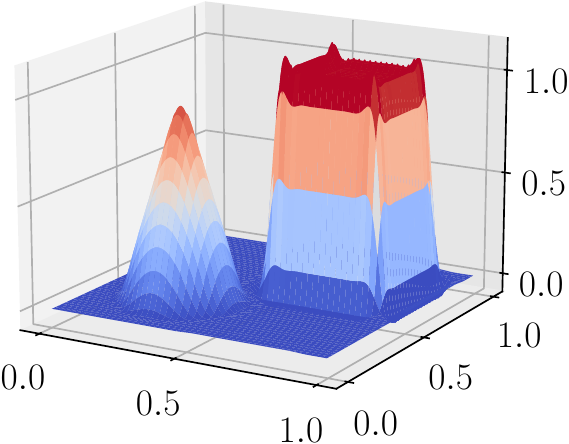}
	\end{subfigure}
	\begin{subfigure}[b]{0.31\textwidth}
		\centering
		\includegraphics[width=1.0\linewidth]{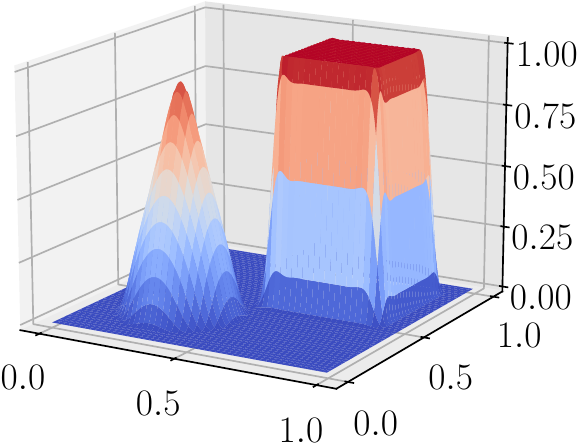}
	\end{subfigure}
        \begin{subfigure}[b]{0.35\textwidth}
		\centering
		\includegraphics[width=1.0\linewidth]{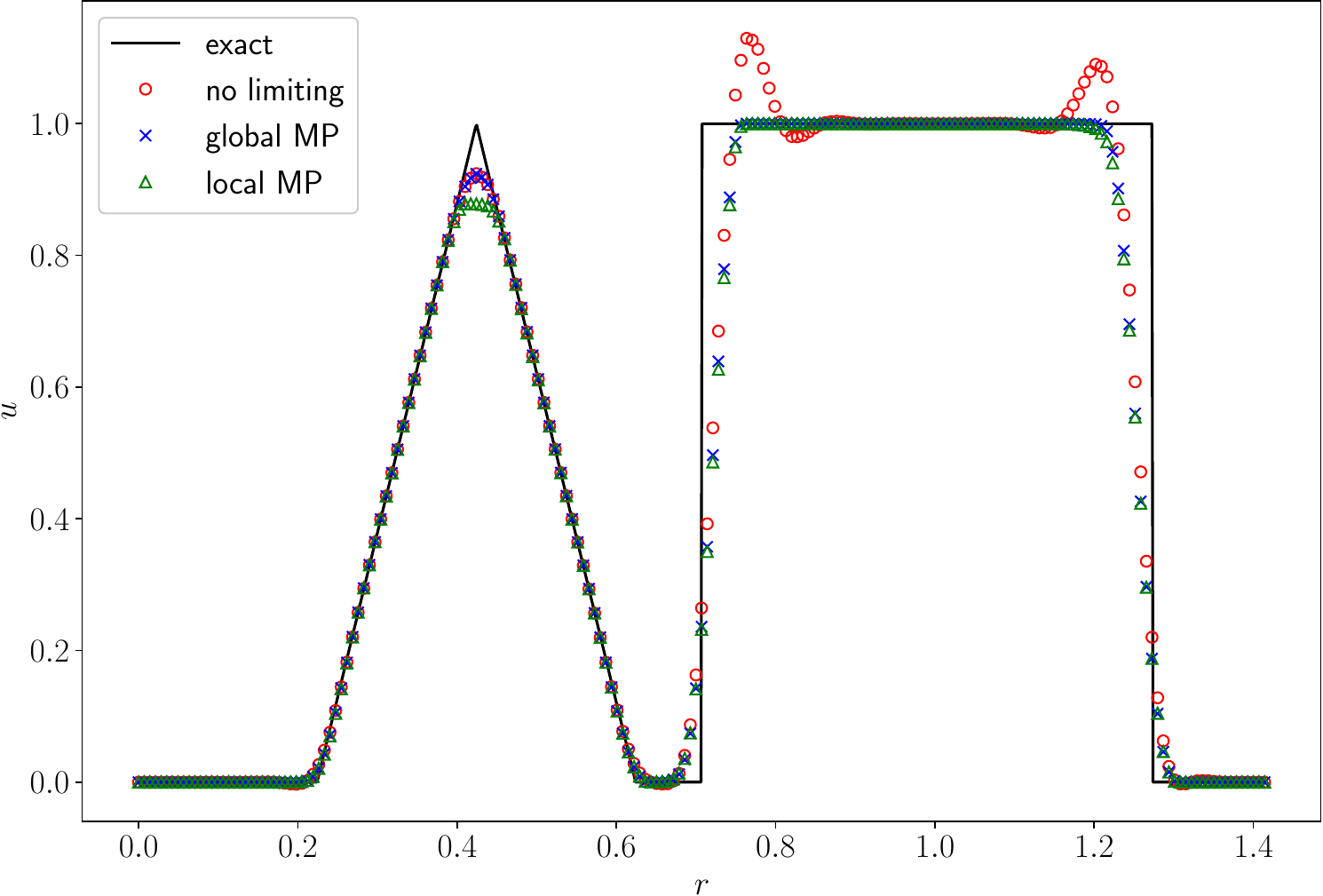}
	\end{subfigure}
	\caption{\Cref{ex:2d_advection_discontinuity}, 2D advection equation.
        From left to right: without any limiting, with BP limitings imposing the global MP, cut-line along $y=x$.}
	\label{fig:2d_advection_discontinuity_surface}
\end{figure}

\begin{table}[htb]
	\centering
	\begin{tabular}{c|c|c|c}
		\hline\hline
	\diagbox{cell average}{point value}  & no limiting & global MP & local MP \\ \hline
	no limiting   &  {\color{red}\xmark}   &  {\color{red}\xmark}   &      {\color{red}\xmark}     \\ \hline
	global MP   &  {\color{red}\xmark}   &   {\color{blue}\cmark}   &   {\color{blue}\cmark}     \\ \hline
	local MP   &  {\color{red}\xmark}   &    {\color{blue}\cmark}   &     {\color{blue}\cmark}     \\
		\hline\hline
	\end{tabular}
	\caption{\Cref{ex:2d_advection_discontinuity}, 2D advection equation.
	We list whether the numerical solutions stay in the bound $[0,1]$ with different limitings.}
	\label{tab:2d_advection_discontinuity_bounds}
\end{table}
\end{example}

\begin{example}[2D Burgers' equation]\label{ex:2d_burgers}
	We solve $u_t + \left(\frac12 u^2\right)_x + \left(\frac12 u^2\right)_y = 0$
	on the periodic domain $[0,1]\times[0,1]$,
	with the initial condition $u_0(x, y) = 0.5 + \sin(2\pi(x+y))$.
	This test is solved until $T=0.3$, when the shock waves have emerged.

\Cref{fig:2d_burgers_shock_surface} plots the solutions using the LLF FVS on the uniform $100\times100$ mesh without and with limitings.
The oscillations near the shock waves are suppressed well when the limitings are activated, and the numerical solutions agree well with the reference solution.
The blending coefficients $\theta_{\xr,j}, \theta_{i,\yr}$ for the cell average and $\theta_{\sigma}$ for the point value when using the global MP are also presented in \Cref{fig:2d_burgers_shock_limiting}, verifying that the limitings are only locally activated near the shock waves.
 
\begin{figure}[htb]
	\centering
	\begin{subfigure}[b]{0.31\textwidth}
		\centering
		\includegraphics[width=\linewidth]{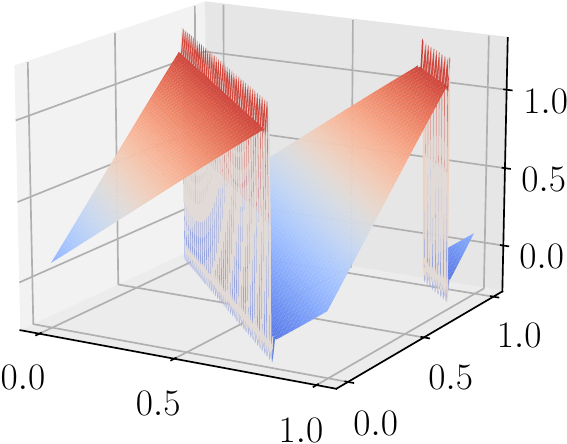}
	\end{subfigure}
	\begin{subfigure}[b]{0.31\textwidth}
		\centering
		\includegraphics[width=\linewidth]{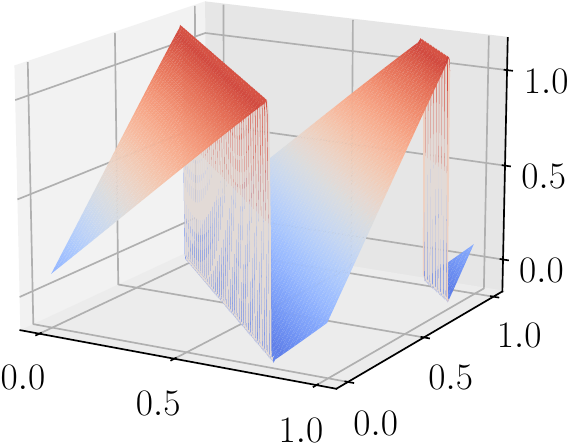}
	\end{subfigure}
        \begin{subfigure}[b]{0.32\textwidth}
		\centering
		\includegraphics[width=\linewidth]{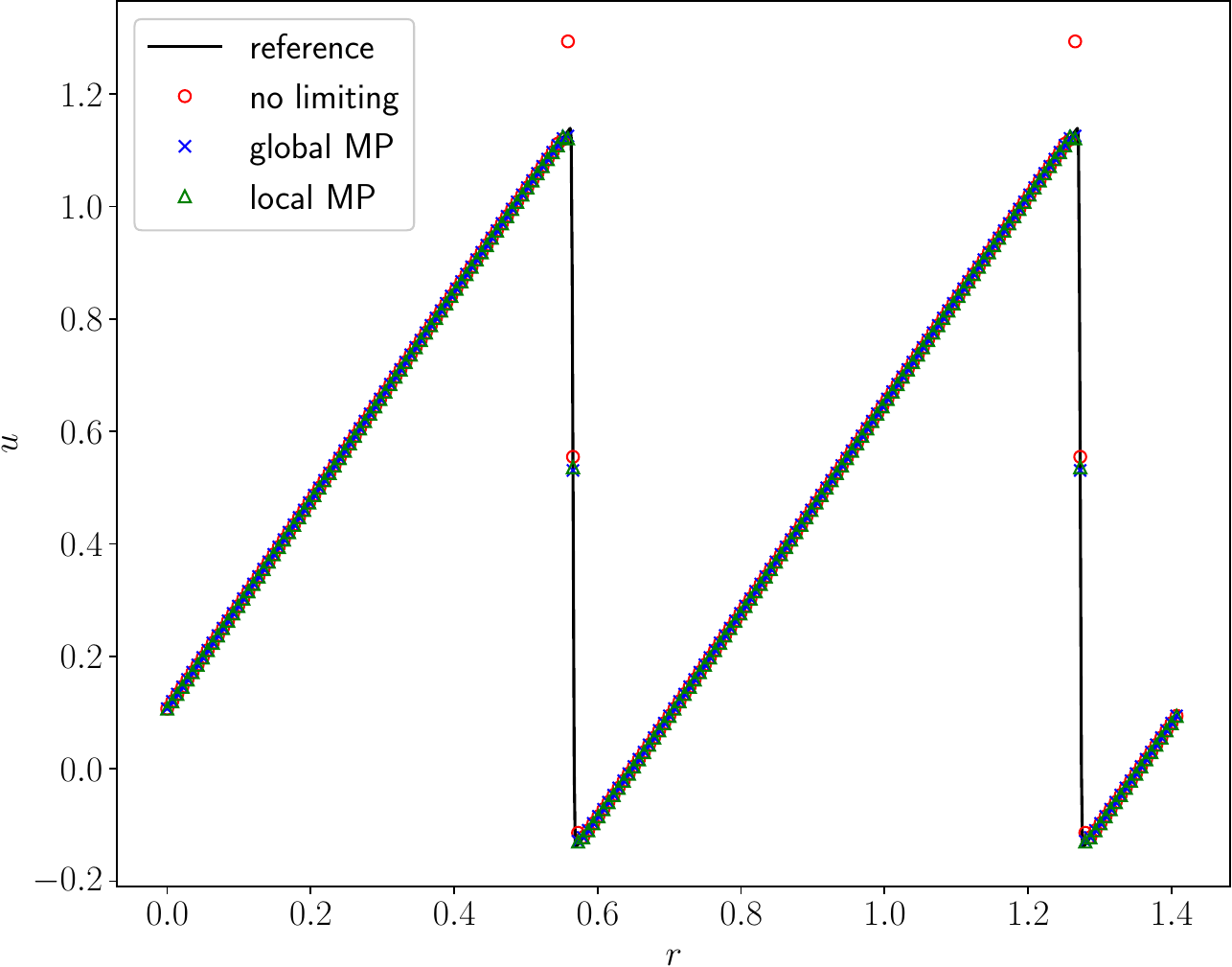}
	\end{subfigure}
	\caption{\Cref{ex:2d_burgers}, 2D Burgers' equation.
 	From left to right: without limiting, with BP limitings imposing the global MP, cut-line along $y=x$.}
	\label{fig:2d_burgers_shock_surface}
\end{figure}

\begin{figure}[htb]
	\centering
	\begin{subfigure}[b]{0.32\textwidth}
		\centering
		\includegraphics[width=\linewidth]{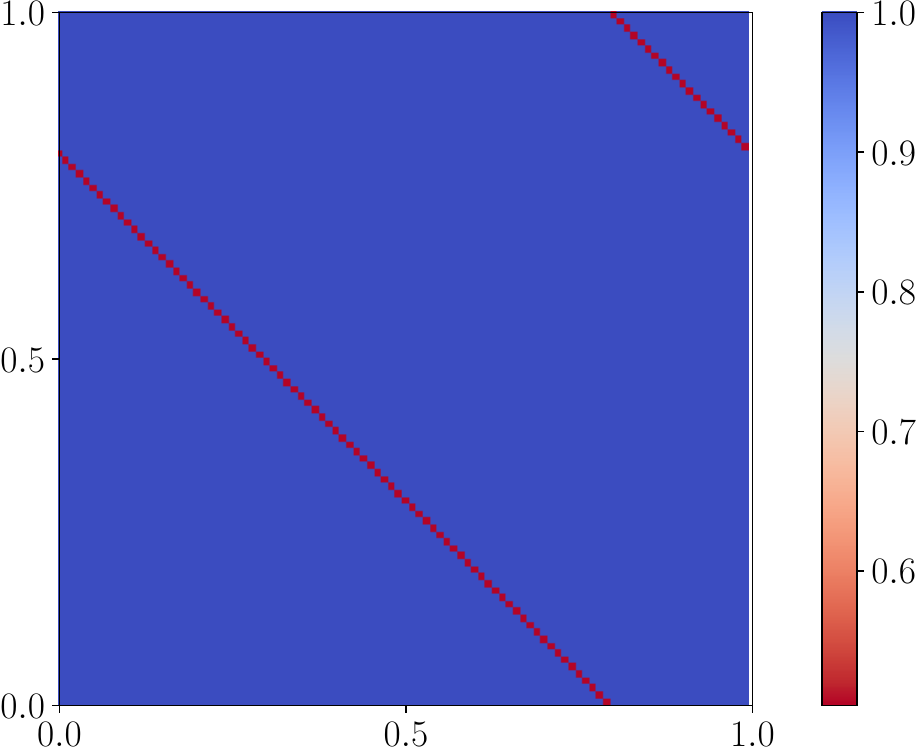}
	\end{subfigure}
	\begin{subfigure}[b]{0.32\textwidth}
		\centering
		\includegraphics[width=\linewidth]{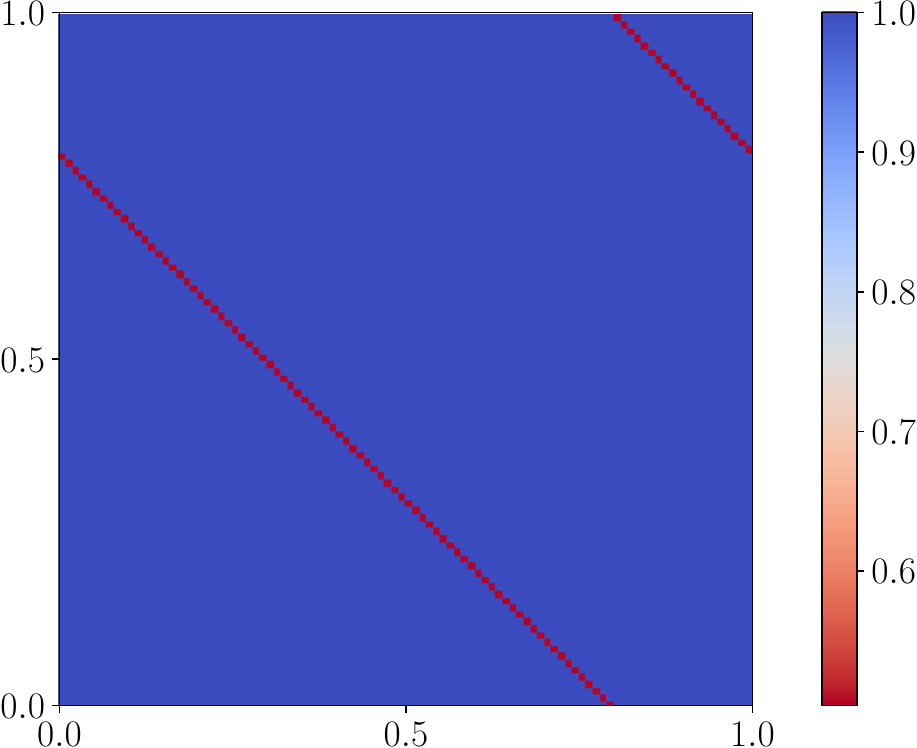}
	\end{subfigure}
	\begin{subfigure}[b]{0.32\textwidth}
		\centering
		\includegraphics[width=\linewidth]{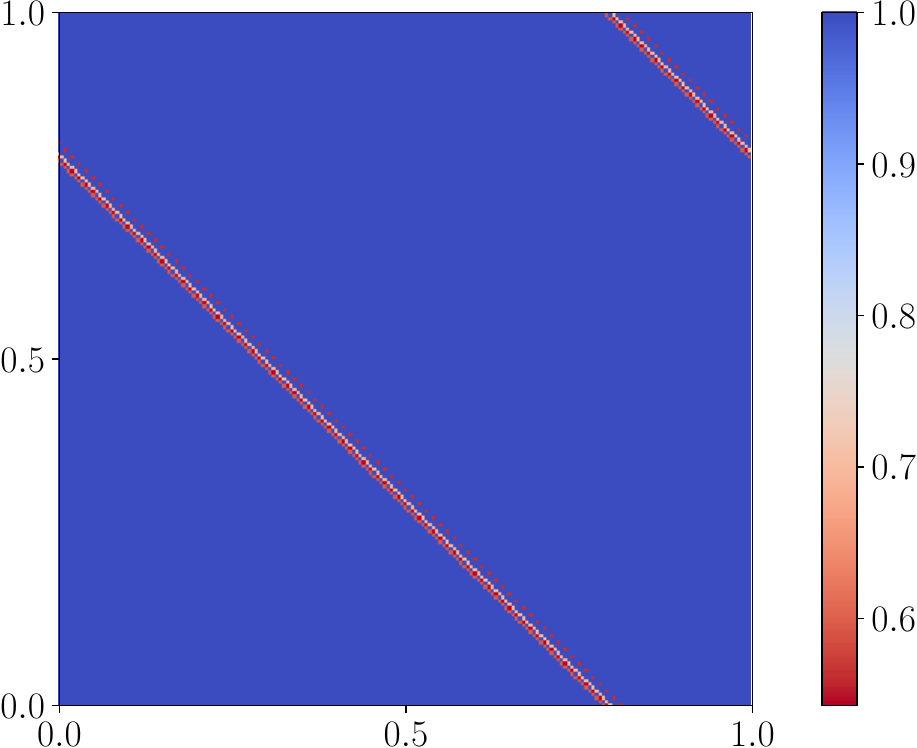}
	\end{subfigure}
	\caption{\Cref{ex:2d_burgers}, 2D Burgers' equation.
	The blending coefficients in the limitings.
	From left to right: $\theta_{\xr,j}$ and $\theta_{i,\yr}$ for the cell average, $\theta_{\sigma}$ for the point value.}
	\label{fig:2d_burgers_shock_limiting}
\end{figure}
\end{example}

\begin{example}[2D isentropic vortex]\label{ex:2d_vortex}
	The domain is $[-5,5]\times[-5,5]$ with periodic boundary conditions and the initial condition is
	\begin{equation*}
		\rho = T_0^{\frac{1}{\gamma-1}},~
		(v_1, v_2) = (1, 1) + k_0(y, -x),~
		p = T_0\rho,~
		k_0=\frac{\epsilon}{2\pi}e^{0.5(1-r^2)},~
		T_0 = 1 - \frac{\gamma-1}{2\gamma}k_0^2,
	\end{equation*}
	where $r^2 = x^2+y^2$, and $\epsilon=10.0828$ is the vortex strength.
	The lowest initial density and pressure are around $7.83\times 10^{-15}$ and $1.78 \times 10^{-20}$, respectively,
	so that the BP limitings are necessary to run this test case.
	The problem is solved until $T = 1$.

Figure \ref{fig:2d_vortex_accuracy} shows the errors and corresponding convergence rates of the conservative variables in the $\ell^1$ norm with the CFL number $0.2$.
The BP AF methods based on the JS, LLF, and VH FVS achieve the third-order accuracy,
which is not affected by the BP limitings.
The convergence rate based on the SW FVS reduces to around $2$,
due to the non-smoothness of the SW FVS as mentioned in \Cref{rmk:sw_fvs}.

\begin{figure}[htb]
	\centering
	\begin{subfigure}[b]{0.5\textwidth}
		\centering
		\includegraphics[width=1.0\linewidth]{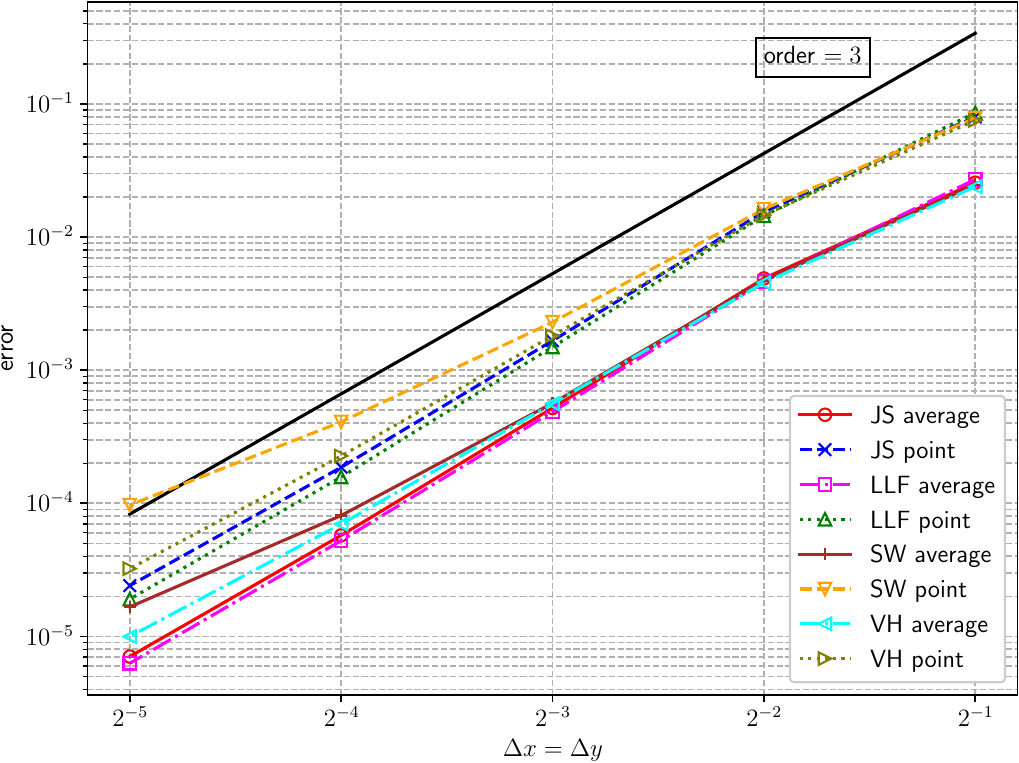}
	\end{subfigure}
	\caption{\Cref{ex:2d_vortex}, 2D isentropic vortex problem.
		The errors and convergence rates.
	}
	\label{fig:2d_vortex_accuracy}
\end{figure}
\end{example}

\begin{example}[Quasi-2D Sod shock tube]\label{ex:2d_sod}
	This test solves the Sod shock tube problem along the $x$-direction on the domain $[0,1]\times[0,1]$ with a $100\times 2$ uniform mesh until $T=0.2$.
	The initial condition is $(\rho, v_1, v_2, p)=(1, 0, 0, 1)$ if $x < 0.5$, otherwise $(\rho, v_1, v_2, p)=(0.125, 0, 0, 0.1)$.
	
	The density plots obtained by using different ways for the point value update without and with the shock sensor ($\kappa=1$) are shown in \Cref{fig:2d_sod_density}.
	The density based on the JS shows large deviations between the contact discontinuity and shock wave, which cannot be reduced by the limiting.
	Seen from \Cref{fig:2d_sod_density_decoupled}, the solutions belonging to the DoFs for different point values are decoupled,
	known as the mesh alignment issue, and has been explained in \Cref{sec:mesh_alignment}.
	The results of all the FVS-based methods agree well with the exact solution when the limiting is activated.
	The FVS-based AF methods are more advantageous in simulations since they can cure both the stagnation and mesh alignment issues.
	To save space, in the following tests, we only show the results obtained using the LLF FVS.
		
	\begin{figure}[htb]
		\centering		
		\begin{subfigure}[b]{0.24\textwidth}
			\includegraphics[width=\linewidth]{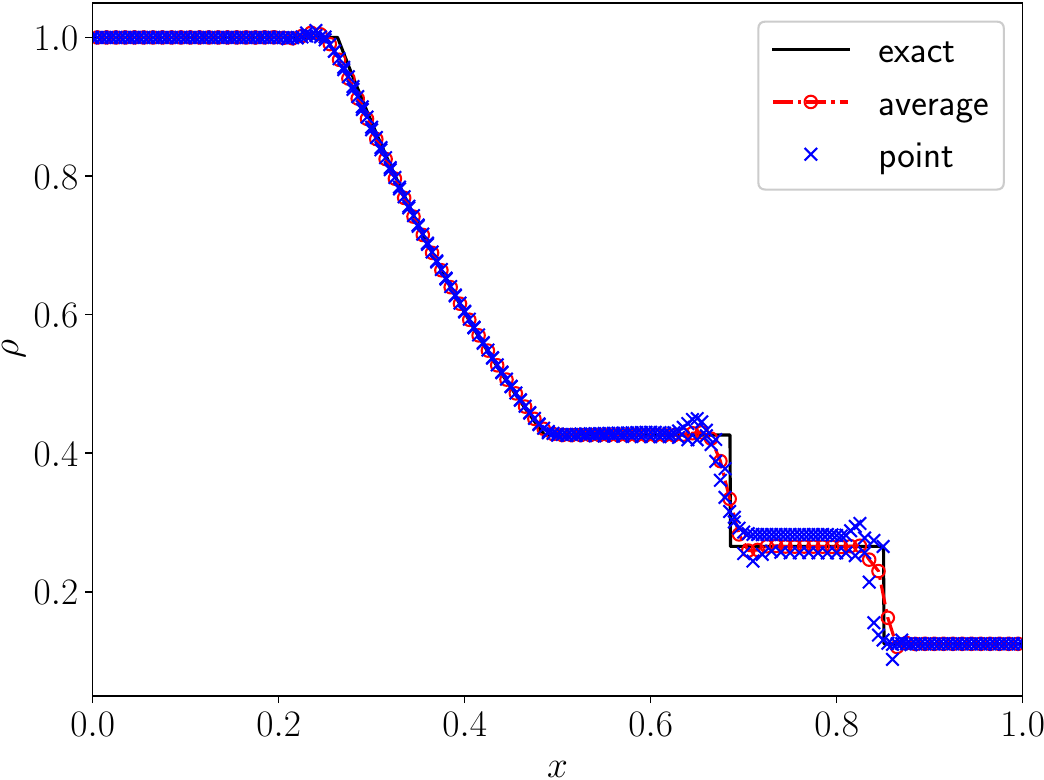}
		\end{subfigure}
		\begin{subfigure}[b]{0.24\textwidth}
			\centering
			\includegraphics[width=\linewidth]{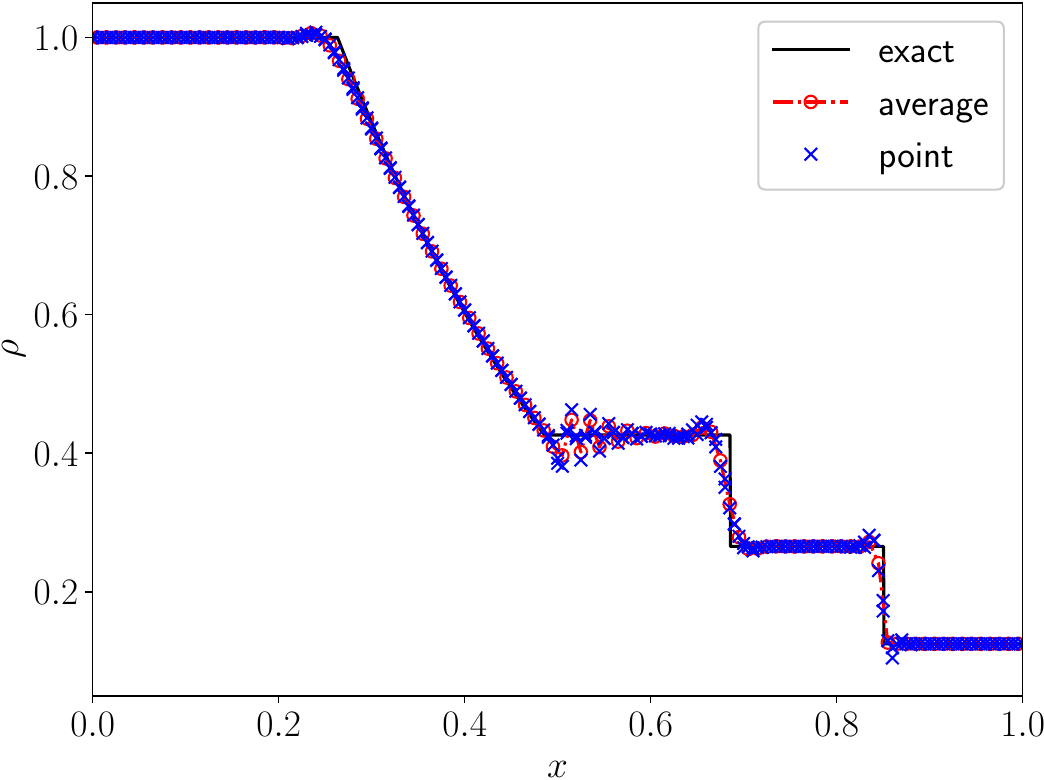}
		\end{subfigure}
		\begin{subfigure}[b]{0.24\textwidth}
			\centering
			\includegraphics[width=\linewidth]{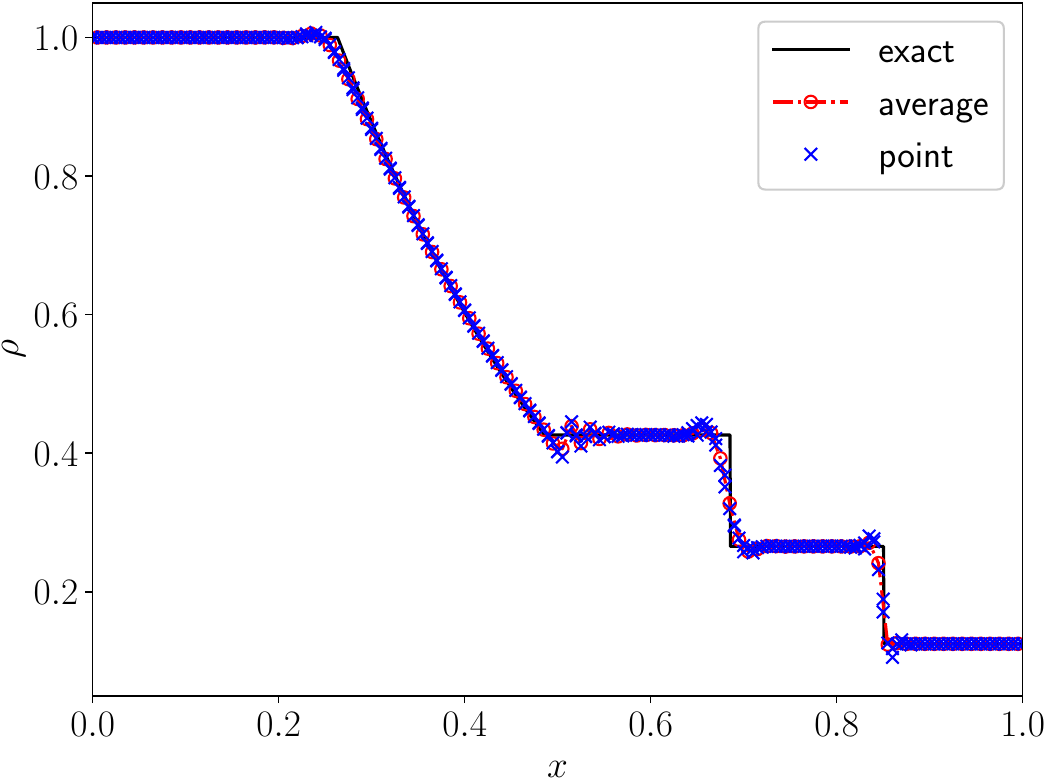}
		\end{subfigure}
		\begin{subfigure}[b]{0.24\textwidth}
			\centering
			\includegraphics[width=\linewidth]{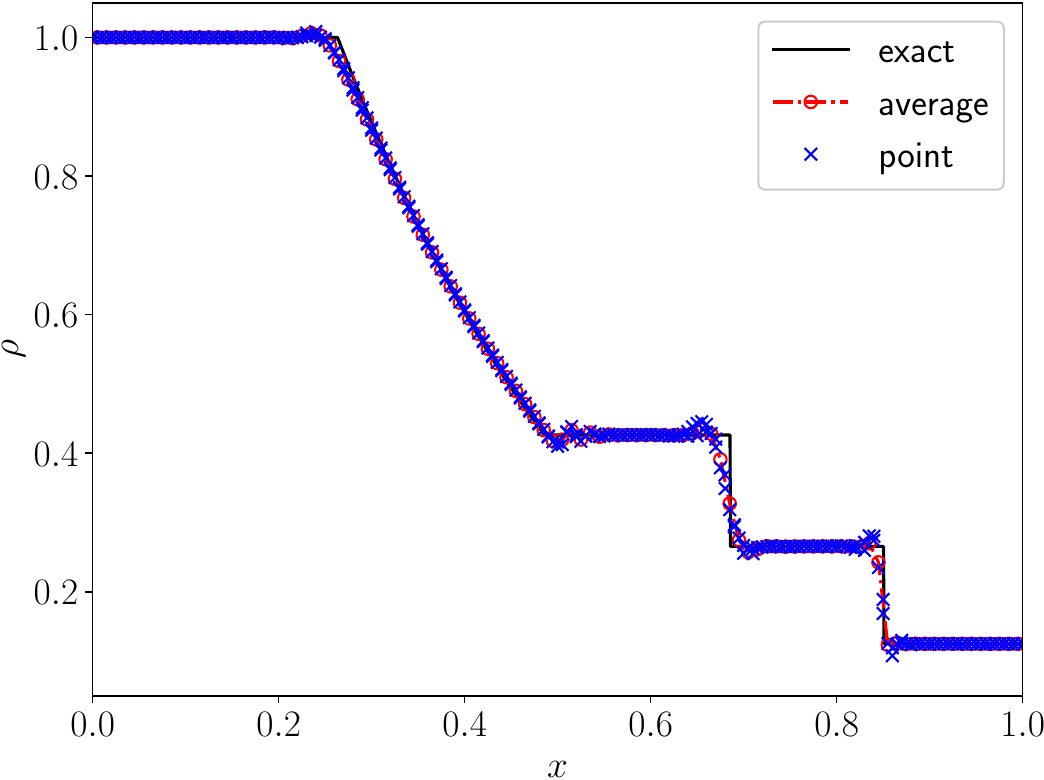}
		\end{subfigure}
		\vspace{5pt}
		
		\begin{subfigure}[b]{0.24\textwidth}
			\includegraphics[width=\linewidth]{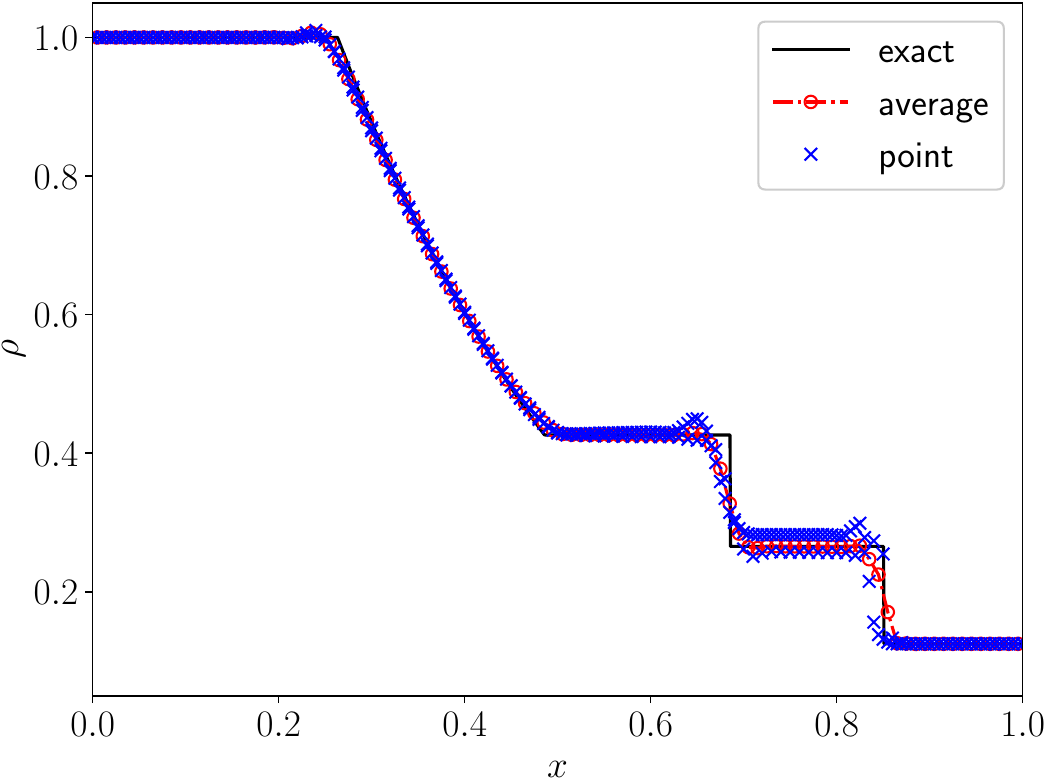}
		\end{subfigure}
		\begin{subfigure}[b]{0.24\textwidth}
			\centering
			\includegraphics[width=\linewidth]{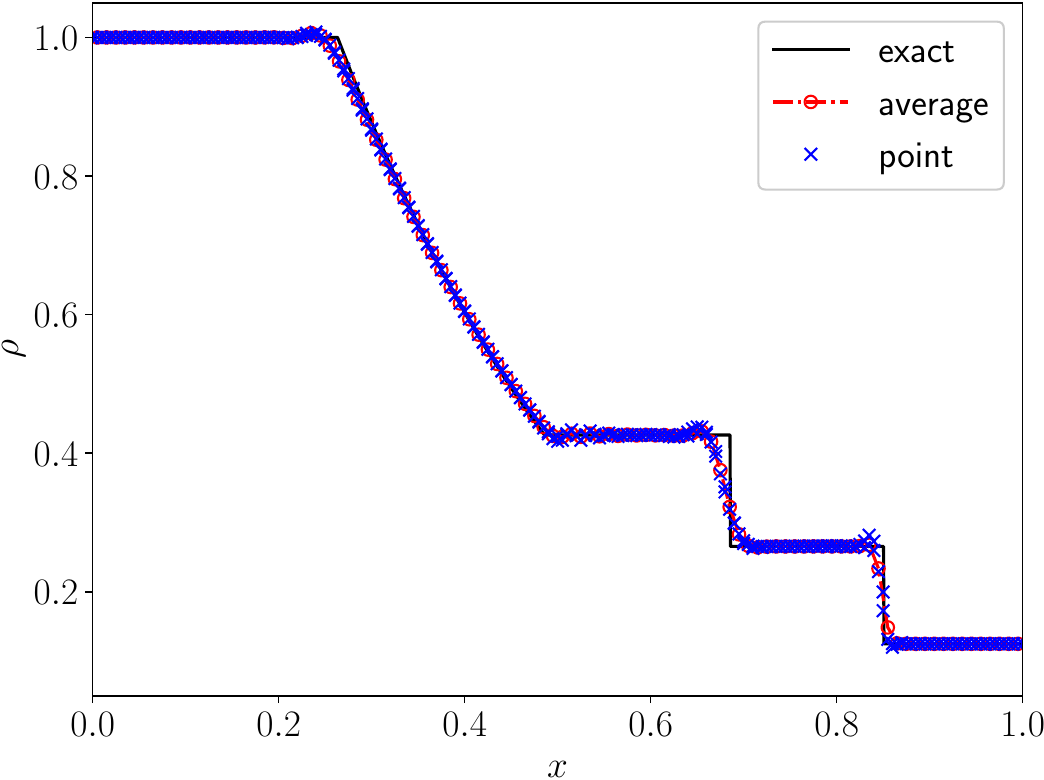}
		\end{subfigure}
		\begin{subfigure}[b]{0.24\textwidth}
			\centering
			\includegraphics[width=\linewidth]{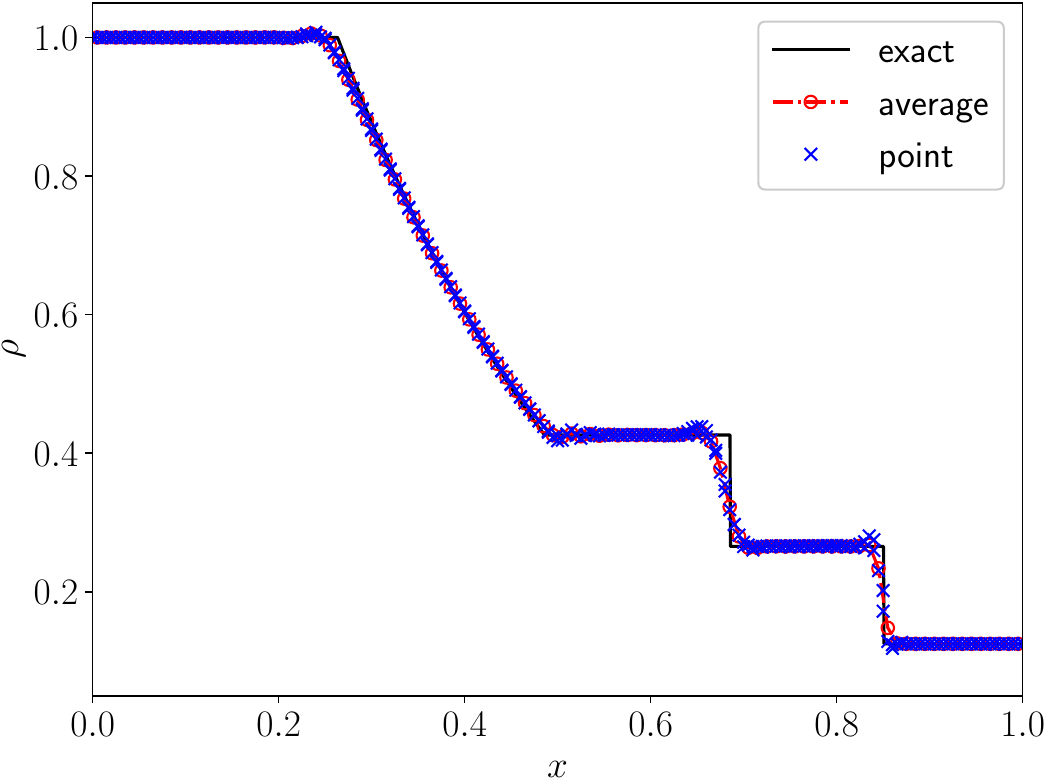}
		\end{subfigure}
		\begin{subfigure}[b]{0.24\textwidth}
			\centering
			\includegraphics[width=\linewidth]{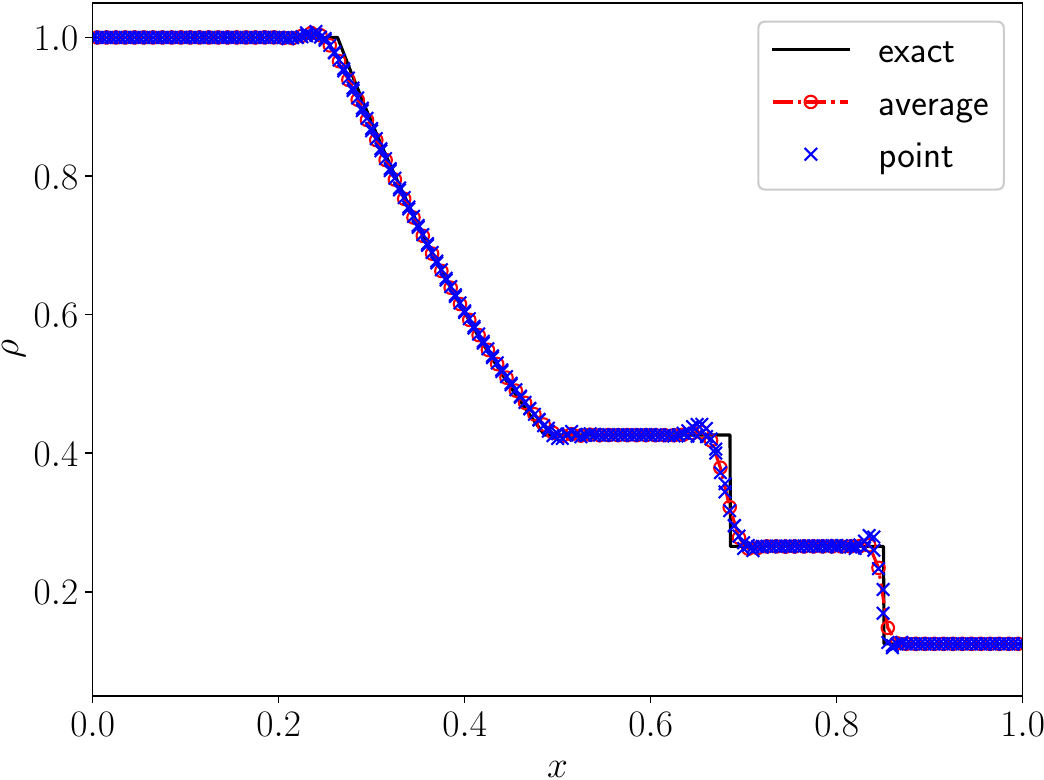}
		\end{subfigure}
		\caption{\Cref{ex:2d_sod}, quasi-2D Sod shock tube.
			The density are computed without (top row) and with the shock sensor-based limiting ($\kappa=1$, bottom row).
			From left to tight: JS, LLF, SW, and VH FVS.}
		\label{fig:2d_sod_density}
	\end{figure}
	
	\begin{figure}[htb]
		\centering		
		\includegraphics[width=0.4\linewidth]{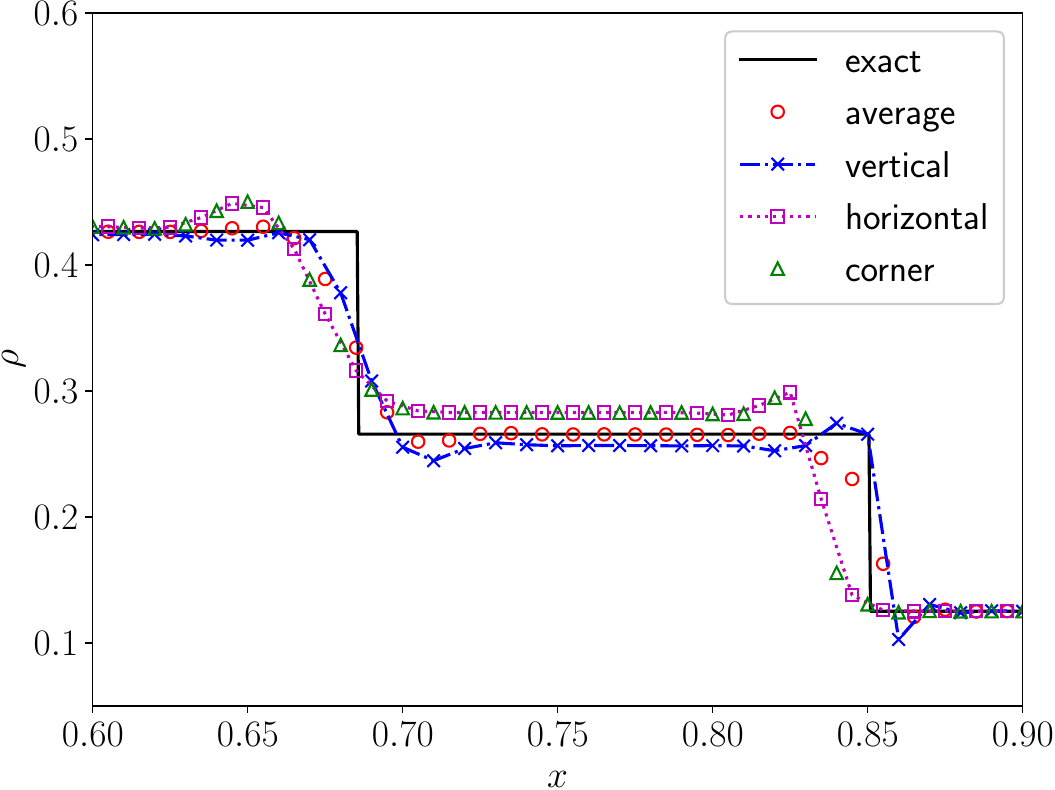}
		\caption{\Cref{ex:2d_sod}, quasi-2D Sod shock tube.
		Based on the JS, the solutions that belong to different kinds of DoFs are decoupled.}
		\label{fig:2d_sod_density_decoupled}
	\end{figure}
			
\end{example}

\begin{example}[Sedov blast wave]\label{ex:2d_sedov}
	The domain is $[-1.1,1.1]\times[-1.1,1.1]$ with outflow boundary conditions.
	The initial density is one, velocity is zero, and the total energy is $10^{-12}$ everywhere except that for the centered cell, the total energy of the cell average and the point values on its faces are $\frac{0.979264}{\Delta x\Delta y}$ with $\Delta x = 2.2/N_1, \Delta y = 2.2/N_2$, which is used to emulate a $\delta$-function at the center.
	
	This test is solved until $T=1$ and the BP limitings are necessary, otherwise, the simulation fails due to negative pressure.
	The density plots obtained with the shock sensor ($\kappa=0.5$) are shown in \Cref{fig:2d_sedov_density}.
	The circular shock wave is well-captured and the numerical solutions converge to the exact solution without spurious oscillations.
	The blending coefficients based on the shock sensor are presented in \Cref{fig:2d_sedov_ss}, indicating that the limiting is locally activated.
	
\begin{figure}[htb]
	\centering
	\begin{subfigure}[b]{0.3\textwidth}
		\centering
		\includegraphics[width=\linewidth]{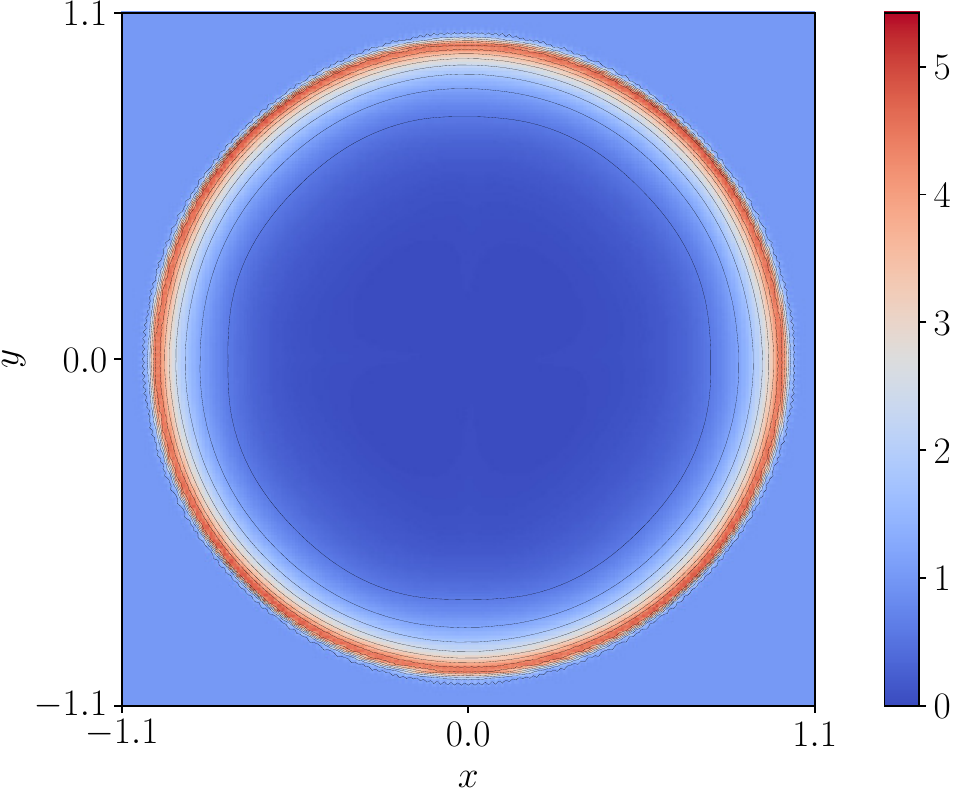}
	\end{subfigure}
	\begin{subfigure}[b]{0.3\textwidth}
		\centering
		\includegraphics[width=\linewidth]{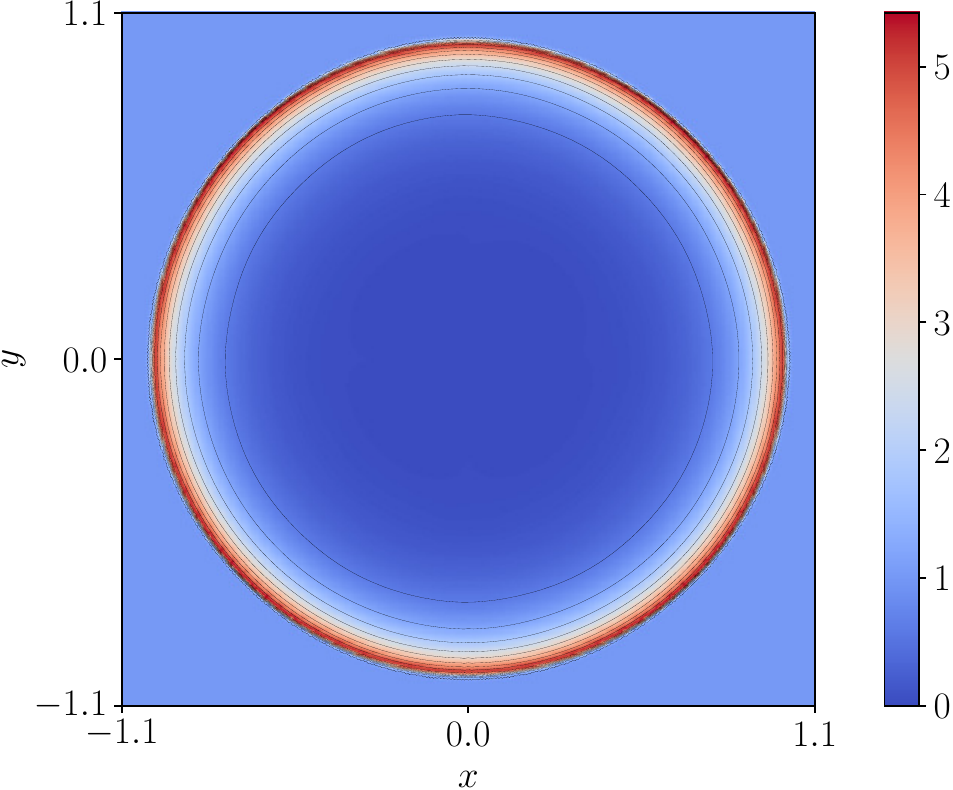}
	\end{subfigure}
        \begin{subfigure}[b]{0.3\textwidth}
		\centering
		\includegraphics[width=0.81\linewidth]{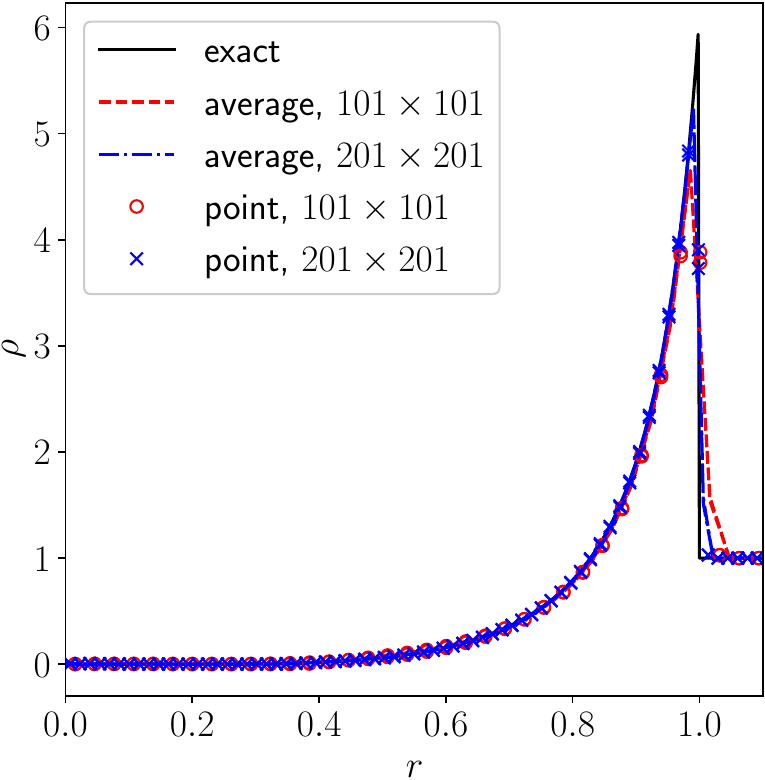}
	\end{subfigure}
	\caption{\Cref{ex:2d_sedov}, 2D Sedov blast wave.
		The density plots computed by the BP AF method.
		From left to right:	$10$ equally spaced contour lines from $0$ to $5.423$ on the uniform $101\times101$ and $201\times201$ meshes, respectively, cut-line along $y=x$. }
	\label{fig:2d_sedov_density}
\end{figure}

\begin{figure}[htb]
	\centering
	\begin{subfigure}[b]{0.3\textwidth}
		\centering
		\includegraphics[width=\linewidth]{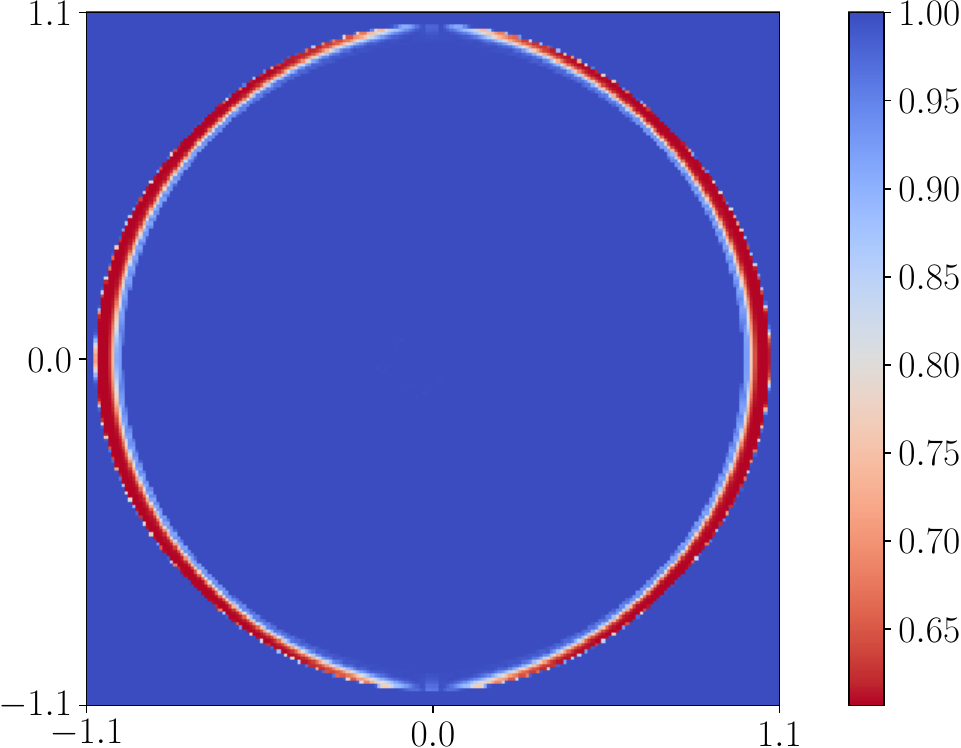}
	\end{subfigure}
	\qquad\qquad\qquad
	\begin{subfigure}[b]{0.3\textwidth}
		\centering
		\includegraphics[width=\linewidth]{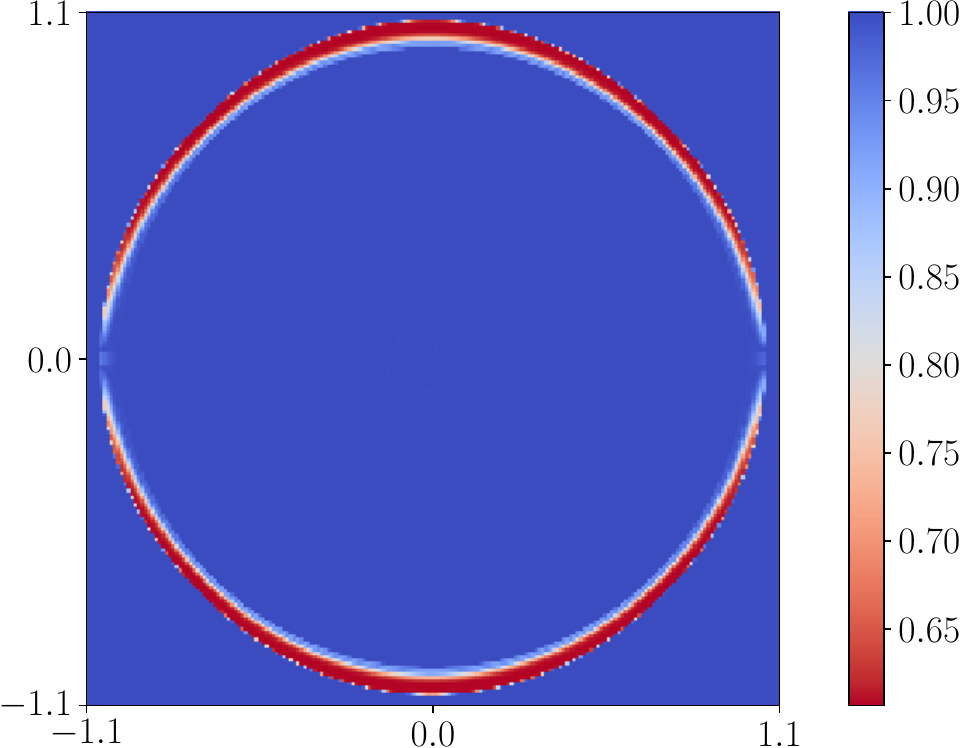}
	\end{subfigure}
	\caption{\Cref{ex:2d_sedov}, 2D Sedov blast wave.
		The shock sensor-based blending coefficients $\theta_{\xr,j}^{s}$ (left) and $\theta_{i,\yr}^{s}$ (right) on the $201\times201$ uniform mesh. }
	\label{fig:2d_sedov_ss}
\end{figure}
\end{example}

\begin{example}[A Mach 3 wind tunnel with a forward-facing step]\label{ex:2d_ffs}
	The initial condition is a Mach 3 flow $(\rho, v_1, v_2, p) = (1.4, 3, 0, 1)$.
	The computational domain is $[0,3]\times[0,1]$ and the step is of height $0.2$ located from $x=0.6$ to $x=3$.
	The inflow and outflow boundary conditions are applied at the entrance ($x=0$) and exit ($x=3$), respectively, and the reflective boundary conditions are imposed at other boundaries.
	
	The density computed by the BP AF method without and with the shock sensor-based limiting at $T=4$ are shown in \Cref{fig:2d_ffs_density_dx}, and the blending coefficients $\theta_{\xr,j}^{s}$, $\theta_{i,\yr}^{s}$ are presented in \Cref{fig:2d_ffs_ss}.
			If only the BP limitings are used, there are oscillations in the numerical solutions, but the BP property is not violated.
			The numerical solutions can be improved by our shock sensor-based limiting.
	Our BP AF method can capture the main features and well-developed Kelvin–Helmholtz roll-ups that originate from the triple point.
	The noise after the shock waves is reduced by the shock sensor-based limiting, while the roll-ups are preserved well.
	Compared to the results obtained by the third-order $P^2$ DG method with the TVB limiter \cite{Cockburn_2001_Runge_JoSC}, the vortices are better captured with the same mesh size $\Delta x=\Delta y=1/160, 1/320$.
	Note that the AF method uses fewer DoFs, showing its efficiency and potential for high Mach number flows.
	
	\begin{figure}[htb]
		\centering
		\begin{subfigure}[b]{0.6\textwidth}
			\centering
			\includegraphics[width=\linewidth]{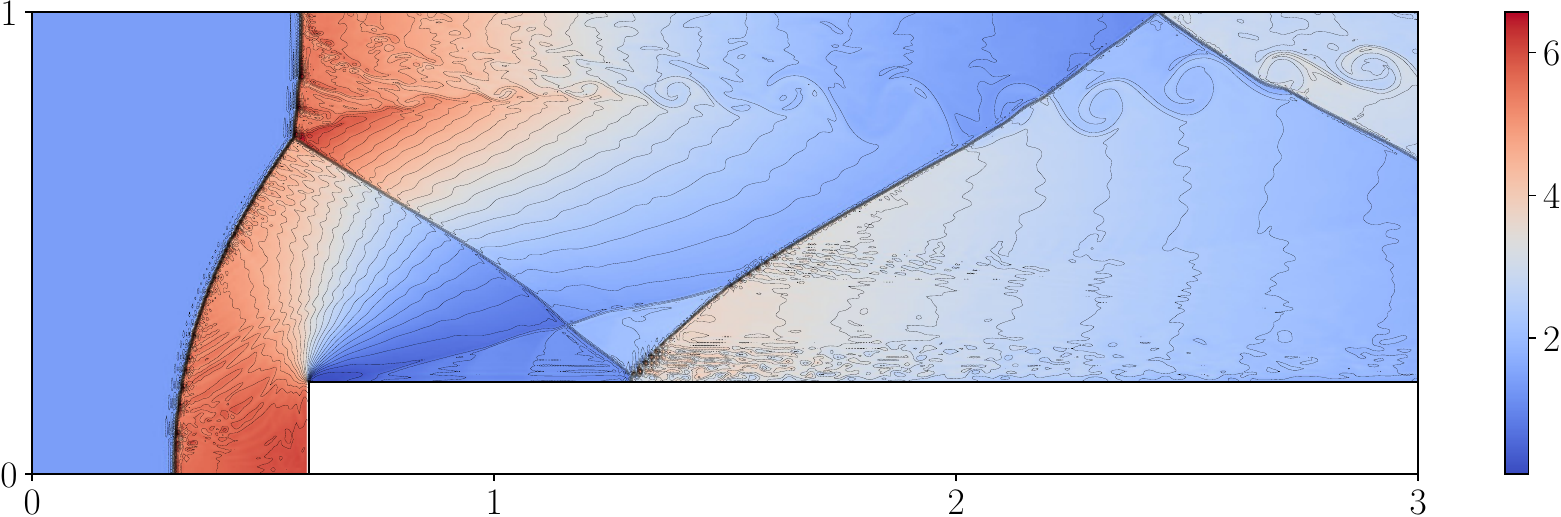}
		\end{subfigure}
		
		\begin{subfigure}[b]{0.6\textwidth}
			\centering
			\includegraphics[width=\linewidth]{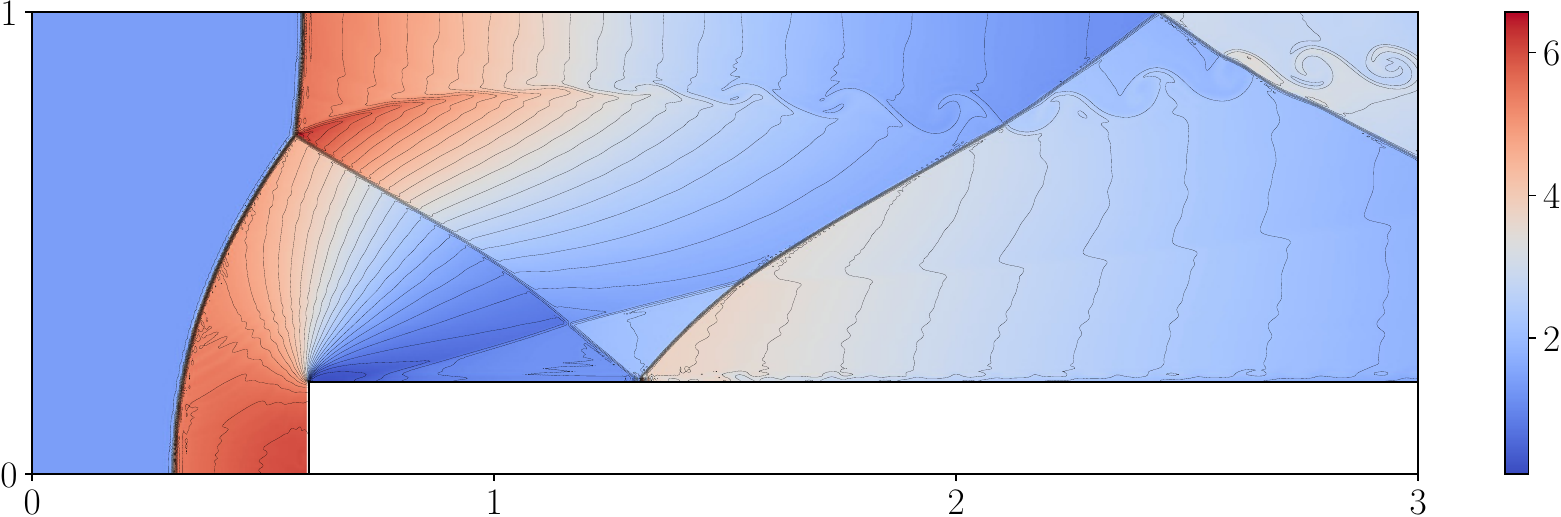}
		\end{subfigure}

		\begin{subfigure}[b]{0.6\textwidth}
			\centering
			\includegraphics[width=\linewidth]{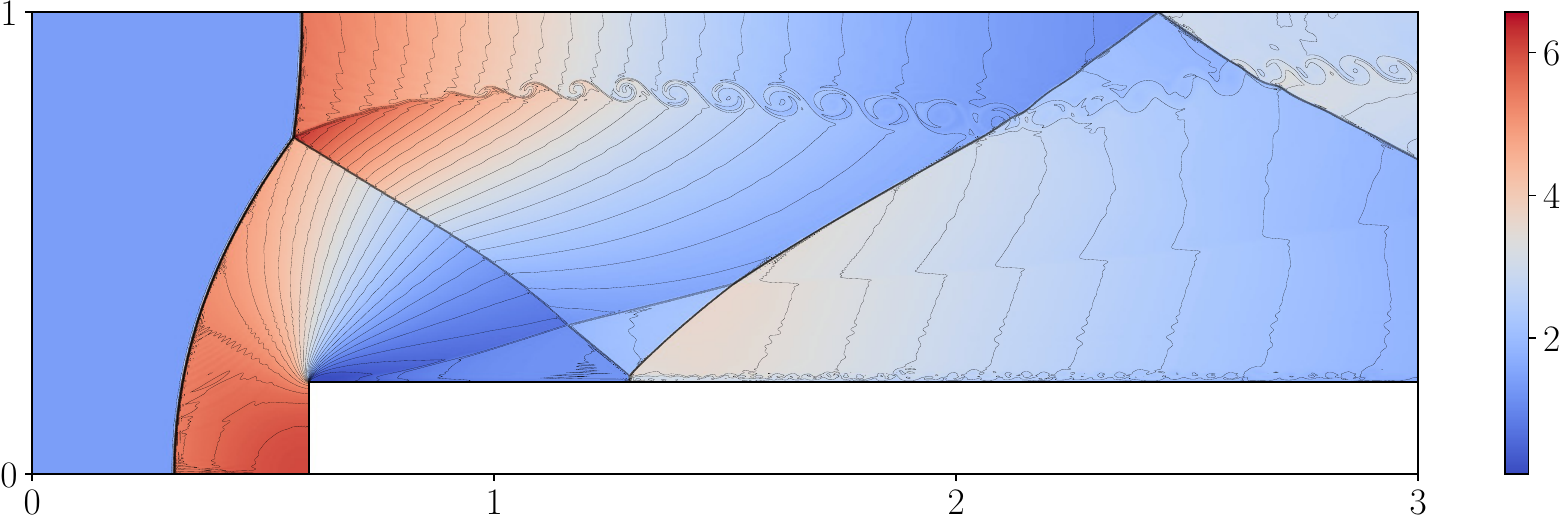}
		\end{subfigure}
		\caption{\Cref{ex:2d_ffs}, forward-facing step problem.
			$30$ equally spaced contour lines of the density from $0.098$ to $6.566$.
			From top to bottom: $480\times160$ mesh without shock sensor, $480\times160$ mesh with $\kappa=1$, 	$960\times320$ mesh with $\kappa=1$.
			}
		\label{fig:2d_ffs_density_dx}
	\end{figure}
	
	\begin{figure}[htb]
		\centering		
		\begin{subfigure}[b]{0.48\textwidth}
			\centering
			\includegraphics[width=\linewidth]{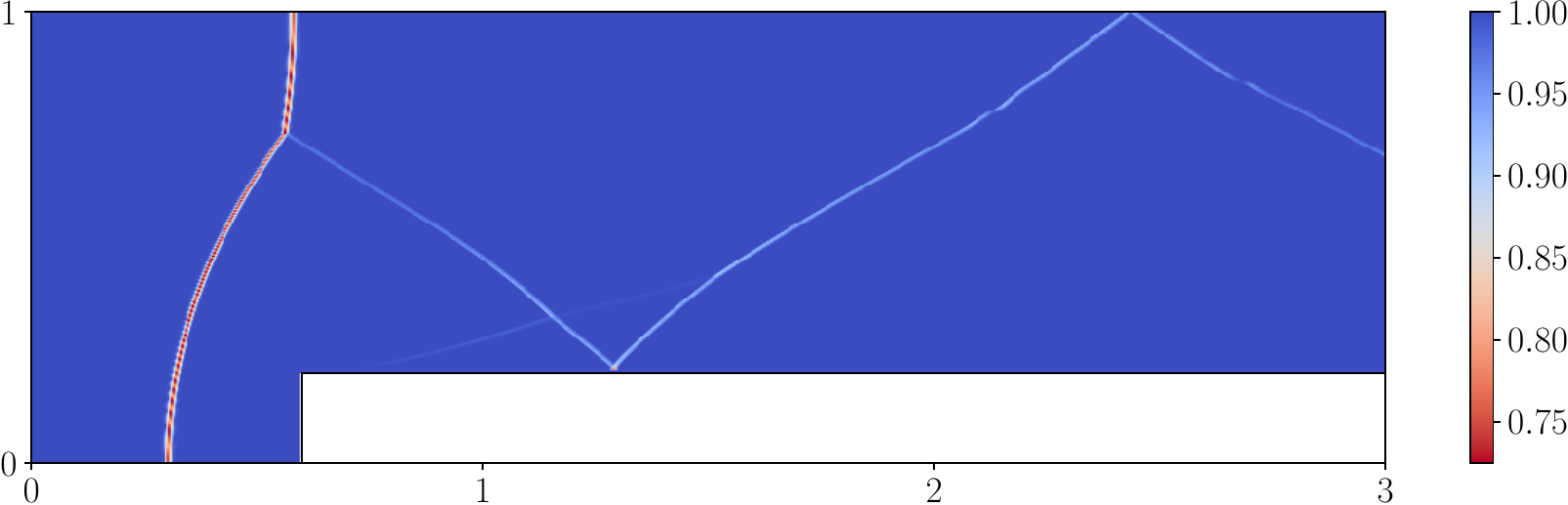}
		\end{subfigure}
		\quad
		\begin{subfigure}[b]{0.48\textwidth}
			\centering
			\includegraphics[width=\linewidth]{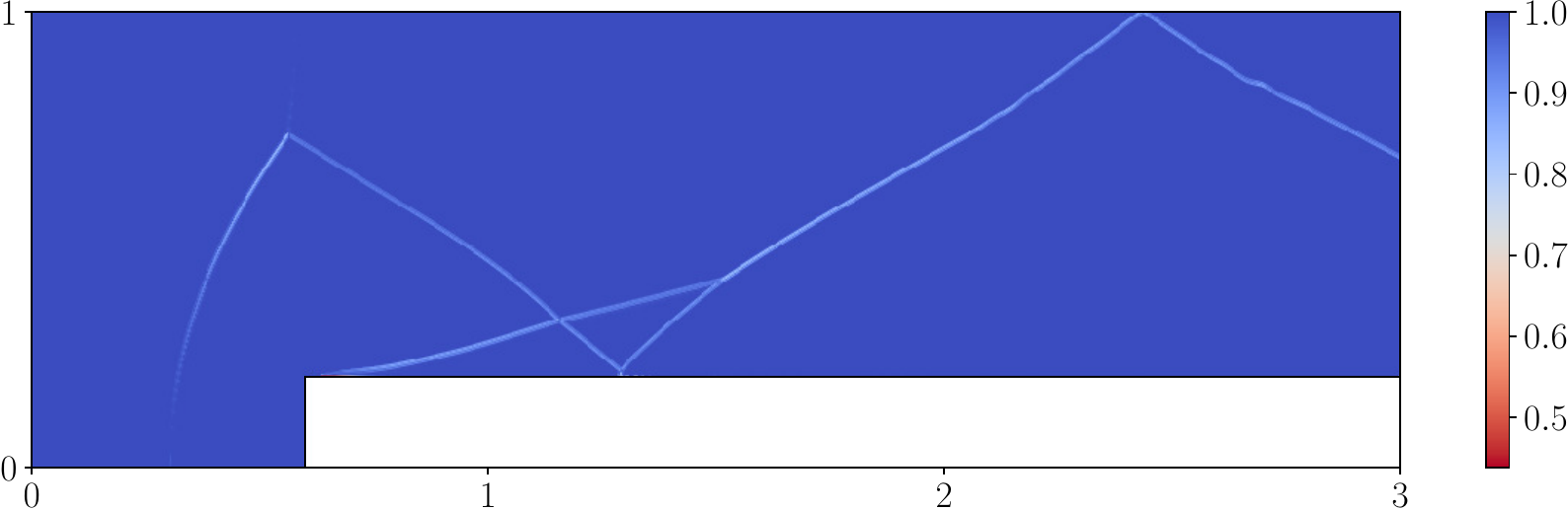}
		\end{subfigure}
		\caption{\Cref{ex:2d_ffs}, forward-facing step problem.
			The blending coefficients $\theta_{\xr,j}^{s}$ (left) and $\theta_{i,\yr}^{s}$ (right) based on the shock sensor with $\kappa=1$ on the $960\times 320$ mesh.}
		\label{fig:2d_ffs_ss}
	\end{figure}
\end{example}

\begin{example}[High Mach number astrophysical jets]\label{ex:2d_jet}
	This test follows the setup in \cite{Zhang_2010_positivity_JoCP}.
        The first case considers a Mach $80$ jet on a computational domain $[0,2]\times[-0.5,0.5]$, initially filled with ambient gas with $(\rho, v_1, v_2, p) = (0.5, 0, 0, 0.4127)$.
        A jet is injected into the domain with $(\rho, v_1, v_2, p) = (5, 30, 0, 0.4127)$ at the left boundary when $\abs{y}<0.05$.
		The free boundary conditions are applied on other boundaries.
        The second case considers a Mach $2000$ jet on a computational domain $[0,1]\times[-0.25,0.25]$.
        The initial condition and boundary conditions are the same as the first case except that the state of the jet is $(\rho, v_1, v_2, p) = (5, 800, 0, 0.4127)$.
        The adiabatic index is $\gamma=5/3$, and 
        the output time is $0.07$ and $0.001$ for the two cases, respectively.
	
	The numerical solutions obtained by the BP AF methods with the shock sensor on the uniform $400\times200$ mesh are shown in \Cref{fig:2d_jet}.
	The main flow structures and small-scale features are captured well, comparable to those in \cite{Zhang_2010_positivity_JoCP}.
	
	\begin{figure}[htb]
		\centering
		\begin{subfigure}[b]{0.48\textwidth}
			\centering
			\includegraphics[width=\linewidth]{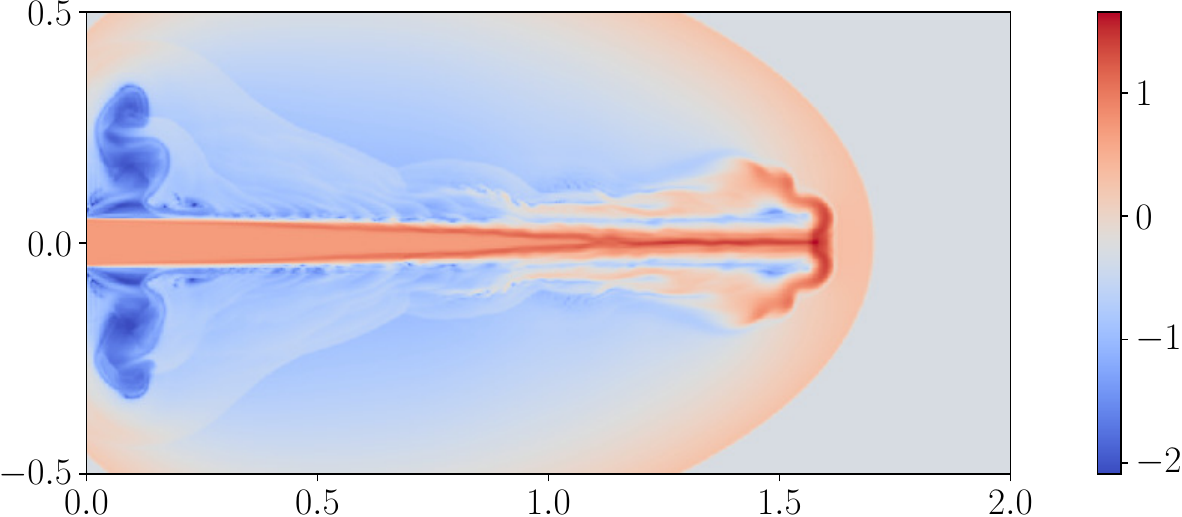}
		\end{subfigure}
		\begin{subfigure}[b]{0.48\textwidth}
			\centering
			\includegraphics[width=\linewidth]{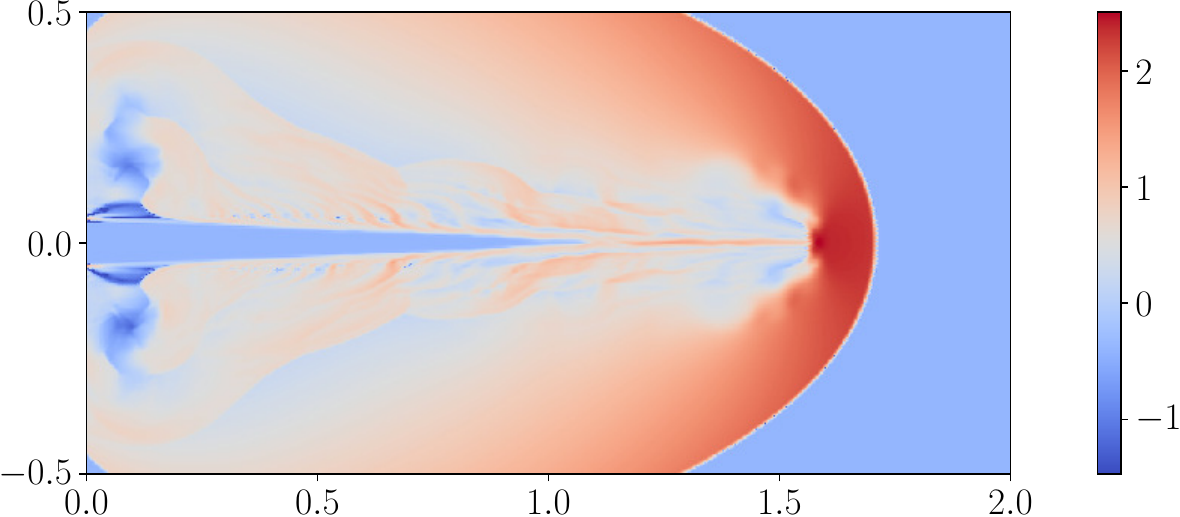}
		\end{subfigure}
		\vspace{5pt}
		
		\begin{subfigure}[b]{0.48\textwidth}
			\centering
			\includegraphics[width=\linewidth]{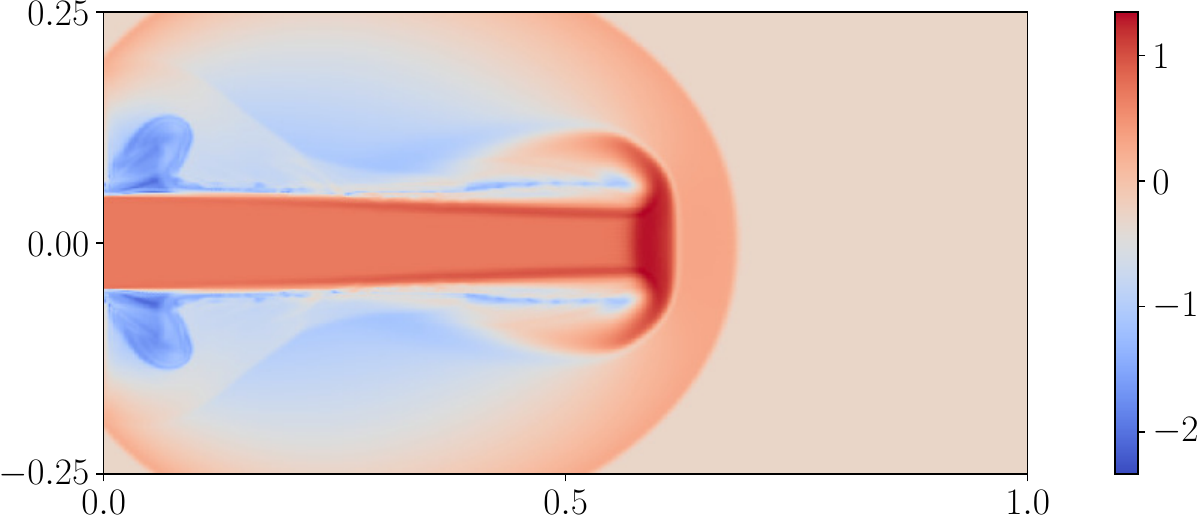}
		\end{subfigure}
		\begin{subfigure}[b]{0.48\textwidth}
			\centering
			\includegraphics[width=\linewidth]{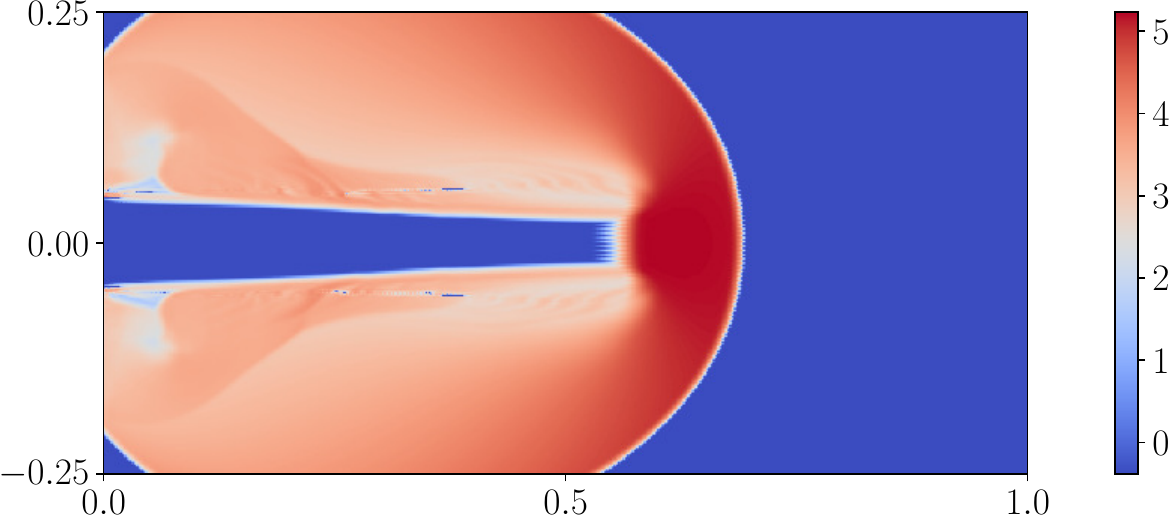}
		\end{subfigure}
		\caption{\Cref{ex:2d_jet}, the Mach $80$ jet (top row) and Mach $2000$ jet (bottom row).
		$\log_{10}\rho$ (left) and $\log_{10}p$ (right) obtained with the BP limitings and shock sensor-based limiting ($\kappa=1$ for Mach 80 and $10$ for Mach 2000, respectively).}
		\label{fig:2d_jet}
	\end{figure}
\end{example}

\section{Conclusion}\label{sec:conclusion}
In the active flux (AF) methods, it is pivotal to design suitable point values update at cell interfaces, to achieve stability and high-order accuracy.
The point value update based on the Jacobian splitting (JS) may lead to the stagnation and mesh alignment issues.
This paper proposed to use the flux vector splitting (FVS) for the point value update instead of the JS, which keeps the continuous reconstruction as the original AF methods, and offers a natural and uniform remedy to those two issues.
To further improve the robustness of the AF methods, this paper developed bound-preserving (BP) AF methods for hyperbolic conservation laws, achieved
by blending the high-order AF methods with the first-order local Lax-Friedrichs (LLF) or Rusanov methods for both the cell average and point value updates, where the convex limiting and scaling limiter were employed, respectively.
The shock sensor-based limiting was proposed to further improve the shock-capturing ability.
The challenging numerical tests verified the robustness and effectiveness of our BP AF methods,
and also showed that the LLF FVS is generally superior to others in terms of the CFL number and non-oscillatory property.
Moreover, for the forward-facing step problem, the present FVS-based BP AF method was able to capture small-scale features better compared to the third-order discontinuous Galerkin method with the TVB limiter on the same mesh resolution \cite{Cockburn_2001_Runge_JoSC}, while using fewer degrees of freedom, demonstrating the efficiency and potential of our BP AF method for high Mach number flows.


\section*{Acknowledgement}

JD was supported by an Alexander von Humboldt Foundation Research fellowship CHN-1234352-HFST-P. CK and WB acknowledge funding by the Deutsche Forschungsgemeinschaft (DFG, German Research Foundation) within \textit{SPP 2410 Hyperbolic Balance Laws in Fluid Mechanics: Complexity, Scales, Randomness (CoScaRa)}, project number 525941602.
We acknowledge helpful discussions with Praveen Chandrashekar at TIFR-CAM Bangalore on the Ducros' shock sensor.

\newcommand{\etalchar}[1]{$^{#1}$}


\appendix

\section{2D flux vector splitting}\label{sec:2d_fvs}
\subsection{Local Lax-Friedrichs flux vector splitting}
This flux vector splitting can be written as
\begin{equation*}
	\bF_\ell^\pm = \frac12(\bF_\ell(\bU) \pm \alpha_\ell \bU),
\end{equation*}
where $\alpha_\ell$ is determined by
\begin{align*}
	(\alpha_1)_{\xr,q} &= \max_{s} \left\{\left|\varrho_{1}(\bU_{s,q})\right|\right\},
	~s \in \left\{ i-\frac12, i, i+\frac12, i+1, i+\frac32\right\}, q=j,j+\frac12, \\
	(\alpha_2)_{q,\yr} &= \max_{s} \left\{\left|\varrho_{2}(\bU_{q,s})\right|\right\},
	~s \in \left\{ j-\frac12, j, j+\frac12, j+1, j+\frac32\right\}, q=i,i+\frac12, \nonumber
\end{align*}
and $\varrho_\ell$ is the spectral radius of the Jacobian matrix ${\partial{\bF_\ell}}/{\partial\bU}$.

\subsection{Upwind flux vector splitting}
The flux can also be split based on each characteristic field as follows
\begin{equation}\label{eq:upwind_fvs2}
	\bF_\ell^\pm = \frac12(\bF_\ell(\bU)\pm \abs{\bJ_\ell} \bU),\quad
	\abs{\bJ_\ell} = \bR_\ell(\bm{\Lambda}_\ell^{+} - \bm{\Lambda}_\ell^{-})\bR_\ell^{-1},
\end{equation}
with $\bJ_\ell={\partial\bF_\ell}/{\partial\bU} = \bR_\ell\bm{\Lambda}_\ell\bR_\ell^{-1}$ the eigen-decomposition of the Jacobian matrix.

For the Euler equations, the explicit expressions in the $x$-direction are
\begin{align*}
	\bF_1^{\pm} &= \begin{bmatrix}
		\frac{\rho}{2\gamma}\alpha^\pm \\
		\frac{\rho}{2\gamma}\left(\alpha^\pm v_1 + a (\lambda_2^\pm - \lambda_3^\pm)\right) \\
		\frac{\rho}{2\gamma}\alpha^\pm v_2 \\
		\frac{\rho}{2\gamma}\left(\frac12\alpha^\pm \norm{\bv}_2^2 + a v_1 (\lambda_2^\pm - \lambda_3^\pm) + \frac{a^2}{\gamma-1}(\lambda_2^\pm + \lambda_3^\pm)\right) \\
	\end{bmatrix},
\end{align*}
where
$\lambda_1 = v_\ell, ~\lambda_2 = v_\ell + a, ~\lambda_3 = v_\ell - a,
~
\alpha^\pm = 2(\gamma-1)\lambda_1^\pm + \lambda_2^\pm + \lambda_3^\pm$,
and $a = \sqrt{\gamma p / \rho}$ is the sound speed.
The expressions in the $y$-direction can be obtained using the rotational invariance.

\subsection{Van Leer-H\"anel flux vector splitting for the Euler equations}
For the $x$-direction, the flux is split according to the Mach number $M=v_1/a$ as
\begin{equation*}
	\bF_1 = \begin{bmatrix}
		\rho a M \\
		\rho a^2(M^2 + \frac{1}{\gamma}) \\
		\rho a M v_2 \\
		\rho a^3M(\frac{1}{2}M^2 + \frac{1}{\gamma-1}) + \frac{\rho a M v_2^2}{2} \\
	\end{bmatrix} = \bF_1^+ + \bF_1^-,~
	\bF_1^{\pm} = \begin{bmatrix}
		\pm\frac{1}{4}\rho a (M\pm1)^2 \\
		\pm\frac{1}{4}\rho a (M\pm1)^2 v_1 + p^{\pm} \\
		\pm\frac{1}{4}\rho a (M\pm1)^2 v_2 \\
		\pm\frac{1}{4}\rho a (M\pm1)^2 H \\
	\end{bmatrix}
\end{equation*}
with the enthalpy $H=(E+p)/\rho$,
and the pressure-splitting
$p^{\pm} = \frac{1}{2}(1\pm\gamma M)p$.

\section{Bound-preserving property of intermediate states}\label{sec:2d_llf_bp}
 Similar to the proofs in \cite{Zhang_2010_maximum_JoCP,Zhang_2010_positivity_JoCP}, the following lemmas hold.
\begin{lemma}\label{lem:2d_llf_scalar}
    For the scalar conservation laws \eqref{eq:2d_scalar}, the intermediate state $\widetilde{u}=\frac12(u_L + u_R) + \frac{1}{2\alpha}(f_\ell(u_L) - f_\ell(u_R))$ stays in $\mathcal{G}$ \eqref{eq:2d_scalar_g} if $\alpha\geqslant\max\{\varrho_\ell(u_L), \varrho_\ell(u_R)\}$.
\end{lemma}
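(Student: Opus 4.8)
The plan is to exhibit $\widetilde u$ as a convex combination of $u_L$ and $u_R$, so that it automatically lies between $\min\{u_L,u_R\}$ and $\max\{u_L,u_R\}$, and hence inside $\mathcal G$, once we recall that the point values and cell averages feeding the intermediate state belong to $\mathcal G$, i.e. $m_0\leqslant u_L,u_R\leqslant M_0$ (this is the implicit standing hypothesis; without it the statement is false). This mirrors the monotone Lax--Friedrichs argument of \cite{Zhang_2010_maximum_JoCP}.

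The first step is to apply the mean value theorem to the scalar flux: there is $\xi$ between $u_L$ and $u_R$ with $f_\ell(u_R)-f_\ell(u_L)=f_\ell'(\xi)\,(u_R-u_L)$. Substituting into $\widetilde u=\tfrac12(u_L+u_R)+\tfrac1{2\alpha}\bigl(f_\ell(u_L)-f_\ell(u_R)\bigr)$ and collecting terms gives
\begin{equation*}
	\widetilde u=\frac12\Bigl(1+\frac{f_\ell'(\xi)}{\alpha}\Bigr)u_L+\frac12\Bigl(1-\frac{f_\ell'(\xi)}{\alpha}\Bigr)u_R .
\end{equation*}
The two weights sum to $1$, so it remains only to check that each is nonnegative, i.e. that $|f_\ell'(\xi)|\leqslant\alpha$. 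Granting this, $\widetilde u$ is a convex combination of $u_L,u_R\in[m_0,M_0]$, hence $m_0\leqslant\widetilde u\leqslant M_0$, which is exactly $\widetilde u\in\mathcal G$.

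The only nontrivial point, and the step I would be most careful about, is the bound $|f_\ell'(\xi)|\leqslant\alpha$. The hypothesis gives only $\alpha\geqslant\max\{\varrho_\ell(u_L),\varrho_\ell(u_R)\}=\max\{|f_\ell'(u_L)|,|f_\ell'(u_R)|\}$, which controls $|f_\ell'|$ at the two endpoints, whereas $\xi$ sits strictly between them. When $f_\ell$ is convex or concave — in particular for linear advection and for Burgers, where $f_\ell'(u)=u$ is affine — $f_\ell'$ is monotone on the segment joining $u_L$ and $u_R$, so $|f_\ell'(\xi)|\leqslant\max\{|f_\ell'(u_L)|,|f_\ell'(u_R)|\}\leqslant\alpha$ and the argument closes as stated; for a general scalar flux one should read $\varrho_\ell$ as a bound for $|f_\ell'|$ over the interval between the two states, which is harmless since in practice $\alpha$ is taken as a stencil-wide maximum. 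Finally, I would note that this lemma is the scalar prototype of the companion statement for the Euler equations: there the same Lax--Friedrichs intermediate state must be shown to have positive density and positive pressure, which no longer follows from a one-line convexity computation but from the positivity of density as a linear functional together with the concavity of pressure in the conservative variables and the wave-speed bound on $\alpha$, along the lines of \cite{Zhang_2010_positivity_JoCP}.
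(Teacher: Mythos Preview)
Your argument is correct and your caveat about the need for $\alpha\geqslant|f_\ell'(\xi)|$ at an interior point is well observed; note that the paper's own proof carries exactly the same implicit assumption, so you are not missing anything relative to the source.

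The paper, however, does not use the mean value theorem. It takes the classical monotone-scheme route: it views $\widetilde u=\widetilde u(u_L,u_R)$ as a function of its two arguments, computes
\[
\frac{\partial\widetilde u}{\partial u_L}=\tfrac12\Bigl(1+\tfrac{f_\ell'(u_L)}{\alpha}\Bigr)\geqslant 0,\qquad
\frac{\partial\widetilde u}{\partial u_R}=\tfrac12\Bigl(1-\tfrac{f_\ell'(u_R)}{\alpha}\Bigr)\geqslant 0,
\]
and then uses the identities $\widetilde u(m_0,m_0)=m_0$, $\widetilde u(M_0,M_0)=M_0$ to sandwich $\widetilde u(u_L,u_R)$. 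Your approach is more direct and actually yields the sharper local statement $\min\{u_L,u_R\}\leqslant\widetilde u\leqslant\max\{u_L,u_R\}$, not merely the global bounds $[m_0,M_0]$; the paper's monotonicity argument, on the other hand, is the standard template that extends without change to schemes depending on more than two states. Both arguments require, for a general non-convex flux, that $\alpha$ dominate $|f_\ell'|$ over the whole interval between the relevant states (yours at $\xi$, the paper's along the path from $(u_L,u_R)$ to the corners of $[m_0,M_0]^2$), which is precisely the point you flag and which is harmless in practice since $\alpha$ is taken as a stencil-wide maximum.
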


\begin{proof}
    The partial derivatives of the intermediate state satisfy
    \begin{equation*}
	        \frac{\partial\widetilde{u}(u_L, u_R)}{\partial u_L} = \frac12\left(1 + \frac{f_\ell'(u_L)}{\alpha}\right)\geqslant 0, \quad
	        \frac{\partial\widetilde{u}(u_L, u_R)}{\partial u_R} = \frac12\left(1 - \frac{f_\ell'(u_R)}{\alpha}\right)\geqslant 0.
	    \end{equation*}
    As $\widetilde{u}(m_0, m_0) = m_0$, $\widetilde{u}(M_0, M_0) = M_0$, it holds $m_0 \leqslant \widetilde{u} \leqslant M_0$.
\end{proof}

\begin{lemma}\label{lem:2d_llf_euler}
    For the Euler equations, the intermediate state $\widetilde{\bU}=\frac12(\bU_L + \bU_R) + \frac{1}{2\alpha}(\bF_\ell(\bU_L) - \bF_\ell(\bU_R))$ stays in $\mathcal{G}$ \eqref{eq:2d_euler_g} if $\alpha\geqslant\max\{\varrho_\ell(\bU_L), \varrho_\ell(\bU_R)\}$.
\end{lemma}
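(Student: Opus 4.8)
The plan is to reduce the claim, via convexity of $\mathcal{G}$, to a one-sided statement about the map $\bU \mapsto \bU \pm \frac{1}{\alpha}\bF_\ell(\bU)$, and then to verify density and pressure positivity of this one-sided state by a direct algebraic computation, exactly in the spirit of the scalar argument in \Cref{lem:2d_llf_scalar}. First I would rewrite
\begin{equation*}
	\widetilde{\bU} = \frac12\left(\bU_L + \frac{1}{\alpha}\bF_\ell(\bU_L)\right) + \frac12\left(\bU_R - \frac{1}{\alpha}\bF_\ell(\bU_R)\right),
\end{equation*}
so that, since $\mathcal{G}$ is convex (as recalled after \eqref{eq:2d_euler_g}), it suffices to show $\bU + \frac{1}{\alpha}\bF_\ell(\bU)\in\mathcal{G}$ and $\bU - \frac{1}{\alpha}\bF_\ell(\bU)\in\mathcal{G}$ for any $\bU\in\mathcal{G}$ and any $\alpha \geqslant \varrho_\ell(\bU) = \abs{v_\ell} + a$, and then apply the first inclusion to $\bU_L$ and the second to $\bU_R$. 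The two inclusions are interchanged by $v_\ell\mapsto -v_\ell$, so I would only work out the $+$ case.

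For the density, the relevant component of $\bU + \frac{1}{\alpha}\bF_\ell(\bU)$ is $\rho\,(1 + v_\ell/\alpha)$, which is strictly positive because $a>0$ gives $\alpha \geqslant \abs{v_\ell} + a > -v_\ell$. For the pressure, the key step is the identity
\begin{equation*}
	p\!\left(\bU + \tfrac{1}{\alpha}\bF_\ell(\bU)\right) = \frac{p\,\big(2\rho\,(\alpha + v_\ell)^2 - (\gamma-1)p\big)}{2\rho\,\alpha\,(\alpha + v_\ell)},
\end{equation*}
which I would obtain by expanding the momentum and energy components of $\bF_\ell$ and using $E - \tfrac12\rho\norm{\bv}^2 = p/(\gamma-1)$ together with $a^2=\gamma p/\rho$. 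Since $\alpha + v_\ell \geqslant \abs{v_\ell} + a + v_\ell \geqslant a > 0$, the denominator is positive, and $2\rho(\alpha+v_\ell)^2 \geqslant 2\rho a^2 = 2\gamma p > (\gamma-1)p$, so the pressure is strictly positive. The mirrored estimate $\alpha - v_\ell \geqslant a > 0$ handles the $-$ case for $\bU_R$, and convexity then gives $\widetilde{\bU}\in\mathcal{G}$, completing the proof.

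The main obstacle is the pressure identity itself: it is elementary but requires careful bookkeeping of the $\rho v_\ell^2 + p$ and $(E+p)v_\ell$ contributions, and one must verify that it is precisely the bound $\alpha \geqslant \abs{v_\ell} + a$ — not merely ``$\alpha$ sufficiently large'' — that produces $\alpha \pm v_\ell \geqslant a$, which is what drives the final inequality. An alternative route would be to invoke the concavity of the pressure functional together with the positivity of the forward-Euler local Lax--Friedrichs update, as in \cite{Perthame_1996_positivity_NM,Zhang_2010_positivity_JoCP}, but the self-contained computation above keeps the argument parallel to \Cref{lem:2d_llf_scalar} and avoids importing extra machinery.
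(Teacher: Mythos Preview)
Your proposal is correct and follows essentially the same route as the paper: both decompose $\widetilde{\bU}$ as a convex combination of the one-sided states $\bU \pm \tfrac{1}{\alpha}\bF_\ell(\bU)$, check density positivity via $\alpha > \abs{v_\ell}$, and reduce pressure (equivalently internal-energy) positivity to the inequality $(\alpha\pm v_\ell)^2 \geqslant a^2 > \tfrac{\gamma-1}{2\gamma}a^2$, which is exactly what $\alpha \geqslant \abs{v_\ell}+a$ delivers. Your pressure identity is just the paper's $\rho e$ identity multiplied through by $(\gamma-1)$ and rearranged, so the algebra and the decisive bound coincide.
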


\begin{proof}
    For the Euler equations, as the intermediate state is a convex combination of ${\bU}_L - \frac{1}{\alpha}\bF_\ell({\bU}_L)$ and ${\bU}_{R} + \frac{1}{\alpha}\bF_\ell({\bU}_R)$, we only need to show that the ${\bU} \pm \frac{1}{\alpha}\bF_\ell({\bU})$ belongs to $\mathcal{G}$.
    The density component $\left(\rho \pm (\rho v_\ell)/{\alpha} \right)$ is positive since $\alpha>\abs{v_\ell}$.
    The recovered internal energy is
    \begin{align*}
	        \rho e\left({\bU} \pm \frac{1}{\alpha}\bF_\ell({\bU})\right)
	        &= E\left({\bU} \pm \frac{1}{\alpha}\bF_\ell({\bU})\right) - \frac{\norm{\rho \bv\left({\bU} \pm \frac{1}{\alpha}\bF_\ell({\bU})\right)}_2^2}{2\rho\left({\bU} \pm \frac{1}{\alpha}\bF_\ell({\bU})\right)} \\
	        &= \left(1 - \frac{p^2}{2(\alpha\pm v_\ell)^2\rho^2e}\right)\left(1\pm\frac{v_\ell}{\alpha}\right)\rho e,
	    \end{align*}
    so that one has $\rho e\left({\bU} \pm \frac{1}{\alpha}\bF_\ell({\bU})\right)> 0 \iff \frac{p^2}{2\rho^2e} < (\alpha\pm v_\ell)^2 \iff \frac{\gamma-1}{2\gamma}a^2 < (\alpha\pm v_\ell)^2$ for the perfect gas EOS,
    which holds as $\alpha\geqslant \abs{v_\ell}+a$. 
\end{proof}

\section{1D bound-preserving active flux methods}\label{sec:1d_limitings}
	For the scalar conservation law \eqref{eq:1d_scalar}, its solutions
	satisfy a strict maximum principle (MP) \cite{Dafermos_2000_Hyperbolic_book}, i.e.,
	\begin{equation}\label{eq:1d_scalar_g}
		\mathcal{G} = \left\{ u ~|~ m_0 \leqslant u \leqslant M_0 \right\},
		\quad m_0 = \min_{x} u_0(x), ~M_0 = \max_{x} u_0(x).
	\end{equation}
	For the compressible Euler equations, the admissible state set is
	\begin{equation}\label{eq:1d_euler_g}
		\mathcal{G} = \left\{\bU = \left(\rho, \rho v, E\right) ~\Big|~ \rho > 0,~ p = (\gamma-1)\left(E - (\rho v)^2/(2\rho)\right) > 0 \right\}.
	\end{equation}
	which is convex, see e.g. \cite{Zhang_2011_Positivity_JoCP}.

\subsection{Convex limiting for the cell average}\label{sec:1d_limiting_average}
This section presents a convex limiting approach to achieve the BP property of the cell average update.
The low-order scheme is chosen as the first-order LLF scheme
\begin{align*}
	&\overline{\bU}^{\texttt{L}}_i = \overline{\bU}^{n}_i - \mu_{i}\left(\widehat{\bF}^{\texttt{L}}_{\xr} - \widehat{\bF}^{\texttt{L}}_{\xl}\right),\quad
	\mu_{i} = \Delta t^n / \Delta x_i, \\
	&\widehat{\bF}^{\texttt{L}}_{\xr} =
	\bF^{\texttt{LLF}}(\overline{\bU}^{n}_i, \overline{\bU}^{n}_{i+1}) = \frac12\left(\bF(\overline{\bU}^{n}_i) + \bF(\overline{\bU}^{n}_{i+1})\right) - \frac{\alpha_{\xr}}{2}\left(\overline{\bU}^{n}_{i+1} - \overline{\bU}^{n}_i\right), \\
	&\alpha_{\xr} = \max\{\varrho(\overline{\bU}^{n}_{i}), ~\varrho_{\ell}(\overline{\bU}^{n}_{i+1})\},
\end{align*}
where $\varrho$ is the spectral radius of $\partial\bF/\partial\bU$.
Note that here $\alpha_{\xr}$ is not the same as the one in the LLF FVS \eqref{eq:1d_Rusanov_alpha}.
Following \cite{Guermond_2016_Invariant_SJoNA}, the first-order LLF scheme can be rewritten as
\begin{equation}\label{eq:1d_lo_decomp}
	\overline{\bU}^{\texttt{L}}_i = \left[1-\mu_i\left(\alpha_{\xl}+\alpha_{\xr}\right)\right] \overline{\bU}^{n}_i
	+ \mu_i\alpha_{\xl}\widetilde{\bU}_{\xl} + \mu_i\alpha_{\xr}\widetilde{\bU}_{\xr},
\end{equation}
with the first-order LLF intermediate states defined as
\begin{equation}\label{eq:1d_llf_inter_states}
		\widetilde{\bU}_{i\pm\frac12} := \frac12\left(\overline{\bU}^{n}_{i} + \overline{\bU}^{n}_{i\pm1}\right)
		\pm \frac{1}{2\alpha_{i\pm\frac12}}\left[\bF(\overline{\bU}^{n}_{i}) - \bF(\overline{\bU}^{n}_{i\pm1})\right].
\end{equation}
	The proofs of $\widetilde{\bU}_{i\pm\frac12} \in \mathcal{G}$ are similar to \Cref{sec:2d_llf_bp}, for the scalar case and Euler equations.


\begin{lemma}\label{lem:1d_llf_g}
	If the time step size $\Delta t^n$ satisfies
	\begin{equation}\label{eq:1d_convex_combination_dt}
		\Delta t^n \leqslant \dfrac{\Delta x_i}{\alpha_{\xl}+\alpha_{\xr}},
	\end{equation}
	then \cref{eq:1d_lo_decomp} is a convex combination,
	and the first-order LLF scheme is BP.
\end{lemma}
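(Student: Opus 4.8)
The plan is to mirror the argument behind \Cref{lem:2d_llf_g}, specialized to one spatial dimension where only a single pair of interfaces contributes. First I would verify that the three coefficients in the rewritten low-order update \eqref{eq:1d_lo_decomp}, namely $1-\mu_i(\alpha_{\xl}+\alpha_{\xr})$, $\mu_i\alpha_{\xl}$, and $\mu_i\alpha_{\xr}$, are nonnegative and sum to one. The last two are nonnegative because $\mu_i=\Delta t^n/\Delta x_i>0$ and the LLF viscosity coefficients $\alpha_{\xl},\alpha_{\xr}\geqslant 0$; the three coefficients sum to one by inspection; and nonnegativity of the first one is \emph{equivalent} to the stated time-step restriction \eqref{eq:1d_convex_combination_dt}, since $1-\mu_i(\alpha_{\xl}+\alpha_{\xr})\geqslant 0 \iff \Delta t^n(\alpha_{\xl}+\alpha_{\xr})\leqslant \Delta x_i$. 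Note there is no factor $\tfrac12$ here, in contrast with \eqref{eq:2d_convex_combination_dt}, precisely because in 1D there is no second directional sum that must simultaneously be accommodated.

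Next I would argue that each of the three states being combined lies in the convex admissible set $\mathcal{G}$ (see \eqref{eq:1d_scalar_g} or \eqref{eq:1d_euler_g}). The cell average $\overline{\bU}_i^n\in\mathcal{G}$ by the standing assumption that one starts from admissible data. For the intermediate states $\widetilde{\bU}_{i\pm\frac12}$ defined in \eqref{eq:1d_llf_inter_states}, I would invoke \Cref{lem:2d_llf_scalar} for the scalar case and \Cref{lem:2d_llf_euler} for the Euler equations: both apply because the chosen viscosity $\alpha_{i\pm\frac12}=\max\{\varrho(\overline{\bU}_i^n),\varrho(\overline{\bU}_{i\pm1}^n)\}$ is at least the larger of the two local spectral radii, which is exactly the hypothesis of those lemmas. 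For Euler, that hypothesis unpacks to $\alpha\geqslant |v|+a$ at both adjacent states, which is what the spectral-radius choice guarantees. It is also worth recording explicitly, as the surrounding text warns, that this $\alpha_{i\pm\frac12}$ is the averaging viscosity of the low-order flux and must not be conflated with the splitting parameter of the LLF FVS in \eqref{eq:1d_Rusanov_alpha}.

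Finally, combining the two steps: $\overline{\bU}_i^{\texttt{L}}$ in \eqref{eq:1d_lo_decomp} is a convex combination of $\overline{\bU}_i^n,\widetilde{\bU}_{\xl},\widetilde{\bU}_{\xr}\in\mathcal{G}$, so the convexity of $\mathcal{G}$ forces $\overline{\bU}_i^{\texttt{L}}\in\mathcal{G}$, i.e.\ the first-order LLF scheme is BP. I do not expect a genuine obstacle here: the only place requiring care is the admissibility of the intermediate states, and that has already been dispatched in \Cref{sec:2d_llf_bp}; everything else is bookkeeping on the convex-combination coefficients and matching the CFL threshold to the sign condition on $1-\mu_i(\alpha_{\xl}+\alpha_{\xr})$.
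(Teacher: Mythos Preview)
Your proposal is correct and follows essentially the same approach as the paper, which simply notes that the proof relies on $\overline{\bU}_i^n,\widetilde{\bU}_{i\pm\frac12}\in\mathcal{G}$ and the convexity of $\mathcal{G}$. You have merely made explicit the verification that the coefficients in \eqref{eq:1d_lo_decomp} are nonnegative and sum to one under \eqref{eq:1d_convex_combination_dt}, which the paper leaves implicit.
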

The proof (see e.g. \cite{Guermond_2016_Invariant_SJoNA,Perthame_1996_positivity_NM}) relies on $\overline{\bU}_i^n, \widetilde{\bU}_{i\pm\frac12} \in \mathcal{G}$ and the convexity of $\mathcal{G}$.

Upon defining the anti-diffusive flux $\Delta \widehat{\bF}_{i\pm\frac12} := \widehat{\bF}^{\texttt{H}}_{i\pm\frac12} - \widehat{\bF}^{\texttt{L}}_{i\pm\frac12}$ with $\widehat{\bF}^{\texttt{H}}_{i\pm\frac12} := \bF(\bU_{i\pm\frac12})$, a forward-Euler step applied to the semi-discrete high-order scheme for the cell average \eqref{eq:semi_av_1d} can be written as
\begin{align}
	&\overline{\bU}^{\texttt{H}}_i = \overline{\bU}^{n}_i - \mu_i (\widehat{\bF}^{\texttt{H}}_{\xr} - \widehat{\bF}^{\texttt{H}}_{\xl}) = \overline{\bU}^{n}_i - \mu_i (\widehat{\bF}^{\texttt{L}}_{\xr} - \widehat{\bF}^{\texttt{L}}_{\xl}) - \mu_i (\Delta \widehat{\bF}_{\xr} - \Delta \widehat{\bF}_{\xl}) \nonumber\\
	&~~~~ = \left[1-\mu_i\left(\alpha_{\xl}+\alpha_{\xr}\right)\right] \overline{\bU}^{n}_i
	+ \mu_i\alpha_{\xl} \widetilde{\bU}_{\xl}^{\texttt{H},+}
	+ \mu_i\alpha_{\xr} \widetilde{\bU}_{\xr}^{\texttt{H},-},\label{eq:1d_ho_decomp}\\
	&\widetilde{\bU}_{\xl}^{\texttt{H},+} := \left(\widetilde{\bU}_{\xl} + \frac{\Delta\widehat{\bF}_{\xl}}{\alpha_{\xl}}\right),\quad
	\widetilde{\bU}_{\xr}^{\texttt{H},-} := \left(\widetilde{\bU}_{\xr} - \frac{\Delta\widehat{\bF}_{\xr}}{\alpha_{\xr}}\right).\nonumber
\end{align}
With the low-order scheme \cref{eq:1d_lo_decomp} and high-order scheme \cref{eq:1d_ho_decomp} having the same abstract form, one can blend them to define the limited scheme for the cell average as
\begin{equation}\label{eq:1d_limited_decomp}
	\overline{\bU}^{\texttt{Lim}}_i = \left[1-\mu_i\left(\alpha_{\xl}+\alpha_{\xr}\right)\right] \overline{\bU}^{n}_i
	+ \mu_i\alpha_{\xl}\widetilde{\bU}_{\xl}^{\texttt{Lim},+}
	+ \mu_i\alpha_{\xr}\widetilde{\bU}_{\xr}^{\texttt{Lim},-},
\end{equation}
where the limited intermediate states are
\begin{equation}\label{eq:1d_limited_inter_states}
	\widetilde{\bU}_{i\pm\frac12}^{\texttt{Lim},\mp} = \widetilde{\bU}_{i\pm\frac12} \mp \frac{\Delta\widehat{\bF}^{\texttt{Lim}}_{i\pm\frac12}}{\alpha_{i\pm\frac12}}
	:= \widetilde{\bU}_{i\pm\frac12} \mp \frac{\theta_{i\pm\frac12}\Delta\widehat{\bF}_{i\pm\frac12}}{\alpha_{i\pm\frac12}},
\end{equation}
	and $\theta_{i\pm \frac12}\in[0,1]$ are the blending coefficients.
	The limited scheme \cref{eq:1d_limited_decomp} reduces to the first-order LLF scheme if $\theta_{i\pm\frac12}=0$,
	and recovers the high-order AF scheme \eqref{eq:semi_av_1d} when $\theta_{i\pm\frac12}=1$.

\subsubsection{Application to scalar conservation laws}
Similar to the 2D case, the convex limiting is applied to scalar conservation laws \eqref{eq:1d_scalar},
	such that the limited cell averages \cref{eq:1d_limited_decomp} satisfy the MP $u^{\min}_i \leqslant \bar{u}_i^{\texttt{Lim}} \leqslant u^{\max}_i$,
	where $u^{\min}_i = \min\mathcal{N}$, $u^{\max}_i = \max\mathcal{N}$,
	and $\mathcal{N}$ will be defined later.
	The limited anti-diffusive flux is
	\begin{equation*}
		\Delta\hat{f}^{\texttt{Lim}}_{\xr} = \begin{cases}
			\min\left\{\Delta\hat{f}_{\xr}, ~\alpha_{\xr}(\tilde{u}_{\xr}-u^{\min}_i),
			~\alpha_{\xr}(u^{\max}_{i+1}-\tilde{u}_{\xr}) \right\}, &\text{if}~ \Delta \hat{f}_{\xr} \geqslant 0, \\
			\max\left\{\Delta\hat{f}_{\xr}, ~\alpha_{\xr}(u^{\min}_{i+1}-\tilde{u}_{\xr}),
			~\alpha_{\xr}(\tilde{u}_{\xr}-u^{\max}_{i}) \right\}, &\text{otherwise}. \\
		\end{cases}
	\end{equation*}
	Finally, the limited numerical flux is
	\begin{equation}\label{eq:1d_flux_limited_scalar}
		\hat{f}^{\texttt{Lim}}_{\xr} = \hat{f}^{\texttt{L}}_{\xr} + \Delta\hat{f}^{\texttt{Lim}}_{\xr}.
	\end{equation}
	
	If considering the global MP, $\mathcal{N} = \bigcup_i\{\bar{u}_i^n, u_{\xr}^n\}$.
	For the local MP, one can choose $\mathcal{N} = \min\left\{\bar{u}_i^n, ~\tilde{u}_{\xl}, ~\tilde{u}_{\xr}, \bar{u}_{i-1}^n, \bar{u}_{i+1}^n\right\}$,
	which consists of the neighboring cell averages and intermediate states.

\subsubsection{Application to the compressible Euler equations}

This section aims at enforcing the positivity of density and pressure.
	To avoid the effect of the round-off error, we need to choose the desired lower bounds.
	Denote the lowest density and pressure in the domain by
	\begin{equation}\label{eq:lowest_rho_prs}
		\varepsilon^{\rho}:=\min_i\{\overline{\bU}_{i}^{n,\rho}, \bU_{\xr}^{n,\rho}\},~ \varepsilon^{p}:=\min_i\{p(\overline{\bU}_{i}^{n}), p(\bU_{\xr}^{n})\},
	\end{equation}
	where $\bU^{*,\rho}$ and $p(\bU^{*})$ denote the density component and pressure recovered from $\bU^{*}$, respectively.
	The limiting \cref{eq:1d_limited_inter_states} is feasible if the constraints are satisfied by the first-order LLF intermediate states \cref{eq:1d_llf_inter_states}, thus the lower bounds can be defined as
	\begin{equation*}
		\varepsilon_{i}^\rho := \min\{10^{-13}, \varepsilon^{\rho}, \widetilde{\bU}_{\xl}^{\rho}, \widetilde{\bU}_{\xr}^{\rho}\},~
		\varepsilon_{i}^p := \min\{10^{-13}, \varepsilon^{p}, p(\widetilde{\bU}_{\xl}), p(\widetilde{\bU}_{\xr})\}.
	\end{equation*}

i) {\bfseries Positivity of density.}
The first step is to impose the density positivity
	$\widetilde{\bU}_{\xr}^{\texttt{Lim},\pm,\rho} \geqslant \bar{\varepsilon}_{\xr}^\rho:=\min\{\varepsilon_{i}^\rho, \varepsilon_{i+1}^\rho\} $.
	Similarly to the derivation of the scalar case,
the corresponding density component of the limited anti-diffusive flux is
\begin{equation*}
	\Delta\widehat{\bF}^{\texttt{Lim},*,\rho}_{\xr} = \begin{cases}
		\min\left\{\Delta\widehat{\bF}^{\rho}_{\xr}, ~\alpha_{\xr}\left(\widetilde{\bU}^{\rho}_{\xr}-\bar{\varepsilon}^\rho_{\xr}\right) \right\}, &\text{if}~ \Delta \widehat{\bF}^{\rho}_{\xr} \geqslant 0, \\
		\max\left\{\Delta\widehat{\bF}^{\rho}_{\xr}, ~\alpha_{\xr}\left(\bar{\varepsilon}^\rho_{\xr}-\widetilde{\bU}^{\rho}_{\xr}\right) \right\}, &\text{otherwise}. \\
	\end{cases}
\end{equation*}
Then the density component of the limited flux is $\widehat{\bF}_{\xr}^{\texttt{Lim}, *, \rho} = \widehat{\bF}_{\xr}^{\texttt{L}, \rho} + \Delta\widehat{\bF}_{\xr}^{\texttt{Lim}, *, \rho}$,
with the other components remaining the same as $\widehat{\bF}_{\xr}^{\texttt{H}}$.

ii) {\bfseries Positivity of pressure.}
The second step is to enforce pressure positivity
	$p(\widetilde{\bU}_{\xr}^{\texttt{Lim},\pm}) \geqslant \bar{\varepsilon}_{\xr}^p:=\min\{\varepsilon_{i}^p, \varepsilon_{i+1}^p\} $.
Since
\begin{equation*}
	\widetilde{\bU}_{\xr}^{\texttt{Lim},\pm} = \widetilde{\bU}_{\xr} \pm \frac{\theta_{\xr}\Delta \widehat{\bF}_{\xr}^{\texttt{Lim}, *}}{\alpha_{\xr}},\quad
	\Delta \widehat{\bF}_{\xr}^{\texttt{Lim}, *} = \widehat{\bF}_{\xr}^{\texttt{Lim}, *} - \widehat{\bF}_{\xr}^{\texttt{L}},
\end{equation*}
the constraints lead to two inequalities
\begin{equation}\label{eq:1d_pressure_inequality}
	A_{\xr}\theta^2_{\xr} \pm B_{\xr}\theta_{\xr} \leqslant C_{\xr},
\end{equation}
with the coefficients
\begin{align*}
	&A_{\xr} = \dfrac{1}{2} \left(\Delta \widehat\bF^{\texttt{Lim}, *, \rho v}_{\xr}\right)^2
	- \Delta \widehat\bF^{\texttt{Lim}, *, \rho}_{\xr} \Delta \widehat\bF^{\texttt{Lim}, *, E}_{\xr}, \\
	&B_{\xr} = \alpha_{\xr}\left(\Delta \widehat\bF^{\texttt{Lim}, *, \rho}_{\xr} \widetilde\bU^{E}_{\xr}
	+ \widetilde\bU^{\rho}_{\xr} \Delta \widehat\bF^{\texttt{Lim}, *, E}_{\xr}
	- \Delta \widehat\bF^{\texttt{Lim}, *, \rho v}_{\xr} \widetilde\bU^{\rho v}_{\xr}
	- \tilde{\varepsilon} \Delta \widehat\bF^{\texttt{Lim}, *, \rho}_{\xr}\right), \\
	&C_{\xr} = \alpha_{\xr}^2\left(\widetilde\bU^{\rho}_{\xr} \widetilde\bU^{E}_{\xr}
	- \dfrac{1}{2} \left(\widetilde\bU^{\rho v}_{\xr}\right)^2
	- \tilde{\varepsilon}\widetilde\bU^{\rho}_{\xr}\right), \quad
	\tilde{\varepsilon} = \bar{\varepsilon}^p_{\xr}/(\gamma-1).
\end{align*}
Following \cite{Kuzmin_2020_Monolithic_CMiAMaE}, the inequalities \cref{eq:1d_pressure_inequality} hold under the linear sufficient condition
\begin{equation*}
	\left(\max\left\{0, A_{\xr}\right\} + \left|B_{\xr}\right|\right)\theta_{\xr}\leqslant C_{\xr},
\end{equation*}
if making use of $\theta_{\xr}^2 \leqslant \theta_{\xr},~ \theta_{\xr}\in[0,1]$.
Thus the coefficient can be chosen as
\begin{equation*}
	\theta_{\xr} = \min\left\{1,~ \frac{C_{\xr}}{\max\{0, A_{\xr}\} + \abs{B_{\xr}}}\right\},
\end{equation*}
and the final limited numerical flux is
\begin{equation}\label{eq:1d_flux_limited_Euler}
	\widehat{\bF}_{\xr}^{\texttt{Lim},**} = \widehat{\bF}_{\xr}^{\texttt{L}} + \theta_{\xr}\Delta\widehat{\bF}_{\xr}^{\texttt{Lim}, *}.
\end{equation}

	\subsubsection{Shock sensor-based limiting}
	In 1D, the Jameson's shock sensor \cite{Jameson_1981_Solutions_AJ} is
	\begin{equation*}
		(\varphi_1)_{i} = \dfrac{\abs{\bar{p}_{i+1} - 2\bar{p}_{i} + \bar{p}_{i-1}}}{\abs{\bar{p}_{i+1} + 2\bar{p}_{i} + \bar{p}_{i-1}}},
	\end{equation*}
	and the modified Ducros' shock sensor reduced from the 2D case \cite{Ducros_1999_Large_JoCP} is
	\begin{equation*}
		(\varphi_2)_{i} = \max\left\{-\dfrac{\bar{v}_{i+1}-\bar{v}_{i-1}}{\abs{\bar{v}_{i+1}-\bar{v}_{i-1}}+10^{-40}}, ~0\right\}.
	\end{equation*}
	Note that $\bar{v}_i$ and $\bar{p}_i$ are the velocity and pressure recovered from the cell average $\overline{\bU}_i$.
	The blending coefficient is designed as
	\begin{align*}
		&\theta_{\xr}^{s} = \exp(-\kappa (\varphi_1)_{\xr} (\varphi_2)_{\xr})\in (0, 1],\\
		&(\varphi_s)_{\xr} = \max\left\{(\varphi_s)_{i}, (\varphi_s)_{i+1}\right\}, ~s=1,2,
	\end{align*}
	where the problem-dependent parameter $\kappa$ adjusts the strength of the limiting, and its optimal choice needs further investigation.
	The final limited numerical flux is
	\begin{equation}\label{eq:1d_flux_limited_Euler_ss}
		\widehat{\bF}_{\xr}^{\texttt{Lim}} = \widehat{\bF}_{\xr}^{\texttt{L}} + \theta_{\xr}^{s}\Delta\widehat{\bF}_{\xr}^{\texttt{Lim}, **},
	\end{equation}
	with $\Delta\widehat{\bF}_{\xr}^{\texttt{Lim},**} = \widehat{\bF}_{\xr}^{\texttt{Lim},**} - \widehat{\bF}_{\xr}^{\texttt{L}}$,
	and $\widehat{\bF}_{\xr}^{\texttt{Lim},**}$ given in \cref{eq:1d_flux_limited_Euler}.

\subsection{Scaling limiter for point value}\label{sec:1d_limiting_point}
A first-order LLF scheme for the point value update can be written as
\begin{equation}\label{eq:1d_llf_pnt}
	\bU_{\xr}^{\texttt{L}} =  \bU_{\xr}^{n} - \dfrac{2\Delta t^n}{\Delta x_i + \Delta x_{i+1}}\left(\widehat{\bF}^{\texttt{L}}_{i+1}( \bU_{i+\frac12}^n,  \bU_{i+\frac32}^n)
	- \widehat{\bF}^{\texttt{L}}_{i}( \bU_{i-\frac12}^n,  \bU_{i+\frac12}^n)\right),
\end{equation}
with the numerical flux
\begin{align*}
	&\widehat{\bF}^{\texttt{L}}_{i} = \widehat{\bF}^{\texttt{LLF}}( \bU_{i-\frac12}^n,  \bU_{i+\frac12}^n) =
	\frac12\left( \bF(\bU_{i-\frac12}^n) +  \bF(\bU_{i+\frac12}^n)\right) - \frac{\alpha_{i}}{2}\left(\bU_{i+\frac12}^n - \bU_{i-\frac12}^n\right),\\
	&\alpha_{i} = \max\{\varrho(\bU_{i-\frac12}^n), ~\varrho(\bU_{i+\frac12}^n)\}.
\end{align*}
Similarly to \Cref{lem:1d_llf_g}, it is straightforward to obtain the following Lemma.

\begin{lemma}
	The LLF scheme for the point value \cref{eq:1d_llf_pnt} is BP under the CFL condition
	\begin{equation}\label{eq:1d_pnt_llf_dt}
		\Delta t^n \leqslant \dfrac{\Delta x_i + \Delta x_{i+1}}{2  (\alpha_i + \alpha_{i+1}) } .
	\end{equation}
\end{lemma}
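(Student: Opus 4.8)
The plan is to reuse the proof of \Cref{lem:1d_llf_g} almost verbatim, because \cref{eq:1d_llf_pnt} is exactly a first-order finite-volume LLF scheme for a conservation law, only posed on the ``dual cell'' $[x_i,x_{i+1}]$ of width $(\Delta x_i+\Delta x_{i+1})/2$, with the point value $\bU_{\xr}$ in the role of the cell average and with $\bU_{\xl}$ and $\bU_{i+\frac32}$ as the two neighbours. First I would introduce $\mu:=2\Delta t^n/(\Delta x_i+\Delta x_{i+1})$, insert the two LLF fluxes into \cref{eq:1d_llf_pnt}, and regroup the terms to arrive at the convex-combination form
\begin{equation*}
	\bU_{\xr}^{\texttt{L}} = \bigl[1-\mu(\alpha_i+\alpha_{i+1})\bigr]\bU_{\xr}^{n}
	+ \mu\alpha_i\,\widetilde{\bU}_{i} + \mu\alpha_{i+1}\,\widetilde{\bU}_{i+1},
\end{equation*}
with the LLF intermediate (bar) states
\begin{equation*}
	\widetilde{\bU}_{i} := \tfrac12\bigl(\bU_{\xl}^{n}+\bU_{\xr}^{n}\bigr) + \tfrac{1}{2\alpha_i}\bigl(\bF(\bU_{\xl}^{n})-\bF(\bU_{\xr}^{n})\bigr),\qquad
	\widetilde{\bU}_{i+1} := \tfrac12\bigl(\bU_{\xr}^{n}+\bU_{i+\frac32}^{n}\bigr) + \tfrac{1}{2\alpha_{i+1}}\bigl(\bF(\bU_{\xr}^{n})-\bF(\bU_{i+\frac32}^{n})\bigr).
\end{equation*}
Establishing this identity is a short bookkeeping check that the coefficients of $\bU_{\xl}^{n}$, $\bU_{\xr}^{n}$, $\bU_{i+\frac32}^{n}$ and of the three fluxes all match; in particular the contributions proportional to $\bF(\bU_{\xr}^{n})$ coming from the left and right interfaces cancel.

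Next I would observe that the stated CFL bound \cref{eq:1d_pnt_llf_dt} is precisely $\mu(\alpha_i+\alpha_{i+1})\le 1$, so the three weights $1-\mu(\alpha_i+\alpha_{i+1})$, $\mu\alpha_i$, $\mu\alpha_{i+1}$ are nonnegative and sum to one, making the right-hand side a genuine convex combination. Since $\alpha_i=\max\{\varrho(\bU_{\xl}^n),\varrho(\bU_{\xr}^n)\}$ and $\alpha_{i+1}=\max\{\varrho(\bU_{\xr}^n),\varrho(\bU_{i+\frac32}^n)\}$, each bar state has exactly the form covered by \Cref{lem:2d_llf_scalar} (scalar case) and \Cref{lem:2d_llf_euler} (Euler case), with $\ell$ the single spatial direction, so $\widetilde{\bU}_{i},\widetilde{\bU}_{i+1}\in\mathcal G$. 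Combined with $\bU_{\xr}^{n}\in\mathcal G$ and the convexity of $\mathcal G$, this yields $\bU_{\xr}^{\texttt{L}}\in\mathcal G$, i.e. the point value update is BP.

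There is no genuinely hard step; the main thing to be careful about is the sign bookkeeping when regrouping (the numerical viscosity from the right interface contributes $+\tfrac12\mu\alpha_{i+1}\bU_{\xr}^{n}$ and from the left interface $+\tfrac12\mu\alpha_i\bU_{\xr}^{n}$, with opposite signs attached to $\bU_{i+\frac32}^{n}$ and $\bU_{\xl}^{n}$), together with checking that $\alpha_i$ and $\alpha_{i+1}$ as defined from the spectral radii indeed dominate all characteristic speeds entering the hypotheses of \Cref{lem:2d_llf_scalar,lem:2d_llf_euler}. Nonuniform meshes require no extra care, since $\mu$ is simply $\Delta t^n$ divided by the dual-cell width $(\Delta x_i+\Delta x_{i+1})/2$, so the argument is independent of the local mesh ratio.
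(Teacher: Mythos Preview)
Your proposal is correct and follows exactly the approach the paper intends: the paper does not give an explicit proof but simply notes ``Similarly to \Cref{lem:1d_llf_g}, it is straightforward to obtain the following Lemma,'' and your argument is precisely the natural unpacking of that remark---rewriting \cref{eq:1d_llf_pnt} as a convex combination on the dual cell of width $(\Delta x_i+\Delta x_{i+1})/2$, identifying the two LLF intermediate states, and invoking \Cref{lem:2d_llf_scalar,lem:2d_llf_euler} together with the convexity of $\mathcal G$.
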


The limited solution is obtained by blending the high-order AF scheme \eqref{eq:semi_pnt_1d} with the forward-Euler scheme and the LLF scheme \eqref{eq:1d_llf_pnt} as $\bU_{\xr}^{\texttt{Lim}} = \theta_{\xr} \bU_{\xr}^{\texttt{H}} + (1-\theta_{\xr}) \bU_{\xr}^{\texttt{L}}$,
such that $\bU_{\xr}^{\texttt{Lim}}\in\mathcal{G}$.

\subsubsection{Application to scalar conservation laws}
	This section enforces the MP $u_{\xr}^{\min} \leqslant u_{\xr}^{\texttt{Lim}} \leqslant u_{\xr}^{\max}$ using the scaling limiter \cite{Zhang_2010_maximum_JoCP}.
	The limited solution is
	\begin{equation}\label{eq:1d_pnt_limited_state_scalar}
		u_{\xr}^{\texttt{Lim}} = \theta_{\xr} u_{\xr}^{\texttt{H}} + \left(1-\theta_{\xr}\right) u_{\xr}^{\texttt{L}},
	\end{equation}
	with the coefficient
	\begin{equation*}
		\theta_{\xr} = \min\left\{1,~ \left|\dfrac{u_{\xr}^{\texttt{L}}-u_{\xr}^{\min}}{u_{\xr}^{\texttt{L}}-u_{\xr}^{\texttt{H}}}\right|,~
		\left|\dfrac{u_{\xr}^{\max}-u_{\xr}^{\texttt{L}}}{u_{\xr}^{\texttt{H}}-u_{\xr}^{\texttt{L}}}\right| \right\}.
	\end{equation*}
%

	The bounds are determined by $u^{\min}_{\xr} = \min \mathcal{N}$, $u^{\max}_{\xr} = \max \mathcal{N}$,
	where the set $\mathcal{N}$ consists of all the DoFs in the domain,
	i.e., $\mathcal{N} = \bigcup_i\{\bar{u}_i^n, u_{\xr}^n\}$ for the global MP.
	One can also consider the local MP, e.g., $\mathcal{N} = \left\{u_{\xl}^n, u_{\xr}^n, u_{i+\frac32}^n\right\}$,
	which at least includes all the DoFs appeared in the first-order LLF scheme \cref{eq:1d_llf_pnt}.

\subsubsection{Application to the compressible Euler equations}\label{sec:1d_limiting_point_euler}
The limiting consists of two steps.

{\bfseries i) Positivity of density.}
First, the high-order solution $\bU_{\xr}^{\texttt{H}}$ is modified as $\bU_{\xr}^{\texttt{Lim}, *}$,
such that $\bU_{\xr}^{\texttt{Lim}, *, \rho} \geqslant \varepsilon^\rho_{\xr}:=\min\{10^{-13}, \varepsilon^{\rho}, \bU_{\xr}^{\texttt{L},\rho}\}$ with $\varepsilon^{\rho}$ given in \cref{eq:lowest_rho_prs}.
Solving the inequality yields
\begin{equation*}
	\theta^{*}_{\xr} = \begin{cases}
		\dfrac{\bU_{\xr}^{\texttt{L}, \rho}-\varepsilon^\rho_{\xr}}{\bU_{\xr}^{\texttt{L}, \rho}-\bU_{\xr}^{\texttt{H}, \rho}},&\text{if}~~ \bU_{\xr}^{\texttt{H}, \rho} < \varepsilon^\rho_{\xr}, \\
		1, &\text{otherwise}. \\
	\end{cases}
\end{equation*}
Then the density component of the limited solution is $\bU_{\xr}^{\texttt{Lim}, *, \rho} = \theta^{*}_{\xr} \bU_{\xr}^{\texttt{H}, \rho} + (1-\theta^{*}_{\xr}) \bU_{\xr}^{\texttt{L}, \rho}$,
with the other components remaining the same as $\bU_{\xr}^{\texttt{H}}$.

{\bfseries ii) Positivity of pressure.}
Then the limited solution $\bU_{\xr}^{\texttt{Lim}, *}$ is modified as $\bU_{\xr}^{\texttt{Lim}}$,
such that it gives positive pressure, i.e., $p(\bU_{\xr}^{\texttt{Lim}}) \geqslant \varepsilon^p_{\xr}:=\min\{10^{-13}, \varepsilon^{p}, p(\bU_{\xr}^{\texttt{L}})\}$, with $\varepsilon^{p}$ given in \cref{eq:lowest_rho_prs}.
Let the final limited solution be
\begin{equation}\label{eq:1d_pnt_limited_state_Euler}
	\bU_{\xr}^{\texttt{Lim}} = \theta^{**}_{\xr} \bU_{\xr}^{\texttt{Lim}, *} + \left(1-\theta^{**}_{\xr}\right) \bU_{\xr}^{\texttt{L}}.
\end{equation}
The pressure is a concave function of the conservative variables (see e.g. \cite{Zhang_2011_Maximum_PotRSAMPaES}), so that 
$p(\bU_{\xr}^{\texttt{Lim}}) \geqslant \theta^{**}_{\xr}p(\bU_{\xr}^{\texttt{Lim}, *}) + \left(1-\theta^{**}_{\xr}\right)p(\bU_{\xr}^{\texttt{L}})$
based on Jensen's inequality and $\bU_{\xr}^{\texttt{Lim}, *, \rho} > 0$, $\bU_{\xr}^{\texttt{L}, \rho} > 0$, $\theta_{\xr}^{**} \in [0,1]$.
Thus the coefficient can be chosen as
\begin{equation*}
	\theta^{**}_{\xr} = \begin{cases}
		\dfrac{p(\bU_{\xr}^{\texttt{L}}) - \varepsilon^p_{\xr}}{p(\bU_{\xr}^{\texttt{L}}) - p(\bU_{\xr}^{\texttt{Lim}, *})},&\text{if}~~ p(\bU_{\xr}^{\texttt{Lim}, *}) < \varepsilon^p_{\xr}, \\
		1, &\text{otherwise}. \\
	\end{cases}
\end{equation*}
		
\begin{theorem}
	If the initial numerical solution $\overline{\bU}_i^0, \bU_{\xr}^0\in\mathcal{G}$ for all $i$,
	and the time step size satisfies \cref{eq:1d_convex_combination_dt} and \cref{eq:1d_pnt_llf_dt},
	then the AF methods \eqref{eq:semi_av_1d}-\eqref{eq:semi_pnt_1d} equipped with the SSP-RK3 \eqref{eq:ssp_rk3} and the BP limitings
	\begin{itemize}[leftmargin=*]
		\item \cref{eq:1d_flux_limited_scalar} and \cref{eq:1d_pnt_limited_state_scalar} preserve the maximum principle for scalar case;
		\item \cref{eq:1d_flux_limited_Euler} and \cref{eq:1d_pnt_limited_state_Euler} preserve positive density and pressure for the Euler equations.
	\end{itemize}
\end{theorem}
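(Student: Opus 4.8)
The plan is to reduce the whole statement to a single \emph{limited} forward-Euler update and then exploit the convex-combination structure already assembled in \Cref{sec:2d_limitings}. Since the SSP-RK3 scheme \eqref{eq:ssp_rk3} is a convex combination of forward-Euler stages and $\mathcal{G}$ is convex, it suffices to show that one limited forward-Euler update sends $(\overline{\bU}_i^n,\bU_{\xr}^n)\in\mathcal{G}$ (for all $i$) to $(\overline{\bU}_i^{\texttt{Lim}},\bU_{\xr}^{\texttt{Lim}})\in\mathcal{G}$, under the understanding that the intermediate stage values also satisfy \eqref{eq:1d_convex_combination_dt} and \eqref{eq:1d_pnt_llf_dt} (this is exactly what the implementation enforces by halving $\Delta t^n$ when needed). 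So the core consists of two one-step BP claims, one for the cell average and one for the point value, each treated separately for the scalar case and for the Euler system.

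For the cell average I would argue as in the two-dimensional case. Under \eqref{eq:1d_convex_combination_dt} the weight $1-\mu_i(\alpha_{\xl}+\alpha_{\xr})$ in \eqref{eq:1d_limited_decomp} is nonnegative, so $\overline{\bU}_i^{\texttt{Lim}}$ is a convex combination of $\overline{\bU}_i^n$ and the limited intermediate states $\widetilde{\bU}_{i\pm\frac12}^{\texttt{Lim},\mp}$, and it remains to check these states lie in $\mathcal{G}$. First I would record that the first-order LLF intermediate states $\widetilde{\bU}_{i\pm\frac12}$ of \eqref{eq:1d_llf_inter_states} are in $\mathcal{G}$: in the scalar case by the monotonicity argument of \Cref{lem:2d_llf_scalar}, and for Euler by writing $\widetilde{\bU}_{i\pm\frac12}$ as a convex combination of $\bU\pm\bF(\bU)/\alpha$ and invoking the internal-energy identity of \Cref{lem:2d_llf_euler}, valid because $\alpha_{i\pm\frac12}\ge\abs{v}+a$ at the relevant states. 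Then the blending coefficient is, by construction, the largest value in $[0,1]$ for which the relevant one-sided bounds hold: the explicit min/max formula for $\Delta\hat f^{\texttt{Lim}}_{\xr}$ yields $u_i^{\min}\le\tilde u_{\xr}^{\texttt{Lim},-}$ and $\tilde u_{\xr}^{\texttt{Lim},+}\le u_{i+1}^{\max}$ (and symmetrically at $\xl$) in the scalar case; in the Euler case the density step forces $\widetilde{\bU}_{\xr}^{\texttt{Lim},\pm,\rho}\ge\bar\varepsilon^\rho_{\xr}>0$, and the pressure step solves \eqref{eq:1d_pressure_inequality} through Kuzmin's linear sufficient condition using $\theta_{\xr}^2\le\theta_{\xr}$. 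Since the lower bounds $\bar\varepsilon^\rho_{\xr},\bar\varepsilon^p_{\xr}$ are chosen at or below the corresponding LLF intermediate values, $\theta_{\xr}=1$ is always admissible when no limiting is needed and the construction is feasible. Hence $\overline{\bU}_i^{\texttt{Lim}}\in\mathcal{G}$; conservativity is immediate since the limited update amounts to a convex blend of the high- and low-order fluxes.

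For the point value I would use that \eqref{eq:1d_pnt_limited_state_scalar} and \eqref{eq:1d_pnt_limited_state_Euler} write $\bU_{\xr}^{\texttt{Lim}}$ as a convex combination $\theta_{\xr}\bU_{\xr}^{\texttt{H}}+(1-\theta_{\xr})\bU_{\xr}^{\texttt{L}}$; by the BP lemma for the first-order LLF point-value scheme \eqref{eq:1d_llf_pnt} we have $\bU_{\xr}^{\texttt{L}}\in\mathcal{G}$ under \eqref{eq:1d_pnt_llf_dt}. In the scalar case the scaling-limiter coefficient $\theta_{\xr}$ is exactly the largest value in $[0,1]$ keeping the blend inside $[u_{\xr}^{\min},u_{\xr}^{\max}]\supseteq[m_0,M_0]$, giving the MP. For Euler I would perform the two successive scalings: the density step gives $\bU_{\xr}^{\texttt{Lim},*,\rho}\ge\varepsilon^\rho_{\xr}>0$, and then, since $p$ is concave in $\bU$ on $\{\rho>0\}$, Jensen's inequality gives $p(\bU_{\xr}^{\texttt{Lim}})\ge\theta_{\xr}^{**}p(\bU_{\xr}^{\texttt{Lim},*})+(1-\theta_{\xr}^{**})p(\bU_{\xr}^{\texttt{L}})\ge\varepsilon^p_{\xr}>0$ for the chosen $\theta_{\xr}^{**}$. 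I would also note, following \Cref{rmk:cell_center}, that the cell-centered value $\bU_i$ entering the FVS is scaled into $\mathcal{G}$ at the start of each stage so that $\bF^\pm(\bU_i)$ is well defined; this does not affect the argument, since the point-value output is a convex blend with $\bU_{\xr}^{\texttt{L}}$ regardless of $\bU_{\xr}^{\texttt{H}}$. The step I expect to be the main obstacle is bookkeeping rather than conceptual: verifying that the LLF intermediate states and the LLF point-value update genuinely lie in $\mathcal{G}$ under the stated CFL numbers, that the lower bounds and their cell-local refinements keep the limited states feasible while the blending coefficients stay well defined (the denominators $\bU_{\xr}^{\texttt{L},\rho}-\bU_{\xr}^{\texttt{H},\rho}$ and $p(\bU_{\xr}^{\texttt{L}})-p(\bU_{\xr}^{\texttt{Lim},*})$ are invoked only when the corresponding numerator forces $\theta<1$, so they are nonzero there), and, for Euler, that the linear sufficient condition actually implies \eqref{eq:1d_pressure_inequality}; the last point I would treat most carefully, tracking the sign of $C_{\xr}$, which is nonnegative precisely because $p(\widetilde{\bU}_{\xr})\ge\bar\varepsilon^p_{\xr}$.
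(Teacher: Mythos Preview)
Your proposal is correct and follows the paper's approach exactly: the paper states this 1D theorem without a self-contained proof, treating it as a summary of the convex-combination arguments already given for the 2D case (the brief proof of Proposition~4.1, \Cref{lem:2d_llf_g}, \Cref{lem:2d_llf_scalar}, \Cref{lem:2d_llf_euler}, and the constructions in \Cref{sec:2d_limiting_average,sec:2d_limiting_point}), which is precisely what you reconstruct. One small slip: in the scalar point-value step you wrote $[u_{\xr}^{\min},u_{\xr}^{\max}]\supseteq[m_0,M_0]$, but the inclusion goes the other way (the local bounds lie inside the global ones), which is what actually yields the MP.
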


\begin{remark}
		For uniform meshes, and if taking the maximal spectral radius of $\partial\bF/\partial\bU$ in the domain as $\norm{\varrho}_\infty$, the following CFL condition
		\begin{equation*}
			\Delta t^n \leqslant \frac{\Delta x}{2\norm{\varrho}_\infty}
		\end{equation*}
		fulfills the time step size constraints \eqref{eq:1d_convex_combination_dt} and \eqref{eq:1d_pnt_llf_dt}.
\end{remark}


\section{Additional numerical results}\label{sec:results_supp}

\begin{example}[1D accuracy test for the Euler equations]\label{ex:1d_accuracy}
		This test is used to examine the accuracy of using different point value updates, following the setup in \cite{Abgrall_2023_Combination_CoAMaC}.
		The domain is $[-1,1]$ with periodic boundary conditions.
		The adiabatic index is chosen as $\gamma=3$ so that the characteristic equations of two Riemann invariants $w=u\pm a$ are $w_t+ww_x=0$.
		The initial condition is $\rho_0(x) = 1+\zeta \sin(\pi x), v_0 = 0, p_0 = \rho_0^\gamma$
		and $\zeta\in(0, 1)$ controls the range of the density.
		The exact solution can be obtained by the method of characteristics, given by $\rho(x,t)=\frac12\left(\rho_0(x_1) + \rho_0(x_2)\right), v(x,t)=\sqrt{3}\left(\rho(x,t)-\rho_0(x_1)\right)$,
		where $x_1$ and $x_2$ are solved from the nonlinear equations $x+\sqrt{3}\rho_0(x_1)t-x_1=0$, $x-\sqrt{3}\rho_0(x_2)t-x_2=0$.
		The problem is solved until $T = 0.1$ with $\zeta=1-10^{-7}$.

As $\zeta=1-10^{-7}$, the minimum density and pressure are $10^{-7}$ and $10^{-21}$ respectively, so that the BP limitings are necessary to run this test case.
	The maximal CFL numbers allowing stable simulations are obtained experimentally, which are around $0.47, 0.43, 0.32, 0.18$ for the JS, LLF, SW, and VH FVS, respectively,
thus we run the test with the same CFL number as $0.18$.
\Cref{fig:1d_euler_accuracy} shows the errors and corresponding convergence rates for the conservative variables in the $\ell^1$ norm.
	It is seen that the JS and all the FVS except for the SW FVS achieve the designed third-order accuracy, showing that our BP limitings do not affect the high-order accuracy.
	To examine the reason why the scheme based on the SW FVS is only second-order accurate,
\Cref{fig:1d_euler_accuracy_test1} plots the density and velocity profiles obtained using the SW FVS with $80$ cells.
One can observe some defects in the density when the velocity is zero,
similar to the ``sonic point glitch'' in the literature \cite{Tang_2005_sonic_JoCP}.
	One possible reason is that the SW FVS is based on the absolute value of the eigenvalues, and the corresponding mass flux is not differentiable when the velocity is zero \cite{Leer_1982_Flux_InProceedings}.
Such an issue remains to be further explored in the future.

\begin{figure}[htbp]
	\centering
	\includegraphics[width=0.5\linewidth]{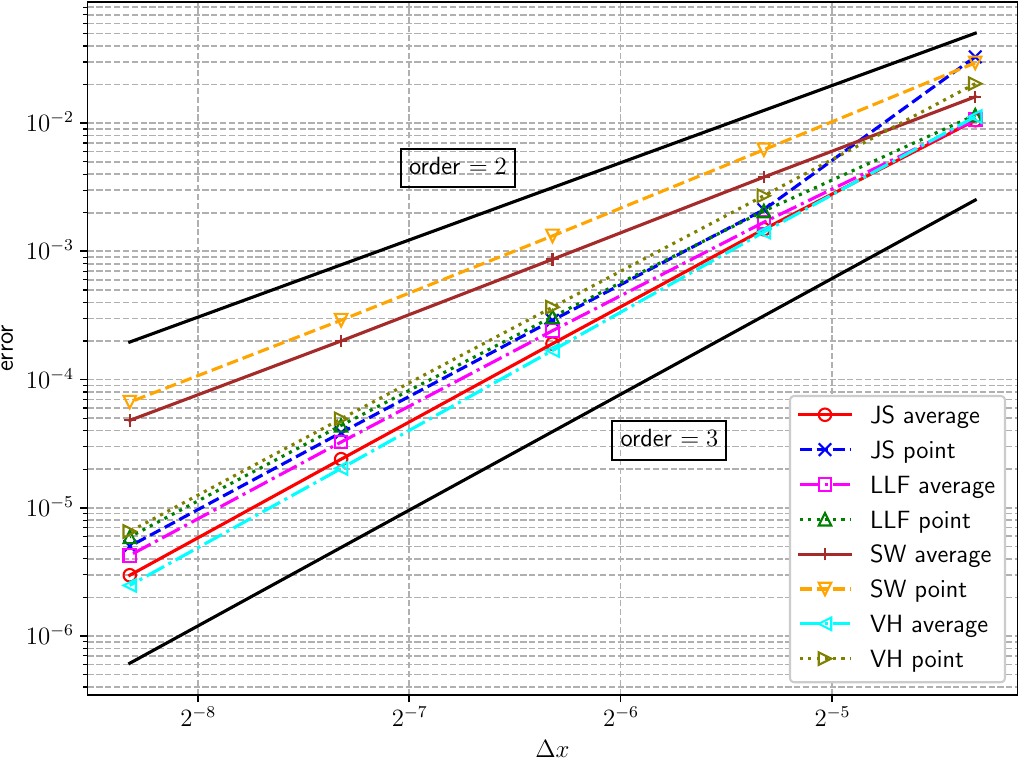}
	\caption{\Cref{ex:1d_accuracy}, the accuracy test for the 1D Euler equations.
 The BP limitings are necessary.
	}
	\label{fig:1d_euler_accuracy}
\end{figure}

\begin{figure}[htbp]
	\centering
	\begin{subfigure}[b]{0.35\textwidth}
		\centering
		\includegraphics[width=1.0\linewidth]{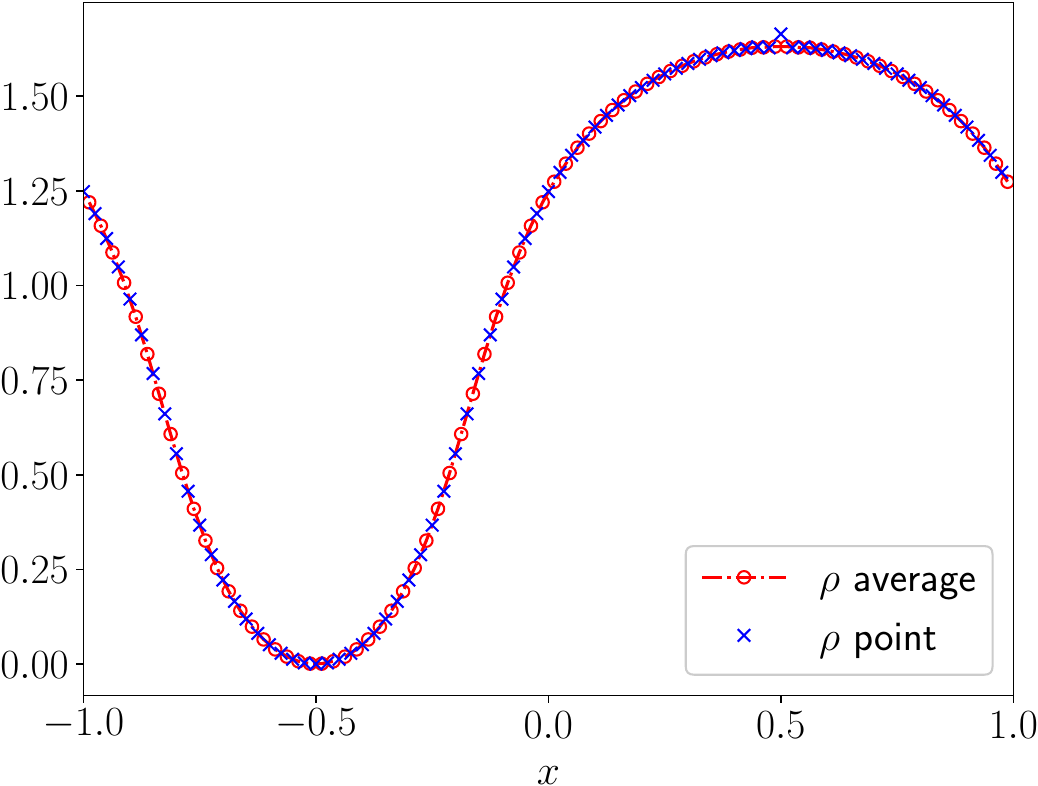}
	\end{subfigure}
	~
	\begin{subfigure}[b]{0.35\textwidth}
		\centering
		\includegraphics[width=1.0\linewidth]{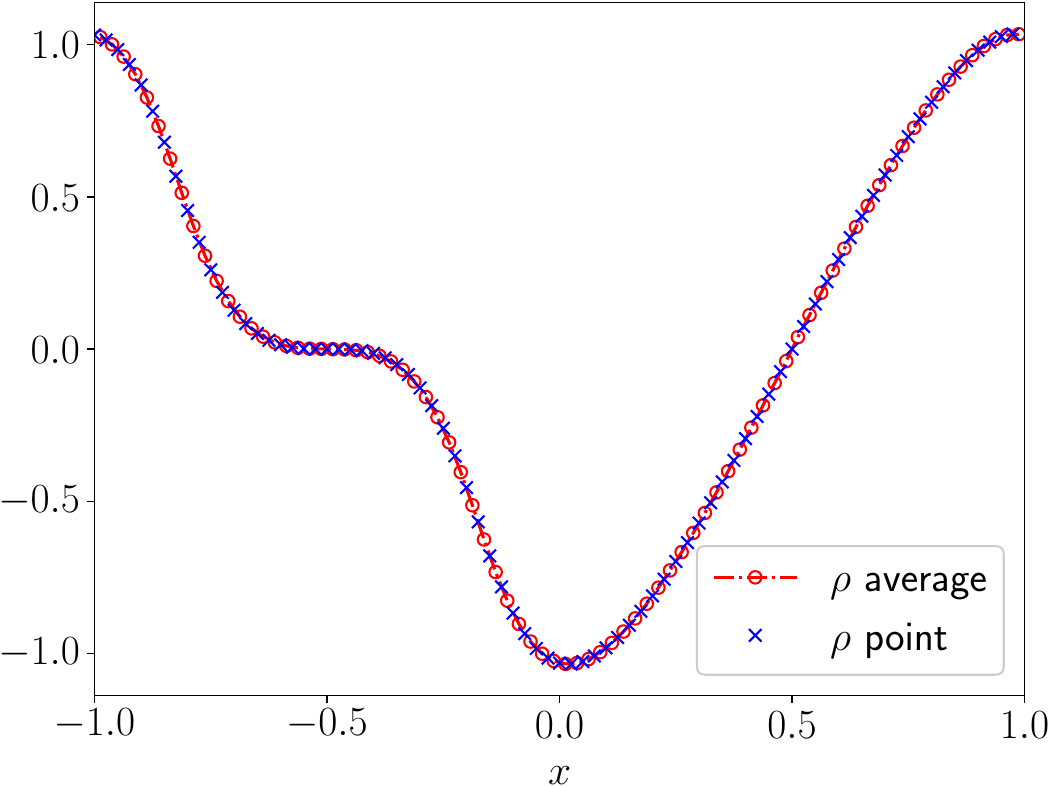}
	\end{subfigure}
	\caption{\Cref{ex:1d_accuracy}, the density (left) and velocity (right) obtained with the SW FVS and $80$ cells for the 1D Euler equations.
	}
	\label{fig:1d_euler_accuracy_test1}
\end{figure}
\end{example}

\begin{example}[Double rarefaction problem]\label{ex:1d_double_rarefaction}
	The exact solution to this problem contains a vacuum,
	so that it is often used to verify the BP property of numerical methods.
	The test is solved on a domain $[0,1]$ until $T=0.3$ with the initial data	\begin{equation*}
		(\rho, v, p) = \begin{cases}
			(7, -1, 0.2), &\text{if}~~ x<0.5,\\
			(7, 1, 0.2), &\text{otherwise}.\\
		\end{cases}
	\end{equation*}
	
	In this test, the AF method based on any kind of point value update mentioned in this paper gives negative density or pressure without the BP limitings.
	\Cref{fig:1d_double_rarefaction} shows the density computed with $400$ cells and the BP limitings for the cell average and point value updates.
	The CFL number is $0.4$ for all kinds of point value updates,
	except for $0.1$ for the VH FVS.
	One observes that the BP AF method gets good performance for this example.
	
	\begin{figure}[htbp]
		\centering
		\begin{subfigure}[b]{0.24\textwidth}
			\centering
			\includegraphics[width=\linewidth]{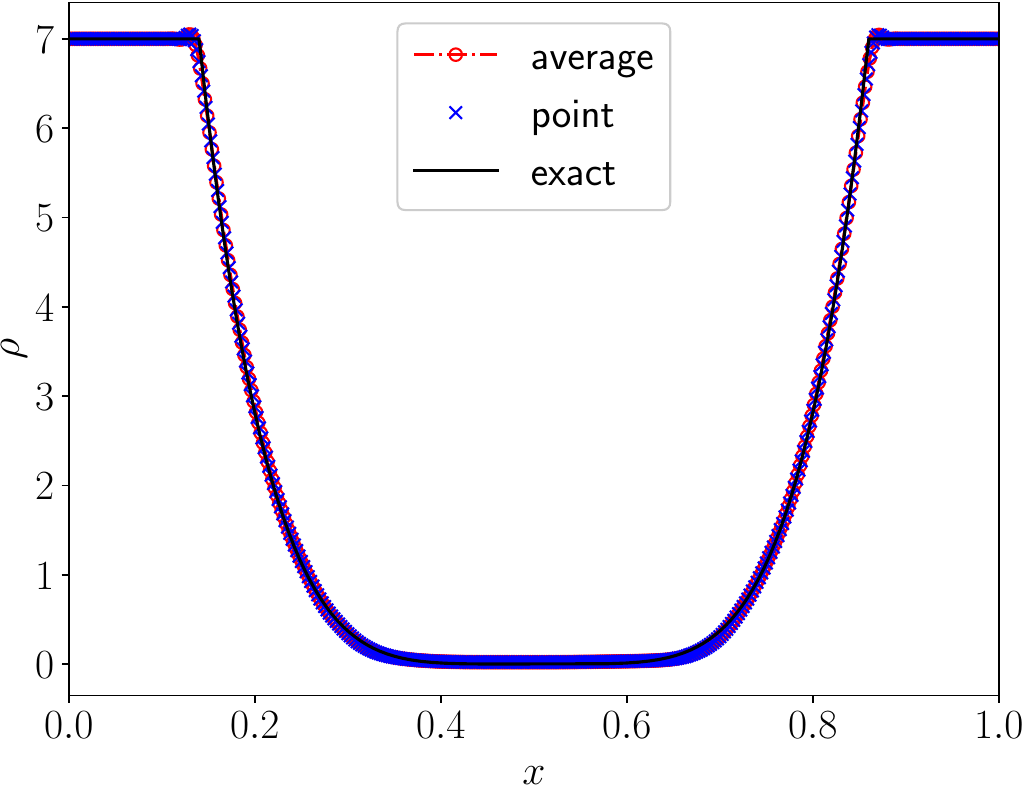}
		\end{subfigure}
		\begin{subfigure}[b]{0.24\textwidth}
			\centering
			\includegraphics[width=\linewidth]{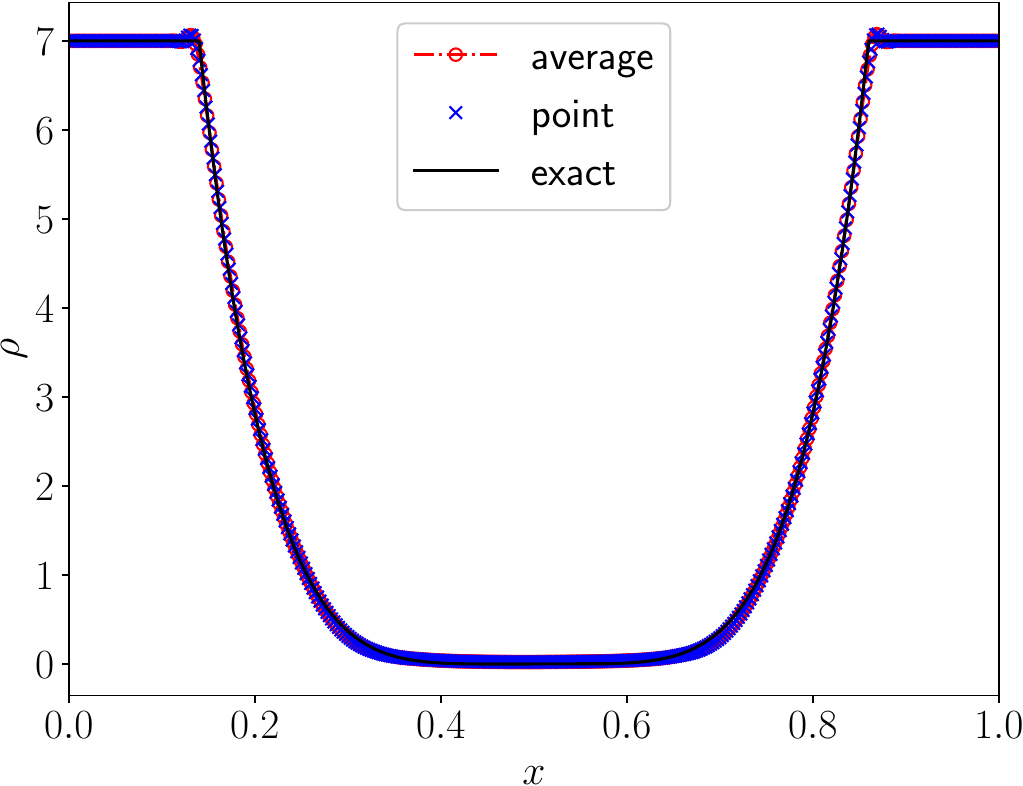}
		\end{subfigure}
		\begin{subfigure}[b]{0.24\textwidth}
			\centering
			\includegraphics[width=\linewidth]{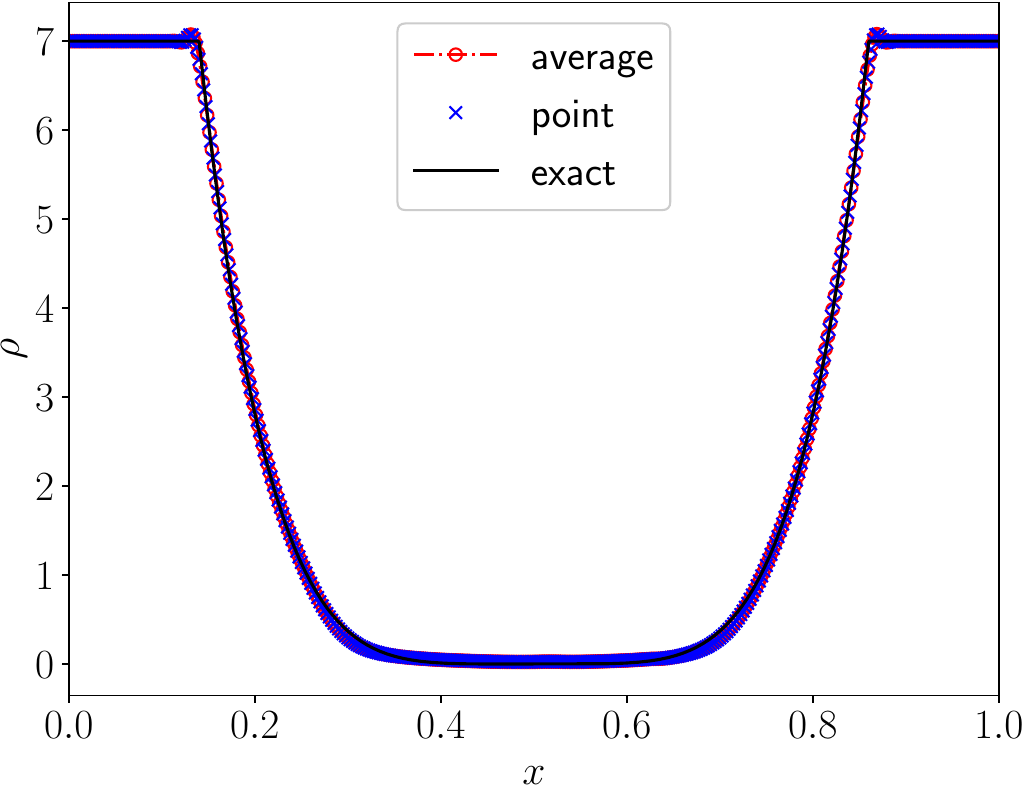}
		\end{subfigure}
		\begin{subfigure}[b]{0.24\textwidth}
			\centering
			\includegraphics[width=\linewidth]{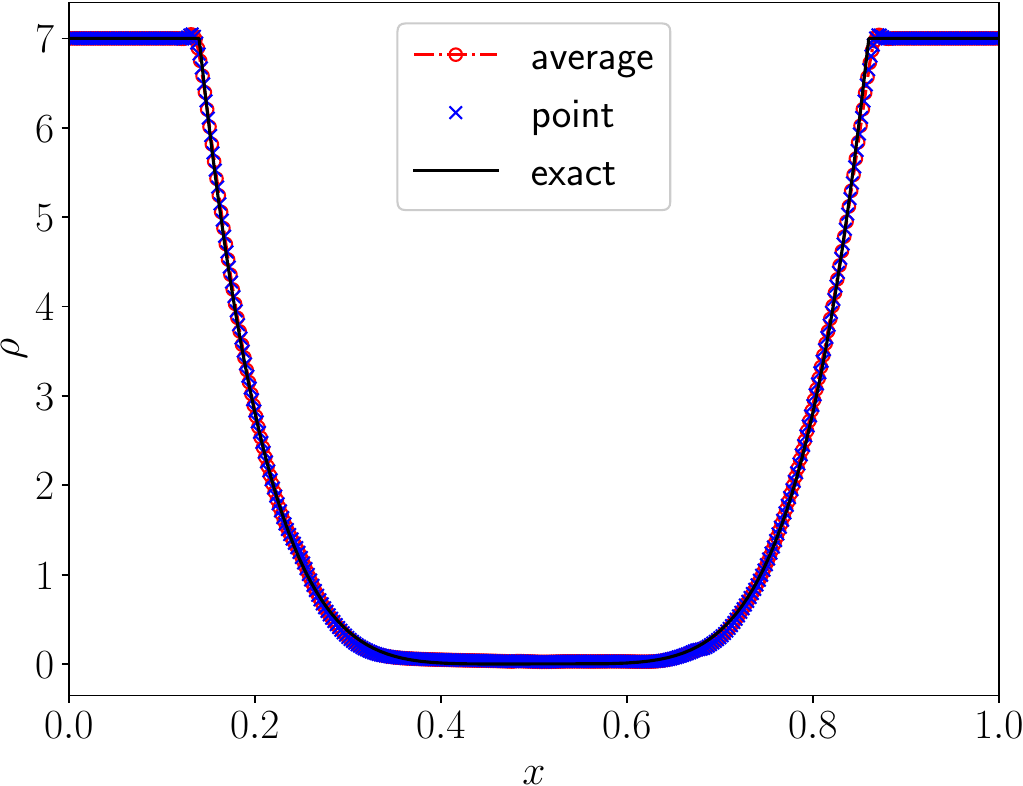}
		\end{subfigure}

	\begin{subfigure}[b]{0.24\textwidth}
		\centering
		\includegraphics[width=\linewidth]{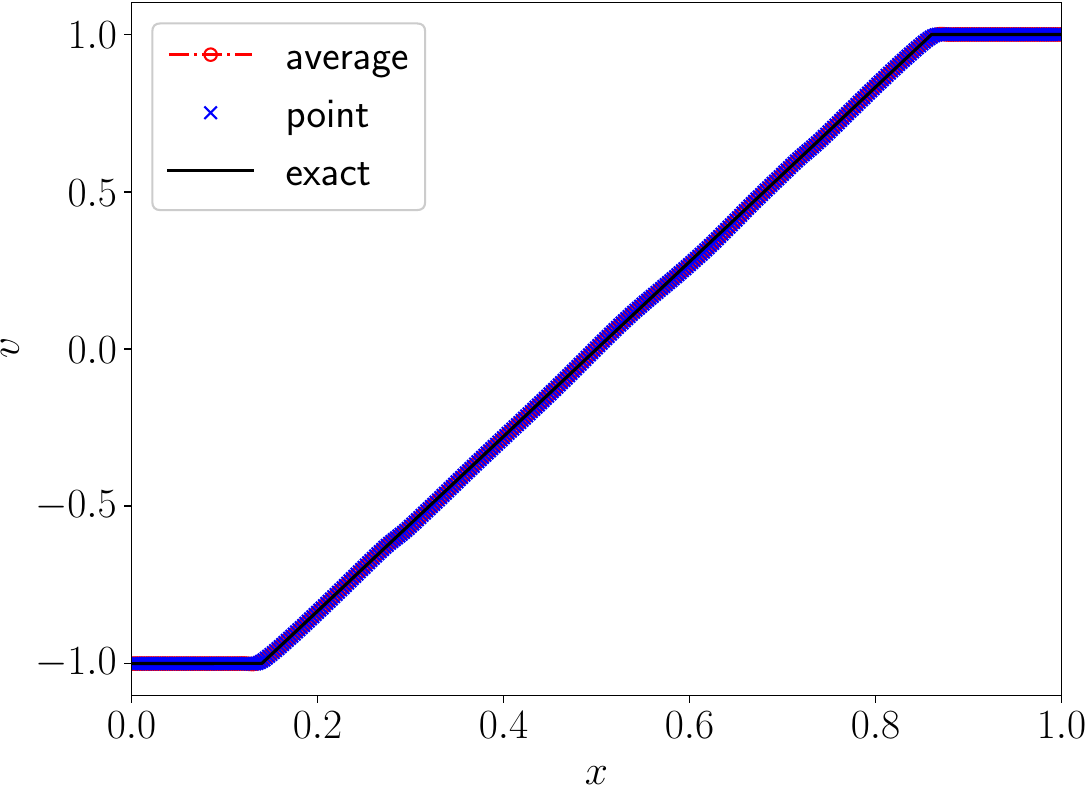}
	\end{subfigure}
	\begin{subfigure}[b]{0.24\textwidth}
		\centering
		\includegraphics[width=\linewidth]{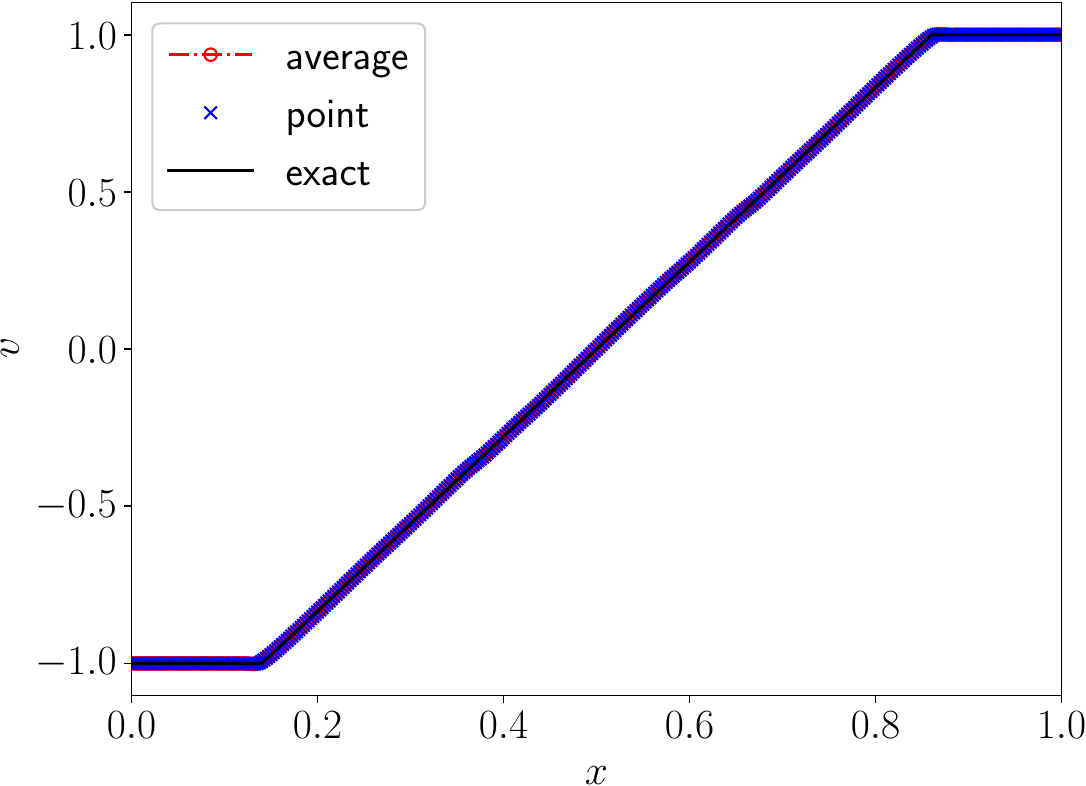}
	\end{subfigure}
	\begin{subfigure}[b]{0.24\textwidth}
		\centering
		\includegraphics[width=\linewidth]{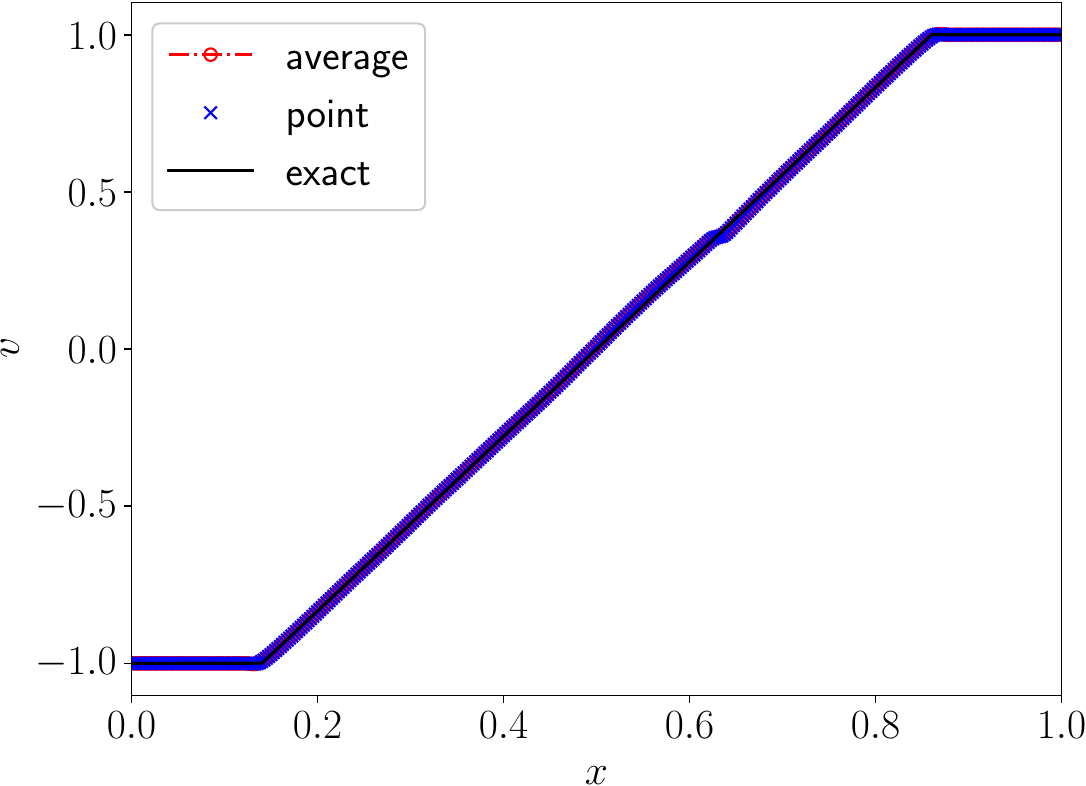}
	\end{subfigure}
	\begin{subfigure}[b]{0.24\textwidth}
		\centering
		\includegraphics[width=\linewidth]{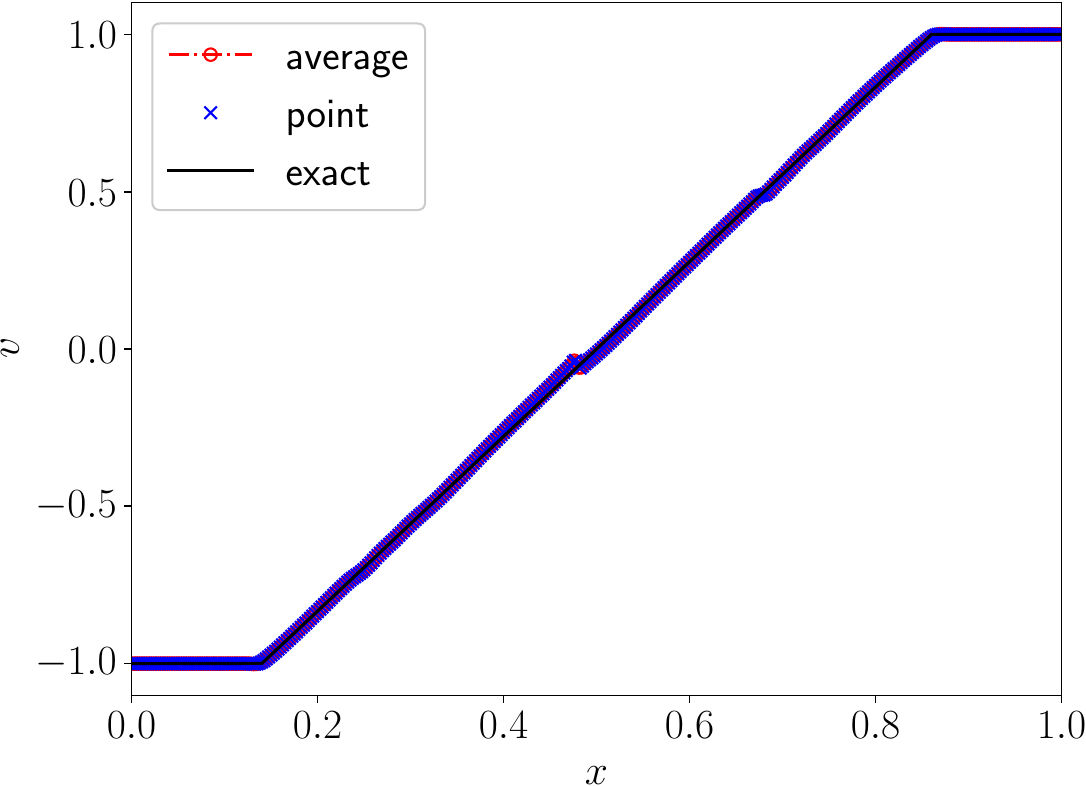}
	\end{subfigure}
	
	\begin{subfigure}[b]{0.24\textwidth}
		\centering
		\includegraphics[width=\linewidth]{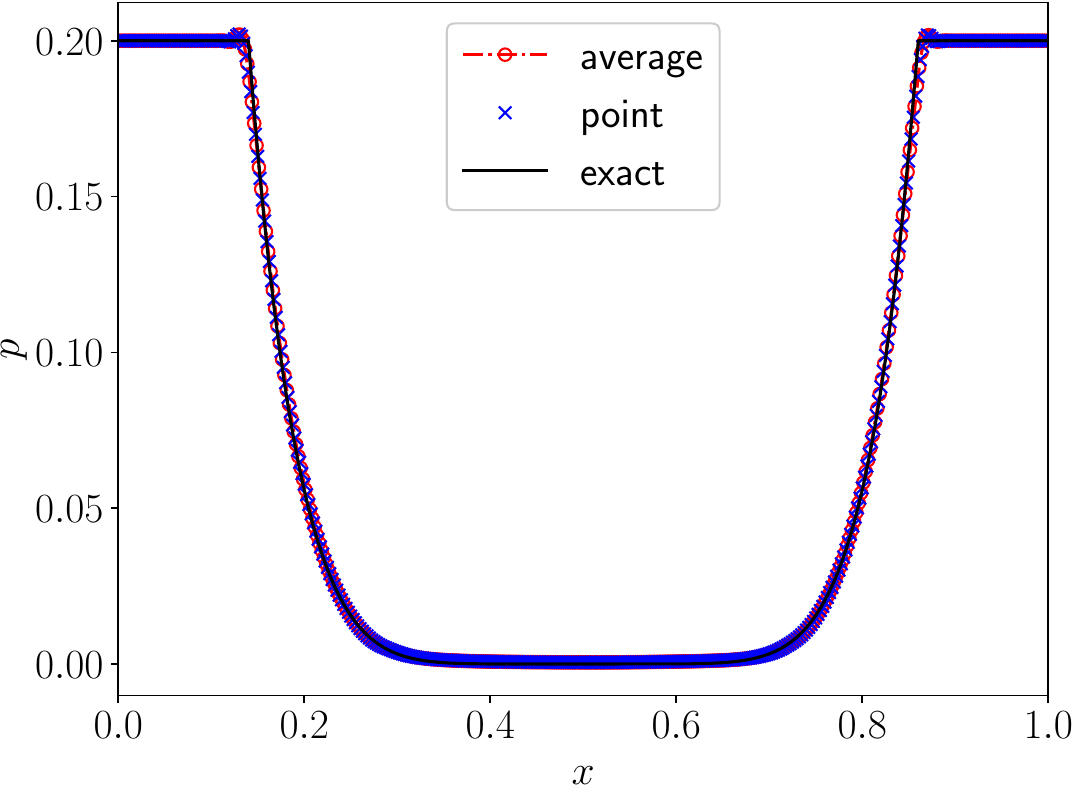}
	\end{subfigure}
	\begin{subfigure}[b]{0.24\textwidth}
		\centering
		\includegraphics[width=\linewidth]{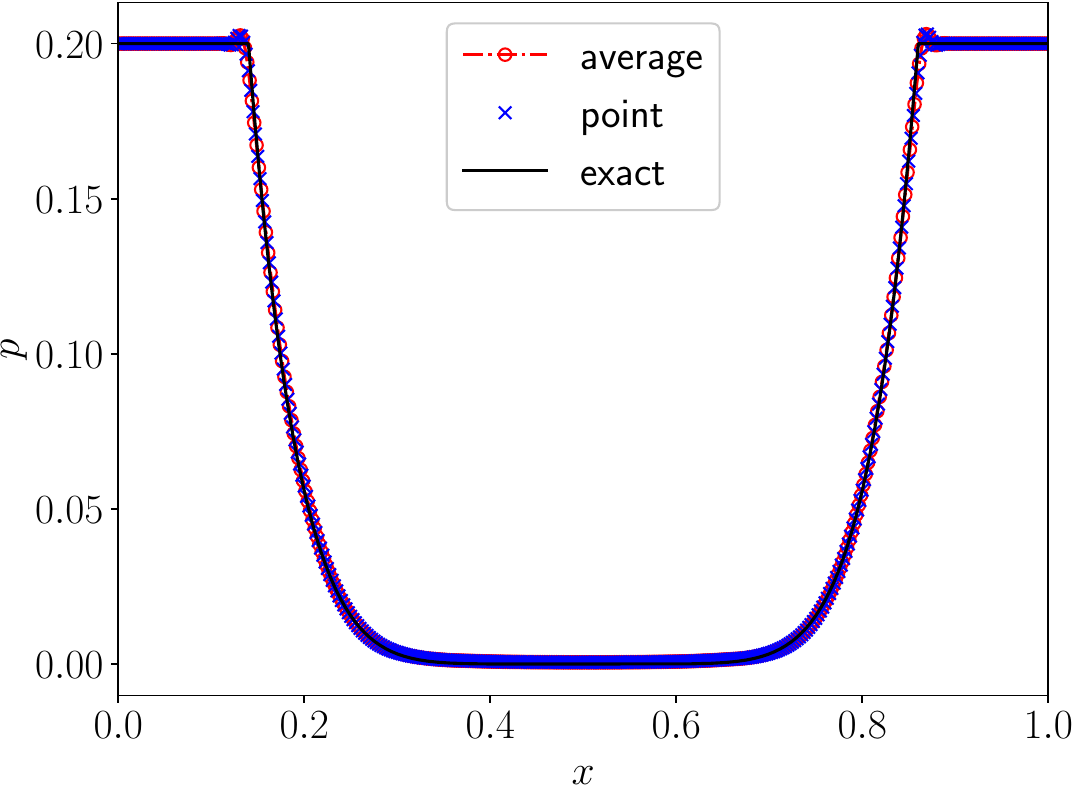}
	\end{subfigure}
	\begin{subfigure}[b]{0.24\textwidth}
		\centering
		\includegraphics[width=\linewidth]{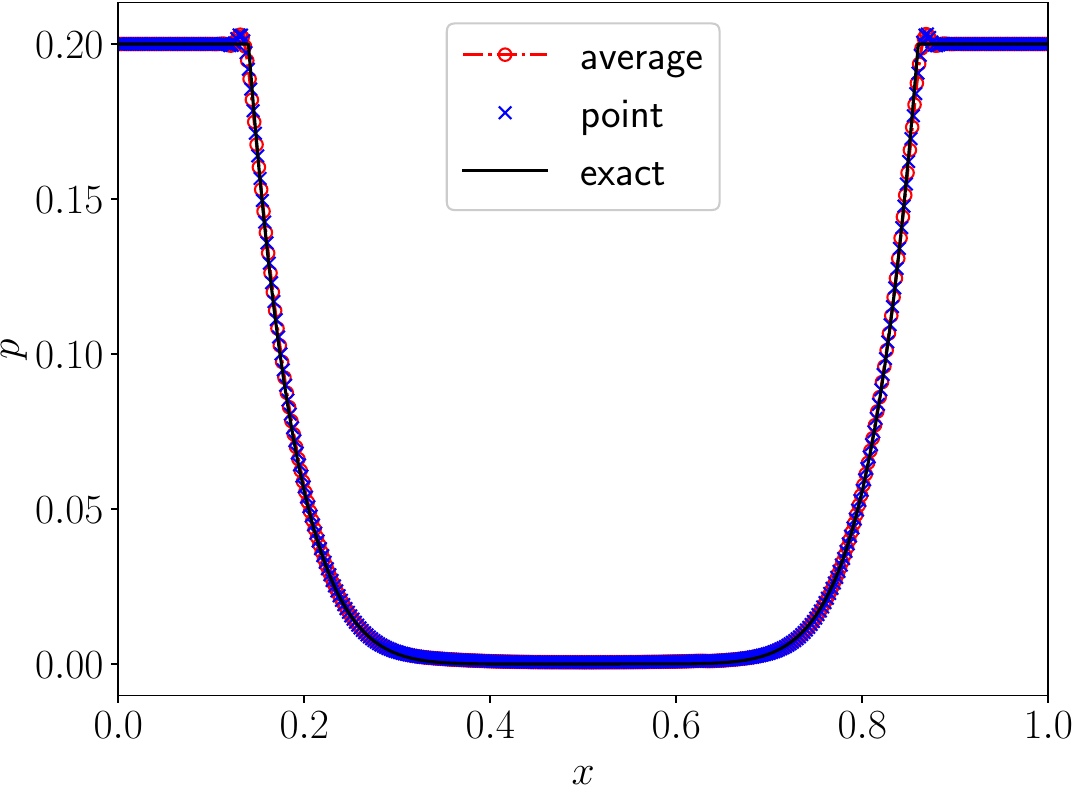}
	\end{subfigure}
	\begin{subfigure}[b]{0.24\textwidth}
		\centering
		\includegraphics[width=\linewidth]{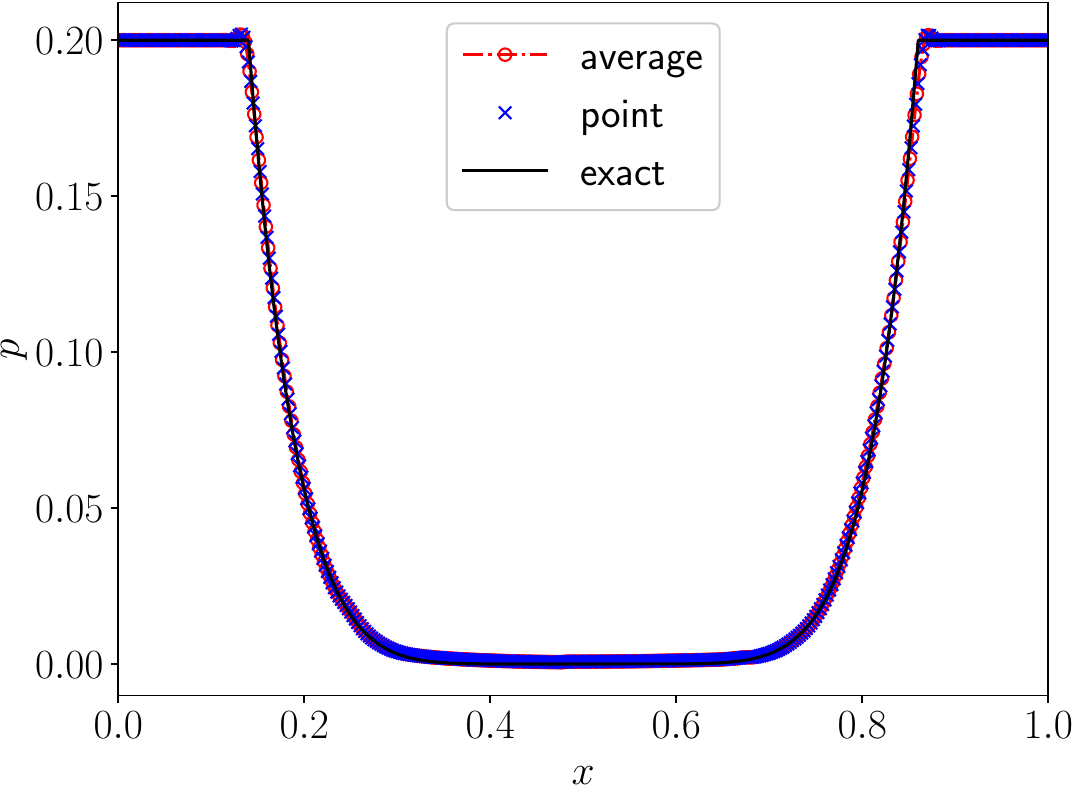}
	\end{subfigure}
	\caption{\Cref{ex:1d_double_rarefaction}, double rarefaction Riemann problem.
		The density, velocity, and pressure are computed by the BP AF methods on a uniform mesh of $400$ cells.
		From left to right: JS, LLF, SW, and VH FVS.}
	\label{fig:1d_double_rarefaction}
\end{figure}
\end{example}

\begin{example}[Blast wave interaction]\label{ex:1d_blast_wave_supp}
The power law reconstruction is useful to reduce oscillations for the fully-discrete AF method \cite{Barsukow_2021_active_JoSC}, thus we would also like to test its ability for the generalized (semi-discrete) AF method.
\Cref{fig:1d_blast_wave_cfl0.1_pwl} shows the density profiles and corresponding enlarged views obtained by using the BP limitings and power law reconstruction on a uniform mesh of $800$ cells.
It is seen that the power law reconstruction can suppress oscillations,
but the results are still more oscillatory than those using the shock sensor-based limiting.
Note that the CFL number reduces to $0.1$ when the power law reconstruction is activated.
This kind of reduction of the CFL number is also observed in other test cases thus we do not recommend using the power law reconstruction for the generalized AF methods,
which also motivates us to develop the shock sensor-based limiting.

\begin{figure}[htbp]
	\centering
	\begin{subfigure}[b]{0.24\textwidth}
		\centering
		\includegraphics[width=1.0\linewidth]{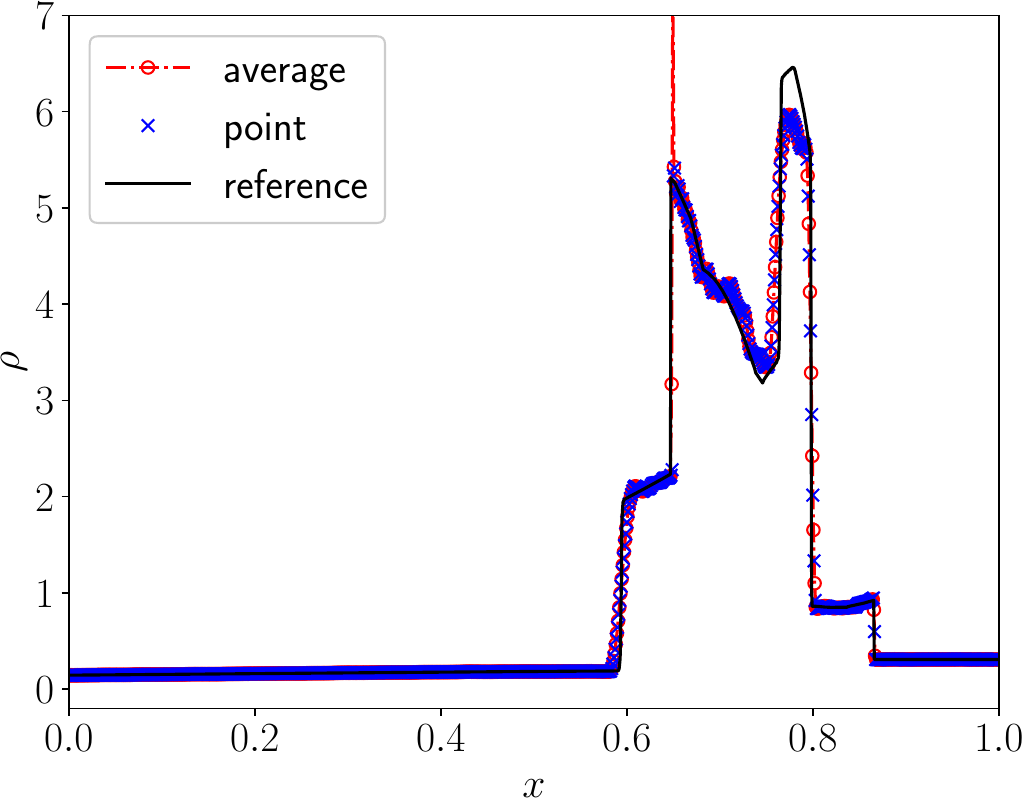}
	\end{subfigure}
	\begin{subfigure}[b]{0.24\textwidth}
		\centering
		\includegraphics[width=1.0\linewidth]{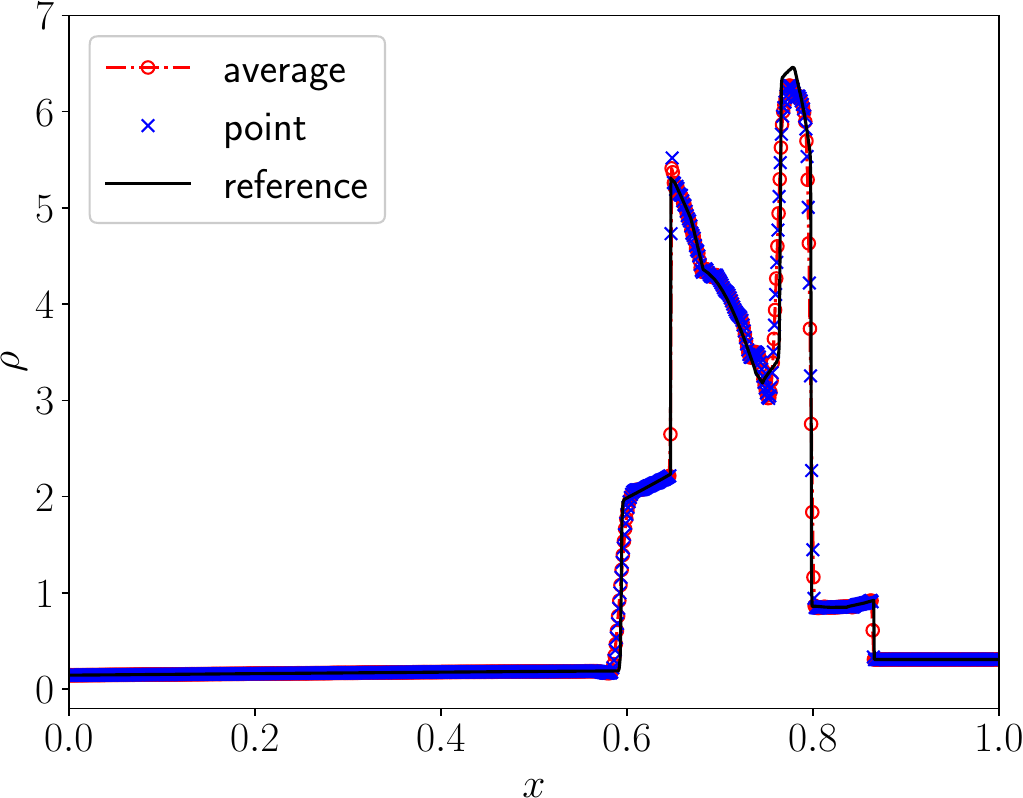}
	\end{subfigure}
	\begin{subfigure}[b]{0.24\textwidth}
		\centering
		\includegraphics[width=1.0\linewidth]{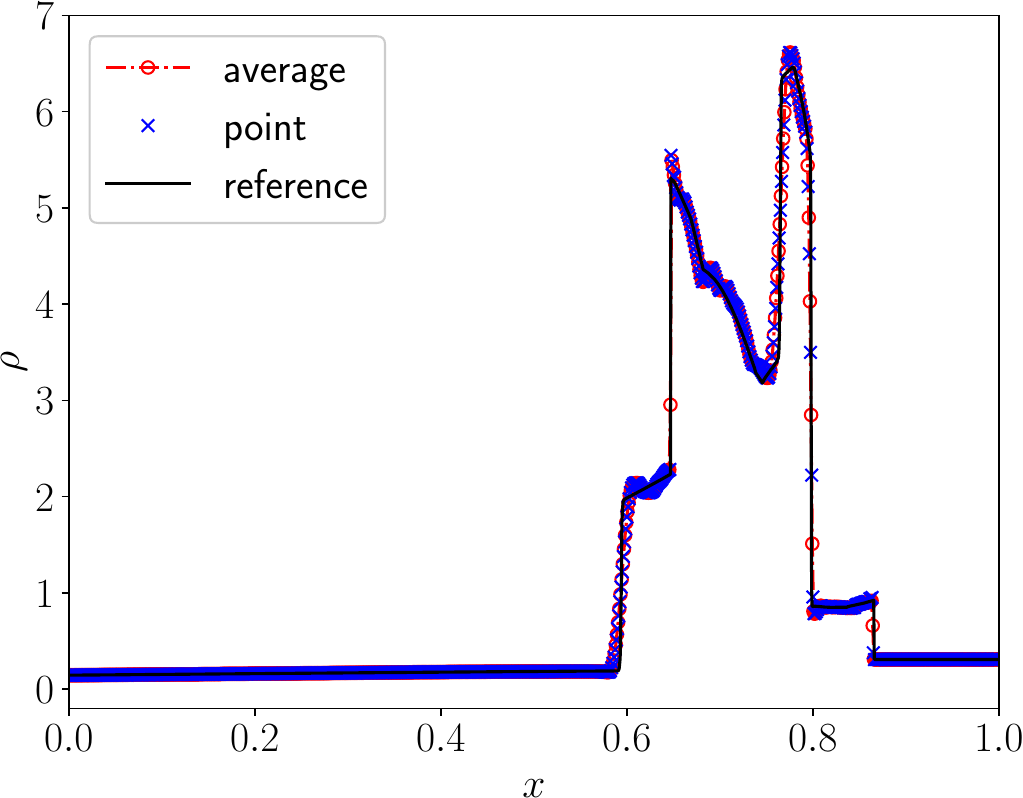}
	\end{subfigure}
	\begin{subfigure}[b]{0.24\textwidth}
		\centering
		\includegraphics[width=1.0\linewidth]{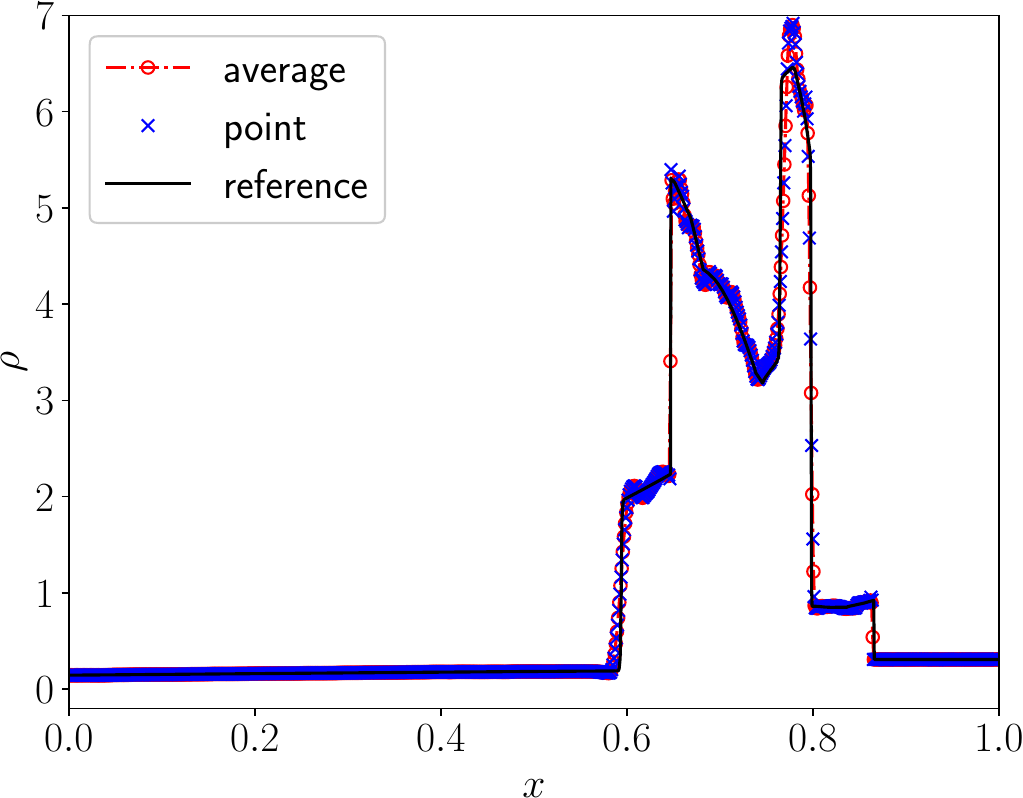}
	\end{subfigure}
	
	\begin{subfigure}[b]{0.24\textwidth}
		\centering
		\includegraphics[width=1.0\linewidth]{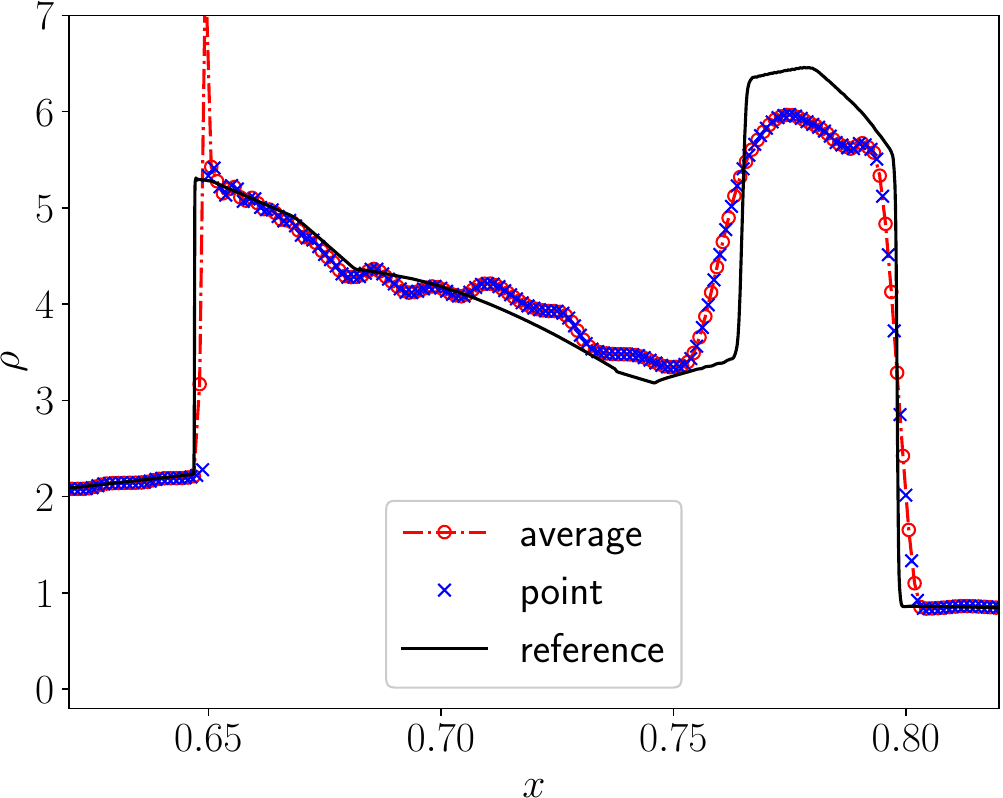}
	\end{subfigure}
	\begin{subfigure}[b]{0.24\textwidth}
		\centering
		\includegraphics[width=1.0\linewidth]{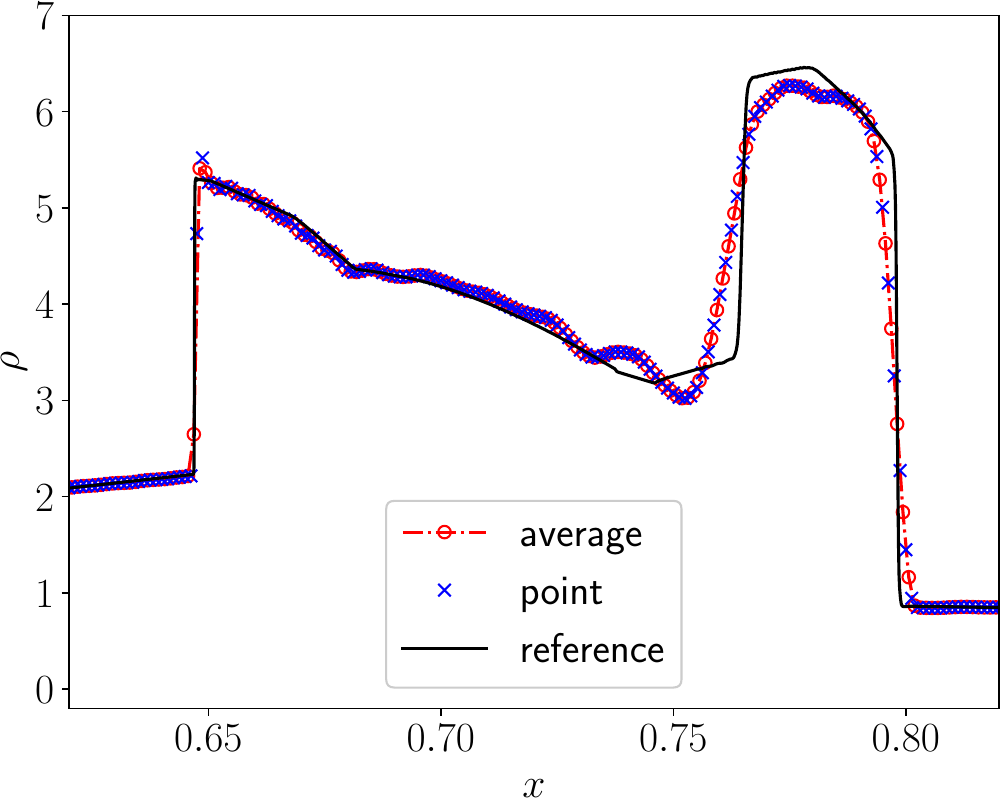}
	\end{subfigure}
	\begin{subfigure}[b]{0.24\textwidth}
		\centering
		\includegraphics[width=1.0\linewidth]{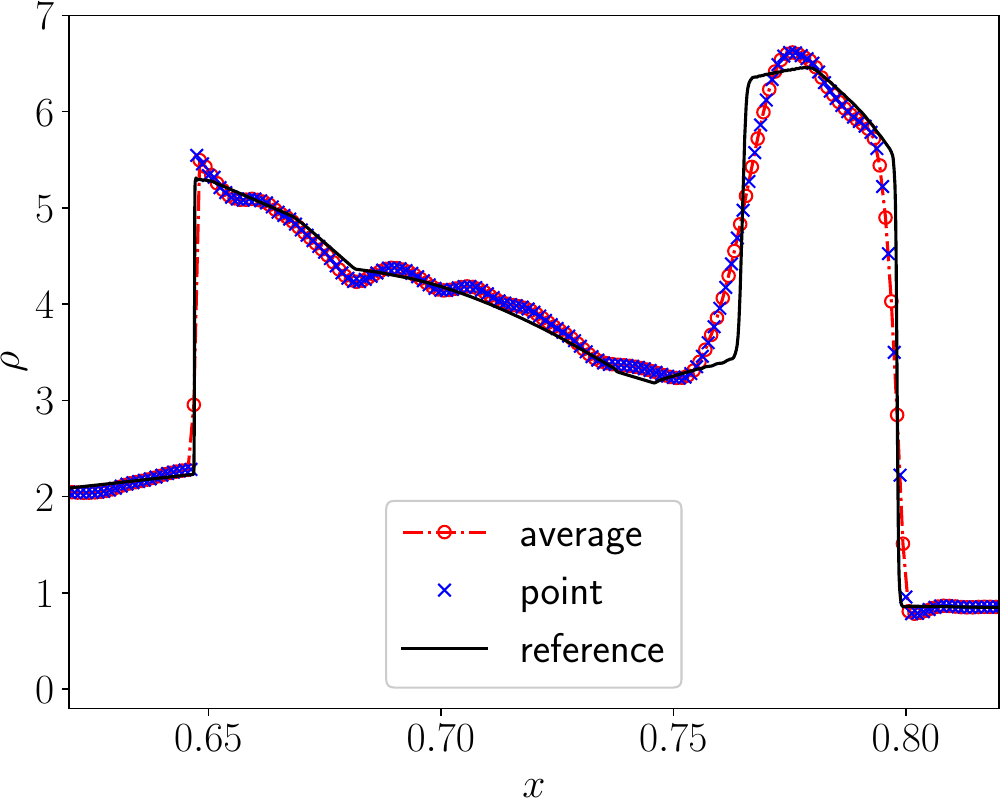}
	\end{subfigure}
	\begin{subfigure}[b]{0.24\textwidth}
		\centering
		\includegraphics[width=1.0\linewidth]{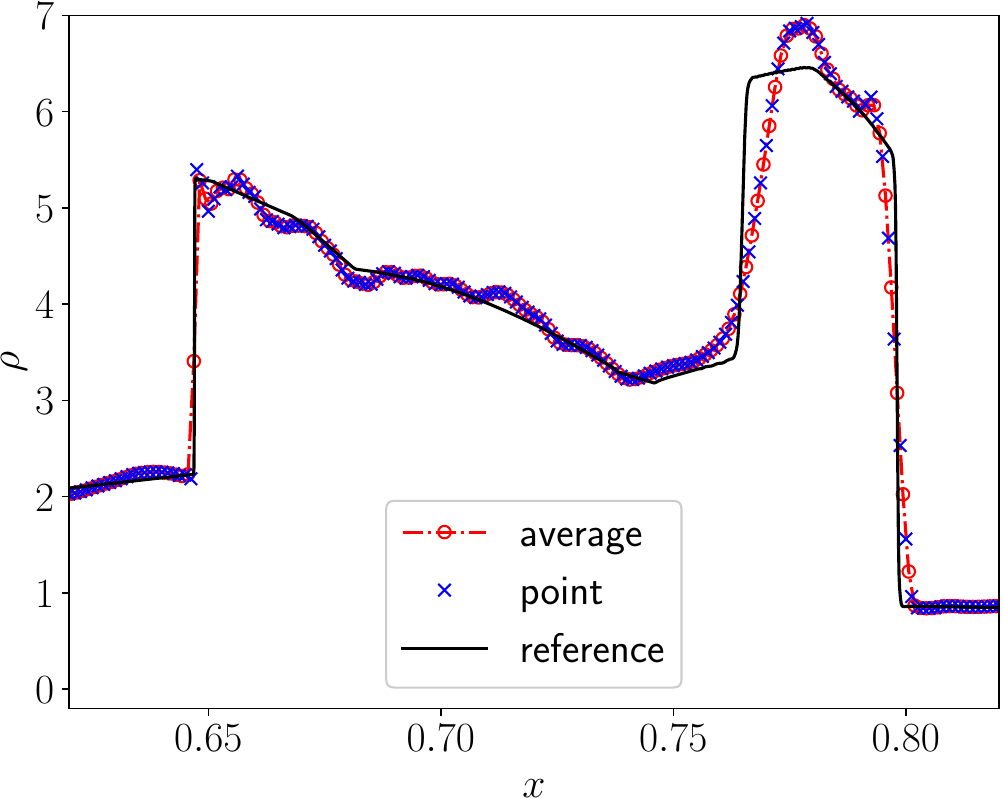}
	\end{subfigure}
	\caption{Example \ref{ex:1d_blast_wave}, blast wave interaction.
		The density computed with the power law reconstruction and BP limitings,
		and the corresponding enlarged views in $[0.62, 0.82]$ are shown in the bottom row.
		From left to right: JS, LLF, SW, and VH FVS.
	}
	\label{fig:1d_blast_wave_cfl0.1_pwl}
\end{figure}
\end{example}

\begin{example}[1D Sedov problem]\label{ex:1d_sedov}
	In this problem, a volume of uniform density and temperature is initialized, and a large quantity of thermal energy is injected at the center, developing into a blast wave that evolves in time in a self-similar fashion \cite{Sedov_1959_Similarity_book}.
	An exact analytical solution based on self-similarity arguments is available \cite{Kamm_2007_efficient}, which contains very low
	density with strong shocks.
	For the background value, the initial density is one, velocity is zero, and total energy is $10^{-12}$ everywhere except that
	in the centered cell, the total energy of the cell average and point values at two cell interfaces are $3.2\times 10^6/\Delta x$ with $\Delta x = 4/N$ with $N$ the number of cells, which is used to emulate a $\delta$-function at the center.
	The test is solved until $T= 10^{-3}$.
	
	This test is run with $N=801$ cells, and the density plots in the right half domain are shown in \cref{fig:1d_sedov}.
	The BP limitings are adopted for the cell average and point value updates.
	The LLF FVS is used and the CFL number is taken as $0.4$.
	
	\begin{figure}[htbp]
		\centering
		\includegraphics[width=0.4\linewidth]{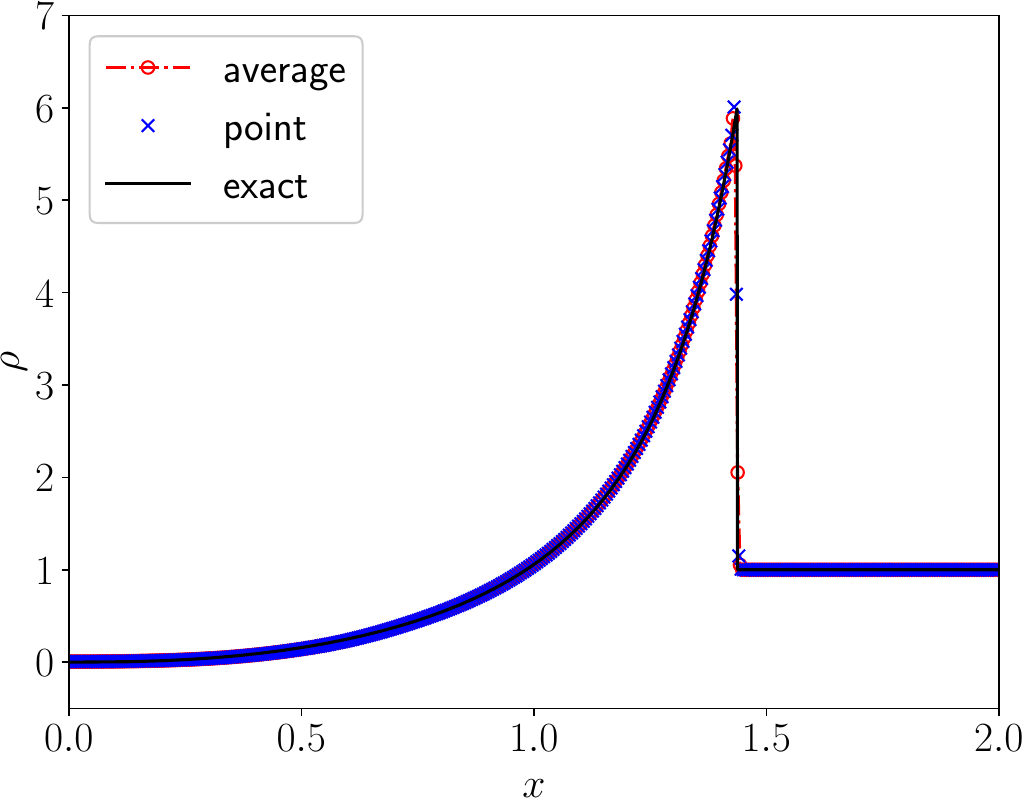}
		\caption{Example \ref{ex:1d_sedov}, 1D Sedov problem.
			The numerical solutions are computed with the LLF FVS and the BP limitings on a uniform mesh of $801$ cells.}
		\label{fig:1d_sedov}
	\end{figure}
\end{example}

\begin{example}[Shock reflection problem]\label{ex:2d_shock_reflection}
	The computational domain is $[0,4]\times[0,1]$, which is divided into a $120\times30$ uniform mesh. The boundary conditions are outflow at the right boundary, reflective at the bottom boundary, and inflow on the other two sides with the data
	\begin{equation*}
		(\rho, v_1, v_2, p) = 
		\begin{cases}
			(1.0, ~2.9, ~0.0, ~1.0/1.4), &\text{if}~ x=0,~ 0\leqslant y\leqslant 1,\\
			(1.69997, ~2.61934, -0.50632, ~1.52819), &\text{if}~ y=1,~ 0\leqslant x\leqslant 4. \\
		\end{cases}
	\end{equation*}
	This test is solved until $T=6$ thus the numerical solution converges.
	
	The density plots obtained without any limiting ($\kappa=0$) and with the shock sensor-based limiting ($\kappa=0.5$) are shown in \Cref{fig:2d_sf_density}, and the blending coefficients based on the shock sensor are plotted in \Cref{fig:2d_sf_ss}.
	The numerical solutions converge in both cases, and the shock sensor can correctly locate the shock waves.
	It is also interesting to look at the residual between two successive time steps, presented in \Cref{fig:2d_sf_residual}, with respective to the number of iterations.
	The limiting based on the shock sensor accelerates the convergence after the reflective shock is fully formed, showing the advantage of using the shock sensor.
	
	\begin{figure}[htbp!]
		\centering
		\begin{subfigure}[b]{0.48\textwidth}
			\centering
			\includegraphics[width=\linewidth]{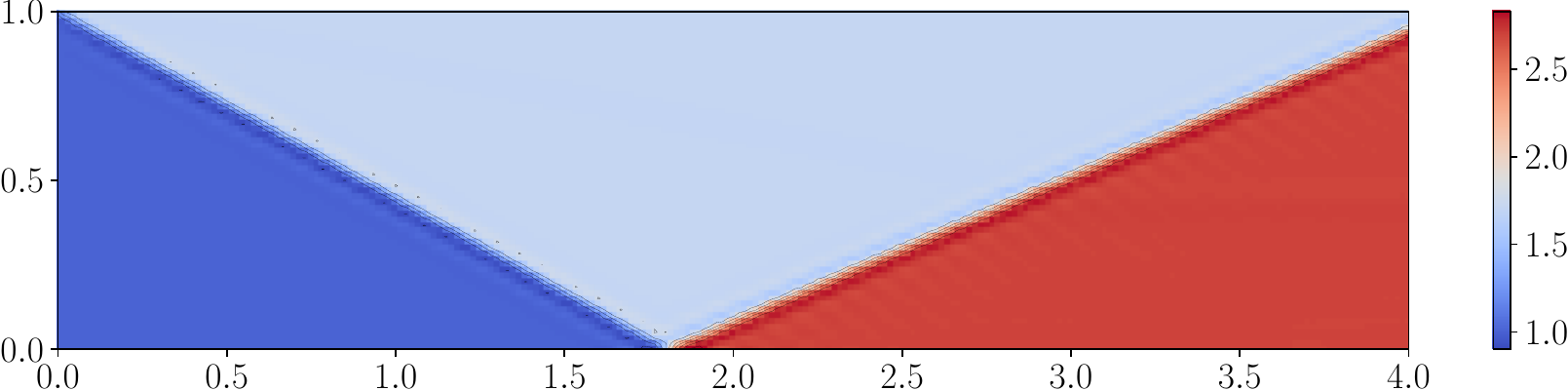}
		\end{subfigure}
		\quad
		\begin{subfigure}[b]{0.48\textwidth}
			\centering
			\includegraphics[width=\linewidth]{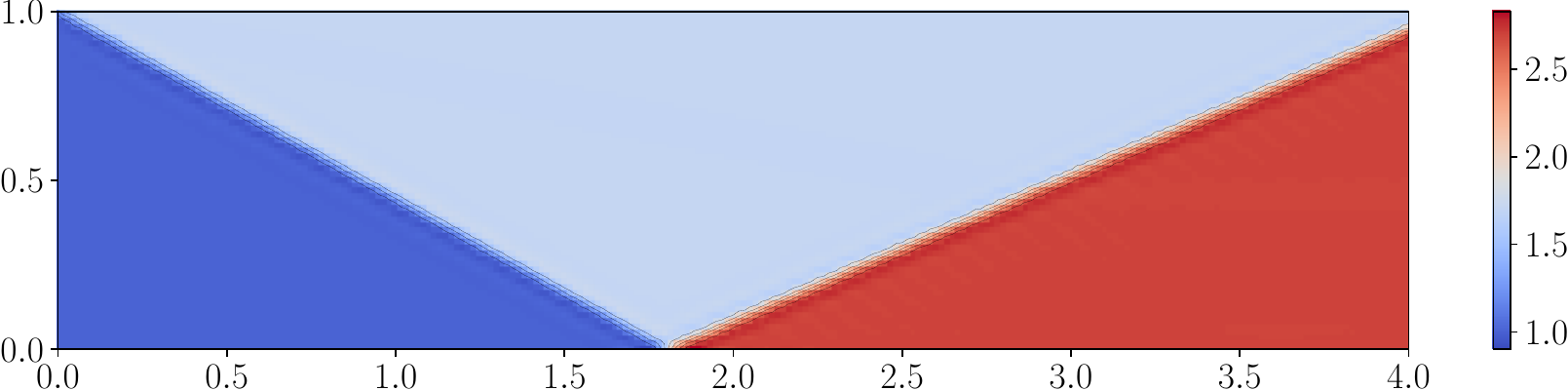}
		\end{subfigure}
		\caption{Example \ref{ex:2d_shock_reflection}, shock reflection problem.
			The density obtained without ($\kappa=0$, left) or with the shock sensor ($\kappa=0.5$, right) on the $120\times30$ uniform mesh. $10$ equally spaced contour lines from $0.901$ to $2.829$ are shown.}
		\label{fig:2d_sf_density}
	\end{figure}
	
	\begin{figure}[htbp!]
		\centering
		\begin{subfigure}[b]{0.48\textwidth}
			\includegraphics[width=\linewidth]{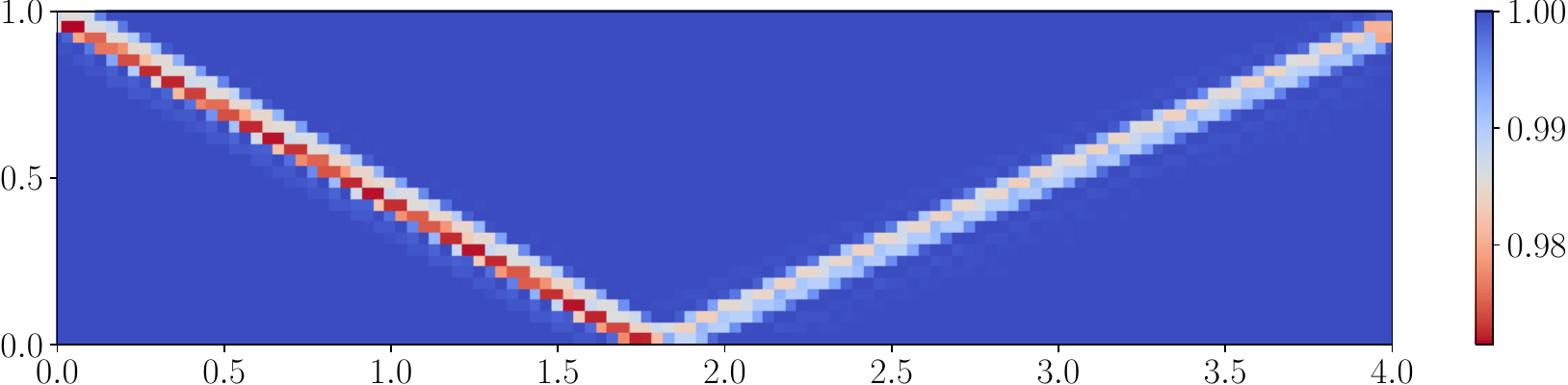}
		\end{subfigure}
		\quad
		\begin{subfigure}[b]{0.48\textwidth}
			\centering
			\includegraphics[width=\linewidth]{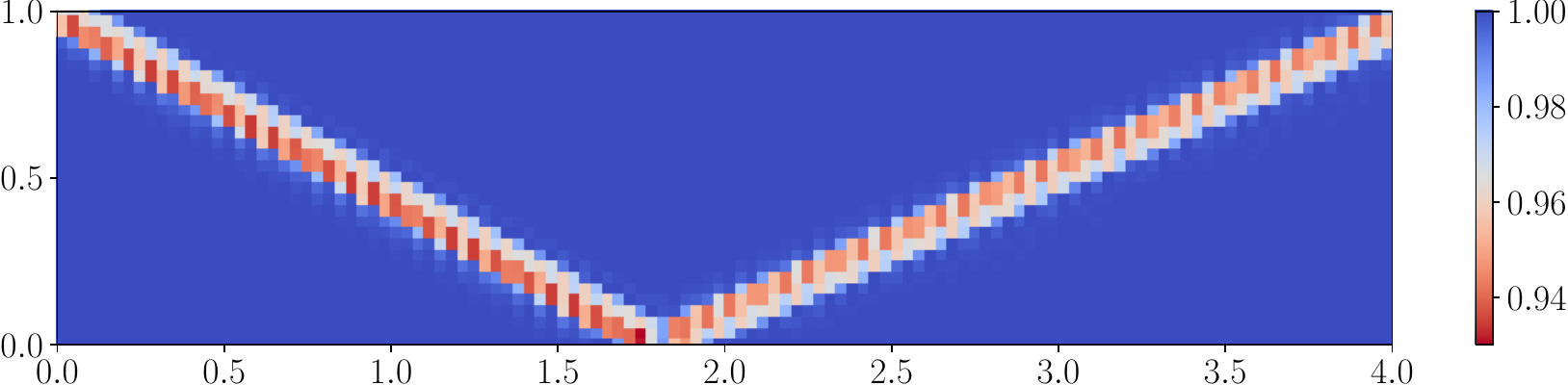}
		\end{subfigure}
		\caption{Example \ref{ex:2d_shock_reflection}, shock reflection problem.
			The shock sensor-based blending coefficients $\theta_{\xr,j}^{s}$ (left) and $\theta_{i,\yr}^{s}$ (right) on the $120\times30$ uniform mesh. $\kappa=0.5$.}
		\label{fig:2d_sf_ss}
	\end{figure}
	
	\begin{figure}[htbp!]
		\centering
		\includegraphics[width=0.4\linewidth]{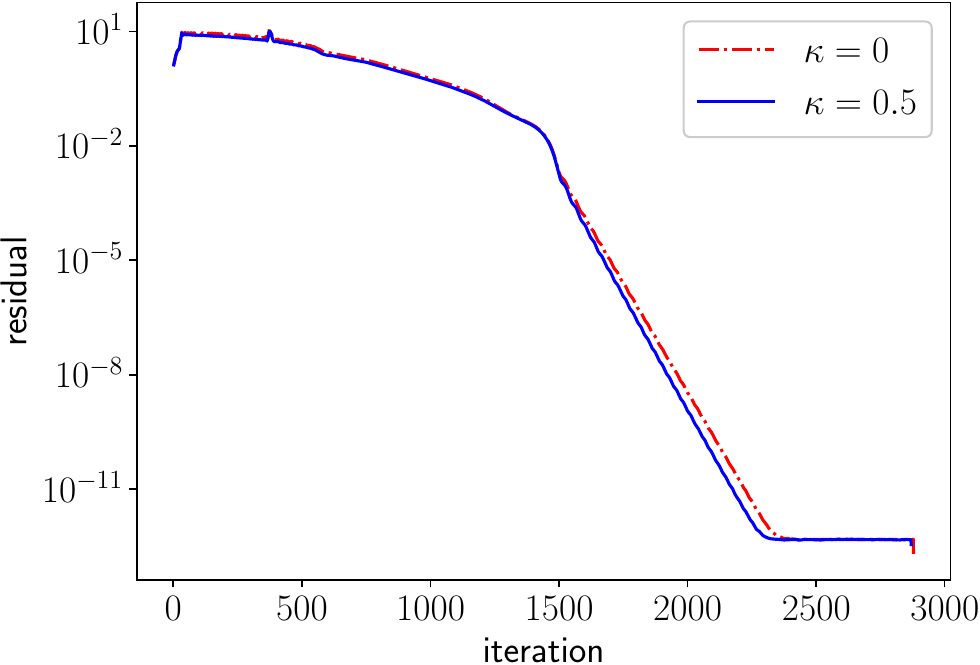}
		\caption{Example \ref{ex:2d_shock_reflection}, shock reflection problem.
			The residual decay with respect to the number of iterations.}
		\label{fig:2d_sf_residual}
	\end{figure}
\end{example}

\begin{example}[2D Riemann problem]\label{ex:2d_rp}
	This problem corresponds to the configuration $3$ in \cite{Lax_1998_Solution_SJoSC}, containing four initial shock waves, with the initial data
	\begin{equation*}
		(\rho, v_1, v_2, p) = 
		\begin{cases}
			(1.5, ~0, ~0, ~1.5), & x > 0.8, ~y > 0.8, \\
			(0.5323, ~1.206, ~0, ~0.3), & x < 0.8, ~y > 0.8, \\
			(0.138, ~1.206, ~1.206, ~0.029), & x < 0.8, ~y < 0.8, \\
			(0.5323, ~0, ~1.206, ~0.3), & x > 0.8, ~y < 0.8. \\
		\end{cases}
	\end{equation*}
	The test is solved on the domain $[0,1]\times[0,1]$ until $T=0.8$.
	
	Without the BP limitings, the simulation crashes due to negative pressure.
	The density plots obtained without ($\kappa=0$) and with the shock sensor ($\kappa=0.5$) are shown in \Cref{fig:2d_rp_density}.
	Without the shock sensor, the numerical solutions contain spurious oscillations, and they are reduced drastically by the shock sensor-based limiting.
	As mesh refinement, the shock waves are captured sharply, and the small-scale features are preserved well, as evidenced by the roll-ups around the mushroom-shaped jet, which are in good agreement with the results in the literature.
	The values of the shock sensor-based blending coefficients $\theta_{\xr,j}, \theta_{i,\yr}$ are also plotted in \Cref{fig:2d_rp_ss}, which indicates that the shock sensor can locate the shock waves correctly.
	
	\begin{figure}[htbp!]
		\begin{subfigure}[b]{0.3\textwidth}
			\centering
			\includegraphics[width=\linewidth]{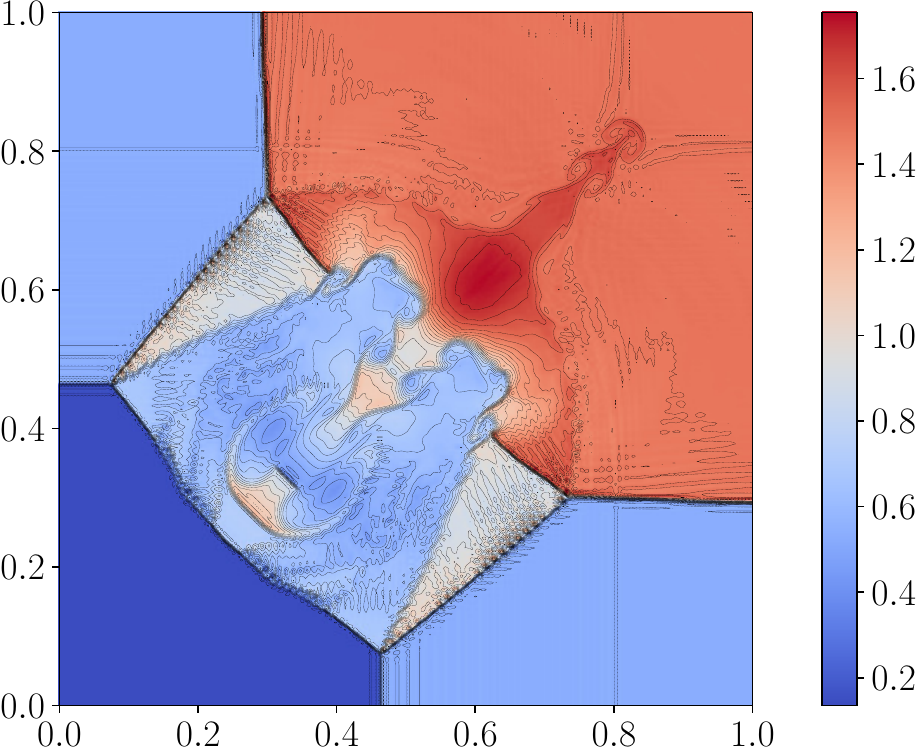}
		\end{subfigure}
		~
		\begin{subfigure}[b]{0.3\textwidth}
			\centering
			\includegraphics[width=\linewidth]{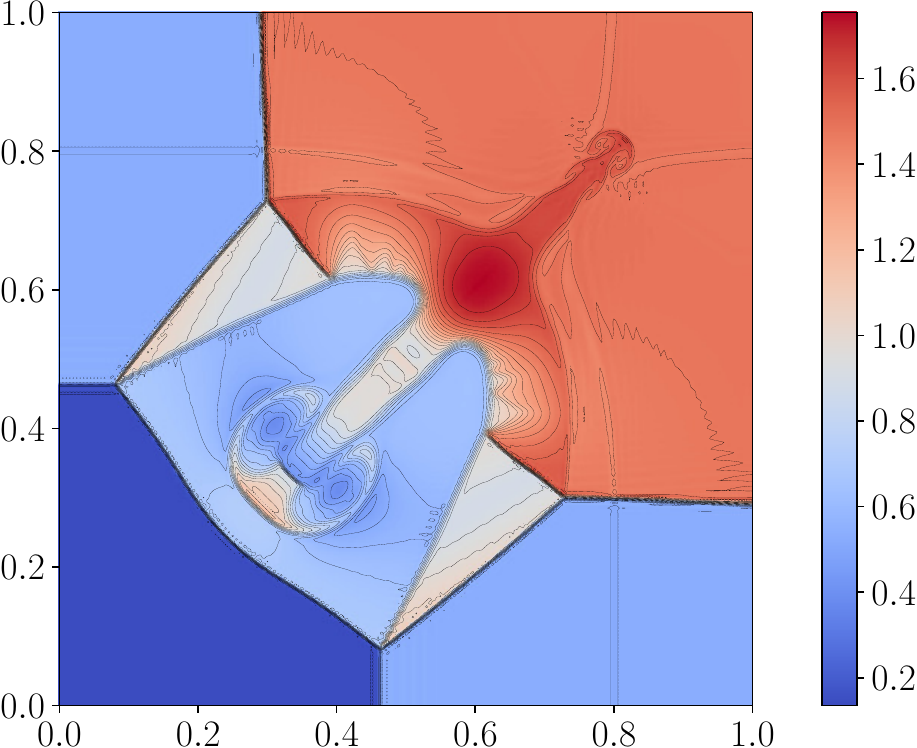}
		\end{subfigure}
		~
		\begin{subfigure}[b]{0.3\textwidth}
			\centering
			\includegraphics[width=\linewidth]{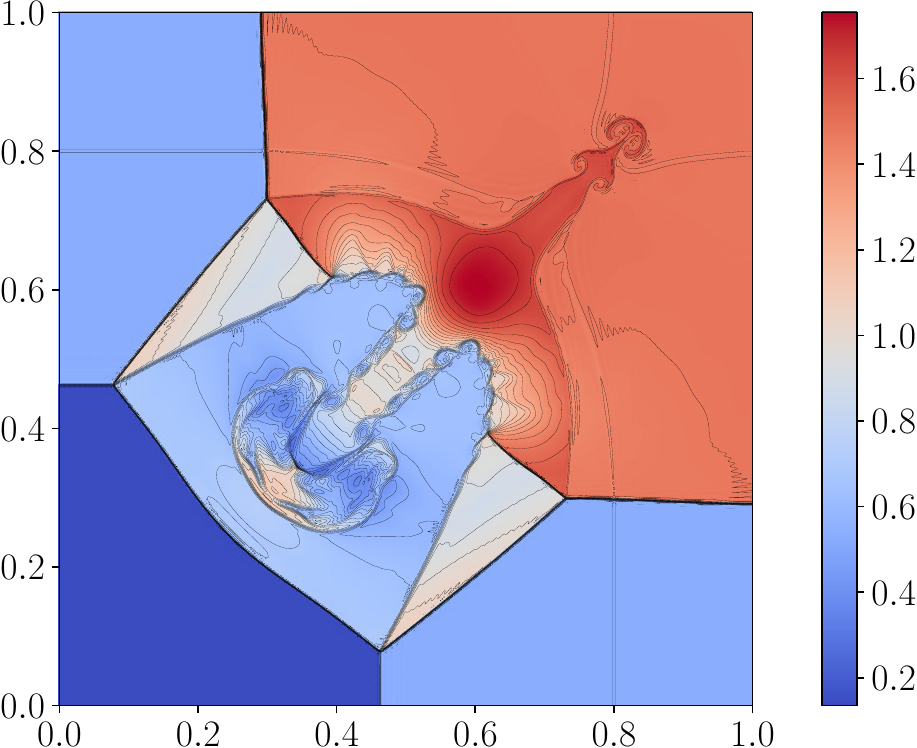}
		\end{subfigure}
		\caption{Example \ref{ex:2d_rp}, 2D Riemann problem.
			The density obtained with the BP limitings and without or with the shock sensor.
			From left to right: $200\times200$ mesh with $\kappa=0$, $200\times200$ mesh with $\kappa=0.5$, $400\times400$ mesh with $\kappa=0.5$.
			$30$ equally spaced contour lines from $0.135$ to $1.754$. }
		\label{fig:2d_rp_density}
	\end{figure}
	
	\begin{figure}[htbp!]
		\centering
		\begin{subfigure}[b]{0.3\textwidth}
			\centering
			\includegraphics[width=\linewidth]{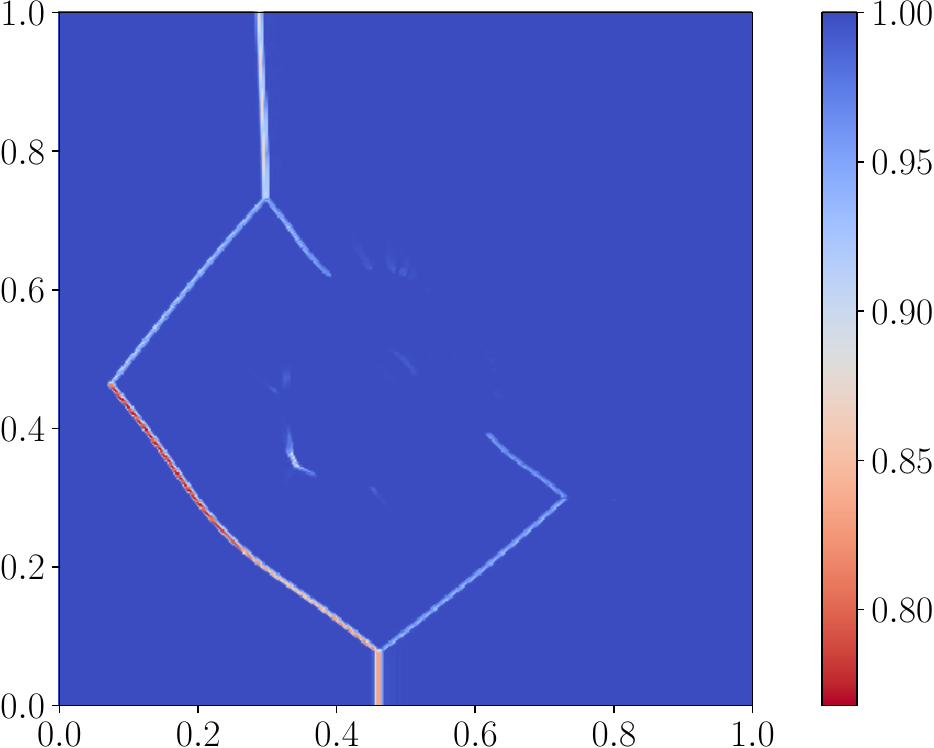}
		\end{subfigure}
		\qquad\qquad\qquad
		\begin{subfigure}[b]{0.3\textwidth}
			\centering
			\includegraphics[width=\linewidth]{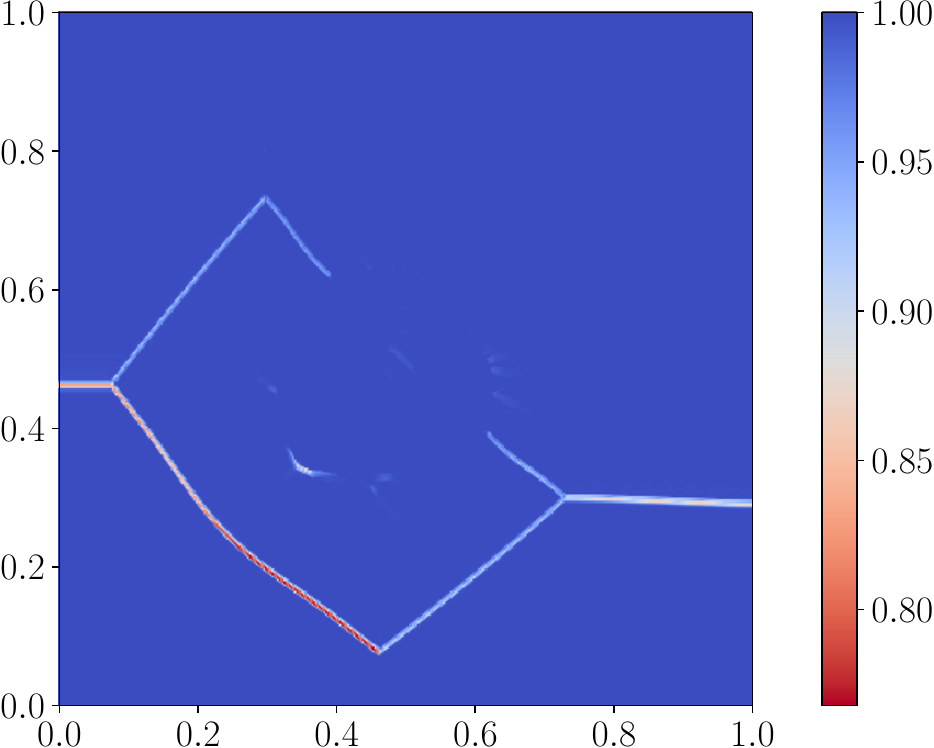}
		\end{subfigure}
		\caption{Example \ref{ex:2d_rp}, 2D Riemann problem.
			The shock sensor-based blending coefficients $\theta_{\xr,j}^{s}$ (left) and $\theta_{i,\yr}^{s}$ (right) on the $400\times400$ uniform mesh.}
		\label{fig:2d_rp_ss}
	\end{figure}
\end{example}

\begin{example}[Double Mach reflection]\label{ex:2d_dmr}
	The computational domain is $[0, 3]\times[0, 1]$ with a reflective wall at the bottom starting from $x = 1/6$.
	A Mach $10$ shock is moving towards the bottom wall with an angle of $\pi/6$.
	The pre- and post-shock states are
	\begin{equation*}
		(\rho, v_1, v_2, p) = 
		\begin{cases}
			(1.4, ~0, ~0, ~1), & x \geqslant 1/6 + (y+20t)/\sqrt{3}, \\
			(8, ~8.25\cos(\pi/6), -8.25\sin(\pi/6), ~116.5), & x < 1/6 + (y+20t)/\sqrt{3}. \\
		\end{cases}
	\end{equation*}
	The reflective boundary condition is applied at the wall, while the exact post-shock condition is imposed at the left boundary and for the rest of the bottom boundary (from $x = 0$ to $x = 1/6$).
	At the top boundary, the exact motion of the Mach $10$ shock is applied and the outflow boundary condition is used at the right boundary.
	The results are shown at $T = 0.2$.
	
	The AF method without the BP limitings gives negative density or pressure near the reflective location $(1/6, 0)$, so the BP limitings are necessary for this test.
 The numerical solutions are computed
 without or with the shock sensor ($\kappa=1$) on a series of uniform meshes.
	The density plots with enlarged views around the double Mach region are shown in \Cref{fig:2d_dmr_density},
  and the blending coefficients based on the shock sensor are shown in \Cref{fig:2d_dmr_ss}.
	When the shock sensor is not activated, the noise after the bow shock is obvious, and it is damped with the help of the shock sensor.
	As mesh refinement, the numerical solutions converge with a good resolution and are comparable to those in the literature.
	Compared to the third-order $P^2$ DG method using the TVB limiter \cite{Cockburn_2001_Runge_JoSC} with the same mesh resolution ($\Delta x=\Delta y=1/480$), the roll-ups and vortices are comparable while the AF method uses fewer DoFs ($4$ versus $6$ per cell).
	
	\begin{figure}[htbp!]
		\centering
		\begin{subfigure}[b]{0.6\textwidth}
			\centering	\includegraphics[width=\linewidth]{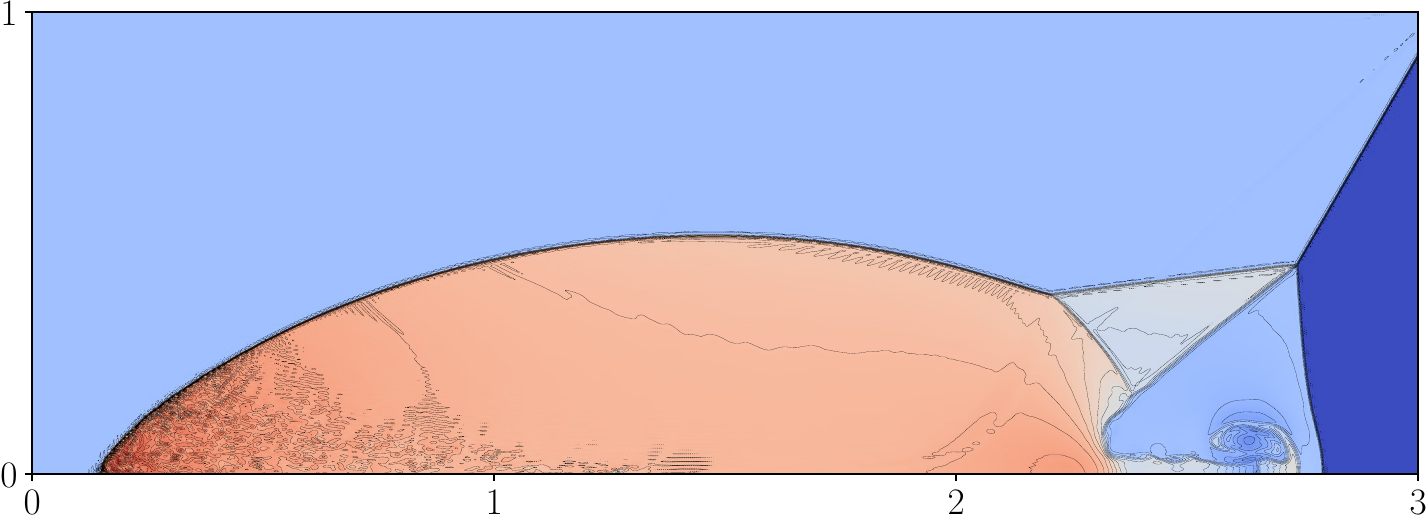}
		\end{subfigure}
		\begin{subfigure}[b]{0.384\textwidth}
			\centering
			\includegraphics[width=\linewidth]{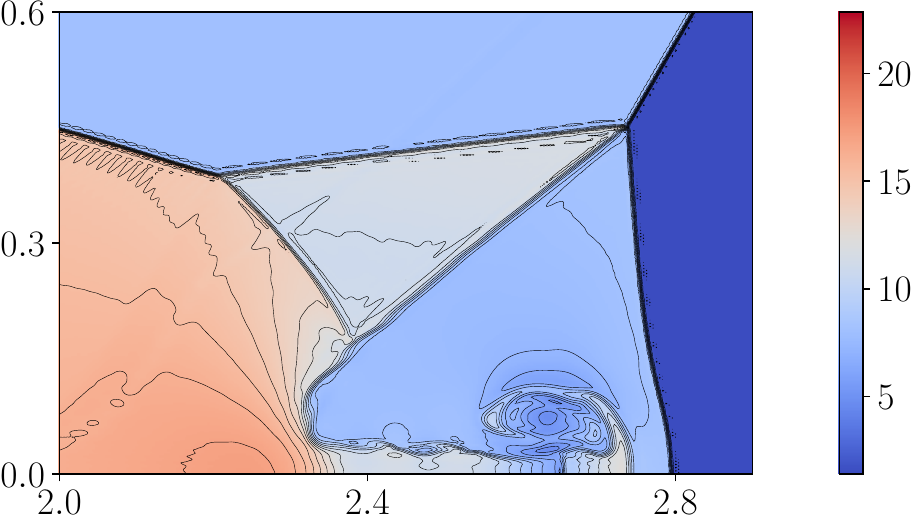}
		\end{subfigure}
		
		\begin{subfigure}[b]{0.6\textwidth}
			\centering
			\includegraphics[width=\linewidth]{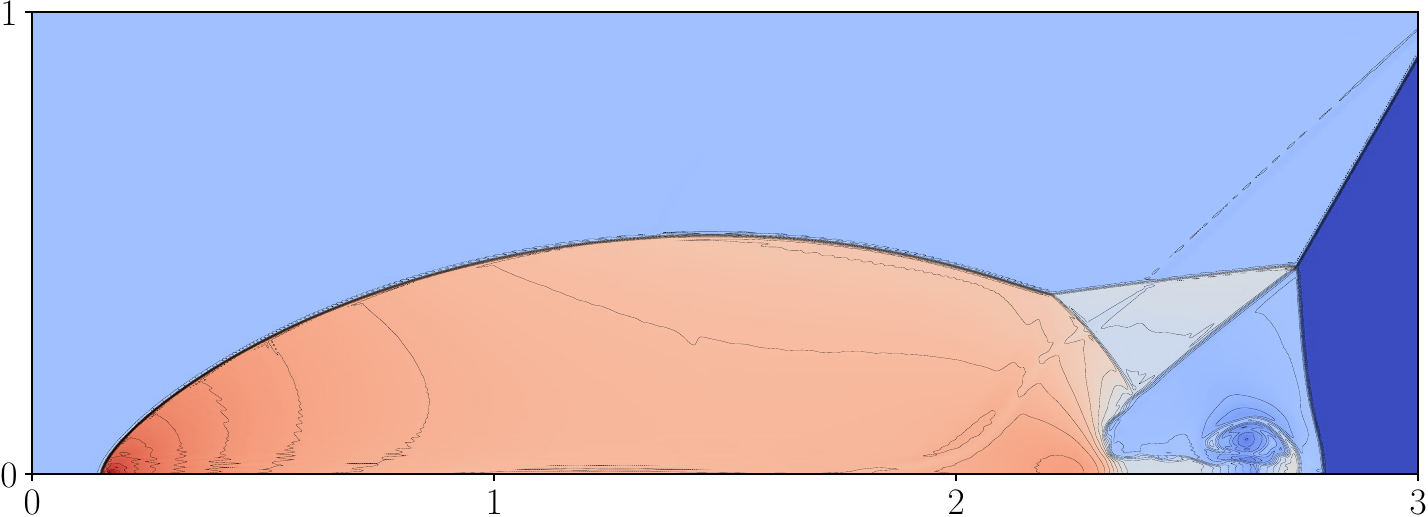}
		\end{subfigure}
		\begin{subfigure}[b]{0.384\textwidth}
			\centering
			\includegraphics[width=\linewidth]{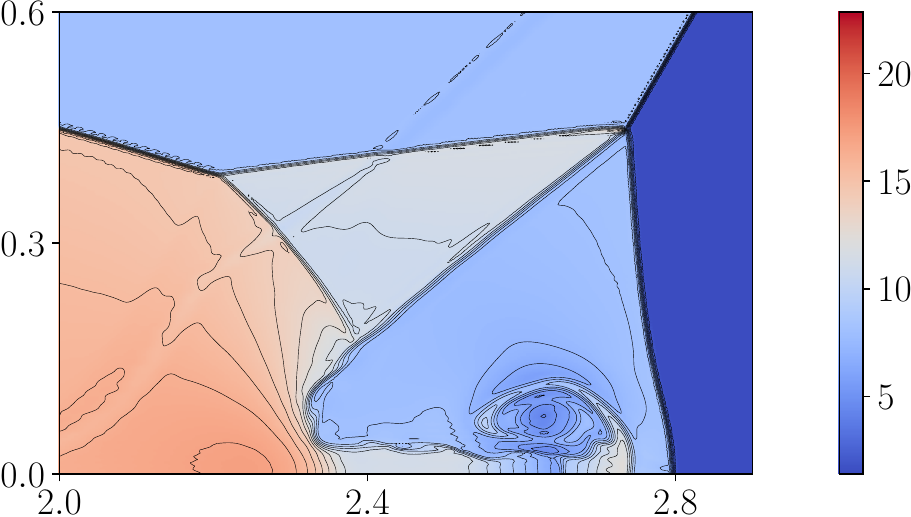}
		\end{subfigure}
		
		\begin{subfigure}[b]{0.6\textwidth}
			\includegraphics[width=\linewidth]{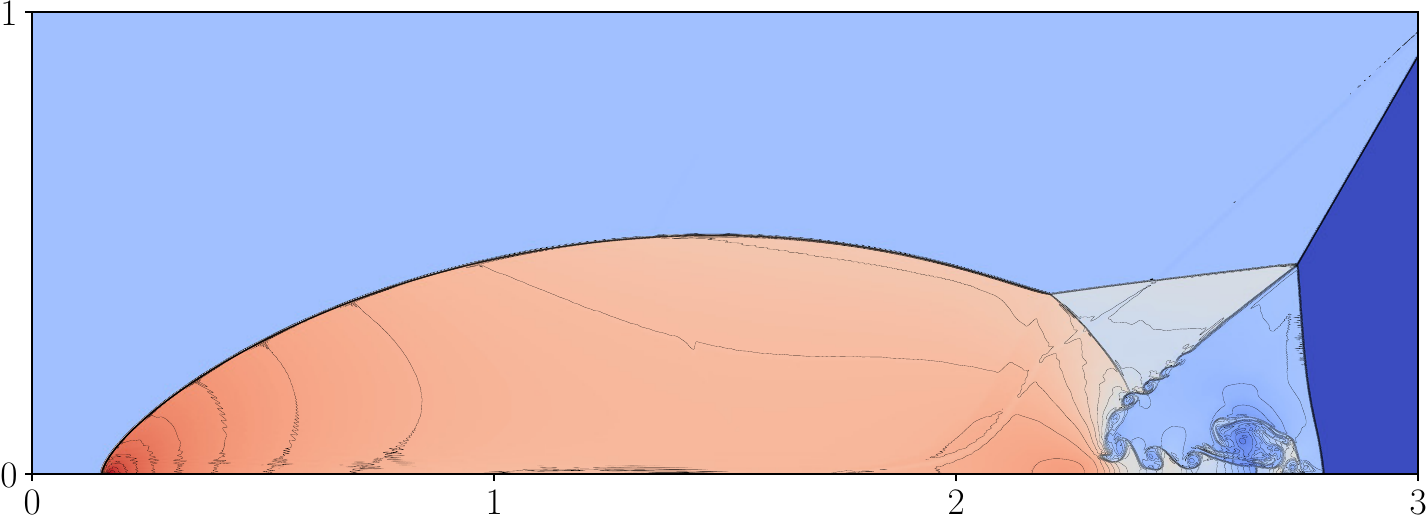}
		\end{subfigure}
		\begin{subfigure}[b]{0.384\textwidth}
			\centering
			\includegraphics[width=\linewidth]{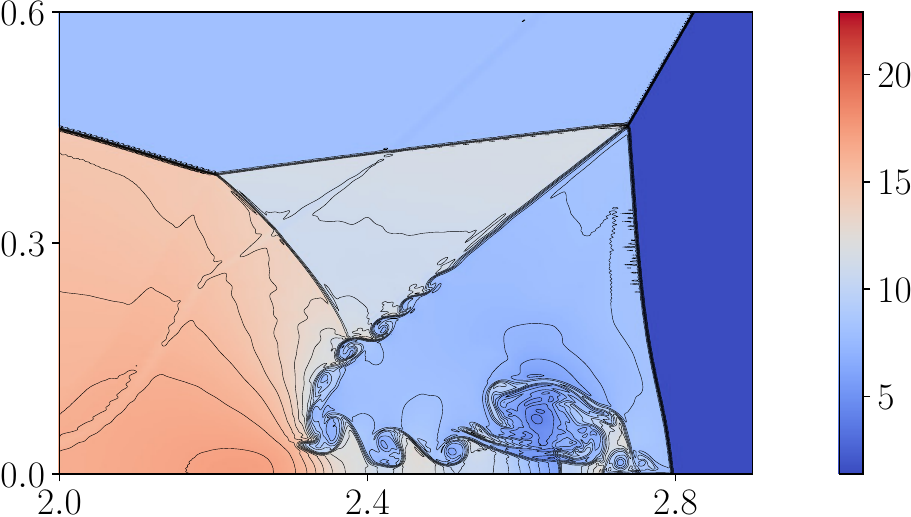}
		\end{subfigure}
		
		\caption{Example \ref{ex:2d_dmr}, double Mach reflection.
			The density obtained with the BP limitings and without or with the shock sensor.
			From top to bottom: $720\times240$ mesh without shock sensor, $720\times240$ mesh with $\kappa=1$, $1440\times480$ mesh with $\kappa=1$.
			$30$ equally spaced contour lines from $1.390$ to $22.861$.}
		\label{fig:2d_dmr_density}
	\end{figure}
	
	\begin{figure}[htbp!]
		\centering		
		\begin{subfigure}[b]{0.48\textwidth}
			\centering
			\includegraphics[width=\linewidth]{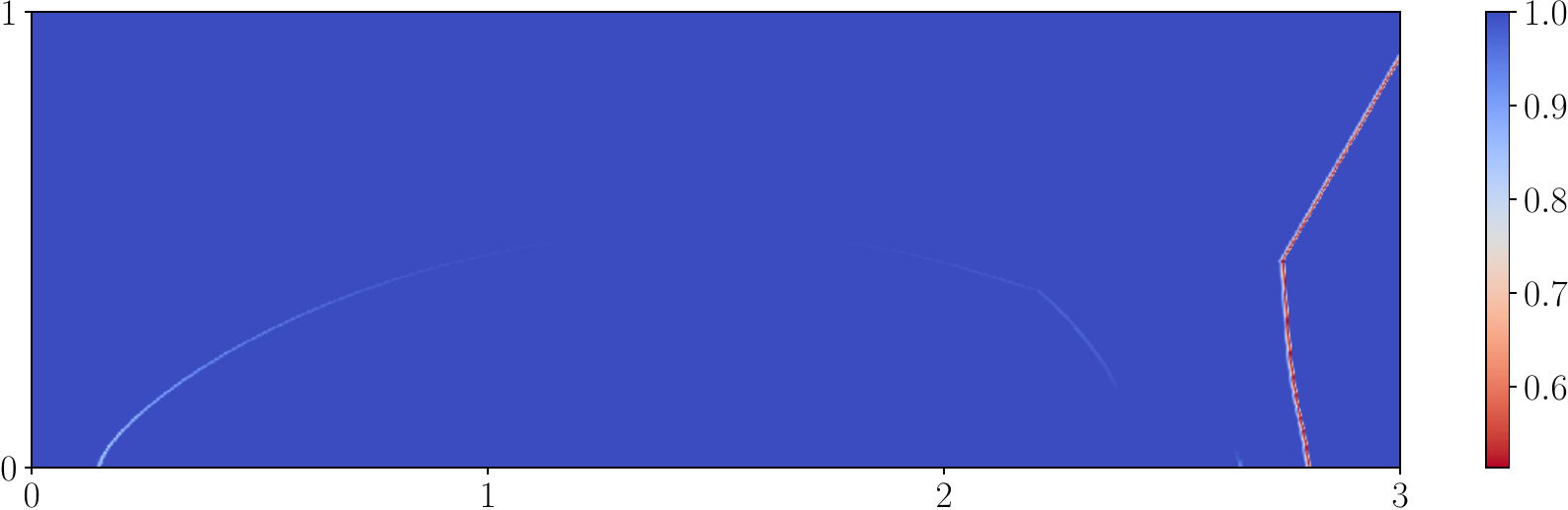}
		\end{subfigure}
		\quad
		\begin{subfigure}[b]{0.48\textwidth}
			\centering
			\includegraphics[width=\linewidth]{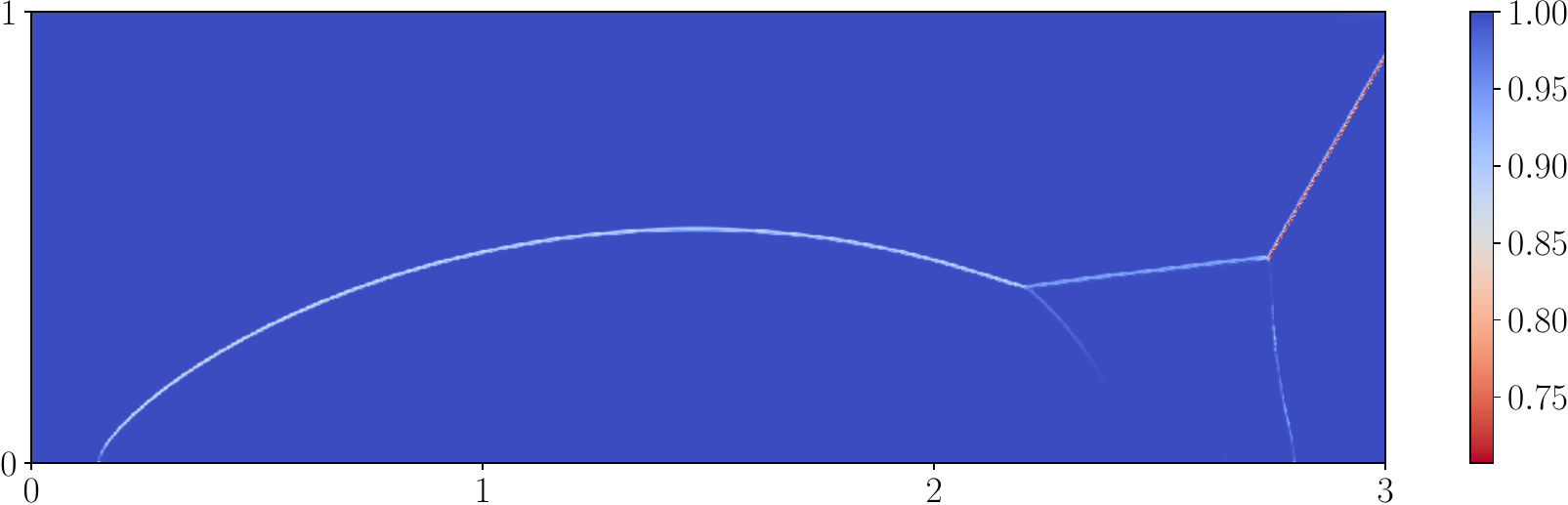}
		\end{subfigure}
		\caption{Example \ref{ex:2d_dmr}, double Mach reflection.
			The blending coefficients $\theta_{\xr,j}^{s}$ (left) and $\theta_{i,\yr}^{s}$ (right) based on the shock sensor with $\kappa=1$ on the $1440\times 480$ mesh.}
		\label{fig:2d_dmr_ss}
	\end{figure}
\end{example}




\begin{thebibliography}{10}

\bibitem{Abgrall_2023_Combination_CoAMaC}
{\sc R.~Abgrall}, {\em A combination of residual distribution and the active
  flux formulations or a new class of schemes that can combine several writings
  of the same hyperbolic problem: Application to the {1D} {Euler} equationss},
  Commun. Appl. Math. Comput., 5 (2023), pp.~370--402,
  \url{https://doi.org/10.1007/s42967-021-00175-w}.

\bibitem{Abgrall_2023_Extensions_EMMaNA}
{\sc R.~Abgrall and W.~Barsukow}, {\em Extensions of active flux to arbitrary
  order of accuracy}, ESAIM: Math. Model. Numer. Anal., 57 (2023),
  pp.~991--1027, \url{https://doi.org/10.1051/m2an/2023004},
  \url{https://www.esaim-m2an.org/articles/m2an/abs/2023/02/m2an220128/m2an220128.html}
  (accessed 2024-03-03).

\bibitem{Abgrall_2023_active}
{\sc R.~Abgrall, W.~Barsukow, and C.~Klingenberg}, {\em The active flux method
  for the {Euler} equations on {Cartesian} grids}, Oct. 2023,
  \url{https://doi.org/10.48550/arXiv.2310.00683},
  \url{http://arxiv.org/abs/2310.00683}.
\newblock {arXiv}:2310.00683.

\bibitem{Abgrall_2023_Activea}
{\sc R.~Abgrall, J.~Lin, and Y.~Liu}, {\em Active flux for triangular meshes
  for compressible flows problems}, Dec. 2023,
  \url{https://doi.org/10.48550/arxiv.2312.11271},
  \url{https://arxiv.org/abs/2312.11271}.

\bibitem{Barsukow_2021_active_JoSC}
{\sc W.~Barsukow}, {\em The active flux scheme for nonlinear problems}, J. Sci.
  Comput., 86 (2021), p.~3, \url{https://doi.org/10.1007/s10915-020-01381-z}.

\bibitem{Barsukow_2023_Well_CoAMaC}
{\sc W.~Barsukow and J.~P. Berberich}, {\em A well-balanced active flux method
  for the shallow water equations with wetting and drying}, Commun. Appl. Math.
  Comput.,  (2023), \url{https://doi.org/10.1007/s42967-022-00241-x}.

\bibitem{Barsukow_2021_Active_SJoSC}
{\sc W.~Barsukow, J.~P. Berberich, and C.~Klingenberg}, {\em On the active flux
  scheme for hyperbolic {PDEs} with source terms}, SIAM J. Sci. Comput., 43
  (2021), pp.~A4015--A4042, \url{https://doi.org/10.1137/20M1346675}.

\bibitem{Barsukow_2019_Active_JoSC}
{\sc W.~Barsukow, J.~Hohm, C.~Klingenberg, and P.~L. Roe}, {\em The active flux
  scheme on {Cartesian} grids and its low {Mach} number limit}, J. Sci.
  Comput., 81 (2019), pp.~594--622,
  \url{https://doi.org/10.1007/s10915-019-01031-z}.

\bibitem{Chudzik_2023_Review_InProceedings}
{\sc E.~Chudzik and C.~Helzel}, {\em A {Review} of {Cartesian} {Grid} {Active}
  {Flux} {Methods} for {Hyperbolic} {Conservation} {Laws}}, in Finite
  {Volumes} for {Complex} {Applications} {X}—{Volume} 1, {Elliptic} and
  {Parabolic} {Problems}, E.~Franck, J.~Fuhrmann, V.~Michel-Dansac, and
  L.~Navoret, eds., Cham, 2023, Springer Nature Switzerland, pp.~93--109,
  \url{https://doi.org/10.1007/978-3-031-40864-9_6}.

\bibitem{Chudzik_2021_Cartesian_AMaC}
{\sc E.~Chudzik, C.~Helzel, and D.~Kerkmann}, {\em The {C}artesian grid active
  flux method: {L}inear stability and bound preserving limiting}, Appl. Math.
  Comput., 393 (2021), p.~125501,
  \url{https://doi.org/10.1016/j.amc.2020.125501},
  \url{https://www.sciencedirect.com/science/article/pii/S0096300320304598}
  (accessed 2024-03-03).

\bibitem{Clain_2011_high_JoCP}
{\sc S.~Clain, S.~Diot, and R.~Loubère}, {\em A high-order finite volume
  method for systems of conservation laws--{Multi}-dimensional {Optimal}
  {Order} {Detection} ({MOOD})}, J. Comput. Phys., 230 (2011), pp.~4028--4050,
  \url{https://doi.org/10.1016/j.jcp.2011.02.026},
  \url{https://www.sciencedirect.com/science/article/pii/S002199911100115X}
  (accessed 2024-03-11).

\bibitem{Cockburn_2001_Runge_JoSC}
{\sc B.~Cockburn and C.~W. Shu}, {\em {R}unge-{K}utta discontinuous {G}alerkin
  methods for convection-dominated problems}, J. Sci. Comput., 16 (2001),
  pp.~173--261, \url{https://doi.org/10.1023/a:1012873910884}.

\bibitem{Cotter_2016_Embedded_JoCP}
{\sc C.~J. Cotter and D.~Kuzmin}, {\em Embedded discontinuous {Galerkin}
  transport schemes with localised limiters}, J. Comput. Phys., 311 (2016),
  pp.~363--373, \url{https://doi.org/10.1016/j.jcp.2016.02.021},
  \url{https://www.sciencedirect.com/science/article/pii/S0021999116000759}
  (accessed 2024-03-12).

\bibitem{Dafermos_2000_Hyperbolic_book}
{\sc C.~M. Dafermos}, {\em {Hyperbolic Conservation Laws in Continuum
  Physics}}, Springer Berlin Heidelberg, 2000,
  \url{https://doi.org/10.1007/978-3-662-22019-1}.

\bibitem{Ducros_1999_Large_JoCP}
{\sc F.~Ducros, V.~Ferrand, F.~Nicoud, C.~Weber, D.~Darracq, C.~Gacherieu, and
  T.~Poinsot}, {\em Large-eddy simulation of the shock/turbulence interaction},
  Journal of Computational Physics, 152 (1999), pp.~517--549,
  \url{https://doi.org/10.1006/jcph.1999.6238},
  \url{https://www.sciencedirect.com/science/article/pii/S0021999199962381}
  (accessed 2024-05-29).

\bibitem{Eymann_2011_Active_InProceedings}
{\sc T.~Eymann and P.~Roe}, {\em Active flux schemes}, in 49th {AIAA}
  {Aerospace} {Sciences} {Meeting} including the {New} {Horizons} {Forum} and
  {Aerospace} {Exposition}, Orlando, Florida, Jan. 2011, American Institute of
  Aeronautics and Astronautics, \url{https://doi.org/10.2514/6.2011-382}.

\bibitem{Eymann_2011_Active_InCollection}
{\sc T.~Eymann and P.~Roe}, {\em Active flux schemes for systems}, in 20th AIAA
  Computational Fluid Dynamics Conference, Fluid {Dynamics} and {Co}-located
  {Conferences}, American Institute of Aeronautics and Astronautics, June 2011,
  \url{https://doi.org/10.2514/6.2011-3840}.

\bibitem{Eymann_2013_Multidimensional_InCollection}
{\sc T.~A. Eymann and P.~L. Roe}, {\em Multidimensional active flux schemes},
  in 21st AIAA Computational Fluid Dynamics Conference, Fluid {Dynamics} and
  {Co}-located {Conferences}, American Institute of Aeronautics and
  Astronautics, June 2013, \url{https://doi.org/10.2514/6.2013-2940}.

\bibitem{Fan_2015_Investigations_InCollection}
{\sc D.~Fan and P.~L. Roe}, {\em Investigations of a new scheme for wave
  propagation}, in 22nd {AIAA} {Computational} {Fluid} {Dynamics} {Conference},
  American Institute of Aeronautics and Astronautics, 2015,
  \url{https://doi.org/10.2514/6.2015-2449}.

\bibitem{Gottlieb_2001_Strong_SRa}
{\sc S.~Gottlieb, C.-W. Shu, and E.~Tadmor}, {\em Strong
  {Stability}-{Preserving} {High}-{Order} {Time} {Discretization} {Methods}},
  SIAM Rev., 43 (2001), pp.~89--112,
  \url{https://doi.org/10.1137/S003614450036757X}.

\bibitem{Guermond_2018_Second_SJoSC}
{\sc J.-L. Guermond, M.~Nazarov, B.~Popov, and I.~Tomas}, {\em Second-order
  invariant domain preserving approximation of the {Euler} equations using
  convex limiting}, SIAM J. Sci. Comput., 40 (2018), pp.~A3211--A3239,
  \url{https://doi.org/10.1137/17M1149961}.

\bibitem{Guermond_2016_Invariant_SJoNA}
{\sc J.-L. Guermond and B.~Popov}, {\em Invariant domains and first-order
  continuous finite element approximation for hyperbolic systems}, SIAM J.
  Numer. Anal., 54 (2016), pp.~2466--2489,
  \url{https://doi.org/10.1137/16M1074291}.

\bibitem{Guermond_2017_Invariant_SJoNA}
{\sc J.-L. Guermond and B.~Popov}, {\em Invariant domains and second-order
  continuous finite element approximation for scalar conservation equations},
  SIAM J. Numer. Anal., 55 (2017), pp.~3120--3146,
  \url{https://doi.org/10.1137/16M1106560}.

\bibitem{Guermond_2019_Invariant_CMiAMaE}
{\sc J.-L. Guermond, B.~Popov, and I.~Tomas}, {\em Invariant domain preserving
  discretization-independent schemes and convex limiting for hyperbolic
  systems}, Comput. Method. Appl. M., 347 (2019), pp.~143--175,
  \url{https://doi.org/10.1016/j.cma.2018.11.036},
  \url{https://www.sciencedirect.com/science/article/pii/S0045782518305954}
  (accessed 2024-03-12).

\bibitem{Hajduk_2021_Monolithic_C&MwA}
{\sc H.~Hajduk}, {\em Monolithic convex limiting in discontinuous {Galerkin}
  discretizations of hyperbolic conservation laws}, Comput. Math. Appl., 87
  (2021), pp.~120--138, \url{https://doi.org/10.1016/j.camwa.2021.02.012},
  \url{https://www.sciencedirect.com/science/article/pii/S0898122121000547}
  (accessed 2024-03-12).

\bibitem{Haenel_1987_accuracy_InCollection}
{\sc D.~H{\"a}nel, R.~Schwane, and G.~Seider}, {\em On the accuracy of upwind
  schemes for the solution of the {Navier}-{Stokes} equations}, Fluid
  {Dynamics} and {Co}-located {Conferences}, American Institute of Aeronautics
  and Astronautics, June 1987, \url{https://doi.org/10.2514/6.1987-1105}.

\bibitem{Helzel_2019_New_JoSC}
{\sc C.~Helzel, D.~Kerkmann, and L.~Scandurra}, {\em A new {ADER} method
  inspired by the active flux method}, J. Sci. Comput., 80 (2019),
  pp.~1463--1497, \url{https://doi.org/10.1007/s10915-019-00988-1}.

\bibitem{Hu_2013_Positivity_JoCP}
{\sc X.~Y. Hu, N.~A. Adams, and C.-W. Shu}, {\em Positivity-preserving method
  for high-order conservative schemes solving compressible {Euler} equations},
  J. Comput. Phys., 242 (2013), pp.~169--180,
  \url{https://doi.org/10.1016/j.jcp.2013.01.024},
  \url{https://www.sciencedirect.com/science/article/pii/S0021999113000557}
  (accessed 2024-03-13).

\bibitem{Jameson_1981_Solutions_AJ}
{\sc A.~Jameson, W.~Schmidt, and E.~Turkel}, {\em Solutions of the {Euler}
  equations by finite volume methods using {Runge}-{Kutta} time-stepping
  schemes}, AIAA J., 1259 (1981).

\bibitem{Kamm_2007_efficient}
{\sc J.~R. Kamm and F.~X. Timmes}, {\em On efficient generation of numerically
  robust {Sedov} solutions}, Tech. Report LA-UR-07-2849, 2007.

\bibitem{Kuzmin_2020_Monolithic_CMiAMaE}
{\sc D.~Kuzmin}, {\em Monolithic convex limiting for continuous finite element
  discretizations of hyperbolic conservation laws}, Computer Methods in Applied
  Mechanics and Engineering, 361 (2020), p.~112804,
  \url{https://doi.org/10.1016/j.cma.2019.112804},
  \url{https://www.sciencedirect.com/science/article/pii/S0045782519306966}
  (accessed 2024-03-12).

\bibitem{Kuzmin_2012_Flux_book}
{\sc D.~Kuzmin, R.~Löhner, and S.~Turek}, eds., {\em Flux-{Corrected}
  {Transport}: {Principles}, {Algorithms}, and {Applications}}, Scientific
  {Computation}, Springer Netherlands, Dordrecht, 2012,
  \url{https://doi.org/10.1007/978-94-007-4038-9}.

\bibitem{Lax_1998_Solution_SJoSC}
{\sc P.~D. Lax and X.-D. Liu}, {\em Solution of two-dimensional {R}iemann
  problems of gas dynamics by positive schemes}, SIAM J. Sci. Comput., 19
  (1998), pp.~319--340, \url{https://doi.org/10.1137/s1064827595291819}.

\bibitem{Liu_1996_Nonoscillatory_SJoNA}
{\sc X.-D. Liu and S.~Osher}, {\em Nonoscillatory high order accurate
  self-similar maximum principle satisfying shock capturing schemes {I}}, SIAM
  J. Numer. Anal., 33 (1996), pp.~760--779,
  \url{https://doi.org/10.1137/0733038}.

\bibitem{Lohmann_2017_Flux_JoCP}
{\sc C.~Lohmann, D.~Kuzmin, J.~N. Shadid, and S.~Mabuza}, {\em Flux-corrected
  transport algorithms for continuous {Galerkin} methods based on high order
  {Bernstein} finite elements}, J. Comput. Phys., 344 (2017), pp.~151--186,
  \url{https://doi.org/10.1016/j.jcp.2017.04.059},
  \url{https://www.sciencedirect.com/science/article/pii/S0021999117303388}
  (accessed 2024-03-12).

\bibitem{Maeng_2017_Advective}
{\sc J.~Maeng}, {\em On the {Advective} {Component} of {Active} {Flux}
  {Schemes} for {Nonlinear} {Hyperbolic} {Conservation} {Laws}}, PhD thesis,
  2017, \url{http://deepblue.lib.umich.edu/handle/2027.42/138695} (accessed
  2024-10-19).

\bibitem{Perthame_1996_positivity_NM}
{\sc B.~Perthame and C.-W. Shu}, {\em On positivity preserving finite volume
  schemes for {Euler} equations}, Numer. Math., 73 (1996), pp.~119--130,
  \url{https://doi.org/10.1007/s002110050187}.

\bibitem{Roe_2017_Is_JoSC}
{\sc P.~Roe}, {\em Is discontinuous reconstruction really a good idea?}, J.
  Sci. Comput., 73 (2017), pp.~1094--1114,
  \url{https://doi.org/10.1007/s10915-017-0555-z}.

\bibitem{Sedov_1959_Similarity_book}
{\sc L.~I. Sedov}, {\em {Similarity and Dimensional Methods in Mechanics}},
  Academic Press, New York,, 1959.

\bibitem{Steger_1981_Flux_JoCP}
{\sc J.~L. Steger and R.~F. Warming}, {\em Flux vector splitting of the
  inviscid gasdynamic equations with application to finite-difference methods},
  J. Comput. Phys., 40 (1981), pp.~263--293,
  \url{https://doi.org/10.1016/0021-9991(81)90210-2},
  \url{https://www.sciencedirect.com/science/article/pii/0021999181902102}
  (accessed 2024-03-05).

\bibitem{Tang_2005_sonic_JoCP}
{\sc H.~Z. Tang}, {\em On the sonic point glitch}, J. Comput. Phys., 202
  (2005), pp.~507--532, \url{https://doi.org/10.1016/j.jcp.2004.07.013},
  \url{https://www.sciencedirect.com/science/article/pii/S0021999104002967}
  (accessed 2024-03-21).

\bibitem{Toro_2009_Riemann}
{\sc E.~F. Toro}, {\em Riemann {S}olvers and {N}umerical {M}ethods for {F}luid
  {D}ynamics}, Springer Berlin Heidelberg, 2009,
  \url{https://doi.org/10.1007/b79761}.

\bibitem{VanLeer_1977_Towards_JoCP}
{\sc B.~Van~Leer}, {\em Towards the ultimate conservative difference scheme.
  {IV}. {A} new approach to numerical convection}, J. Comput. Phys., 23 (1977),
  pp.~276--299, \url{https://doi.org/10.1016/0021-9991(77)90095-X},
  \url{https://www.sciencedirect.com/science/article/pii/002199917790095X}
  (accessed 2024-03-09).

\bibitem{Leer_1982_Flux_InProceedings}
{\sc B.~van Leer}, {\em Flux-vector splitting for the {Euler} equations}, in
  Eighth {International} {Conference} on {Numerical} {Methods} in {Fluid}
  {Dynamics}, E.~Krause, ed., Lecture {Notes} in {Physics}, Berlin, Heidelberg,
  1982, Springer, pp.~507--512, \url{https://doi.org/10.1007/3-540-11948-5_66}.

\bibitem{Woodward_1984_numerical_JoCP}
{\sc P.~Woodward and P.~Colella}, {\em The numerical simulation of
  two-dimensional fluid flow with strong shocks}, J. Comput. Phys., 54 (1984),
  pp.~115--173, \url{https://doi.org/10.1016/0021-9991(84)90142-6},
  \url{https://www.sciencedirect.com/science/article/pii/0021999184901426}
  (accessed 2024-03-08).

\bibitem{Wu_2023_Geometric_SR}
{\sc K.~Wu and C.-W. Shu}, {\em Geometric quasilinearization framework for
  analysis and design of bound-preserving schemes}, SIAM Rev., 65 (2023),
  pp.~1031--1073, \url{https://doi.org/10.1137/21M1458247}.

\bibitem{Xu_2014_Parametrized_MoC}
{\sc Z.~Xu}, {\em Parametrized maximum principle preserving flux limiters for
  high order schemes solving hyperbolic conservation laws: one-dimensional
  scalar problem}, Math. Comput., 83 (2014), pp.~2213--2238,
  \url{https://doi.org/10.1090/S0025-5718-2013-02788-3},
  \url{https://www.ams.org/mcom/2014-83-289/S0025-5718-2013-02788-3/} (accessed
  2024-03-13).

\bibitem{Zhang_2010_maximum_JoCP}
{\sc X.~Zhang and C.-W. Shu}, {\em On maximum-principle-satisfying high order
  schemes for scalar conservation laws}, J. Comput. Phys., 229 (2010),
  pp.~3091--3120, \url{https://doi.org/10.1016/j.jcp.2009.12.030},
  \url{https://www.sciencedirect.com/science/article/pii/S0021999109007165}
  (accessed 2024-06-14).

\bibitem{Zhang_2010_positivity_JoCP}
{\sc X.~Zhang and C.-W. Shu}, {\em On positivity-preserving high order
  discontinuous {Galerkin} schemes for compressible {Euler} equations on
  rectangular meshes}, J. Comput. Phys., 229 (2010), pp.~8918--8934,
  \url{https://doi.org/10.1016/j.jcp.2010.08.016},
  \url{https://www.sciencedirect.com/science/article/pii/S0021999110004535}
  (accessed 2024-03-26).

\bibitem{Zhang_2011_Maximum_PotRSAMPaES}
{\sc X.~Zhang and C.-W. Shu}, {\em Maximum-principle-satisfying and
  positivity-preserving high-order schemes for conservation laws: survey and
  new developments}, Proceedings of the Royal Society A: Mathematical, Physical
  and Engineering Sciences, 467 (2011), pp.~2752--2776,
  \url{https://doi.org/10.1098/rspa.2011.0153}.

\bibitem{Zhang_2011_Positivity_JoCP}
{\sc X.~Zhang and C.-W. Shu}, {\em Positivity-preserving high order
  discontinuous {Galerkin} schemes for compressible {Euler} equations with
  source terms}, J. Comput. Phys., 230 (2011), pp.~1238--1248,
  \url{https://doi.org/10.1016/j.jcp.2010.10.036},
  \url{https://www.sciencedirect.com/science/article/pii/S0021999110006017}
  (accessed 2024-03-13).

\end{thebibliography}

\end{document}